\documentclass[a4paper, 11pt, reqno]{amsart}
\usepackage[margin=1.2in]{geometry}

\usepackage{amssymb}
\usepackage{amsmath} 
\usepackage{mathrsfs}
\usepackage[all]{xy}
\usepackage{mathtools}
\usepackage{color}
\usepackage[dvipsnames]{xcolor}  
\usepackage[hidelinks]{hyperref}
\usepackage{url}
\usepackage[english]{babel}
\usepackage{comment}
\usepackage{cases}
\usepackage{todonotes}
\usepackage{enumitem}
\usepackage{colortbl}
\usepackage{longtable}
\usepackage{colonequals}
\usepackage{tikz}
\usepackage{tikz-cd}
\usepackage{pgf,tikz}
\usetikzlibrary{arrows}
\usepackage{fancyhdr}
\usepackage[style=alphabetic, sorting=nty, maxbibnames=6]{biblatex}
\usepackage{csquotes}              

\usepackage[OT2,T1]{fontenc}
\DeclareSymbolFont{cyrletters}{OT2}{wncyr}{m}{n}

\numberwithin{equation}{section} \numberwithin{figure}{section}

\DeclareSymbolFont{cyrletters}{OT2}{wncyr}{m}{n}
\DeclareMathSymbol{\Sha}{\mathalpha}{cyrletters}{"58}
\DeclareMathSymbol{\Be}{\mathalpha}{cyrletters}{"42}

 \newcommand{\term}[1]{\textbf{#1}}          
 \newcommand{\textred}[1]{\textcolor{red}{#1}}
 
\DeclareMathOperator{\ord}{ord}
\DeclareMathOperator{\Pic}{Pic}
\DeclareMathOperator{\Div}{Div}

\DeclareMathOperator{\Gal}{Gal}

\DeclareMathOperator{\supp}{supp}

\DeclareMathOperator{\Aut}{Aut}

\DeclareMathOperator{\Spec}{Spec}

\DeclareMathOperator{\Frob}{Frob}

\DeclareMathOperator{\inv}{inv}
\DeclareMathOperator{\Br}{Br}

\newcommand\blfootnote[1]{%
  \begingroup
  \renewcommand\thefootnote{}\footnote{#1}%
  \addtocounter{footnote}{-1}%
  \endgroup
}

\def \F {{\mathbb F}}

\def \P {{\mathbb P}}
\def \Q {{\mathbb Q}}

\def \Z {{\mathbb Z}}

\newcommand{\ra}{\rightarrow}

\newtheorem{lemma}{Lemma}

\newtheorem{theorem}[lemma]{Theorem}
\newtheorem{proposition}[lemma]{Proposition}
\newtheorem{corollary}[lemma]{Corollary}

\newtheorem{claim}[lemma]{Claim}

\theoremstyle{definition}
\newtheorem{example}[lemma]{Example}

\newtheorem{definition}[lemma]{Definition}
\newtheorem{remark}[lemma]{Remark}

\newtheorem{notation}[lemma]{Notation}
\newtheorem*{notation*}{Notation}

\numberwithin{lemma}{section}

\begin{document}

\title{Existence of minimal Del Pezzo surfaces of degree 1 with conic bundles over finite fields}

\author{Manoy T. Trip}

\begin{abstract}
    We study minimal del Pezzo surfaces of degree 1 with a conic bundle over a finite field $\mathbb{F}_q$ according to the action of the absolute Galois group on the singular fibers (which is known as their type). We give a lower bound on the size of the field over which they exist, and determine values of $q$ for which certain types cannot exist. In particular, we solve the inverse Galois problem for certain types of minimal del Pezzo surfaces of degree 1 over finite fields with a conic bundle structure. Additionally, we give bounds on the values of $q$ for which del Pezzo surfaces of degree 1 of index 8 exist over $\F_q$.
\end{abstract}

\begin{comment}
        Minimal del Pezzo surfaces of degree 1 with conic bundles over finite fields can be divided into seven types, based on the Galois action on the components of the singular fibers of the conic bundle structure. For each of these types, we give a bound such that if $q$ is larger than this value, there exists a minimal del Pezzo surface of degree 1 with conic bundle of that type. We also show that there are types which cannot exist if $q$ is equal to 2, 3 or 4. Together, for 4 of the 7 types, this gives a complete classification of the existence over $\F_q$ of minimal del Pezzo surfaces of degree 1 with conic bundles. We similarly give bounds on the values of $q$ for which del Pezzo surfaces of degree 1 of index 8, i.e. blow-ups of $\P^2$, exist over $\F_q$.
\end{comment}

\maketitle

\blfootnote{\textit{Date}: \today.}

\thispagestyle{empty}

\section{Introduction} \label{sec:intro}

A del Pezzo surface $X$ over a field $k$ is a smooth, projective, geometrically integral surface with an ample anticanonical divisor $-K_X$. Its degree $d$ is the self-intersection number $K_X^2$, and we have $1 \leq d \leq 9$. All $k$-minimal rational surfaces are either del Pezzo surfaces or standard conic bundles \cite[Theorem 1]{iskovskih}, which motivates the study of these two types of minimal surfaces.

Let $k$ be a perfect field, $\overline{k}$ a fixed algebraic closure of $k$, and let $\overline{X} \colonequals X \times_{\Spec k} \Spec \overline{k}$. Del Pezzo surfaces over $k$ can be classified based on the action of the absolute Galois group of $k$ on $\overline{X}$, which induces automorphisms on $\Pic(\overline{X})$ that preserve $K_{\overline{X}}$ and the intersection pairing. The subgroup of such automorphisms is isomorphic to the Weyl group of a root system $R_{9-d}$, which only depends on the degree $d$ of $X$.

Let us restrict ourselves to the case where $k$ is a finite field. Let $X$ be a del Pezzo surface over $\F_q$, of degree $d \leq 6$. We consider the action of $\Gal(\overline{\F}_q/\F_q)$ on $\Pic(\overline{X})$, which can be identified with a cyclic subgroup in $W(R_{9-d})$, generated by the image of the Frobenius automorphism. Let $C(X)$ denote the conjugacy class of this image of the Frobenius automorphism in $W(R_{9-d})$. We call this class the \term{type} of $X$. We consider the following problem:\\

\textbf{The inverse Galois problem for del Pezzo surfaces over finite fields.} \textit{Let $q$ be a prime power, and $1 \leq d \leq 6$. For each conjugacy class $C$ of $W(R_{9-d})$, does there exist a del Pezzo surface $X/\F_q$ of degree $d$ such that $C(X) = C$?}\\

\subsection{Existing literature}
For any fixed degree, it is shown in \cite[Corollary 1.8]{BFL} that for $q$ large enough, every type can be realized. For each type, the question remains which exact values of $q$ are large enough, and what happens for smaller values of $q$. For del Pezzo surfaces of degree at least two, the problem is solved completely, for all values of $q$. For del Pezzo surfaces of degree 5 and higher, it is explained in \cite[Proposition 3.1]{trepalin} that every type can be realized for every value of $q$. For del Pezzo surfaces of degree 4, there are 18 different types (see \cite[Table 4]{trepalin}). Theorem 1.4 in \cite{trepalin} summarizes the solution to the inverse Galois problem for these surfaces, collecting results from \cite{rybakov, BFL, trepalindeg2, trepalin}. There are 25 types of del Pezzo surfaces of degree 3 \cite[Table 7.1]{BFL}. In \cite[Theorem 1.1]{LoughranTrepalin}, it is shown precisely for which $q$ a del Pezzo surface of degree 3 of each type exists over $\F_q$, based on results in \cite{rybakov, SD, rybakovtrepalin, trepalindeg2, BFL, trepalin, LoughranTrepalin} (see the proof of Theorem 1.3 in \cite{trepalin} for an overview of what is proven where). For del Pezzo surfaces of degree 2 over $\F_q$, there are 60 possible types (\cite[Table 1]{urabe2}, \cite[Table 2]{trepalin}). The solution to the inverse Galois problem for degree 2 is given in \cite[Theorem 1.2]{LoughranTrepalin}, based on results in \cite{trepalindeg2, BFL, trepalin, LoughranTrepalin}. The proof of Theorem 1.2 in \cite{trepalin} contains an overview of what is proven where.

For del Pezzo surfaces of degree 1, the relevant root system $R_{9-d}$ is $E_8$ (see Theorem \ref{thm:manin23.9}). The Weyl group $W(E_8)$ has 112 conjugacy classes, which are tabulated in \cite[Table 2]{urabe2} with some corresponding properties. In \cite{BFL}, Banwait, Fit\'{e} and Loughran treat the inverse Galois problem for del Pezzo surfaces of degree 1 over finite fields for one type of each possible trace value. More precisely, they treat the following types (numbering after \cite{urabe1, urabe2}): 91, 92, 94, 97, 102, 107, 111, 112, and their Bertini twists 8, 44, 15, 1, 19, 62 and 20. 

In \cite{ICP}, another coarser analogue of the inverse Galois problem is treated, named the inverse curve problem for del Pezzo surface. They classify del Pezzo surfaces based on the number of lines and conics on the surface that are defined over the base field $\F_q$, which is a coarser classification than the classification by type. They completely solve the inverse line problem and inverse conic problem for del Pezzo surfaces of any degree over any finite field.

\subsection{Our contribution}
Types 1 to 37 in \cite{urabe2} correspond to minimal del Pezzo surfaces. In this paper, we treat the types corresponding to minimal del Pezzo surfaces of degree 1 which have a conic bundle structure, which are types 1 to 7. We obtain the following main result, solving the inverse Galois problem for these types for all but some small values of $q$. 

\begin{theorem}\label{thm:mainthm}
    \begin{enumerate}
        \item There exists a del Pezzo surface of degree 1 over $\F_q$ of type 1 if and only if $q \geq 5$.
        \item There exists a del Pezzo surface of degree 1 over $\F_q$ of type 2 if and only if $q \geq 3$.
        \item There exists a del Pezzo surface of degree 1 over $\F_q$ of type 3 if $q \geq 4$. It does not exist when $q = 2$.
        \item There exists a del Pezzo surface of degree 1 over $\F_q$ of type 4 if $q \geq 7$.
        \item There exists a del Pezzo surface of degree 1 over $\F_q$ of type 5 if $q \geq 4$.
        \item There exists a del Pezzo surface of degree 1 over $\F_q$ of type 6 for all $q$.
        \item There exists a del Pezzo surface of degree 1 over $\F_q$ of type 7 for all $q$.
    \end{enumerate}
\end{theorem}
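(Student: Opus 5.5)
Minimal del Pezzo surfaces of degree 1 with a conic bundle are treated through a birational model. If $X$ is such a surface with conic bundle $\pi\colon X\to\P^1$, then $-K_X$ meets a fibre in degree 2 and $K_X^2=1$, so $\pi$ has exactly $8-1=7$ singular fibres. The Frobenius permutes these 7 singular fibres and may swap the two components of each fibre. Minimality of $X$ forces every Frobenius orbit of components to be non-trivial in an appropriate sense. The seven types 1 to 7 then correspond to the seven admissible cycle structures on the singular fibres, read off from Urabe's table together with the decomposition of the Frobenius class in $W(E_8)$ into a permutation of fibres and component swaps.

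\textbf{Existence.} I would construct $X$ explicitly as a double cover. Choose a model $w^2 = f(x,y,z)$ of degree 1, or equivalently a conic bundle
\[
y^2 - a(t)\,z^2 = b(t)\,x^2 ,
\]
where $a$ is a non-square. In this model the singular fibres sit over the roots of the discriminant polynomial of degree 7 (suitably balanced in degree). The Galois orbits of these roots give the permutation of the singular fibres. The swap of the two components over a root $\theta$ is controlled by whether $a(\theta)$, up to the correct factor, is a square in $\F_q(\theta)$.

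Prescribing the factorization pattern of the discriminant and the quadratic character at each root then gives each type. Two conditions must be checked along the way. First, the resulting surface must actually be del Pezzo: this requires that no configuration makes $-K_X$ fail to be ample, for instance no section of self-intersection $-2$. Second, the factorization must be realizable over the given field. For large $q$, I would count irreducible polynomials of each needed degree together with the quadratic-character conditions. Weil bounds (Lang--Weil, or the estimates of BFL) show that a suitable choice exists once $q$ exceeds an explicit bound. The finitely many remaining $q$ up to the stated thresholds (5, 3, 4, 7, 4, all $q$) would be handled by explicit computer search, with the surface exhibited.

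\textbf{Non-existence.} This covers type 1 for $q\le 4$, type 2 for $q=2$, and type 3 for $q=2$. I would use the point count $\#X(\F_q)=q^2+q\,\mathrm{tr}(\mathrm{Frob}\mid \Pic)+1$, together with the conic bundle structure. The point count is
\[
\#X(\F_q) = (q+1)^2 - (\text{contribution of singular fibres}),
\]
where each $\F_q$-rational singular fibre with swapped components contributes one point and one with non-swapped components contributes $2q+1$ points. Rational singular fibres must lie over $\P^1(\F_q)$, which has $q+1$ points. Types needing many rational singular fibres, or many fibres over points of small degree, therefore cannot exist once $q$ is small: there are too few closed points of degree $d$ on $\P^1_{\F_q}$. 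For example, $\P^1(\F_2)$ has 3 points and there is 1 closed point of degree 2. When this counting alone does not suffice, I would add the constraint that $X$ has a rational point on $-K_X$, namely the base point of $|-K_X|$, which must lie on a smooth fibre.

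The main obstacle is the existence part for intermediate $q$. The character-sum error terms are large relative to $q$ for degree-7 discriminants, so the explicit bounds will leave a gap that must be closed by computation. For types 3, 4 and 5 this gap cannot be closed completely, which is presumably why the statement only gives sufficient conditions for those types.
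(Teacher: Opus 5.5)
Your proposal has two genuine gaps. The first is in the existence half and is the central one; the second is in non-existence.

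\textbf{Existence: nothing forces $-K_X$ to be ample.} Prescribing the factorization pattern of the discriminant and the quadratic characters at its roots only fixes the \emph{type} of a minimal standard conic bundle. Such bundles already exist for every $q$ for which $\P^1_{\F_q}$ has the required closed points (Rybakov's theorem). So Weil-bound counts over these data cannot produce the thresholds $5,3,4,7,4$.

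What decides existence is whether some model in the class has $-K_X$ ample. Elementary transformations keep the discriminant and the characters fixed but change this property. For example, minimal weak del Pezzo conic bundles of type 2 exist over every $\F_q$, yet del Pezzo surfaces of type 2 do not exist over $\F_2$. You note that ampleness must be checked, but you give no criterion in terms of $a,b$ and no argument that it can be achieved.

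The paper supplies exactly this step:
\begin{enumerate}
\item It classifies how ampleness can fail for a minimal degree-1 conic bundle: either there is a $(-3)$-bisection in $|-K_X-F|$, or there is a Galois orbit of $(-2)$-curves of one of three specific shapes.
\item It then applies an elementary transformation centred at an $\F_q$-point of a smooth rational fibre. The point is chosen to avoid negative sections, singular points of anticanonical curves (at most two per fibre), and singular points of the genus-one bisections in $|-K_{\overline{X}}+E|$. The last are bounded by contracting $E$ to a degree-2 surface and using the ramification curve of its anticanonical map.
\item Counting these bad points gives existence for $q\ge k_t$.
\item Types 1, 2 and 4 are then improved through their Bertini twists 97, 105 and 73. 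These are blow-ups of degree-2 del Pezzo surfaces at a rational point off the bad locus, and computer checks finish the small cases.
\end{enumerate}
Separately, your diagonal model $y^2-az^2=bx^2$ degenerates to a double line over $\overline{\F}_q(t)$ in characteristic 2. The statement nevertheless claims existence for $q=2$ (types 6, 7) and $q=4$ (types 3, 5).

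\textbf{Non-existence: the base point handles type 1 over $\F_4$ but not type 2 over $\F_2$.} The base-point idea is sound. It settles type 1 over $\F_4$ more directly than the paper does (the paper uses that the twist, type 97, needs $q\ge 5$). All five points of $\P^1(\F_4)$ carry singular fibres whose only rational point is the node. Hence the rational base point $P_B$ of $|-K_X|$ would lie on a $(-1)$-curve $E$. Then every section in $H^0(-K_X)$ restricts to a section of $\mathcal{O}_{\P^1}(1)$ on $E$ vanishing at $P_B$, so the restriction map has nonzero kernel and some anticanonical curve contains $E$. Thus $-K_X-E$ is effective and nonzero with $-K_X\cdot(-K_X-E)=0$, contradicting ampleness. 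You need to include this justification.

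Neither closed-point counting nor the base point reaches type 2 over $\F_2$. The line $\P^1_{\F_2}$ has three rational points, one quadratic point and two cubic points, so the required configuration exists. The third rational fibre is smooth and can carry $P_B$. Indeed, minimal weak del Pezzo conic bundles of type 2 exist over $\F_2$, so a finer obstruction is needed. The paper obtains it from the Bertini twist of type 105, a blow-up of $\P^2$ at closed points of degrees $1,3,4$ in general position, whose non-existence over $\F_2$ is verified by computer.
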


We obtain a similar result about weak del Pezzo surfaces.

\begin{theorem}\label{thm:weakdPexistence}
    There exists a weak del Pezzo surface of degree 1 over $\F_q$ which is a minimal standard conic bundle in the following cases:
    \begin{enumerate}[label=(\roman*)]
        \item of type 1 when $q \geq 5$; it does not exist when $q = 2, 3$;
        \item of type 3 if and only if $q \geq 3$;
        \item of type 4 when $q \geq 3$;
        \item of type 2, 5, 6 and 7 for all $q$.
    \end{enumerate}
    with the notion of type as in Definition \ref{def:typeconicbundles}.
\end{theorem}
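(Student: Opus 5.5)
This is the plan for Theorem~\ref{thm:weakdPexistence}. A minimal standard conic bundle of degree 1 over $\F_q$ has $K_X^2 = 8 - n = 1$, so it has $n = 7$ singular fibers over $\overline{\F}_q$. I would therefore model $X$ explicitly: blow up $\P^1\times\P^1$, or equivalently $\P^2$ in a point, and then blow up seven points so that the projection to $\P^1$ has seven degenerate fibers, each a pair of lines. The type in Definition~\ref{def:typeconicbundles} records how Frobenius permutes the seven points $t_i \in \P^1(\overline{\F}_q)$ under the singular fibers. It also records which of those Frobenius orbits swap the two components. Minimality amounts to every orbit of singular fibers having its components interchanged by some power of Frobenius. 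The cleaner route is to realise such $X$ as a double cover: we want a conic bundle $y^2 = a(t)x^2 + b(t)xz + c(t)z^2$ over $\P^1$, with discriminant $\Delta(t) = b^2 - 4ac$ of degree $7$ or $8$ with suitable weights. Odd characteristic and $q$ even would be handled separately with an Artin--Schreier form.

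\emph{Existence.} For each type I would take the required factorisation pattern of $\Delta$, namely the orbit sizes of Frobenius on the $t_i$. I would choose $\Delta$ as a product of irreducible polynomials of the prescribed degrees; such polynomials exist over every $\F_q$ once we allow enough degree-$d$ irreducibles, except when several distinct factors of the same small degree are needed. The condition that components be swapped on an orbit $\{t_i\}$ of length $m$ is that the relevant coefficient, for example $a(t_i)$ in the diagonal model $y^2 = a x^2 + c z^2$ with $\Delta = -4ac$, is a non-square in $\F_{q^m}$. The weak del Pezzo condition needs $-K_X$ to be nef and big. Here $K_X^2 = 1 > 0$, so the real content is that there is no $(-n)$-curve with $n\ge 3$; for a conic bundle with seven degenerate fibers this means no section of self-intersection $\le -3$. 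I would arrange this by placing the seven points in general enough position on the chosen model. For the quantitative bounds (types 1, 3, 4 when $q \ge 5, 3, 3$), the main task is to count. I need to count the admissible choices of $\Delta$, and then among the $t_i$ count the non-square conditions, using that half the elements of $\F_{q^m}^*$ are non-squares. This is done with a character-sum or explicit choice and checked to give a positive count once $q$ passes the stated bound. For types 2, 5, 6, 7, which exist for all $q$, I would write down explicit polynomials for $q = 2,3,4$ and use a uniform construction otherwise.

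\emph{Nonexistence} (type 1 for $q = 2,3$, type 3 for $q = 2$). These types force at least as many distinct $\F_q$-rational or low-degree closed points of $\P^1$ under singular fibers as exist. For instance, if type 1 requires seven $\F_q$-rational singular fibers, this is impossible when $|\P^1(\F_q)| = q+1 < 7$, i.e.\ $q \le 5$. The exact statement ($q = 2,3$ only) suggests instead a mix of points of degree 1 and 2. The pure count of closed points of degree $d$ on $\P^1(\F_q)$ should then exclude $q = 2,3$ for type 1, and a similar count should exclude $q = 2$ for type 3. I expect this counting to settle nonexistence directly. The "if and only if" for type 3 needs an example at $q = 3$ together with this exclusion at $q = 2$.

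The step I expect to be the main obstacle is verifying the weak del Pezzo condition, i.e.\ excluding curves with $-K_X\cdot C < 0$, for the small-$q$ explicit examples. At the same time the degenerate fibers must be placed at the few available closed points. When points are forced into special position, the $-2$ curves are allowed but $-3$ sections are not. I would first try the double-cover model, which forces $-K_X$ to be the pullback of an ample class on the quotient, and check by hand or by computer that no section has square $\le -3$.
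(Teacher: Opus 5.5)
\paragraph{Gaps in the existence half.} There are genuine gaps here.

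First, the model you propose cannot produce any of the surfaces in question. Take $y^2=a(t)x^2+b(t)xz+c(t)z^2$ with constant $y^2$-coefficient. The degenerate fibers are the zeros of $b^2-4ac$, which as a binary form on $\P^1$ has even degree $2\deg b$. Equivalently, they are the branch points over $\P^1$ of the bisection $\{ax^2+bxz+cz^2=0\}$, which are even in number by Riemann--Hurwitz. So every standard conic bundle of this shape has an even number of singular fibers and $K_X^2$ even, and seven singular fibers are impossible. You would need something like $d(t)y^2=a(t)x^2+c(t)z^2$ with all three degrees odd and summing to $7$. That loses the double-cover structure you wanted to exploit; the claim that $-K_X$ is then a pullback of an ample class was unjustified anyway.

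Second, your nefness test looks at the wrong curves. On a minimal degree-1 conic bundle there is never a section $S$ with $S^2\le -3$, since such a section would have $-K_{\overline{X}}\cdot S<0$. By Lemma \ref{lem:-KXnotnef}, the only curve on $\overline{X}$ with negative anticanonical degree is the $(-3)$-\emph{bisection} $A$ with $[A]=-K_X-F$. So checking sections is vacuous, and it would wrongly certify surfaces in case (1) of Proposition \ref{prop:classificationCB}, which are not weak del Pezzo.

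Beyond this, the thresholds ($q\ge 5$ for type 1, $q\ge3$ for types 3 and 4, all $q$ for types 2, 5, 6, 7) are deferred to character-sum counts and computer searches that are never carried out. Also, ``components swapped on each orbit'' only gives relative minimality; you need Proposition \ref{prop:relativelyminimal-minimal} (Iskovskikh) to get minimality of $X$.

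\paragraph{The missing idea.} No equations are needed. Rybakov's theorem (Corollary \ref{cor:rybakov}) gives a minimal degree-1 conic bundle with singular fibers over any prescribed collection of closed points of one of the seven shapes. So existence of \emph{some} minimal conic bundle of each type is pure point counting on $\P^1_{\F_q}$ (Lemma \ref{lem:smallfields}).

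The same counting gives your nonexistence claims once you use Table \ref{tab:pointsbelowsingularfibers}. Type 1 needs five $\F_q$-points of $\P^1$, and $q+1<5$ for $q=2,3$. Type 3 needs three closed points of degree $2$, and $\P^1_{\F_2}$ has only one.

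If $-K_X$ is not nef, take a smooth fiber over an $\F_q$-point and an $\F_q$-point $P$ on it not lying on $A$. This is possible because $A$ meets the fiber in two points and the fiber has $q+1\ge 3$ points.
\begin{itemize}
\item The elementary transformation at $P$ turns $A$ into a curve of square $-3+4=1$.
\item Every other bisection defined over $\F_q$ has class $-K_X-bF$ with $b\le -1$, hence square $1-4b\ge 5$, which drops by at most $4$.
\end{itemize}
So the new conic bundle is minimal of the same type, has no $(-3)$-bisection over $\F_q$, and hence $-K$ is nef (Theorem \ref{thm:(1)to(2,3)}). Since $K^2=1>0$, $-K$ is also big, so the surface is weak del Pezzo.

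The only obstruction is the absence of a smooth $\F_q$-rational fiber. This happens exactly for type 1 with $q=4$ and type 4 with $q=2$, which is where the stated bounds come from.
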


In the process of proving Theorem \ref{thm:mainthm}, we adapt a strategy laid out in \cite[\S5.2]{BFL} to obtain a solution to the inverse Galois problem for del Pezzo surfaces of degree 1 and index 8, that is, a solution for all values of $q$, for all but one type. Using the fact that the existence of a del Pezzo surface of degree 1 is equivalent to the existence of its Bertini twist, we obtain the following result.

\begin{theorem}\label{thm:index8+twist}
\begin{enumerate}
    \item Del Pezzo surfaces of degree 1 of type 8 and 91 exist over $\F_q$ if and only if $q = 16$ or $q \geq 19$.
    \item Del Pezzo surfaces of degree 1 of type 44 and 92 exist over $\F_q$ if and only if $q \geq 11$.
    \item Del Pezzo surfaces of degree 1 of type 15, 38, 69, 93, 94 and 95 exist over $\F_q$ if and only if $q \geq 7$.
    \item Del Pezzo surfaces of degree 1 of type 1 and 97 exist over $\F_q$ if and only if $q \geq 5$.
    \item Del Pezzo surfaces of degree 1 of type 40, 48, 96, 98 and 99 exist over $\F_q$ if and only if $q \geq 4$.
    \item Del Pezzo surfaces of degree 1 of type 2, 16, 19, 50, 62, 76, 100, 101, 102, 103, 104, 105 and 107 exist over $\F_q$ if and only if $q \geq 3$.
    \item Del Pezzo surfaces of degree 1 of type 20, 22, 49, 106, 109, 110, 111, 112 exist over $\F_q$ for all $q$.
\end{enumerate}
\end{theorem}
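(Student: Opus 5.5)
The plan is to treat the types of index 8 directly and then obtain the partner types in each item via the Bertini involution. A del Pezzo surface $X$ of degree 1 has index 8 precisely when it is the blow-up of $\P^2$ in a Galois-stable set of 8 geometric points in general position. The Galois orbit structure of these 8 points determines the type of $X$, which is one of 91--112 in Urabe's numbering. Conversely, the Bertini twist of $X$ has Frobenius class $-w$, where $w$ is the class of $X$. Since the twist exists over $\F_q$ if and only if $X$ does, each item reduces to the index-8 member(s) of its list. Types 1, 2, 8, 15, 16, 19, 20, 22, 38, 40, 44, 48, 49, 50, 62, 69 and 76 should each be the twist of an index-8 type listed in the same item. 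I would verify this pairing from Urabe's table: the class of $-1$ in $W(E_8)$ is central, so the pairing is well defined on conjugacy classes.

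For the index-8 types the question becomes the following. For which $q$ does there exist a Frobenius-stable configuration of 8 points in $\P^2(\overline{\F}_q)$ with prescribed orbit sizes (a partition of 8) in general position? General position here means: no 3 collinear, no 6 on a conic, and no 8 on a cubic singular at one of them. Following \cite[\S5.2]{BFL}, I would split into two ranges.

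\textbf{Large $q$.} I would count configurations. The number of closed points of degree $m$ in $\P^2$ is known exactly. From this I subtract an upper bound on the number of ``bad'' configurations lying on a line, conic or singular cubic, each bounded in terms of the number of such curves times the points on them. This gives existence once the main term dominates, yielding a threshold $q_0$ for each partition.

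\textbf{Small $q$.} For $q<q_0$ I would search exhaustively by computer over $\F_q$. Using the $\mathrm{PGL}_3(\F_q)$-action, I fix some points (for example, put a rational point or a conjugate pair in standard position) to reduce the search. Each candidate tuple is then tested for the general-position conditions.

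Nonexistence for small $q$ (for example type 91, a single orbit of 8 rational points, failing for $q\le 13$ and $q=17$) should come from the same exhaustive search showing that no valid configuration exists. For the smallest fields, a cleaner obstruction is also available: type 91 needs 8 rational points, which forces $q^2+q+1\ge 8$. Better still, for a degree 1 del Pezzo, $\#X(\F_q)=q^2+q+1+q\,\mathrm{tr}(w)\ge 0$, and this can exclude the twist for small $q$.

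The main obstacle is making the counting bound sharp enough, uniformly in $q$, that the remaining finite search is feasible. This is especially delicate for the cubic condition with 8 points, where one must count plane cubics singular at a prescribed point of the configuration. The other hard part is the honest exhaustive search near the boundary cases (e.g.\ $q=16,17,19$ for type 91). I would handle the cubic condition by bounding, for each choice of 7 points and a singular point among them, the number of eighth points on the unique such cubic by $O(q)$ points.
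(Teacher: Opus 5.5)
Your overall structure matches the paper's. You reduce to the index-8 types via the Bertini twist (Frobenius class times the central element $-1\in W(E_8)$), and your list of partners agrees with Table~\ref{tab:bertinitwists}. You then decide for which $q$ a Frobenius-stable set of 8 points in general position with the prescribed orbit sizes exists in $\P^2$, and finish small $q$ by computer.

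\textbf{Where your route differs.} The real difference is the large-$q$ existence step. The paper does not count configurations in $\P^2$; it uses the strategy that \cite[\S5.2]{BFL} actually employs.
\begin{itemize}
\item Every index-8 type in the statement except 112 contains a $(-1)$-curve defined over $\F_q$. This is the exceptional curve over a rational point, or for instance the line through a conjugate pair for $2^4$, $2\cdot 3^2$, $2^2\cdot 4$, $2\cdot 6$, or the conic through the five conjugates for $3\cdot 5$.
\item Contracting that curve gives a degree-2 del Pezzo surface $Y$ of a determined type. Its existence or non-existence over $\F_q$ is imported from \cite{trepalin, LoughranTrepalin}.
\item By Proposition~\ref{prop:STVcor14}, it then suffices that the exact count $\#Y(\F_q)=q^2+aq+1$ exceeds a bound on the rational points of the bad locus. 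That bound is the points on the 56 lines (read off from orbit intersection patterns) plus the points on the ramification curve (Hasse--Weil in genus 3, or at most two rational curves in characteristic 2).
\end{itemize}
This gives small thresholds (e.g.\ $q\ge 67$ for type 91), yields many non-existence results for free, and leaves only a short Magma search. Type 112 is quoted from \cite{BFL}, and 108 stays open precisely because it has no rational $(-1)$-curve. Your direct count is self-contained and could in principle reach 108 and 112 as well, at the cost of much larger thresholds and more computation.

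\textbf{A step that fails as described.} You bound bad configurations by ``number of such curves times points on them''. That works when some point of the configuration is free, say a rational point chosen last. It does not work for a large Galois orbit, as in types 109--112, which you must also settle for every large $q$.

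There the line, conic or singular cubic through some conjugates of a degree-$m$ point is generally defined only over $\F_{q^m}$ or a large subfield, and counting such curves overshoots the main term. For instance, six of the eight conjugates of a degree-8 point can lie on a conic defined over $\F_{q^4}$. There are about $q^{20}$ such conics, against roughly $q^{16}/8$ points of degree 8.

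The fix is to parametrize by the point rather than the curve. In an $\F_q$-basis of $\F_{q^m}$, all conjugates depend linearly on the $2m$ coordinates of $x$. So each general-position condition is a polynomial of bounded degree in these coordinates. It is nonzero because general position is attainable over $\overline{\F}_q$. A Schwartz--Zippel or Lang--Weil bound then gives $O(q^{2m-1})$ bad points.

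Below the resulting threshold, existence only needs a successful (not exhaustive) search. Exhaustive search is needed only in the non-existence range.
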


\subsection{Methods in relation to the literature}

To prove Theorem \ref{thm:mainthm}, we use two distinct strategies. For the first strategy, which is treated in Section \ref{sec:method1}, we use the existence of conic bundles of a certain type to construct corresponding del Pezzo surfaces with conic bundle structure. This method is modeled after \cite{trepalindeg2}, which deals with the analogous problem for del Pezzo surfaces of degree 2. We prove Proposition \ref{prop:classificationCB}, which sorts minimal standard conic bundles of degree 1 into five possible cases. This is analogous to \cite[Proposition 2.3]{trepalindeg2} for degree 2. Each of the seven types of minimal del Pezzo surfaces of degree 1 with conic bundle structure is uniquely determined by the Galois action on the singular fibers of the conic bundle. We use \cite[Theorem 2.11]{rybakov}, which guarantees the existence of a minimal conic bundle with the corresponding Galois action for each type (except for some very small values of $q$, see Lemma \ref{lem:smallfields}). We then construct birational maps to transform it into a del Pezzo surface of the corresponding type.

The second strategy (Section \ref{sec:method2}) uses that for any del Pezzo surface of degree 1, there exists a quadratic twist, called the Bertini twist, which is again a del Pezzo surface of degree 1. For surfaces over $\F_q$ of types 1, 2 and 4, the corresponding twist is no longer a minimal surface, and in fact it contains an exceptional curve defined over $\F_q$, which can be contracted to obtain a del Pezzo surface of degree 2 of a specific type. In \cite[Theorem 1.2]{LoughranTrepalin}, we find for which $q$ such a del Pezzo surface of degree 2 exists. If such a surface $Y$ exists, we show for large enough $q$ that we can find an $\F_q$-point $P$ on $Y$ such that the blow-up of $Y$ in $P$ results in a del Pezzo surface of degree 1 of the required type. For the twist of type 1, this is already done in \cite{BFL}, and we adapt their strategy for all other types of del Pezzo surfaces which contain an exceptional curve defined over $\F_q$, which includes in particular the twists of type 2 and 4. For small values of $q$ and index 8, we also adapt Magma code from \cite{BFL} to computationally confirm existence or non-existence, completing the solution in those cases. 

\subsection{Organization of the paper}

We start this paper by giving a background of the relevant theory of del Pezzo surfaces (\S\ref{sec:delPezzosurfaces}) and conic bundles (\S\ref{sec:conicbundles}). In \S\ref{sec:autpicdelPezzo}, we introduce the notion of the type of a del Pezzo surface over $\F_q$. We also define the index, trace and Picard rank of a del Pezzo surface, properties which are uniquely determined by its type. In \S\ref{sec:autpicconicbundle}, we recall an explicit description of the Picard group of a standard conic bundle over an algebraically closed field, and describe the Galois action on this group. We explain a result by Rybakov \cite{rybakov} about the existence of minimal standard conic bundles in \S\ref{sec:rybakov}.

In Section \ref{sec:typesCB}, we use Theorem \ref{thm:rybakov} to obtain an explicit characterization of minimal standard conic bundles over $\F_q$ in the degree 1 case, based on the Galois action on their singular fibers. In \S\ref{sec:Galoisorbits} and \S\ref{sec:Galoisorbitsfibers} we link this characterization to the classification of minimal del Pezzo surfaces of degree 1 with conic bundles, and demonstrate a correspondence between these two classifications. This allows us in \S\ref{sec:conjugacyclassfrob} to give, for a surface $X$ of each type, an explicit element in the conjugacy class of the action of the Frobenius automorphism on $\Pic(\overline{X})$. This gives us a first result about the non-existence of certain types of del Pezzo surfaces of degree 1 (\S\ref{sec:nonexistence}). 

In Section \ref{sec:method1}, we get into the first method used to partially prove Theorem \ref{thm:mainthm}. To construct del Pezzo surfaces of degree 1, we use elementary transformations of conic bundles, which are introduced in \S\ref{sec:elementarytransformations}. We divide minimal standard conic bundles of degree 1 in 5 different cases, numbered (1), (2a), (2b), (2c) and (3), depending on which curves with negative self-intersection they contain (\S\ref{sec:classificationCB}, Proposition \ref{prop:classificationCB}). Case (3) is the case where the surface is a del Pezzo surface. We treat the other cases one by one in \S\ref{sec:case(1)} to \S\ref{sec:case(2c)}. For conic bundle surfaces $X$ satisfying each of the cases, we find an explicit description of the classes in $\Pic(\overline{X})$ of curves with negative self-intersection. We determine lower bounds for $q$ above which there is a point on a smooth fiber over $\F_q$ that allows an elementary transformation resulting in a conic bundle satisfying a different case in Proposition \ref{prop:classificationCB}, eventually transforming the surface into a del Pezzo surface. This results in Theorem \ref{thm:mainthmpart1}, which is a partial result to Theorem \ref{thm:mainthm}, and also allows us to prove Theorem \ref{thm:weakdPexistence}.

In Section \ref{sec:method2}, we lay out a different proof strategy, where we show the existence of del Pezzo surfaces of degree 1 over $\F_q$ of types that contain a $(-1)$-curve defined over $\F_q$. The types 1, 2 and 4 have quadratic twists of this form, which allows us to improve the result of Theorem \ref{thm:mainthmpart1} and conclude our main result in Theorem \ref{thm:mainthm}. Using this method, we also get a complete solution for the inverse Galois problem for all but one of the types of index 8. This results in Theorem \ref{thm:index8+twist}.

\subsection*{Acknowledgements}

I want to thank Cec\'{i}lia Salgado for many useful discussions, and in particular for introducing me to the topic of del Pezzo surfaces of degree 1 over finite fields. I also thank Ander Arriola Corpion for a 
discussion on the details of Lemma \ref{lem:(2)singpointsanticanonical}.

\section{Background} \label{sec:background}

We will call a smooth, projective, geometrically integral surface a \term{nice} surface. A nice surface $X$ defined over a field $k$ is called \term{$k$-minimal} (or just \term{minimal} if the field is clear from the context) if any birational $k$-morphism from $X$ to a smooth surface $X'$ is an isomorphism. Let $\overline{k}$ be an algebraic closure of $k$. We say a surface $X/k$ is \term{(geometrically) rational} if it is birational to $\P^2$ over $\overline{k}$. 
Minimal rational surfaces can be classified as follows.

\begin{theorem}[{\cite[Theorem 1]{iskovskih}}]\label{thm:iskovskih1}
    Let $X$ be a minimal rational surface over a field $k$. Then $X$ satisfies one of the following:
    \begin{enumerate}
    \item $\Pic(X) \cong \Z$ and $X$ is a del Pezzo surface (see \S\ref{sec:delPezzosurfaces});
    \item $\Pic(X) \cong \Z^2$ and it admits the structure of a standard conic bundle (see \S\ref{sec:conicbundles}).
\end{enumerate} 
\end{theorem}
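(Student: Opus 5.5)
The plan is to run the minimal model program for surfaces over the arbitrary field $k$, working throughout with $\Pic(X)_{\mathbb R} = (\Pic(\overline{X})_{\mathbb R})^{G}$, where $G=\Gal(\overline{k}/k)$ (I take $k$ perfect, as in the rest of the paper). The alternative of Iskovskikh's original adjunction-based argument is not what I will follow.

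\emph{Step 1: $K_X$ is not nef.} Since $\overline{X}$ is rational, $h^0(\overline{X}, mK_{\overline{X}})=0$ for all $m\ge 1$ and $\chi(\mathcal O_{\overline{X}})=1$. Riemann--Roch together with Serre duality then gives
\[
h^0(-(m-1)K_{\overline{X}}) \;\ge\; \chi(mK_{\overline{X}}) \;=\; 1+\tfrac{m(m-1)}{2}K_{\overline{X}}^2 .
\]
Suppose $K_X$ were nef. Then $K_X^2\ge 0$, so $-K_{\overline{X}}$ (taking $m=2$) is effective. It is also nonzero, since otherwise $h^0(2K_{\overline{X}})=h^0(\mathcal O)=1$. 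Hence $K_X\cdot H<0$ for an ample $H$, which is a contradiction.

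\emph{Step 2: an extremal ray.} By the cone theorem, $\overline{NE}(\overline{X})$ is the closure of the $K$-nonnegative part plus countably many rays spanned by rational curves, which are locally discrete in the half-space $K<0$. The group $G$ acts on this cone through a finite quotient, because it acts on $\Pic(\overline{X})$ through a finite group. Averaging over $G$, the cone $\overline{NE}(X)=\overline{NE}(\overline{X})^G$ has the same shape, and by Step 1 it has a $K_X$-negative extremal ray $R$. Such a ray is spanned by the class of a $G$-orbit sum $C$ of rational curves.

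\emph{Step 3: case analysis on $C^2$.} I distinguish three cases.
\begin{enumerate}
\item If $C^2<0$, then extremality and adjunction force the components of $C$ to be pairwise disjoint $(-1)$-curves. This is the standard argument: components of a negative extremal class with $K\cdot C<0$ have $C_i^2=K\cdot C_i=-1$, and distinct components in the orbit cannot meet, or the ray would not be extremal. The orbit is then contracted over $k$ by Castelnuovo's criterion applied Galois-equivariantly, which contradicts minimality.
\item If $C^2=0$, the linear system $|nC|$ is base point free for suitable $n$, by Riemann--Roch and the fact that $C$ is nef. It gives a morphism $X\to B$ to a curve whose fibres have $-K\cdot F=2$, $F^2=0$, so $X\to B$ is a conic bundle. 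Extremality of $R$ gives $\rho(X)=2$, which is exactly the relative minimality characterising a standard conic bundle in \S\ref{sec:conicbundles}.
\item If $C^2>0$, then $R$ lies in the interior of the positive cone, so $\rho(X)=1$ and $\overline{NE}(X)=R$. Since $-K_X$ is positive on $R$, Kleiman's criterion makes $-K_X$ ample, and $X$ is a del Pezzo surface with $\Pic(X)\cong\Z$. Here I use that $\Pic(X)$ is torsion-free because $\Pic(\overline{X})$ is.
\end{enumerate}

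The main obstacle is Step 2: making the cone theorem and the contraction genuinely work over a non-closed $k$. I will handle it through Galois invariance of the cone and of the extremal rays, rather than appealing to a general relative MMP. The delicate point in case (1) is showing that the whole $G$-orbit consists of disjoint curves, so that the contraction is defined over $k$.
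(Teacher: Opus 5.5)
The paper contains no proof of this statement: it quotes it from \cite{iskovskih} and uses it as a black box. Your route is the standard Mori-theoretic proof, and it is sound in outline. You apply the cone theorem on $\overline{X}$, push it down by averaging to $\overline{NE}(X)=\overline{NE}(\overline{X})\cap N_1(\overline{X})_{\mathbb R}^{G}$, and then split on $C^2<0$, $C^2=0$, $C^2>0$ for a $K_X$-negative extremal ray. This makes the trichotomy conceptual, at the cost of importing the cone theorem. You only treat perfect $k$, while the statement allows an arbitrary field; that is harmless here, since the paper only uses finite fields.

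Steps 1 and 2 and case (3) are fine. In case (1), however, disjointness does not come from extremality but from $C^2<0$ together with transitivity of the Galois action on the components. All the numbers $C_i\cdot C$ are equal. If $C_1$ met another component, you would get $C_1\cdot C\ge -1+1=0$, hence $C^2=n\,(C_1\cdot C)\ge 0$.

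Case (2) is where the proposal asserts rather than proves.

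\emph{The fibration.} Riemann--Roch alone only gives $h^0(nC)\ge 1-\tfrac{n}{2}K_X\cdot C\to\infty$; to get a morphism you need the moving-part argument. Write $|nC|=|M|+Z$. Nefness of $C$ and $C^2=0$ give $C\cdot M=C\cdot Z=0$, so $0\le M^2=-M\cdot Z\le 0$. Hence $|M|$ is base point free, and by the Hodge index theorem its Stein factorisation $f\colon X\to B$ has fibres numerically proportional to $C$.

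\emph{Standardness.} More importantly, $\rho(X)=2$ is not \emph{exactly} standardness. In this paper a conic bundle is standard when every singular geometric fibre is two $(-1)$-curves meeting transversally in one point. Relative minimality (Definition~\ref{def:relativelyminimal}) is only defined for conic bundles already known to be standard. You have to derive the fibre structure from extremality, as follows.
\begin{itemize}
\item Let $f^*(b)=\sum a_iD_i$ over a closed point $b$, with the $D_i$ irreducible over $k$. Extremality puts each $[D_i]$ in $R$, so $D_i^2=0$, and Zariski's lemma then rules out more than one $D_i$.
\item Hence the stabiliser of a geometric point over $b$ acts transitively on the components of the geometric fibre $F=m\sum_{j=1}^{r}L_j$. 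So all the values $-K\cdot L_j$ are equal and positive.
\item The equation $2=-K\cdot F=mr(-K\cdot L_1)$ leaves three options. $F$ is a smooth conic. Or $F=2L$, which is impossible since then $L^2+K\cdot L=-1$ is odd. Or $F=L_1+L_2$, where Zariski's lemma and adjunction force $L_1^2=L_2^2=-1$ and $L_1\cdot L_2=1$.
\end{itemize}

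The same $k$-irreducibility of fibres, together with $\Pic$ of the generic fibre being $\Z$, is what actually gives $\rho(X)=2$. Injectivity of $\Pic(X)\to\Pic(\overline{X})$ then gives $\Pic(X)\cong\Z^2$. With these additions your argument is complete.
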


Any nice surface defined over a field $k$ is birational over $k$ to a $k$-minimal surface \cite[Proposition II.16]{beauville}. Several arithmetic properties of varieties, such as unirationality and the Hilbert property \cite[Section 3.1]{SerreTopics}, are birational invariants. In the case of rational surfaces, Theorem \ref{thm:iskovskih1} implies that it is enough to consider questions about such properties for minimal del Pezzo surfaces and minimal conic bundles.

Recall that any birational morphism of smooth projective surfaces over $k$ can be decomposed as a finite sequence of blow-ups in closed points \cite[Theorem 21.4]{manin}. Suppose $k$ is algebraically closed, and let $\phi \colon X' \ra X$ be a blow-up of $X$ in a closed point $P$. Then $\phi^{-1}(P)$ is a curve $E$ on $X'$ satisfying $E^2 = -1$ and $E \cong \P^1$.

\begin{definition}
    Let $X$ be a nice surface defined over a field $k$, and let $\overline{X} \colonequals X \times_{\Spec k} \Spec \overline{k}$. A class $D \in \Pic(\overline{X})$ is called an \term{exceptional class} if $D^2 = -1 = D \cdot K_{\overline{X}} = -1$. An irreducible curve $C$ on $\overline{X}$ is called an \term{exceptional curve}, a \term{$(-1)$-curve}, or simply a \term{line}, if its class in $\Pic(\overline{X})$ is an exceptional class.
\end{definition}

By the adjunction formula \cite[V, Exercise 1.3]{hartshorne}, a curve $C$ is a $(-1)$-curve if and only if $C \cong \P^1$ and it satisfies $C^2 = -1$.

\subsection{Del Pezzo surfaces}\label{sec:delPezzosurfaces}
Recall that a del Pezzo surface $X$ of degree $d$ over a field $k$ is a nice surface with an ample anticanonical divisor $-K_X$ such that $K_X^2 = d$. Over an algebraically closed field, del Pezzo surfaces can be described as follows.

\begin{theorem}[{\cite[Theorem 24.3, Theorem 24.4, Remark 26.3]{manin}}]\label{thm:delpezzoclassification}
    Let $X$ be a del Pezzo surface over an algebraically closed field $k$, and let $d$ be its degree. Then $1 \leq d \leq 9$, and we have the following characterization.
    \begin{enumerate}
        \item[(i)] If $d = 9$, then $X \cong \P^2$.
        \item[(ii)] If $d = 8$, then $X \cong \P^1 \times \P^1$, or $X$ is a blow-up of $\P^2$ in a closed point.
        \item[(iii)] If $1 \leq d \leq 7$, then $X$ is a blow-up of $\P^2$ in $9-d$ closed points in \term{general position} (i.e. no three points lie on a line, no six points lie on a conic, and no eight points lie on a singular cubic with one of them being the singularity).
    \end{enumerate}
    Conversely, any surface described in (i), (ii), (iii) for $d \geq 3$ is a del Pezzo surface.
\end{theorem}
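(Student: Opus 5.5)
The statement is the classical classification of del Pezzo surfaces over an algebraically closed field. I would prove it by analysing extremal contractions driven by the ampleness of $-K_X$, together with Riemann--Roch and Noether's formula.

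The first step is the bound on $d$. Since $-K_X$ is ample, $d=K_X^2>0$, so $d \geq 1$. Ampleness also forces $H^2(X,\mathcal O_X)=H^0(X,K_X)^\vee=0$, and more generally $h^0(mK_X)=0$ for $m>0$. Kodaira vanishing (or, in positive characteristic, the argument in \cite{manin} via Riemann--Roch) gives $H^1(X,\mathcal O_X)=0$. Castelnuovo's criterion then shows $X$ is rational.

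Next I would show that $\rho(X) \le 9$ and that $X$ is either minimal or contains a $(-1)$-curve. Every irreducible curve $C$ satisfies $-K_X\cdot C>0$, so adjunction gives $C^2 \ge -1$, with equality exactly for $(-1)$-curves. A minimal rational surface over $\overline{k}$ is $\P^2$ or a Hirzebruch surface $\mathbb F_n$. Among these, only $\P^2$, $\mathbb F_0=\P^1\times\P^1$ and $\mathbb F_1$ have ample anticanonical class, since for $n\ge 2$ the negative section $\sigma$ has $-K\cdot\sigma=2-n\le 0$. Contracting a $(-1)$-curve $E$ on a del Pezzo surface yields a surface $X'$ with $K_{X'}^2=d+1$. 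One checks via the Nakai--Moishezon criterion that $-K_{X'}$ is again ample: for a curve $C'$ on $X'$ with strict transform $C$, we have $-K_{X'}\cdot C' = -K_X\cdot C + E\cdot C>0$, and $K_{X'}^2>0$. Iterating the contraction, $X$ becomes a blow-up of $\P^2$ or $\P^1\times\P^1$ in $9-d$ points. Since $\P^1\times\P^1$ blown up in one point equals $\P^2$ blown up in two points, every case with $d\le 7$ reduces to $\P^2$. This also gives $d\le 9$, and yields (i) and (ii).

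For (iii) I would verify the general position conditions through a curve $C$ with $-K_X\cdot C\le 0$, contradicting ampleness. If three points lie on a line, its strict transform has class $L-E_i-E_j-E_k$, and $-K\cdot(L-E_i-E_j-E_k)=0$. If six points lie on a conic, the class is $2L-\sum_{6}E_i$, with $-K$-degree $6-6=0$. If eight points lie on a cubic singular at one of them, the class is $3L-2E_1-\sum_{i=2}^8E_i$, with degree $9-2-7=0$. One must also rule out infinitely near points: a blown-up point lying on an earlier exceptional curve produces a $(-2)$-curve $E_i-E_j$. For the converse with $d\ge 3$, $-K$ is represented by cubics through the points. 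Nakai--Moishezon reduces ampleness to showing $-K\cdot C>0$ for every irreducible curve $C$. Since $-K$ is base-point free in this range, nefness is clear, and the remaining task is to show that no curve has $-K\cdot C=0$. Such a curve would be a $(-2)$-curve, and an enumeration of classes $aL-\sum b_iE_i$ with $3a=\sum b_i$ and $a^2-\sum b_i^2=-2$, for $\le 6$ points, gives only the forbidden line and conic configurations.

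I expect this last enumeration, together with the base-point freeness of $|-K_X|$ for $d\ge3$, to be the main obstacle. I would handle it by Cauchy--Schwarz: we have $\sum b_i^2\ge (\sum b_i)^2/n$, so $a^2+2\ge 9a^2/n$. For $n\le 6$ this forces $a\le 2$, which leaves only the classes listed above.
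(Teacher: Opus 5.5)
Your forward direction works. Adjunction gives $C^2\ge -1$, Nakai--Moishezon shows that contracting a $(-1)$-curve keeps $-K$ ample, and you reduce to $\P^2$ or $\P^1\times\P^1$; note that on $\P^1\times\P^1$ you blow up $8-d$ points, not $9-d$. Excluding infinitely near points and special position via curves of $-K$-degree $\le 0$ is also correct, with reducible conics and cubics reducing to the line case. The paper gives no proof and only cites \cite{manin}, so there is no argument to compare routes with. There is, however, a gap in your converse.

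\textbf{The gap: nefness in the converse.} You deduce nefness of $-K_X$ from the claim that $|-K_X|$ is base-point free when $r\le 6$ points are in general position, but you never prove that claim. It is not easier than what you want: it is part of the very-ampleness of cubics through the points, and it uses the general-position hypothesis in the same way. Your Cauchy--Schwarz enumeration only handles curves with $-K_X\cdot C=0$, so curves with $-K_X\cdot C<0$ are never excluded.

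The repair is to run your enumeration for $-K_X\cdot C\le 0$, using $p_a(C)\ge 0$ instead of the $(-2)$-curve condition. Take an irreducible $C\neq E_i$ of class $aL-\sum b_iE_i$, with $a\ge 1$, $b_i\ge 0$ and $s=\sum b_i$.
\begin{itemize}
\item Adjunction gives $\sum b_i^2\le a^2-3a+2+s$.
\item If $s\ge 3a$, combine this with $s^2/6\le\sum b_i^2$ and the monotonicity of $x\mapsto x^2/6-x$ for $x\ge 3$. This gives $\tfrac32 a^2-3a\le a^2-3a+2$, hence $a\le 2$.
\item An irreducible plane curve of degree $\le 2$ is smooth, so $b_i\in\{0,1\}$. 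Then $C$ is a line through at least three of the points or a conic through at least six, which general position excludes.
\end{itemize}
Together with $-K_X\cdot E_i=1$ and $K_X^2=9-r>0$, Nakai--Moishezon gives ampleness directly, with no base-point-freeness needed.

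\textbf{A smaller point: rationality in positive characteristic.} Kodaira vanishing is unavailable in characteristic $p$, which is the case this paper needs (over $\overline{\F}_q$). A bare pointer to ``a Riemann--Roch argument'' does not produce $h^1(X,\mathcal O_X)=0$. A characteristic-free route goes as follows.
\begin{itemize}
\item $h^0(mK_X)=0$ for all $m>0$, so $X$ is birational to $C\times\P^1$ for some curve $C$, by the Zariski and Bombieri--Mumford classification.
\item If $g(C)\ge 1$, every smooth model is an iterated blow-up of a geometrically ruled surface over $C$. Hence $K_X^2\le 8(1-g(C))\le 0$, contradicting $d>0$.
\end{itemize}
This also fits into your contraction argument: the minimal model you reach is still del Pezzo, and among minimal surfaces of Kodaira dimension $-\infty$ only $\P^2$ and $\mathbb F_0$ have ample anticanonical class.
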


\begin{comment}
what if $d = 2, 1$?
\end{comment}
If $X$ is a nice surface over $k$ such that $-K_X$ is big and nef, we say that $X$ is a \term{weak del Pezzo surface} (see for example \cite[Th\'{e}or\`{e}me 1]{demazure} for equivalent definitions). We also have the following geometric characterization.

\begin{theorem}[{\cite[Corollary 8.1.17]{dolgachev}}]\label{thm:weakdelpezzoclassification}
    Let $X$ be a weak del Pezzo surface over an algebraically closed field $k$ of degree $3 \leq d \leq 6$. Then $X$ is a blow-up of $\P^2$ in $9-d$ closed points in \term{almost general position}. That is, $X$ can be obtained by a sequence of blow-ups
    \[X_s \ra \cdots \ra X_1 \ra X_0 \ra \P^2,\]
    for some $s < 9-d$, where
    \begin{itemize}
        \item $X_0 \ra \P^2$ is a blow-up of $\P^2$ in $9-d-s$ closed points such that no four points lie on a line, and no seven points lie on a conic;
        \item $X_i \ra X_{i-1}$ for $i = 0, \ldots, s-1$ is a blow-up of one closed point which lies on a $(-1)$-curve, avoiding all curves of self-intersection $-2$.
    \end{itemize}
\end{theorem}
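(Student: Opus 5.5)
The statement is the classical characterization of weak del Pezzo surfaces of degree $3\le d\le 6$ from \cite[Corollary 8.1.17]{dolgachev}. I would prove it by the usual argument: first show that $\overline{X}$ dominates $\P^2$ through a sequence of blow-ups, and then read off the positional conditions from the nefness of $-K_X$. Since $-K_X$ is big and nef and $K_X^2=d>0$, Riemann--Roch gives $h^0(-nK_X)>0$ for large $n$. Hence $\overline{X}$ is not ruled over a curve of positive genus, and $X$ is rational.

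\textbf{Reduction to $\P^2$.} Contract $(-1)$-curves on $\overline{X}$ one at a time. Each contraction keeps $-K$ nef: if $E$ is contracted and $C'$ is the image of a curve $C$, then $-K_{X'}\cdot C' = -K\cdot C + (E\cdot C)\ge 0$. It also raises $K^2$ by one. Eventually we reach a minimal rational surface with $-K$ nef and big. Such a surface is $\P^2$, or $\mathbb{F}_n$ with $n\le 2$, since on $\mathbb{F}_n$ the negative section $\sigma$ satisfies $-K\cdot\sigma = 2-n$. If we reach $\mathbb{F}_0$, $\mathbb{F}_1$ or $\mathbb{F}_2$, a standard elementary-transformation argument lets us reorder the blow-ups so that we pass through $\mathbb{F}_1$ and hence through $\P^2$. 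This uses that $d\le 7$, so at least one point is blown up. The result is a factorization
\[
\overline{X}=Y_{9-d}\ra\cdots\ra Y_0=\P^2,
\]
where each map blows up a single point. Some of these points may be infinitely near.

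\textbf{Positional conditions.} Next I would translate nefness of $-K$ into conditions on the points. Suppose a point lying on a $(-2)$-curve of the intermediate surface were blown up. The strict transform of that curve would have self-intersection $-3$, and adjunction would force $-K\cdot C=-1<0$, a contradiction. So at every step the blown-up point avoids $(-2)$-curves. In particular, an infinitely near point may lie on the exceptional $(-1)$-curve just created, but not on any $(-2)$-curve. Four collinear points, or seven points on a conic, among the proper points of $\P^2$ would give a curve class $C$ with $-K\cdot C<0$: the line has class $h-e_1-\cdots-e_4$ with $-K\cdot C = 3-4$, and the conic has class $2h-\sum_{i=1}^7 e_i$ with $-K\cdot C=6-7$. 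This again contradicts nefness.

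\textbf{Ordering the blow-ups.} Finally, reorder the sequence so that all blow-ups of proper points of $\P^2$ come first, giving $X_0$, followed by the $s$ blow-ups of infinitely near points. Blow-ups at distinct points commute, so this reordering is possible. Then $s<9-d$, because the first point blown up is always a proper point.

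\textbf{Main obstacle.} I expect the hardest step to be the reduction from $\mathbb{F}_2$ (or $\mathbb{F}_0$) to $\P^2$, while keeping the almost-general-position conditions intact. My first attempt would be to do an elementary transformation at a blown-up point lying off the negative section. This passes to $\mathbb{F}_1$ without disturbing the nefness already established.
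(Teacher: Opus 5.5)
The paper does not prove this statement itself; it quotes it from Dolgachev. Your outline is the standard argument behind that reference: contract $(-1)$-curves while keeping $-K$ nef with $K^2>0$, land on $\P^2$, $\mathbb{F}_0$ or $\mathbb{F}_2$, pass to $\P^2$, then read off the positional conditions from nefness via adjunction. It works, except for the rationality step, which is a non sequitur as written.

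The fact that $h^0(-nK_X)>0$ does not rule out irrational ruled surfaces. For instance, on $E\times\P^1$ with $E$ an elliptic curve, $-K$ is the class of $2(E\times\{\mathrm{pt}\})$, which is effective and nef. What you need is $K_X^2>0$ together with nefness. These give $h^0(nK_X)=0$ for all $n\ge 1$, since an effective $D\in|nK_X|$ would satisfy $0\le -K_X\cdot D=-nK_X^2<0$. Hence $X$ is ruled over a curve of some genus $g$. Such a surface has $K^2\le 8(1-g)$, so positivity of $K_X^2$ forces $g=0$. Your contraction step preserves both nefness and $K^2>0$, so you may equally run this argument on the minimal model you reach.

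The step you single out as the main obstacle is actually immediate. On $\mathbb{F}_2$, the first blown-up point $p$ cannot lie on the negative section, by your own $(-2)$-curve argument applied to that section. On $\mathbb{F}_0$, any point will do. In both cases, contracting the strict transform of a fiber through $p$ on $\mathrm{Bl}_p\mathbb{F}_n$ yields $\mathbb{F}_1$. So $X\to\mathrm{Bl}_p\mathbb{F}_n\to\mathbb{F}_1\to\P^2$ is a birational morphism, and no reordering is required.

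Two small precisions:
\begin{enumerate}
\item In the $(-2)$-curve step, the strict transform on $X$ has self-intersection at most $-3$, since later blow-ups can lower it further. This still gives $-K_X\cdot\tilde C\le -1$ because $p_a(\tilde C)\ge 0$.
\item After your reordering, you still need that each infinitely near point lies on a $(-1)$-curve. This holds because the point lies on some exceptional component, the intermediate surfaces contain no curves of self-intersection $\le -3$ (their strict transforms would violate nefness on $X$), and $(-2)$-curves are avoided.
\end{enumerate}
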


In particular, from the descriptions in Theorem \ref{thm:delpezzoclassification} and Theorem \ref{thm:weakdelpezzoclassification}, (weak) del Pezzo surfaces are rational surfaces.

\subsection{Conic bundles}\label{sec:conicbundles}
\begin{definition}
    A nice surface $X$ over a field $k$ is called a \term{conic bundle} over $k$ (or we say $X$ has a conic bundle structure over $k$) if there exists a surjective $k$-morphism $f \colon X \ra B$ where $B$ is a smooth curve and the generic fiber of $f$ is a smooth, irreducible curve of genus 0.
    A conic bundle is called \term{standard} if every singular geometric fiber is a pair of $(-1)$-curves intersecting transversally in one point. 
\end{definition}

\begin{comment}
\begin{proposition}[{\cite[Proposition I.8(i)]{beauville}}]
    Let $f \colon X \ra B$ be a conic bundle over a field $k$. Then the fibers of $f$ above rational points on $B$ are linearly equivalent divisors of self-intersection $0$.
\end{proposition}

\begin{proof}
    This is shown in \cite[Proposition I.8(i)]{beauville} for $k$ algebraically closed. The result for arbitrary $k$ follows by considering the base change $\overline{f} \colon \overline{X} \ra \overline{B}$ to $\overline{k}$. 
\end{proof}
\end{comment}

\begin{remark}\label{rem:rationalconicbundle}
     Let $f \colon X \ra B$ be a conic bundle over a field $k$. If $X$ is a geometrically rational surface, then $B$ has genus 0 by L\"{u}roth's Theorem \cite{luroth}. If moreover $X(k) \neq \emptyset$, then $B \cong \P^1_{k}$. Conversely, if $B$ has genus 0, then $X$ is a geometrically rational surface (Noether's Lemma, see \cite[Proposition 1(b)]{iskovskih}). In this case, we say $X$ is a \term{rational conic bundle}.
\end{remark}

\begin{comment}
\begin{proposition}\label{prop:rationalconicbundle}
    Let $f \colon X \ra B$ be a conic bundle over a field $k$. If $X$ is a geometrically rational surface, then $B$ has genus 0. If moreover $X(k) \neq \emptyset$, then $B \cong \P^1_{k}$. Conversely, if $B$ has genus 0, then $X$ is a geometrically rational surface. In this case, we say $X$ is a \term{rational conic bundle}.
\end{proposition}

\begin{proof}
    The first statement follows from the fact that if $B$ is unirational, it is rational by L\"{u}roth's Theorem \cite{luroth}. If $X(k) \neq \emptyset$, then $B(k) \neq \emptyset$. Then $B$ is a curve of genus 0 with a rational point, and thus isomorphic to $\P^1$ over $k$. The converse statement follows from Noether's Lemma (see \cite[Proposition 1(b)]{iskovskih}).
\end{proof}
\end{comment}

\begin{definition}
    Let $X$ be a nice surface defined over a field $k$, and let $f \colon X \ra B$ and $f' \colon X \ra B'$ be two conic bundle structures on $X$. We say that $f$ and $f'$ are \term{distinct} if there is no isomorphism $\phi \colon B \ra B'$ satisfying $\phi \circ f = f'$. If $B \cong B' \cong \P^1$, this is equivalent to saying $f$ and $f'$ are distinct if the fibers of ${f}$ and $f'$ have different classes in $\Pic(X)$ \cite[II, Theorem 7.1, Example 7.1.1]{hartshorne}.
\end{definition}


We recall the following theorem.

\begin{theorem}[{\cite[Theorem 5]{iskovskih}}]\label{thm:twoconicbundles}
    Let $X$ be a minimal del Pezzo surface of degree $d = 1, 2$ or $4$ over a field $k$, such that $\Pic(X) \cong \Z^2$. Then $X$ has precisely two distinct conic bundle structures.
\end{theorem}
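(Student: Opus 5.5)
The statement is Iskovskikh's \cite[Theorem 5]{iskovskih}. I would argue on $\Pic(\overline{X})$ with its intersection form and the Galois action, and find the two conic bundle structures as the two boundary rays of the nef cone of $X$.

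\textbf{Step 1: setup.} Since $X$ is minimal with $\Pic(X)\cong\Z^2$, Theorem \ref{thm:iskovskih1} gives at least one standard conic bundle structure $f\colon X\to B$. Let $F$ be its fiber class, so $F^2=0$ and $-K_X\cdot F=2$ by adjunction. Set $\Pic(X)\otimes\mathbb{Q}=\mathbb{Q}K_X\oplus\mathbb{Q}F$. This form is indefinite of signature $(1,1)$, because $K_X^2=d>0$. So the isotropic classes lie on exactly two lines through the origin.

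\textbf{Step 2: find the second isotropic class.} Solve $(aK_X+bF)^2=0$:
\[
a^2d+2ab(K_X\cdot F)=a^2d-4ab=0 .
\]
The solutions are $a=0$, giving $F$, and $b=ad/4$. For an integral primitive class in the positive cone, take
\[
F'=-\tfrac{4}{d}K_X-F .
\]
This gives $F'=-4K_X-F$ for $d=1$, $F'=-2K_X-F$ for $d=2$, and $F'=-K_X-F$ for $d=4$. These are exactly the degrees where $4/d$ is an integer, which is why the theorem is restricted to $d\in\{1,2,4\}$. In each case $F'^2=0$ and $-K_X\cdot F'=2$.

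\textbf{Step 3: show $F'$ is a conic bundle class (main obstacle).} We must show $|F'|$ is a base-point-free pencil with connected genus-$0$ fibers. I would work over $\overline{k}$ with $\overline{X}$ a blow-up of $\P^2$ in $9-d$ points, as in Theorem \ref{thm:delpezzoclassification}.

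1. Riemann--Roch on the rational surface gives $h^0(F')\ge 1+\tfrac12(F'^2-K_X\cdot F')=2$. Higher cohomology vanishes by Kawamata--Viehweg, since $F'-K_X$ is ample when $F'$ is nef.
2. The real content is nefness of $F'$. On a del Pezzo surface the Mori cone is generated by $(-1)$-curves, so it suffices to check $F'\cdot E\ge 0$ for every $(-1)$-curve $E$.
3. Use the reflection $\sigma$ of $\Pic(\overline{X})$ fixing $K$ with $\sigma(F)=F'$. For $d=1$ this is the element $v\mapsto v-\langle v,\alpha\rangle\alpha$ in $W(E_8)$, or a Bertini/Geiser-type involution composed appropriately, where $\alpha=F-F'$ lies in $K^\perp$ up to scaling.
4. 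Geometrically, the Weyl group permutes conic classes (classes with $D^2=0$, $-K\cdot D=2$), and every conic class on a del Pezzo surface is a base-point-free pencil. Since $F'$ is such a class, $|F'|$ is nef and defines a morphism $\overline{X}\to\P^1$.
5. $F'$ is Galois-invariant and lies in $\Pic(X)$, and $\Pic(X)\to\Pic(\overline{X})^{G}$ is an isomorphism up to a Brauer obstruction. So the morphism descends to $f'\colon X\to B'$ over $k$, with $B'$ a genus $0$ curve.

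The point needing most care is exactly that every primitive isotropic class with $-K\cdot D=2$ on $\overline{X}$ is a conic class and hence base-point-free. I would prove it by showing $D$ is $W(E_{9-d})$-conjugate to $L-e_1$, the pullback of a line through a blown-up point, using the standard transitivity of the Weyl group on such vectors.

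\textbf{Step 4: exactly two structures.} Standardness of $f'$ comes from minimality. Reducible fibers of $|F'|$ are sums of two $(-1)$-curves, because any $(-2)$-curve is excluded on a genuine del Pezzo surface. Distinctness is immediate since $F'\neq F$ in $\Pic(X)$. Conversely, any conic bundle structure has an isotropic fiber class with $-K\cdot D=2$, and Step 2 leaves only $F$ and $F'$. So there are precisely two distinct structures.
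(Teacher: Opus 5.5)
The paper gives no proof of this statement; it simply quotes Iskovskikh. Your argument is correct in substance, and it takes a different, more hands-on route than the usual proof via the cone of curves.

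In the usual proof, $\rho(X)=2$ and $-K_X$ ample make $\overline{NE}(X)$ a two-dimensional cone with exactly two $K_X$-negative extremal rays, each contractible over $k$. Minimality excludes birational contractions, and $\rho(X)=2$ excludes contraction to a point. So each ray gives a conic bundle, and there are exactly two because there are exactly two rays. That argument gets existence and the count for free, but it rests on the cone and contraction theorems over a non-closed field.

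Yours replaces those theorems by the isotropic-line computation in $\Q K_X\oplus\Q F$. This also produces the explicit formula $F'=-\frac{4}{d}K_X-F$, i.e.\ $F+F'=-\frac{4}{d}K_X$. The price is a Weyl-group input over $\overline{k}$ and a descent step. The uniqueness half of your Step 4 is fine.

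A few points need repair, none fatal.

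Item 3 of Step 3 is wrong as written: the reflection in $\alpha=F-F'$ is not integral for $d=1$ or $d=4$. For $d=1$, $\alpha=2\beta$ with $\beta=F+2K_X$ and $\beta^2=-4$. The reflection sends $v$ to $v+\frac{1}{2}(v\cdot\beta)\beta$, while $\beta\cdot E=-1$ for a $(-1)$-curve $E$ with $F\cdot E=1$. The element you want depends on $d$. For $d=1$ it is $-1$ on $K_X^\perp$ (the Bertini action). For $d=2$ it is the reflection in the root $F+K_X$. For $d=4$ neither device is available, so drop item 3 and rely on transitivity.

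To pass from ``$F'$ is $W$-conjugate to $L-e_1$'' to ``$|F'|$ is a base-point-free pencil'', you need to state the re-marking fact. On a genuine del Pezzo surface every exceptional class is an irreducible $(-1)$-curve. So $w(e_1),\dots,w(e_{9-d})$ are disjoint $(-1)$-curves whose contraction is $\P^2$, with $w(L)$ the pulled-back line class. Hence $w(L-e_1)$ is the pencil of lines through a contracted point.

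For descent, $F'$ itself need not lie in $\Pic(X)$ when $B$ has no $k$-point. However, $2F'=-\frac{8}{d}K_X-2F$ does lie in $\Pic(X)$, since $2F=f^*(-K_B)$. The $k$-linear system $|2F'|$ then maps $X$ onto a smooth conic $B'$ over $k$, and this is your $f'$.

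Finally, standardness of $f'$ comes from ampleness of $-K_X$, not from minimality: each fibre component has $-K_X\cdot C\geq 1$, and parity rules out double fibres.
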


\subsection{Automorphisms of the geometric Picard group of a del Pezzo surface}\label{sec:autpicdelPezzo}

From the construction in Theorem \ref{thm:delpezzoclassification}, it can be derived that a del Pezzo surface over an algebraically closed field has a fixed number of $(-1)$-curves, which only depends on the degree of the surface.

\begin{theorem}[{\cite[Theorem 26.2(iii)]{manin}}]
    Let $X$ be a del Pezzo surface of degree $d$ over an algebraically closed field $k$, not isomorphic to $\P^1 \times \P^1$. Then the number of $(-1)$-curves on $X$ is as follows:
    \begin{table}[h]
    \centering
    \begin{tabular}{c|c c c c c c c c c}
        $d$& 1 & 2 & 3 & 4 & 5 & 6 & 7 & 8 & 9\\
        \hline
        $\# (-1)$-curves & 240 & 56 & 27 & 16 & 10 & 6 & 3 & 1 & 0
    \end{tabular}
    \label{tab:numberofexceptionalclasses}
\end{table}
\end{theorem}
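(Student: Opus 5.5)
Since the statement is geometric, we may work over the algebraically closed field $k$ and use Theorem \ref{thm:delpezzoclassification}. For $d=9$ the surface is $\P^2$, which contains no $(-1)$-curve: every curve of degree $e \geq 1$ has self-intersection $e^2>0$. For $d \leq 8$ with $X \not\cong \P^1\times\P^1$, we write $X$ as the blow-up $\pi\colon X\ra\P^2$ in $r=9-d$ points $P_1,\dots,P_r$ in general position. Then $\Pic(X)$ is free with basis $L,E_1,\dots,E_r$, where $L$ is the pullback of a line and $E_i$ is the exceptional curve over $P_i$. The intersection form is $L^2=1$, $E_i^2=-1$, with all other products $0$, and the canonical class is $K_X=-3L+\sum E_i$.

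The plan has three steps. First, count exceptional classes. A class $D=aL-\sum b_iE_i$ is exceptional exactly when
\[a^2-\sum b_i^2=-1,\qquad 3a-\sum b_i=1.\]
By Cauchy--Schwarz, $(3a-1)^2=(\sum b_i)^2\le r(a^2+1)$, so $a$ is bounded once $r\le 8$, and each $b_i$ is then bounded as well. We enumerate the solutions by increasing $a$:
\begin{itemize}
\item[$\bullet$] $a=0$: the $E_i$, giving $r$ classes.
\item[$\bullet$] $a=1$: $L-E_i-E_j$, giving $\binom r2$ classes.
\item[$\bullet$] $a=2$: $2L-\sum_{5}E_i$, giving $\binom r5$ classes.
\item[$\bullet$] $a=3$: $3L-2E_i-\sum_{6}E_j$, giving $7\binom r7$ classes.
\item[$\bullet$] $a=4$: $4L-2\sum_3 E-\sum_5E$, giving $56$ classes when $r=8$.
\item[$\bullet$] $a=5$: $5L-2\sum_6E-\sum_2E$, giving $28$ classes when $r=8$.
\item[$\bullet$] $a=6$: $6L-3E_i-2\sum_7E$, giving $8$ classes when $r=8$.
\end{itemize}
Summing these for each $r$ yields $3,6,10,16,27,56,240$ for $r=2,\dots,8$. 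For $r=1$ the only class is $E_1$, so $d=8$ gives $1$.

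Second, show that every exceptional class is represented by a unique irreducible curve. Uniqueness comes from the negative self-intersection: if $C$ is irreducible with $C^2<0$, then $C$ is the only effective divisor in its class. For existence, Riemann--Roch gives
\[h^0(D)\ge \chi(D)=1+\tfrac12(D^2-D\cdot K)=1,\]
because $h^2(D)=h^0(K-D)=0$ (we have $-K\cdot(K-D)=-d-1<0$ while $-K$ is ample). So $D$ is effective. Writing $D=\sum n_iC_i$, we get $1=-K\cdot D=\sum n_i(-K\cdot C_i)$. Since $-K$ is ample, each term is a positive integer, so $D$ is irreducible and reduced. Finally, $D^2=-1$ together with the adjunction formula gives $D\cong\P^1$.

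Third, check that irreducible curves with $C^2=-1$ and $C\cong\P^1$ are exactly the curves whose classes are exceptional. This is the adjunction remark already in the paper, so it needs no further work.

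The main obstacle is the first step for $r=7,8$: the enumeration must be complete, which needs a careful bound on $a$ and on the possible $b_i$. An alternative is to identify the exceptional classes with an orbit of the Weyl group $W(E_r)$ and use its order together with the stabilizer. I would do the direct Cauchy--Schwarz enumeration, which settles completeness case by case.
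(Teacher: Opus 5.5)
The paper does not prove this statement; it quotes it from Manin (Theorem 26.2(iii)). Your argument is correct and is the standard proof of that cited result: you enumerate the integer solutions of $a^2-\sum b_i^2=-1$, $3a-\sum b_i=1$, and then use Riemann--Roch, Serre duality and the ampleness of $-K_X$ to show that every exceptional class contains exactly one irreducible curve, which is necessarily a smooth rational $(-1)$-curve.

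One point needs to be made explicit. For $r=8$, your Cauchy--Schwarz bound gives $-1\le a\le 7$ with equality at both endpoints. So $a=-1$ and $a=7$ must be ruled out by integrality: equality would force every $b_i=(3a-1)/8\notin\Z$. For the completeness check at each remaining $a$, the identity $\sum b_i(b_i-1)=(a-1)(a-2)$ makes the case analysis very short.
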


Let $X$ be a del Pezzo surface of degree $d \leq 6$ defined over a perfect field $k$. Let us fix an algebraic closure $\overline{k}$ of $k$, and write $\overline{X} \colonequals X \times_{\Spec(k)} \Spec(\overline{k})$. The $(-1)$-curves on $\overline{X}$ are not necessarily defined over $k$. The group of automorphisms of $\Pic(\overline{X})$ which preserve the intersection pairing acts on the set of $(-1)$-curves on $\overline{X}$. 

\begin{theorem}[{\cite[Theorem 23.9]{manin}}] \label{thm:manin23.9}
    Let $X$ be a del Pezzo surface of degree $1 \leq d \leq 6$ defined over $k$, and let $r = 9 - d$. Let $R_r$ be the root system in the following table.
    
    \begin{table}[h]
    \centering
    \begin{tabular}{c|c c c c c c}
        $r$ & 3 & 4 & 5 & 6 & 7 & 8\\
        \hline
        $R_r$ & $A_1 \times A_2$ & $A_4$ & $D_5$ & $E_6$ & $E_7$ & $E_8$.
    \end{tabular}
    \label{tab:rootsystems}
    \end{table}
    
    The following groups are isomorphic:
    \begin{enumerate}
        \item The group of automorphisms of $\Pic(\overline{X})$ preserving $K_{\overline{X}}$ and the intersection pairing;
        \item The group of permutations of the $(-1)$-curves on $\overline{X}$ preserving the intersection pairing;
        \item The Weyl group $W(R_r)$ of the root system $R_r$.
    \end{enumerate}
\end{theorem}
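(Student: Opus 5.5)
The statement is Manin's Theorem 23.9, and I would prove it via the standard lattice-theoretic description of $\Pic(\overline{X})$. First, by Theorem \ref{thm:delpezzoclassification}(iii), $\overline{X}$ is the blow-up of $\P^2$ in $r = 9-d$ points in general position. Hence $\Pic(\overline{X}) \cong \Z^{1,r}$ with basis $L, E_1, \ldots, E_r$, where $L$ is the pullback of a line and the $E_i$ are the exceptional divisors. The intersection form is $\mathrm{diag}(1,-1,\ldots,-1)$, and $K_{\overline{X}} = -3L + \sum E_i$. I would then set $R = \{D \in \Pic(\overline{X}) : D^2 = -2,\ D\cdot K_{\overline{X}} = 0\}$. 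This set lies in the negative definite lattice $K_{\overline{X}}^\perp$ of rank $r$. One checks that the elements $E_1 - E_2, \ldots, E_{r-1}-E_r, L - E_1 - E_2 - E_3$ form a simple system whose Dynkin diagram is $A_2\times A_1, A_4, D_5, E_6, E_7, E_8$ for $r = 3,\ldots,8$. This is a routine computation of the pairwise intersection numbers; for $r=3$ the last root is orthogonal to $E_1-E_2$ and $E_2-E_3$, which produces the $A_1$ factor.

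\textbf{(1) $\Leftrightarrow$ (3).} Each root $\alpha \in R$ defines a reflection $s_\alpha(D) = D + (D\cdot\alpha)\alpha$, which preserves the form and fixes $K_{\overline{X}}$. So $W(R_r) \subseteq G$, where $G$ is the group in (1). For the converse, any $g \in G$ preserves $K_{\overline{X}}^\perp$ and the form, hence permutes $R$. It is therefore an automorphism of the root system $R$, and $\mathrm{Aut}(R) = W(R)\rtimes \mathrm{Aut}(\text{Dynkin})$. To show $g \in W(R)$, I would compose with an element of $W$ so that it fixes the chosen Weyl chamber, and then argue that a diagram automorphism which preserves the full lattice $\Pic(\overline{X})$ and fixes $K_{\overline{X}}$ must be trivial. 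For $E_7, E_8$ there are no nontrivial diagram automorphisms. For $E_6, D_5, A_4, A_2\times A_1$ one checks directly that the nontrivial diagram symmetry does not extend to $\Pic(\overline{X})$ fixing $K_{\overline{X}}$. For instance, it would have to send $E_1-E_2$ (the end of the chain) to $L-E_1-E_2-E_3$ or similar, and one verifies that the induced map on $\Pic(\overline{X}) \supset \Z K_{\overline{X}} \oplus K_{\overline{X}}^\perp$ (a finite-index sublattice) is not integral. Alternatively, one can use that $W(R)$ acts simply transitively on ordered ``exceptional configurations'' $(E_1,\dots,E_r)$, as follows. Given $g\in G$, the classes $gE_1,\ldots,gE_r$ are again pairwise orthogonal exceptional classes. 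Repeatedly applying reflections in roots of the form $L - E_i - E_j - E_k$ or $E_i - E_j$, by induction on the degree $D\cdot L$, brings these back to $E_1,\ldots,E_r$. Since $L$ is then determined by $K_{\overline{X}}$ and the $E_i$, the composite is the identity. I expect this reduction (a Noether-type degree-lowering argument on exceptional classes) to be the main obstacle, and I would follow Manin's \S23 for it.

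\textbf{(1) $\Leftrightarrow$ (2).} Every exceptional class on a del Pezzo surface is represented by a unique $(-1)$-curve, by Riemann--Roch and ampleness of $-K$. Hence $G$ acts on the set $I$ of $(-1)$-curves, preserving intersections. This action is faithful because $I$ spans $\Pic(\overline{X})$ (it contains $E_1,\ldots,E_r$ and $L-E_1-E_2$). Conversely, a permutation $\sigma$ of $I$ preserving intersections is determined by where it sends $E_1, \ldots, E_r, L-E_1-E_2$, and it extends linearly. The extension is well defined because linear relations among classes in $I$ are detected by intersection numbers with $I$, since the form is nondegenerate and $I$ spans. It preserves $K_{\overline{X}}$ because $K_{\overline{X}}$ is characterized by $K\cdot e = -1$ for all $e\in I$.
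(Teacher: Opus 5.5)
The paper gives no proof of this statement. It quotes Manin, \emph{Cubic forms}, Theorem 23.9, so there is no internal argument to compare yours with. Your proposal is a correct reconstruction of the standard proof behind that citation.

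Your $(1)\Leftrightarrow(2)$ part is complete as written: Riemann--Roch plus ampleness, spanning by $I$, unimodularity of the form, and the characterization of $K_{\overline{X}}$ by $K_{\overline{X}}\cdot e=-1$ for all $e \in I$. In $(1)\Leftrightarrow(3)$, two steps should each get one explicit line.

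\emph{That the proposed elements are a base of $R$.} The Gram matrix of $E_1-E_2,\ldots,E_{r-1}-E_r,L-E_1-E_2-E_3$ only shows that these vectors have the right Dynkin diagram. It does not by itself show that $R$ has no further roots. For that, note that
\[
K_{\overline{X}}^\perp=\{aL-\textstyle\sum b_iE_i : \sum b_i=3a\}.
\]
Subtracting $a(L-E_1-E_2-E_3)$ leaves a combination of the $E_i$ with coefficient sum $0$, which lies in the span of the $E_i-E_{i+1}$. So your $r$ vectors form a $\Z$-basis of $K_{\overline{X}}^\perp$. Hence $K_{\overline{X}}^\perp$ is the root lattice of type $R_r$, whose vectors of square $-2$ are exactly its roots.

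\emph{The diagram-automorphism check.} This can be done uniformly. For $3\le r\le 6$ the nontrivial diagram automorphism is $-w_0$; for $r=3$ it flips the $A_2$ factor and fixes the $A_1$ factor. Since $w_0\in W(R_r)\subseteq G$, this automorphism lies in $G$ if and only if the map acting as $-1$ on $K_{\overline{X}}^\perp$ and fixing $K_{\overline{X}}$ does. That map sends $E_1$ to $-E_1-\frac{2}{d}K_{\overline{X}}$. This is not integral for $d=3,4,5,6$, because $K_{\overline{X}}$ is primitive. For $d=2,1$ it is integral: it is the action of the Geiser, respectively Bertini, involution, consistent with $-1\in W(E_7)$ and $-1\in W(E_8)$.

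\emph{If you take the Noether-type route instead.} Induct on $gL\cdot L$. This is a positive integer, equal to $1$ only when $gL=L$. The reason is that $gL$ and $L$ lie in the same half of the positive cone, since both pair positively with $-K_{\overline{X}}$, which $g$ fixes; reverse Cauchy--Schwarz then gives the claim.
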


Any field automorphism $\sigma \in \Gal(\overline{k} /k)$ induces an automorphism of $\Pic(\overline{X})$ which preserves the intersection pairing and the canonical divisor class, and thus can be identified with an element of $W(R_{9-d})$. 

Now let $k = \F_q$ be a finite field, and let $X/\F_q$ be a del Pezzo surface of degree $d \leq 6$. By the discussion above, we obtain a representation
\begin{equation}\label{eq:GaloisrepdP}
    \rho_X \colon \Gal(\overline{\F}_q /\F_q) \ra W(R_{9-d}),
\end{equation}
where $W(R_{9-d})$ denotes the Weyl group of $R_{9-d}$, which is a finite group. The image of $\rho_X$ is isomorphic to $\Gal(\F_{q^n}/\F_q)$, where $\F_{q^n}$ is the smallest field such that all $(-1)$-curves on $\overline{X}$ are defined over $\F_{q^n}$. It is cyclic and generated by the image of the Frobenius automorphism $\Frob_q \colon x \mapsto x^q$ on $\overline{\F}_q$.

\begin{definition}
    Let $X$ be a del Pezzo surface of degree $d \leq 6$ over $\F_q$. We define the \term{type} of $X$, denoted by $C(X)$, to be to be the conjugacy class of $\rho_X(\Frob_q)$ in the group $W(R_{9-d})$.
\end{definition}

The type of a del Pezzo surface uniquely determines properties such as the Picard rank, index and trace of the surface, which we introduce in \S\ref{sec:InverseGaloisProblem}. This is why it makes sense to consider the existence of del Pezzo surfaces up to this notion of type.

\subsection{Automorphisms of the geometric Picard group of a standard rational conic bundle}\label{sec:autpicconicbundle}

In this paper, we consider del Pezzo surfaces of degree 1 of types 1 to 7, which are the minimal del Pezzo surfaces with a conic bundle structure. Therefore, we study conic bundles which are simultaneously del Pezzo surfaces, and thus in particular geometrically rational. Furthermore, any conic bundle structure on a del Pezzo surface is standard (this follows from the Nakai--Moishezon Criterion \cite[V, Theorem 1.10]{hartshorne}, see the proof of \cite[Lemma 5.1]{FLS}). We thus focus our attention on standard rational conic bundles. Over an algebraically closed field, there is the following explicit description of the Picard group of a standard rational conic bundle. 

\begin{theorem}[{\cite[Theorem 2.2.1, Theorem 2.2.2]{manintsfasman}, \cite[Proposition 0.3, Proposition 0.4]{KunyavskiiTsfasman}}]\label{thm:MT2.2.2}
    Let $f \colon X \ra \P^1$ be a standard rational conic bundle over an algebraically closed field. We say it has \term{degree} $d \colonequals K_X^2$. Set $r \colonequals 8 - d$. Then
    \begin{enumerate}[label=(\alph*)]
        \item $d \leq 8$.
        
        \item $\Pic(X)$ is torsion-free of rank $\rho(X) = r + 2$.
        \item $f$ has $r$ singular fibers.
        
        \item  $\Pic(X)$ has a free basis $\{C, F, E_1, \ldots, E_r\}$, where $F$ is the class of a fiber of $f$, such that 
    \begin{align*}
        C^2 = F^2 = 0,\ C \cdot F &= 1,&\\
        E_i^2 = -1,\ C \cdot E_i = F \cdot E_i &= 0 \quad \text{for} \ i = 1, \ldots, r\\
        E_i \cdot E_j &= 0 \quad \text{ for }\ i, j \in \{1, \ldots, r\}, i \neq j,
    \end{align*}
    and furthermore $K_X = -2C - 2F + \sum_{n=1}^r E_n$. \label{thm:MT2.2.2(d)}
    
    \item Let $E_i' \colonequals F - E_i$. Then $\{E_1, \ldots, E_r, E_1', \ldots, E_r'\}$ is the set of classes of $(-1)$-curves which are contained in the fibers of $f$.
    
    \item The following three groups are isomorphic:
    \begin{enumerate}[label=(\roman*)]
        \item The group of automorphisms of $\Pic(X)$ preserving $K_X$, $F$ and the intersection pairing;
        \item The group of permutations of the set $\{E_1, \ldots, E_r, E_1', \ldots, E_r'\}$ taking an even number of elements of $\{E_1, \ldots, E_r\}$ to elements of $\{E_1', \ldots, E_r'\}$, and preserving the pairwise intersection pairing.
        \item The Weyl group $W(D_r)$ of the root system $D_r$. 
    \end{enumerate}\label{thm:MT2.2.2(f)}
    \end{enumerate}
\end{theorem}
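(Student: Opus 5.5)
This theorem is cited from \cite{manintsfasman} and \cite{KunyavskiiTsfasman}. If I were to prove it myself, I would induct on the number of blow-ups needed to reach a relatively minimal model, using the classification of relatively minimal rational ruled surfaces. Since $f$ is standard, I would contract one component of each singular fiber. Each component is a $(-1)$-curve, and components in different fibers are disjoint, so the contractions are independent. The result is a conic bundle with no singular fibers over $\P^1$, i.e.\ a Hirzebruch surface $\mathbb{F}_n$. Let $r$ be the number of singular fibers, so $X$ is $\mathbb{F}_n$ blown up in $r$ points on distinct fibers. Then $K_X^2 = 8 - r$, which gives (c) and (a). Since $\Pic(\mathbb{F}_n) = \Z\sigma \oplus \Z F$ and each blow-up adds a free summand $\Z E_i$, we get (b).

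For (d), I would first reduce to $n \in \{0,1\}$. If $r \geq 1$, an elementary transformation at a blown-up point changes $n$ by $\pm 1$ while keeping $X$ unchanged. So by choosing which component of each singular fiber to contract, we may arrange $n \le 1$. If $n=1$ and $r\ge1$, one more elementary transformation gives $n=0$, and the case $r=0$ is clear. On $\mathbb{F}_0$ take $C$ to be the class of a section with $C^2=0$, pulled back to $X$. Then $C^2 = F^2 = 0$, $C\cdot F = 1$, and the $E_i$ are orthogonal to $C$ and $F$ because the blown-up points can be chosen away from one fixed section of self-intersection $0$. The formula $K_X = -2C-2F+\sum E_i$ follows from $K_{\mathbb{F}_0} = -2C-2F$ and the blow-up formula. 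If $n=1$ cannot be avoided, I would instead set $C = \sigma + F$ and adjust, checking the lattice is the same unimodular lattice.

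For (e), let $D$ be a $(-1)$-curve contained in a fiber. Then $D\cdot F = 0$, $D^2 = -1$ and $D\cdot K = -1$. Writing $D = aC + bF + \sum c_iE_i$, the condition $D\cdot F = 0$ gives $a=0$, and $D\cdot K=-1$ gives $-2a - \sum c_i = -1$, so $\sum c_i = 1$. Also $D^2 = 2ab - \sum c_i^2 = -1$, so $\sum c_i^2 = 1$. Hence $D = bF \pm E_i$, and $\sum c_i = 1$ forces the sign $+$. Effectivity and irreducibility inside a single fiber then give $D = E_i$ or $D = F - E_i$.

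For (f), an automorphism fixing $K$, $F$ and the pairing permutes the $2r$ classes $E_i, E_i'$. This is because they are exactly the solutions found in (e), and the argument there was purely lattice-theoretic. Such a permutation preserves the pairs $\{E_i, E_i'\}$, since these are the pairs with intersection $1$. Conversely, the automorphism is determined by its action on $F$, the $E_i$ and $K$, which together with $C$ span the lattice rationally. So the group embeds in $(\Z/2)^r \rtimes S_r$. The parity condition is the main obstacle. Swapping $E_i \leftrightarrow F-E_i$ changes $\sum E_i$ by $F - 2E_i$, and preserving $K$ then forces $C \mapsto C - \tfrac12(\ldots)$. Integrality of the image of $C$ should force the number of swaps to be even. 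I would check this by computing $C \mapsto C + \tfrac{m}{2}F - \sum_{\text{swapped}} E_i$ after normalizing. Conversely, each even swap, together with every permutation, extends to an integral isometry. This identifies the group with $(\Z/2)^{r-1}\rtimes S_r = W(D_r)$.
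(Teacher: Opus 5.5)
Your route is genuinely different from the paper's. The paper proves nothing geometric here: it cites Iskovskih for (a)--(c), and Kunyavskii--Tsfasman for (e), (f) and for a basis as in (d) but with $C^2=a$ arbitrary. Its only own argument is a parity-based change of basis that makes $C^2=0$. Your contraction to a Hirzebruch surface correctly gives (a)--(c). Your treatment of (f) is also sound and matches the paper's Notation: the $2r$ classes are permuted, the pairs $\{E_i,F-E_i\}$ are preserved, and integrality of $C\mapsto C+\tfrac m2F-\sum_{i\in S}E_i$ forces $m$ even.

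\textbf{The gap in (d).} You claim that by choosing which component of each singular fiber to contract you can arrange $n\le1$, and then $n=0$. This is false. Suppose $n\ge1$ and all blown-up points lie on the negative section $\sigma$ of $\mathbb{F}_n$. Switching the contracted component over a subset $S$ of the singular fibers then yields $\mathbb{F}_{n+|S|}$, so $n$ can never be lowered.
\begin{itemize}
\item[] For example, blow up $\mathbb{F}_2$ at one point of its $(-2)$-section. The strict transform of that section is a $(-3)$-section, and the two possible contractions give $\mathbb{F}_2$ and $\mathbb{F}_3$.
\item[] Likewise, $\mathbb{F}_1$ blown up at points of its $(-1)$-curve never reaches $\mathbb{F}_0$.
\end{itemize}
Your fallback ``set $C=\sigma+F$ and adjust'' is only sketched for $n=1$, and that adjustment is exactly the missing idea.

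What you need is the paper's normalization, which requires no elementary transformations at all. Pull $\sigma$ and $F$ back to $X$; they are then automatically orthogonal to every $E_i$, so there is no need to choose a section avoiding the points.
\begin{itemize}
\item[] If $n$ is even, put $C=\sigma+\tfrac n2F$.
\item[] If $n$ is odd, put $C=\sigma+\tfrac{n+1}{2}F-E_1$ and replace $E_1$ by $F-E_1$.
\end{itemize}
One checks $C^2=0$, $C\cdot F=1$, the orthogonality relations, and $K_X=-2\sigma-(n+2)F+\sum E_i=-2C-2F+\sum(\text{new }E_i)$. This is the paper's substitution with $a=-n$. Note that $C$ need not be effective; compare Remark~\ref{rem:sectionC}, which uses minimality to make $C$ a section.

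Also, your claim that ``the case $r=0$ is clear'' is wrong for odd $n$. $\Pic(\mathbb{F}_1)$ is an odd lattice, while $C^2=F^2=0$, $C\cdot F=1$ spans an even one, so (d) genuinely fails for $X=\mathbb{F}_1$. The paper's odd case likewise needs $E_1$ to exist; this is harmless there, since only $r=7$ is used.

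\textbf{A slip in (e).} There is an arithmetic error that makes the argument inconsistent. Since $K\cdot C=K\cdot F=-2$ and $K\cdot E_i=-1$, with $a=0$ one gets $D\cdot K=-2b-\sum c_i$, not $-2a-\sum c_i$. As written, you conclude $\sum c_i=1$ and the sign $+$. That would exclude $F-E_i$ and leave $b$ free, so effectivity cannot then recover $F-E_i$. Correctly, $\sum c_i^2=1$ and $\sum c_i=1-2b$ give exactly $D=E_i$ (with $b=0$) or $D=F-E_i$ (with $b=1$), with no appeal to effectivity. This purely lattice-theoretic version is what your argument for (f) actually relies on. Geometrically, (e) is immediate anyway: a $(-1)$-curve in a fiber must be a component of a singular fiber.
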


\begin{proof}
    Parts (a), (b) and (c) are proven in \cite[Theorem 3]{iskovskih}. A variation of part (d) is proven in \cite[Proposition 0.4(a)]{KunyavskiiTsfasman}, with $C^2 = a$ for some $a \in \Z$, and $K_X = -2C + (a-2)F + \sum_{i=1}^r E_i$. We can reduce to the case $C^2 = 0$, using an argument similar to the proof of \cite[Lemma 2.10]{KST}. Suppose $C^2 = a$, and let \[ \begin{cases}
        C' = C - \frac{a}{2}F, \ E_1' = E_1 &\text{if $a$ is even}\\
        C'= C - \frac{a-1}{2}F - E_1, \ E_1' = F - E_1 &\text{if $a$ is odd.}
    \end{cases}\]
    Then replacing $C$ by $C'$, and $E_1$ by $E_1'$, the set $\{C', F, E_1', E_2, \ldots, E_7\}$ is a free basis for $\Pic(X)$ with the desired properties. Part (e) and (f) are also shown in \cite[Proposition 0.4]{KunyavskiiTsfasman}.
\end{proof}

\begin{remark}
    Our proof technique is similar to the argument in \cite[Lemma 2.10]{KST}. However, we noted that their substitution of $s$ by $s'$ does not preserve the intersection number $s \cdot l_1$. Our choice of substitution of $C$ by $C'$ fixes this issue.
\end{remark}

Now let $f \colon X \ra B$ be a standard rational conic bundle over a perfect field $k$. We can consider the base change $\overline{f} \colon \overline{X} \ra \overline {B}$ to a fixed algebraic closure $\overline{k}$, which is again a standard rational conic bundle and thus satisfies the properties in Theorem \ref{thm:MT2.2.2}. There is an induced injective homomorphism $\iota_X^* \colon \Pic(X) \ra \Pic(\overline{X})$ with $\iota_X^*(K_X) = K_{\overline{X}}$, via which we view $\Pic(X)$ as a subgroup of $\Pic(\overline{X})$. If $X(k) \neq \emptyset$, we have $\Pic(X) \cong \Pic(\overline{X})^{\Gal(\overline{k}/k)}$ \cite[Theorem 4.2.5]{BLT}.

\begin{remark}\label{rem:sectionC}
    When $X$ is minimal, by \cite[Proposition 2.6, Corollary 2.7]{trepalindeg2} we can choose the class $C$ in the basis of $\Pic(\overline{X})$ in Theorem \ref{thm:MT2.2.2}\ref{thm:MT2.2.2(d)} in such a way that it is the class of a section of $\overline{f}$, and hence an effective divisor.
\end{remark}

\begin{proposition}\label{prop:picconicbundle}
    Let $f \colon X \ra B$ be a standard rational conic bundle over a perfect field $k$. Then $\rho(X) \geq 2$.
\end{proposition}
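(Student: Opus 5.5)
To prove $\rho(X) \geq 2$, I will exhibit two $k$-rational divisor classes on $X$ that are linearly independent in $\Pic(\overline{X})$. The natural candidates are the canonical class $K_X$ and the class of a fiber of $f$.

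\textbf{The two classes.}
\begin{itemize}
\item The canonical class $K_X$ is defined over $k$, and $\iota_X^*(K_X) = K_{\overline{X}}$.
\item For the fiber class, the generic fiber of $f$ is a conic over the function field $k(B)$, and $B$ is a smooth genus-0 curve over $k$. I will take a closed point $b \in B$ and consider the fiber $D \colonequals f^{*}(b) = f^{-1}(b)$. This is an effective divisor on $X$ defined over $k$, whatever the degree of $b$. Over $\overline{k}$ the point $b$ splits into $\deg(b)$ geometric points. Since all fibers of $\overline{f}$ over points of $\overline{B} \cong \P^1_{\overline{k}}$ are linearly equivalent, the image of $D$ in $\Pic(\overline{X})$ is $\deg(b)\, F$, where $F$ is the fiber class from Theorem \ref{thm:MT2.2.2}.
\end{itemize}
Note that $B$ need not be $\P^1_k$ when $X(k)=\emptyset$. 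This is why I use the pullback of a closed point rather than a $k$-rational fiber.

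\textbf{Independence.} Using the basis of Theorem \ref{thm:MT2.2.2}\ref{thm:MT2.2.2(d)}, I compute
\[ F^2 = 0 \qquad \text{and} \qquad K_{\overline{X}} \cdot F = (-2C - 2F + \textstyle\sum E_n)\cdot F = -2. \]
Suppose $a K_{\overline{X}} + c F = 0$ in $\Pic(\overline{X})$. Intersecting with $F$ gives $-2a = 0$, so $a = 0$. Then $cF = 0$, and since $F$ is a nonzero basis element of the free group $\Pic(\overline{X})$, also $c = 0$. Hence $K_{\overline{X}}$ and $\deg(b)F$ span a subgroup of rank $2$.

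\textbf{Conclusion.} The map $\iota_X^*$ is an injective homomorphism, and its image contains $K_{\overline{X}}$ and $\deg(b)F$. Therefore $\Pic(X)$ has rank at least $2$, that is, $\rho(X) \geq 2$.

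\textbf{Main obstacle.} The only delicate point is justifying that $D$ really maps to a multiple of $F$. This requires all geometric fibers over points of $\overline{B}$ to be linearly equivalent, which holds because $\overline{B} \cong \P^1$ and points on $\P^1$ are linearly equivalent, so their pullbacks are too. One also needs $\Pic(X) \to \Pic(\overline{X})$ to be injective, which the paper states. No assumption of a rational point is needed, since I work with $\Pic(X)$ itself rather than the Galois invariants.
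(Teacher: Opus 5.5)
Your proof is correct and follows the paper's argument: the paper pulls back a divisor class from the genus-0 curve $B$ together with $K_X$, and invokes the linear independence of $K_{\overline{X}}$ and $F$ from Theorem \ref{thm:MT2.2.2}. The only difference is that the paper pulls back a degree-2 class (giving $2F$) where you pull back a closed point (giving $\deg(b)F$); also, injectivity of $\iota_X^*$ is not actually needed, since the rank of $\Pic(X)$ is at least the rank of its image.
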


\begin{proof}
    Because $B$ is a genus 0 curve, the pullback map $\Pic(B) \ra \Pic(X)$ ensures that $2F$, where $F \in \Pic(\overline{X})$ is the class of a fiber of $\overline{f}$, is an element of $\Pic(X)$. Because $F$ and $K_{\overline{X}}$ are linearly independent (Theorem \ref{thm:MT2.2.2}\ref{thm:MT2.2.2(d)}), this shows that $\rho(X) \geq 2$.
\end{proof}

\begin{comment}
    \begin{proposition}
    Let $f \colon X \ra B$ be a standard rational conic bundle over a perfect field $k$, and let $\overline{k}$ be an algebraic closure of $k$. Let $F \in \Pic(\overline{X})$ be the class of a fiber of $\overline{f}$. Then $K_{\overline{X}}, F \in \Pic(\overline{X})$ are fixed by the action of $\Gal(\overline{k}/k)$. In particular, if $X(k) \neq \emptyset$, we have $\rho(X) \geq 2$.
\end{proposition}

\begin{proof}
    Let $r = 9 - K_X^2$. By Theorem \ref{thm:MT2.2.2}, $\Pic(\overline{X})$ has rank $r + 2$ and a basis $\{C, F, E_1, \ldots, E_r\}$. Let $P$ be a closed point on $B$ of degree $m$. The base change of $P$ to $\overline{X}$ is fixed by the action of $\Gal(\overline{k}/k)$, and so is the divisor $f^{-1}(P)$ on $\overline{X}$, which has class $mF$ in the Picard group. Then also $F$ is fixed by the action of $\Gal(\overline{k}/k)$. If $X(k) \neq \emptyset$ then we have $\Pic(X) \cong \Pic(\overline{X})^{\Gal(\overline{k}/k)}$, so because $K_{\overline{X}}$ and $F$ are linearly independent in $\Pic(\overline{X})$, we conclude that $\rho(X) \geq 2$ in this case.
\end{proof}
\end{comment}

Because $f$ is defined over $k$, the action of $\Gal(\overline{k}/k)$ on $\Pic(\overline{X})$ preserves the class $F$ of a fiber, the class $K_{\overline{X}}$ and the intersection pairing. We saw in Theorem \ref{thm:MT2.2.2}\ref{thm:MT2.2.2(f)} that the group of automorphisms of $\Pic(\overline{X})$ which preserve $K_{\overline{X}}$, $F$, and the intersection pairing can be identified with the group of permutations of the set $\{E_1, \ldots, E_r, E_1', \ldots, E_r'\}$ that preserve the pairwise intersection pairing, and taking an even number of elements of $\{E_1, \ldots, E_r\}$ to elements of $\{E_1', \ldots, E_r'\}$. This group can be identified with $(\Z/2\Z)^{r-1} \rtimes S_r$ \cite[Proof of Proposition 0.4]{KunyavskiiTsfasman}. We thus obtain a representation 
\begin{equation}\label{eq:galoisrep}
    \rho_X^c \colon \Gal(\overline{k} / k) \ra (\Z/2\Z)^{r-1} \rtimes S_r. 
\end{equation}

\begin{notation}\label{not:W(D7)}
    We can write elements of $(\Z/2\Z)^{r-1} \rtimes S_r$ in the form $\iota_{i_1 \cdots i_k} \sigma$ for some even number $k \leq r$, $i_j \in \{1, \ldots, r\}$ distinct and $\sigma \in S_r$.
In the identification with $\Aut(\Pic(\overline{X}))$, using the basis of $\Pic(\overline{X})$ in Theorem \ref{thm:MT2.2.2}\ref{thm:MT2.2.2(d)}, we have that
\[\sigma(C) = C, \quad \sigma(F) = F, \text{ and } \sigma(E_i) = E_{\sigma(i)},\]
that is, $\sigma$ permutes the singular fibers of $X$. We have \
\begin{align*}
    \iota_{i_1 \cdots i_k}(C) &= C + \frac{k}{2} F - E_{i_1} - \cdots - E_{i_k},\\
    \iota_{i_1 \cdots i_k}(F) &= F,\\
    \iota_{i_1 \cdots i_k}(E_i) &=
\begin{cases}
     F - E_i &\text{ if } i \in \{i_1, \ldots, i_k\}\\
     E_i &\text{ if } i \notin \{i_1, \ldots, i_k\}.
\end{cases}
\end{align*}
This corresponds to swapping the two components of the singular fibers containing the lines $E_{i_1}, \ldots, E_{i_k}$.
\end{notation}

\begin{comment}
\begin{proposition}
    Let $f \colon X \ra B$ be a standard rational conic bundle over $k$, with $\Pic(X) = K_X \Z \oplus F \Z$. Then $f$ is relatively minimal.
\end{proposition}

\begin{proof}
\end{proof}

\end{comment}

\subsection{The inverse Galois problem for del Pezzo surfaces of degree 1}\label{sec:InverseGaloisProblem}

In this paper, we consider the inverse Galois problem for del Pezzo surfaces of degree 1 over finite fields, the only degree in which it is still largely unsolved. The 112 conjugacy classes of $W(E_8)$, corresponding to the types of del Pezzo surfaces of degree 1 over finite fields, are tabulated in \cite[Table 2]{urabe2}, together with a number of corresponding properties.

For a smooth projective surface $X/\F_q$, we denote by $a(X)$ the trace of the Frobenius action on $\Pic(\overline{X})$, and call this the \term{trace of $X$}. The trace is uniquely determined by the type of $X$. The number of rational points on $X$ is determined by this trace, given by the following formula. 

\begin{theorem}[{\cite[Theorem 27.1]{manin}}]\label{thm:numberrationalpoints}
    Let $X$ be a smooth projective geometrically rational surface over $\F_q$. Let $a(X)$ be the trace of the Frobenius action on $\Pic(\overline{X})$. Then \[\#X(\F_q) = q^2 + a(X)q + 1.\]
    In particular, $X(\F_q) \neq \emptyset$.
\end{theorem}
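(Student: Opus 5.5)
The plan is to apply the Grothendieck--Lefschetz trace formula in $\ell$-adic étale cohomology, for a prime $\ell \nmid q$, to $\overline{X}$ with the geometric Frobenius $F$:
\[\#X(\F_q) = \sum_{i=0}^{4} (-1)^i \operatorname{Tr}\bigl(F \mid H^i_{\text{ét}}(\overline{X}, \Q_\ell)\bigr).\]
The proof then reduces to computing each cohomology group of a geometrically rational surface together with its Frobenius action.

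\textbf{Step 1: the outer degrees.} Since $\overline{X}$ is a smooth projective geometrically connected surface, $H^0 = \Q_\ell$ with trivial action, contributing $1$. By Poincaré duality, $H^4 = \Q_\ell(-2)$, on which $F$ acts by $q^2$.

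\textbf{Step 2: the odd degrees.} $H^1$ and $H^3$ vanish. Indeed, $\overline{X}$ is birational to $\P^2$, and the first Betti number is a birational invariant of smooth projective surfaces: blow-ups of points do not change $H^1$. Hence $b_1 = 0$, and $b_3 = 0$ by Poincaré duality.

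\textbf{Step 3: the middle degree.} I expect this to be the main point. The cycle class map
\[\Pic(\overline{X}) \otimes \Q_\ell(-1) \to H^2_{\text{ét}}(\overline{X}, \Q_\ell)\]
is Galois-equivariant and injective, since numerical and algebraic equivalence agree and $\Pic(\overline{X})$ is torsion-free. It is also an isomorphism. To see this, note that for a blow-up of $\P^2$ in $n$ points, both sides have dimension $n+1$: blowing up a point adds one to $b_2$ and one to the rank of $\Pic$. More conceptually, $H^2(\overline{X}, \mathcal{O}) = 0$ and $\Br(\overline{X})$ is finite, so the Kummer sequence gives surjectivity. The twist $(-1)$ means $F$ acts as $q$ times its action on $\Pic(\overline{X})$. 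That action has finite order and preserves the integral lattice $\Pic(\overline{X})$, so its trace is an integer equal to the trace of its inverse; this makes the choice of arithmetic versus geometric Frobenius immaterial. The contribution of $H^2$ is therefore $a(X)q$.

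Summing the contributions gives $\#X(\F_q) = q^2 + a(X)q + 1$.

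\textbf{Step 4: non-emptiness.} One could try to bound $|a(X)|$ by $\rho(\overline{X})$, but that bound fails to rule out $\#X(\F_q) = 0$ for small $q$. Instead, observe that the formula gives $\#X(\F_q) \equiv 1 \pmod q$. A nonnegative integer congruent to $1$ modulo $q \geq 2$ is nonzero, so $X(\F_q) \neq \emptyset$.
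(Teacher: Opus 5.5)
Your proof is correct and follows the standard route. The paper gives no argument of its own and simply cites Manin's Theorem 27.1, which is proved by this same Grothendieck--Lefschetz trace formula computation, using $H^1=H^3=0$ and $H^2(\overline{X},\Q_\ell)\cong\Pic(\overline{X})\otimes\Q_\ell(-1)$.

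The only slip is that not every rational surface over $\overline{\F}_q$ is a blow-up of $\P^2$ (e.g.\ $\P^1\times\P^1$). However, your remark that each point blow-up raises both $b_2$ and $\rho$ by one, combined with the factorization of birational maps you already use in Step 2, gives $b_2=\rho$ in general.
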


In \cite[Theorem 1.5]{BFL}, Banwait, Fit\'{e} and Loughran determine the set of values of $q$ for which a del Pezzo surface of degree 1 of a given trace exists. Because the trace is uniquely determined for a given type, classifying by type gives a finer classification of del Pezzo surfaces of degree 1 than classifying by trace. The proof in \cite{BFL} involves, for each trace value, explicitly choosing a type of surface which has that trace, and demonstrating the existence of a surface of this type. In this paper, we treat a number of types which were not treated in \cite{BFL}.

The index of a surface, defined as follows, is a second invariant of a del Pezzo surface determined by its type.

\begin{definition}
    Let $X$ be a nice surface defined over a perfect field $k$. The \term{index} of $X$ is the size of the largest set of pairwise skew $(-1)$-curves which is stable under the action of the absolute Galois group of $k$. 
\end{definition}

It follows from Castelnuovo's contractibility criterion \cite[Theorem 9.3.3]{poonen} that a surface $X$ is minimal if and only if it has index 0. Table 2 in \cite{urabe2} lists the index for each type of del Pezzo surface of degree 1 over finite fields. Following the numbering in the first column of Urabe's table, we note that the minimal del Pezzo surfaces are the ones of types 1 to 37.

The Picard rank of a del Pezzo surface $X$, denoted by $\rho(X)$, is another property that is determined by its type. Among the minimal surfaces, we find using a Magma \cite{magma} computation (Magma code can be found on GitHub page \cite{github_Existence_dP1}) that surfaces of types 1 to 7 have Picard rank 2, and surfaces of types 8 to 37 have Picard rank 1. Using Theorem \ref{thm:iskovskih1} and Proposition \ref{prop:picconicbundle}, we conclude that surfaces of type 1 to 7 have a conic bundle structure, and surfaces of types 8 to 37 do not have a conic bundle structure.

\section{Classification of minimal standard conic bundles of degree 1}\label{sec:typesCB}

In what follows, we present a result by Rybakov which characterizes the singular fibers of minimal standard rational conic bundles over $\F_q$ (Theorem \ref{thm:rybakov}). This result also places conditions on the existence of standard rational conic bundles which are relatively minimal (Definition \ref{def:relativelyminimal}), which is a priori a weaker condition than minimality. We show that for del Pezzo surfaces of degree 1 with conic bundle structure, relative minimality implies minimality.

\subsection{Existence of minimal rational conic bundles}\label{sec:rybakov}

\begin{definition}\label{def:relativelyminimal}
    Let $f \colon X \ra B$ be a standard rational conic bundle over a perfect field $k$. We say $f$ is \term{relatively minimal} if there is no Galois-stable set of skew $(-1)$-curves contained in the set of singular fibers of $f$. 
\end{definition}

We now restrict to the case where $k = \F_q$ for some prime power $q$. In this case, if $f \colon X \ra B$ is a standard rational conic bundle, then $B \cong \P^1_{\F_q}$ by Remark \ref{rem:rationalconicbundle} and Theorem \ref{thm:numberrationalpoints}.
\begin{comment}Note that any genus 0 curve over $\F_q$ has a rational point and is thus isomorphic to $\P^1$. Therefore, we restrict our attention to conic bundles over $\F_q$ of the form $f \colon X \ra \P^1$.
\end{comment}
We consider the following result about the existence of relatively minimal standard conic bundles by \cite{rybakov}. 

\begin{theorem}[{\cite[Theorem 2.11]{rybakov}}]\label{thm:rybakov}
Let $x_1, \ldots, x_s \in \P^1_{\F_q}$ be a set of closed points. Then there exists a smooth rational surface $X$ and a relatively minimal standard conic bundle structure $f \colon X \ra \P^1$ over $\F_q$, such that the fibers above $x_1, \ldots, x_s$ are singular, and all other fibers are smooth, if and only if $s$ is even.
\end{theorem}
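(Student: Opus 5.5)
This theorem is cited from Rybakov, so the plan below is a reconstruction, not the paper's own argument. The proof splits into necessity (singular fibres force $s$ even) and sufficiency (a construction for each even $s$). Throughout, write $d_i$ for the degree of $x_i$ and $\Frob$ for the Frobenius.

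\textbf{Necessity.} Let $f\colon X \ra \P^1$ be relatively minimal and standard. Over each closed point $x_i$ the geometric fibre consists of $d_i$ singular fibres, cyclically permuted by $\Frob$. The $\Frob^{d_i}$-stabiliser of one of these fibres either fixes its two components or swaps them.
\begin{itemize}
\item If it fixes them, the $\Frob$-orbit of one component is a Galois-stable set of $d_i$ pairwise skew $(-1)$-curves. This contradicts relative minimality.
\item So every $x_i$ has swapped components. In the notation $\iota_{i_1\cdots i_k}\sigma$ of Notation \ref{not:W(D7)}, $\rho^c_X(\Frob)$ has $\sigma$ a product of disjoint cycles, one for each $x_i$.
\end{itemize}
On the cycle of $x_i$, let $n_i$ be the number of indices lying in the $\iota$-part. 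The fibre is swapped after going once around the cycle exactly when $n_i$ is odd, so every $n_i$ is odd. The group is $(\Z/2\Z)^{r-1}\rtimes S_r$, so the total $\sum_i n_i = k$ must be even. A sum of $s$ odd numbers is even only if $s$ is even.

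\textbf{Sufficiency.} Given an even number of closed points, I would build the surface explicitly over $\F_q$.
\begin{enumerate}
\item Pick a squarefree polynomial $g(t)$ whose zeros are exactly the $x_i$. Its degree is $\sum_i d_i$; adjust parity at infinity if needed.
\item Choose a non-square $\epsilon\in\F_q$ (in odd characteristic) and take the conic bundle model $x^2-\epsilon y^2 = g(t)z^2$.
\item Over $x_i$ the fibre is $x^2-\epsilon y^2=0$, two lines conjugate over $\F_{q^2}$. They are swapped by $\Frob^{d_i}$ exactly when $d_i$ is odd, so this model only works directly when all $d_i$ are odd.
\item In general, take the double cover of $\P^1$ branched exactly at $x_1,\dots,x_s$, i.e.\ $u^2 = g(t)$. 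This requires $\deg g$ even, and this is precisely where $s$ even enters: a point of even degree must be compensated, for example via a $t$-dependent twist $x^2-h(t)y^2$ with $h$ chosen by CRT to be a non-square in each residue field $\kappa(x_i)$.
\item Smooth the model by standard resolution into a standard conic bundle. Relative minimality then follows from the swapping at every $x_i$.
\end{enumerate}

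\textbf{Main obstacle.} The hard step is choosing $h$ so that the discriminant locus is exactly $\{x_i\}$ with nonsquare residues and no extra degeneration (at infinity, or a parity defect). I would handle this using the Brauer group of $\F_q(t)$: quaternion algebras with ramification at exactly $\{x_i\}$ exist if and only if $\sum \mathrm{inv}=0$. Each local invariant is $1/2$, so such an algebra exists precisely when $s$ is even, by the reciprocity sequence for $\Br \F_q(t)$. Its associated conic over $\F_q(t)$ then gives the conic bundle, with the converse direction matching the necessity computation above.
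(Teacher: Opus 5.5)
Your necessity half is correct and is essentially the paper's own argument. Remark~\ref{rem:orbitsinsingularfibersFq}, building on Lemma~\ref{lem:orbitsinsingularfibers}, relabels so that each cycle of $\sigma$ carries exactly one index of the $\iota$-part, giving $\rho^c_X(\Frob_q)=\iota_{i_1\cdots i_s}\sigma$. Evenness of the $\iota$-part then forces $s$ even. Your parity count of the odd $n_i$ is the same argument without the normalisation.

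The paper proves nothing about sufficiency (it simply cites Rybakov), so that half must stand on its own, and as written it does not. Steps 1--5 should be dropped, for two reasons.
\begin{enumerate}
\item The claim in step 4 is false. The cover $u^2=g(t)$ is branched exactly at the $x_i$ iff $\deg g=\sum_i d_i$ is even, which has nothing to do with the parity of $s$: every configuration in Corollary~\ref{cor:rybakov} has $s$ even but $\sum_i d_i=7$.
\item Choosing $h$ by CRT to be a non-square at each $x_i$ does not stop $x^2-hy^2=gz^2$ from acquiring new degenerate fibres. These can appear at odd-order zeros of $h$ where $g$ is a non-square, or at $\infty$.
\end{enumerate}

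The reciprocity argument is the right tool. It correctly shows that a class $\alpha\in\Br\F_q(t)$ with $\inv_v\alpha=\tfrac12$ exactly at $x_1,\dots,x_s$ exists iff $s$ is even. The gap is the sentence that its associated conic ``gives the conic bundle''. That is precisely the obstacle you identified (no extra degeneration), and it is not proved. You need three things.
\begin{enumerate}
\item[(i)] $2\alpha$ has all local invariants zero, so $2\alpha=0$. Over a global field period equals index, so $\alpha$ is a quaternion algebra, i.e.\ the class of a conic $Q/\F_q(t)$.
\item[(ii)] A smooth projective model $X\to\P^1$ of $Q$, made relatively minimal by contracting Galois-stable sets of disjoint fibre components. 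Such a model is automatically standard.
\item[(iii)] The key lemma: in a relatively minimal standard model, the fibre over a closed point $v$ is singular iff $\inv_v\alpha=\tfrac12$.
\end{enumerate}

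For (iii), consider the two cases.
\begin{itemize}
\item[$\cdot$] If the fibre is smooth, it is a smooth conic over the finite field $\kappa(v)$, hence has a $\kappa(v)$-point. That point lifts by Hensel's lemma, so $Q$ splits over the completion $K_v$.
\item[$\cdot$] If the fibre is singular, relative minimality forces its two components to be swapped by Galois (your first bullet), so its only $\kappa(v)$-point is the node. A $K_v$-point of $Q$ would extend by properness to a section of the regular model $X\times_{\P^1}\Spec\mathcal O_v$. Such a section meets the special fibre at a point where that fibre is regular, which the node is not, so $Q(K_v)=\emptyset$.
\end{itemize}
With (iii) in hand, the fibres are singular exactly over the $x_i$, and relative minimality holds by construction.

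Note also that (iii) together with reciprocity gives necessity in one line. So, once the lemma is proved, the Brauer route handles both directions uniformly. The lattice-parity argument avoids class field theory but cannot give existence.
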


It is clear from the definitions that $\F_q$-minimal standard conic bundles are also relatively minimal. We will show that a theorem by Iskovskih implies that the converse is also true if the surface has degree 1, 2 or 4. Therefore, Theorem \ref{thm:rybakov} gives a characterization of minimal standard rational conic bundles over $\F_q$ in those cases. In order to prove this, we need a description of the action of Galois on those $(-1)$-curves which are components of singular fibers of a relatively minimal standard rational conic bundle over a perfect field $k$, with fixed algebraic closure $\overline{k}$.

Because the action of $\Gal(\overline{k}/k)$ commutes with morphisms defined over $k$, and permutes the $(-1)$-curves on the surface, we conclude that the singular fibers of $f$ lie above a collection of closed points in $\P^1_k$ whose degrees sum to $r$.

\begin{lemma}\label{lem:orbitsinsingularfibers}
    Let $f \colon X \ra B$ be a relatively minimal standard rational conic bundle over $k$. Then the $(-1)$-curves in each singular fiber above a closed point form a single Galois orbit. 
\end{lemma}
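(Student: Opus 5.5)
Fix a closed point $x \in B$ of degree $m$ over which the fiber is singular. The geometric fiber $\overline{f}^{-1}(x)$ is a union of $m$ singular fibers of $\overline{f}$, lying over the $m$ geometric points $x_1,\ldots,x_m$ of $\overline{B}$ above $x$. Each of these fibers is a pair of $(-1)$-curves $E_{i}, E_{i}' = F - E_{i}$ meeting transversally in one point. The set $S_x$ of these $2m$ curves is Galois-stable, because $f$ is defined over $k$ and Galois acts transitively on $\{x_1,\ldots,x_m\}$. Every Galois orbit in $S_x$ therefore maps onto $\{x_1,\ldots,x_m\}$, so each orbit has size at least $m$. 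The claim is that $S_x$ is a single orbit, and I would argue by contradiction.

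Suppose $S_x$ is not a single orbit. Since every orbit surjects onto the $m$ geometric points and $|S_x| = 2m$, there are exactly two orbits $O_1$ and $O_2$, each of size $m$. Each orbit then contains exactly one component from each fiber $\overline{f}^{-1}(x_i)$.

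Next I check that $O_1$ consists of pairwise skew curves. Two curves in $O_1$ lie in distinct fibers of $\overline{f}$, over distinct points $x_i \neq x_j$. Distinct fibers are disjoint, so the two curves do not meet, i.e. their intersection number is $0$. (In the basis of Theorem \ref{thm:MT2.2.2}, this is the relation $E_i\cdot E_j = 0$, or $E_i\cdot(F-E_j)=0$, for $i\ne j$.)

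It follows that $O_1$ is a Galois-stable set of pairwise skew $(-1)$-curves contained in the singular fibers of $f$. This contradicts relative minimality (Definition \ref{def:relativelyminimal}). Hence $S_x$ is a single Galois orbit.

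The only delicate point is the counting step: the orbits partition $S_x$, and each orbit must meet every fiber over the $x_i$ because Galois permutes the $x_i$ transitively. Once that is settled, no finiteness of $k$ is needed, only that $k$ is perfect, so that $\overline{f}$ is defined and Galois acts compatibly on $\overline{X}$ and $\overline{B}$.
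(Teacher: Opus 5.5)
Your proof is correct and follows the paper's argument. Galois acts transitively on the geometric points over $x$, so every orbit meets every fiber and there are at most two orbits; two orbits would each form a Galois-stable skew set, contradicting relative minimality. Your explicit count (each orbit has size at least $m$) and your check that curves in one orbit lie in distinct, hence disjoint, fibers just spell out steps the paper leaves implicit.
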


\begin{proof}
   Consider a point $P$ in $\P^1_{k}$ of degree $m$, which lies below a singular fiber. The action of $\Gal(\overline{k}/k)$ on the set of $m$ $\overline{k}$-points on $\P^1_{\overline{k}}$ above $P$ is transitive, and the action on the singular fibers above them is compatible. Therefore, the orbit of one of the $(-1)$-curves $L$ contains a $(-1)$-curve above each of the points. The components of the singular fibers therefore either form two orbits, or a single orbit. However, if they from two orbits, each orbit forms a skew Galois stable set of $(-1)$-curves, which implies the surface is not relatively minimal. We conclude that the $(-1)$-curves above $P$ form a single Galois orbit in $\Pic(\overline{X})$. 
\end{proof}

\begin{remark}\label{rem:orbitsinsingularfibersFq}
    If $k = \F_q$, the image of $\rho_X^c$ is cyclic and generated by the action of the Frobenius automorphism $\tau = \rho_X^c(\Frob_q)$. The action on the geometric points above $P$ and on the $(-1)$-curves is then cyclic. Recall from Theorem \ref{thm:MT2.2.2} that we can choose a basis for $\Pic(\overline{X})$ in such a way that the classes of the $(-1)$-curves in the singular fibers are $\{E_1, \ldots, E_r, F-E_1, \ldots, F - E_r\}$. We can freely swap $E_i$ with $F-E_i$, to obtain a basis with the same intersection properties. If there are $s$ closed points below the singular fibers, we can label the $(-1)$-curves in the singular fibers such that $\tau$ is of the form $\iota_{i_1, \ldots, i_s}\sigma$ (in the description of Notation \ref{not:W(D7)}). In particular, $s$ must be even. This gives a different proof of \cite[Corollary 2.10]{rybakov}.
\end{remark}

\begin{lemma}\label{lem:relativelyminalrank2}
     Let $f \colon X \ra B$ be a relatively minimal standard rational conic bundle over $k$. Then $\rho(X) = 2$. If $X(k) \neq \emptyset$, we have $\Pic(X) \cong K_{\overline{X}} \Z \oplus F \Z$, where $F$ is the class of a fiber of $\overline{f}$.
\end{lemma}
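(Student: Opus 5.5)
We compute the Galois-invariant part of $\Pic(\overline{X})\otimes\Q$ using the basis $\{C,F,E_1,\ldots,E_r\}$ of Theorem \ref{thm:MT2.2.2}\ref{thm:MT2.2.2(d)}, and then pass back to $\Pic(X)$.

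The lower bound $\rho(X)\ge 2$ is Proposition \ref{prop:picconicbundle}, so we only need $\rho(X)\le 2$. Since $\Pic(X)$ injects into $\Pic(\overline{X})^{\Gal(\overline{k}/k)}$, it suffices to show that the invariant sublattice $\Pic(\overline{X})^{G}$, with $G=\Gal(\overline{k}/k)$, has rank $2$.

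\textbf{Step 1: bounding the invariants.} Let $D=aC+bF+\sum_i c_iE_i$ be $G$-invariant.
\begin{itemize}
\item By Lemma \ref{lem:orbitsinsingularfibers}, for each singular fiber over a closed point $P$, the $(-1)$-curves $E_i$ and $F-E_i$ lying above $P$ form a single orbit.
\item Since $F$ is invariant, $D\cdot E_i=-c_i$ and $D\cdot(F-E_i)=a+c_i$ are constant on that orbit.
\item In particular $-c_i=a+c_i$, so $c_i=-a/2$ for every $i$.
\end{itemize}
Hence $D=a(C-\tfrac12\sum E_i)+bF$, which is determined by $(a,b)$. The invariant lattice therefore has rank at most $2$, and so $\rho(X)=2$.

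\textbf{Step 2: identifying $\Pic(X)$ when $X(k)\neq\emptyset$.} We use $\Pic(X)\cong\Pic(\overline{X})^{G}$ from \cite[Theorem 4.2.5]{BLT}. Since $K_{\overline{X}}=-2C-2F+\sum E_i = -2\bigl(C-\tfrac12\sum E_i\bigr)-2F$, every invariant class has the form $D=-\tfrac{a}{2}K_{\overline{X}}+(b-a)F$. We must show that $a$ is even and that $F$ is itself invariant.

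\begin{itemize}
\item \emph{$a$ is even.} Since $r\ge1$ there is at least one singular fiber, and $c_i=-a/2$ must be an integer, so $a$ is even.
\item \emph{$F$ is invariant.} The class of a fiber is Galois-fixed because $f$ is defined over $k$, as noted before Notation \ref{not:W(D7)}.
\end{itemize}
Consequently the invariant classes are exactly $\Z K_{\overline{X}}\oplus\Z F$. This is a direct sum because $K_{\overline{X}}$ and $F$ are linearly independent.

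The step needing the most care is the case $r=0$, where Step 2 cannot use a singular fiber. This case should be excluded, or handled separately, using the standing degree-$1$ context, where $r=7$. The only other delicate point is the identification $\Pic(X)=\Pic(\overline{X})^{G}$, which is needed only for the second statement.
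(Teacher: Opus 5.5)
Your proof is correct and is essentially the paper's argument. Both use the basis of Theorem \ref{thm:MT2.2.2}, together with Lemma \ref{lem:orbitsinsingularfibers} (that $E_j$ and $F-E_j$ lie in one Galois orbit), to force the $E_j$-coefficient of any invariant class to be minus half its $C$-coefficient, and then conclude with Proposition \ref{prop:picconicbundle} and \cite[Theorem 4.2.5]{BLT}. The only difference is how you extract that relation: you use invariance of $D\cdot E_j$ and $D\cdot(F-E_j)$ along the orbit, whereas the paper computes $\tau_j(C)$ for an explicit Galois element $\tau_j$ swapping $E_j$ and $F-E_j$ and compares $E_j$-coefficients. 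Your version is shorter, and your remark that the integral statement needs $r\geq 1$ is apt, since the paper's proof uses this tacitly as well.
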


\begin{proof}   
    Recall from Theorem \ref{thm:MT2.2.2} that we can choose a basis $\{C, F, E_1, \ldots, E_r\}$ for $\Pic(\overline{X})$ in such a way that the classes of the $(-1)$-curves in the singular fibers are $\{E_1, \ldots, E_r, F-E_1, \ldots, F - E_r\}$. Because the $(-1)$-curves are rigid effective divisors, we abuse notation and also denote the curves themselves by $E_i, F-E_i$ for $i = 1, \ldots, r$. From Lemma \ref{lem:orbitsinsingularfibers}, we deduce that for any $j \in \{1, \ldots, r\}$, we have that $E_j$ and $F - E_j$ are in the same Galois orbit, so there is an element $\tau_j \colonequals \iota_{i_1, \ldots, i_{l_j}}\sigma_j \in \rho_X^c(\Gal(\overline{k}/k))$ such that $\tau_j(E_j) = F - E_j$ (i.e. $\sigma_j$ fixes $E_j$, and $j \in \{i_1, \ldots, i_{l_j}\}$). Recall that $K_{\overline{X}} = -2C -2F + \sum_{n=1}^r E_n$, and any element of $\rho_X^c(\Gal(\overline{k}/k))$ fixes $K_{\overline{X}}$ and $F$. We have
    \[K_{\overline{X}} = \tau_j(K_{\overline{X}}) = -2 \tau(C) -2F + \sum_{n=1}^r E_n + l_j F -2 \sum_{n \in \{i_1, \ldots, i_{l_j}\}} E_n.\] We obtain \[\tau_j(C) = C  + \frac{l_j}{2}F - \sum_{n \in \{i_1, \ldots, i_{l_j}\}} E_n.\]
    Let $D \in \Pic(\overline{X})^{\Gal(\overline{k}/k)}$ be arbitrary, then we can write $D = cC + aF - \sum_{n=1}^7 b_n E_n$ for some $a, c, b_1, \ldots, b_r \in \Z$. From comparing the coefficient of $E_j$ in the expansion of $D$ and $\tau_j(D)$, we obtain that $-b_j = c + b_j$, so $c = -2b_j$. Because this has to hold for every $j = 1, \ldots, r$, we obtain that $c$ is even, and $D$ is of the form $D = b_1 K_{\overline{X}} + (a - 2)F \in \Pic(X)$. Because $K_{\overline{X}}$ and $F$ are linearly independent (Theorem \ref{thm:MT2.2.2}\ref{thm:MT2.2.2(d)}), This shows that $K_{\overline{X}}$ and $F$ form a basis for $\Pic(\overline{X})^{\Gal(\overline{k}/k)}$. We have $\Pic(X) \subseteq \Pic(\overline{X})^{\Gal(\overline{k}/k)}$, and $\rho(X) \geq 2$ by Proposition \ref{prop:picconicbundle}, so $\rho(X) = 2$. If $X(k) \neq \emptyset$, then $\Pic(X) = \Pic(\overline{X})^{\Gal(\overline{k}/k)}$.
\end{proof}

\begin{comment}
  manoy{I cannot find a reference for this :(}  
\end{comment}

\begin{comment}
second attempt
\begin{lemma}
    Let $f \colon X \ra B$ be a relatively minimal standard rational conic bundle over $k$. Then $\Pic$
\end{lemma}

\begin{proof}
    Recall from Theorem \ref{thm:MT2.2.2} that we can choose a basis $\{C, F, E_1, \ldots, E_r\}$ for $\Pic(\overline{X})$ in such a way that the classes of the $(-1)$-curves in the singular fibers are $\{E_1, \ldots, E_r, F-E_1, \ldots, F - E_r\}$. Recall that $K_{\overline{X}} = -2C -2F + \sum_{n=1}^r E_n$, and any element of $\rho_X^c(\Gal(\overline{k}/k))$ fixes $K_{\overline{X}}$ and $F$. Let $D \in \Pic(\overline{X})^{\Gal(\overline{k}/k)}$ be arbitrary, then we can write $D = cC + aF - \sum_{n=1}^7 b_n E_n$ for some $a, c, b_1, \ldots, b_r \in \Z$.
\end{proof}
\end{comment}

\begin{proposition}\label{prop:relativelyminimal-minimal}
    Let $f \colon X \ra \P^1$ be a relatively minimal standard rational conic bundle of degree 1, 2 or 4 over $\F_q$. Then $X$ is $\F_q$-minimal.
\end{proposition}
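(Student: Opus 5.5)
We argue by contradiction: assume $X$ is not $\F_q$-minimal, so there is a birational $\F_q$-morphism $X \ra X'$ that is not an isomorphism. Since it factors into blow-ups of closed points, $X$ contains a Galois-stable set of pairwise skew $(-1)$-curves $L_1, \ldots, L_m$ on $\overline{X}$ that can be contracted over $\F_q$. By Lemma \ref{lem:relativelyminalrank2} and Theorem \ref{thm:numberrationalpoints}, $\Pic(X) = K_{\overline{X}}\Z \oplus F\Z$. The class $L = L_1 + \cdots + L_m$ is Galois-invariant, hence $L = aK_{\overline{X}} + bF$ for some integers $a,b$. The plan is to use this rank-two structure, together with the intersection numbers $L\cdot K_{\overline{X}} = -m$, $L^2 = -m$ and $L\cdot F \geq 0$, to show that such a set cannot exist.

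Set $d = K_X^2 \in \{1,2,4\}$. Using $K\cdot F = -2$ and $F^2 = 0$, we get
\[L\cdot K = ad - 2b = -m,\qquad L^2 = a^2 d - 4ab = -m,\qquad L\cdot F = -2a.\]
If some $L_i$ meets $F$ trivially, it lies in a fiber and is therefore a component of a singular fiber. Its whole Galois orbit then consists of singular-fibre components, and relative minimality rules out such a skew stable set. So every $L_i$ satisfies $L_i \cdot F \geq 1$. This gives $a<0$; write $a=-c$ with $c \ge 1$, so that $L\cdot F = 2c \geq m$.

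From the two equations, $b = (m - cd)/2$, and substituting gives
\[-m = c^2 d + 2c(m - cd) = 2cm - c^2 d,\]
so $m(2c+1) = c^2 d$. Since $\gcd(c, 2c+1) = 1$, it follows that $2c+1 \mid d$. For $d \in \{1,2,4\}$, the only odd divisor is $1$, which forces $c = 0$. This contradicts $c \geq 1$, so $X$ is minimal. The restriction to degrees $1,2,4$ enters exactly here: for $d=3$ one could have $c=1$, consistent with Iskovskih's Theorem \ref{thm:twoconicbundles} only covering $d = 1,2,4$.

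I expect the main obstacle to be justifying the reduction from a non-isomorphic birational morphism to a Galois-stable set of pairwise skew $(-1)$-curves; this is Castelnuovo's criterion, as used in the paper's definition of index. A second point needing care is the use of $\Pic(X) = \Pic(\overline{X})^{\Gal}$, which holds because $X(\F_q) \neq \emptyset$ by Theorem \ref{thm:numberrationalpoints}. The other step to check is that curves with $L_i\cdot F = 0$ really are components of singular fibers. Alternatively, one could cite \cite[Theorem 4, Theorem 5]{iskovskih} directly, but the arithmetic argument above seems self-contained.
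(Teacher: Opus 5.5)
Your proof is correct, and it takes a genuinely different route from the paper.

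\textbf{The paper's route.} The proof there is two lines. Lemma \ref{lem:relativelyminalrank2} gives $\rho(X)=2$, and then Iskovskih's minimality criterion for standard conic bundles of Picard rank $2$ \cite[Theorem 4]{iskovskih} is quoted as a black box.

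\textbf{Your route.} You prove the needed case of that criterion directly. You write the class $L$ of a contractible Galois-stable set of $m$ skew $(-1)$-curves in the invariant lattice $\Z K_{\overline{X}}\oplus\Z F$. You then use relative minimality a second time to force $L\cdot F=2c\geq m\geq 1$. This reduces everything to the condition $m(2c+1)=c^2d$, which has no solution with $c\geq 1$ precisely when $d$ has no odd divisor $\geq 3$. All the intersection computations and the $\gcd$ step check out.

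\textbf{What each route buys.} Your approach is self-contained and pinpoints exactly where $d\in\{1,2,4\}$ enters. It also shows why $d=3,5,6$ behave differently: the equation then has solutions, for example $d=3$, $c=m=1$, $L=-K_{\overline{X}}-F$. It gives minimality for $d\leq 0$ for free. It never uses that the field is finite or that $X(\F_q)\neq\emptyset$, since $L$ automatically lies in $\Pic(\overline{X})^{\Gal}$. The paper's citation buys brevity and Iskovskih's full classification over arbitrary fields.

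\textbf{Points to tighten.}
\begin{itemize}
\item You need the integral statement $\Pic(\overline{X})^{\Gal}=\Z K_{\overline{X}}\oplus\Z F$ established in the proof of Lemma \ref{lem:relativelyminalrank2}, not merely $\rho(X)=2$. That proof uses $r\geq 1$, which holds here. Integrality is what makes your divisibility step work.
\item The reduction from a non-isomorphic birational morphism to a Galois orbit of skew $(-1)$-curves is the factorization into blow-ups of closed points \cite[Theorem 21.4]{manin}, not Castelnuovo's criterion, which is the converse direction.
\item The Iskovskih result relevant to your closing remark is his Theorem 4. Theorem \ref{thm:twoconicbundles} is his Theorem 5, about del Pezzo surfaces with two conic bundle structures.
\end{itemize}
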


\begin{proof}
    We have that $\rho(X) = 2$ by Lemma \ref{lem:relativelyminalrank2}. The result follows from \cite[Theorem 4]{iskovskih}.
\end{proof}

\begin{corollary}[Corollary of Theorem \ref{thm:rybakov}]\label{cor:rybakov}
    Let $x_1, \ldots, x_s \in \P^1_{\F_q}$ be a set of closed points of one of the following forms:
    \begin{enumerate}
        \item five $\F_q$-rational points and one point of degree 2;
        \item two $\F_q$-rational points, one point of degree 2 and one point of degree 3;
        \item one $\F_q$-rational point and three points of degree 2;
        \item three $\F_q$-rational points and one point of degree 4;
        \item one $\F_q$-rational point and one point of degree 6;
        \item one point of degree 2 and one point of degree 5;
        \item one point of degree 3 and one point of degree 4.
    \end{enumerate}
    Then there exists a smooth minimal rational surface $X$ and a standard conic bundle structure $f \colon X \ra \P^1$ over $\F_q$, such that the fibers above $x_1, \ldots, x_s$ are singular, and all other fibers are smooth. 
    
    Conversely, let $f \colon X \ra \P^1$ be a minimal standard rational conic bundle of degree 1. Then the singular fibers of $f$ lie above points in $\P^1_{\F_q}$ of one of the seven forms listed above.
\end{corollary}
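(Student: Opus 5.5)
The plan is to reduce both directions to Theorem \ref{thm:rybakov}, Proposition \ref{prop:relativelyminimal-minimal} and Theorem \ref{thm:MT2.2.2}, with a combinatorial enumeration in the middle.

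\textbf{Existence.} A standard conic bundle of degree $1$ has $r = 8 - 1 = 7$ singular geometric fibers by Theorem \ref{thm:MT2.2.2}. Each of the seven configurations listed has degrees summing to $7$: $5+2$, $1+1+2+3$, $1+2+2+2$, $1+1+1+4$, $1+6$, $2+5$, $3+4$. Each also consists of an even number $s$ of closed points ($6,4,4,4,2,2,2$ respectively). Theorem \ref{thm:rybakov} then gives a smooth rational surface $X$ with a relatively minimal standard conic bundle $f\colon X \ra \P^1$ over $\F_q$, singular exactly above $x_1,\dots,x_s$. The number of geometric singular fibers is $\sum \deg x_i = 7$, so $X$ has degree $8-7=1$. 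Proposition \ref{prop:relativelyminimal-minimal} (degree $1$) then shows $X$ is $\F_q$-minimal.

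\textbf{Converse.} Let $f\colon X\ra\P^1$ be a minimal standard rational conic bundle of degree $1$.
\begin{enumerate}
    \item A minimal surface is relatively minimal: a Galois-stable set of skew $(-1)$-curves in the singular fibers could be contracted over $\F_q$ by Castelnuovo.
    \item By Theorem \ref{thm:MT2.2.2} there are $7$ geometric singular fibers. They lie above closed points $x_1,\dots,x_s$ of $\P^1_{\F_q}$ with $\sum \deg x_i = 7$.
    \item By Remark \ref{rem:orbitsinsingularfibersFq} (equivalently the ``only if'' part of Theorem \ref{thm:rybakov}), $s$ is even.
\end{enumerate}
It remains to enumerate the partitions of $7$ into an even number of parts:
\begin{itemize}
    \item two parts: $6+1$, $5+2$, $4+3$;
    \item four parts: $4+1+1+1$, $3+2+1+1$, $2+2+2+1$;
    \item six parts: $2+1+1+1+1+1$.
\end{itemize}
This is exactly the list (1)--(7).

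\textbf{Caveat.} I expect no real obstacle. The one point to check is that the ``if and only if'' in Theorem \ref{thm:rybakov} also covers the parity restriction in the converse, which Remark \ref{rem:orbitsinsingularfibersFq} already supplies. Existence of enough closed points of each degree on $\P^1_{\F_q}$ is not needed here, since the statement takes the points as given.
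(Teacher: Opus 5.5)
Your proof is correct and follows essentially the same route as the paper: apply Theorem \ref{thm:rybakov} with degrees summing to $7$, upgrade relative minimality to minimality via Proposition \ref{prop:relativelyminimal-minimal}, and match the seven partitions of $7$ into an even number of parts with the listed configurations. You also make explicit two steps the paper leaves implicit: that $\sum \deg x_i = 7$ forces degree $1$, and that minimal implies relatively minimal for the converse.
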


\begin{proof}
    Consider Theorem \ref{thm:rybakov} in the case of degree 1, that is, when the sum of the degrees of $x_1, \ldots, x_s$ is equal to 7. In this case, by Proposition \ref{prop:relativelyminimal-minimal}, we can replace ``relatively minimal'' by ``minimal'' in the statement of Theorem \ref{thm:rybakov}. There are seven partitions of the integer 7 into an even number of parts, which correspond to the seven listed configurations of an even number of closed points whose degrees sum to seven. 
\end{proof}

Corollary \ref{cor:rybakov} gives us a characterization of possible minimal standard rational conic bundles of degree 1 over $\F_q$ in terms of the Galois configuration of the singular fibers. There are seven possible configurations. Recall that the types 1 to 7 in \cite[Table 2]{urabe2} correspond to minimal del Pezzo surfaces of degree 1 which are standard rational conic bundles. In this section we match the types 1 to 7 with the characterization in \ref{cor:rybakov}.

\subsection{Description of Galois orbits of $(-1)$-curves}\label{sec:Galoisorbits}

Let $X$ be a del Pezzo surface of degree 1 over a finite field $\F_q$. Recall that the type of $X$ is determined by the image of the Frobenius automorphism under the map $\rho_X$ in \eqref{eq:GaloisrepdP}. Because this map is obtained via the identification of $W(E_8)$ with the group of permutations of the $(-1)$-curves preserving the intersection pairing, the type of $X$ determines the size and intersection pattern of the Galois orbits of the $(-1)$-curves. For each type, we can partition the set of 240 $(-1)$-curves into Galois orbits, and determine the intersection pattern of the lines within each orbit. 

\begin{example}
    Consider a del Pezzo surface of degree 1 over $\F_q$ of type 1. We can find in Urabe's table \cite[Table 2]{urabe2} that there are 30 orbits of size 2, which separate into orbits with two different intersection patterns. The other 180 lines are divided into 45 orbits of size four of various intersection types.
\end{example}

Note that the Galois action on the set of lines is cyclic, generated by the image of the Frobenius automorphism $\tau = \rho_X(\Frob_q)$. Hence, it is enough to specify for one element in the orbit how it intersects the others. 

\begin{notation}\label{not:orbits}
Consider Galois orbit of $(-1)$-curves of size $m$, containing a line $L_1$. Let us write $a_i \colonequals L_1 \cdot \tau^i(L_1)$ for $i = 1, \ldots, m-1$. We write $[m \mid a_1, \ldots, a_{m-1}]$ to specify the intersection pattern of this orbit. 
\end{notation}

We use Magma \cite{magma} to compute the intersection numbers between lines within each orbit (see the ancillary file, or \cite{github_Existence_dP1}). Because the action of $\tau$ preserves the intersection pairing, the numbers $a_1, \ldots, a_{m-1}$ completely determine the intersection numbers of all pairs of lines in the orbit.

\begin{remark}
    We note two mistakes in Table 2 in \cite{urabe2}: In line 22, the orbit $3^1$ in the last column should be replaced by $2^1$. Furthermore, in line 87, in the last column, there is a collection of orbits of the form $3^6$ missing.
\end{remark}

\subsection{Galois orbits of $(-1)$-curves in singular fibers of conic bundle}\label{sec:Galoisorbitsfibers}
	
Now suppose $X$ is moreover a minimal standard conic bundle over $\F_q$. Note that $X$ is a geometrically rational surface, and $X(\F_q) \neq \emptyset$ (Theorem \ref{thm:numberrationalpoints}). By Remark \ref{rem:rationalconicbundle} and Theorem \ref{thm:MT2.2.2}, there is an $\F_q$-morphism $f \colon X \ra \P^1$, with 7 singular fibers over $\overline{\F}_q$, which consist of pairs of $(-1)$-curves intersecting transversally. From Corollary \ref{cor:rybakov}, we have seven possibilities for the degrees of points in $\P^1$ above which the singular fibers lie. In Remark \ref{rem:orbitsinsingularfibersFq}, we explicitly described the Galois action on the $(-1)$-curves in the singular fibers above a point of degree $m$. The lines form a Galois orbit of size $2m$, with intersection pattern $[ 2m \mid 0, \ldots, 0, 1, 0 \ldots, 0]$ with the 1 in the $m$-th (middle) position (Notation \ref{not:orbits}). 
This translates to the following orbits of $(-1)$-curves for each possibility in Corollary \ref{cor:rybakov}:
\begin{enumerate}
    \item Five orbits of type $[2 \mid 1]$ and one orbit of type $[4 \mid 0, 1, 0]$;
    \item  Two orbits of type $[2 \mid 1]$, one orbit of type $[4 \mid 0, 1, 0]$ and one orbit of type $[6 \mid 0, 0, 1, 0, 0]$;
    \item One orbit of type $[2 \mid 1]$ and three orbits of type $[4 \mid 0, 1, 0]$;
    \item Three orbits of type $[2 \mid 1]$ and one orbit of type $[8 \mid 0, 0, 0, 1, 0, 0, 0]$;
    \item One orbit of type $[2 \mid 1]$ and one orbit of type $[12 \mid 0, 0, 0, 0, 0, 1, 0, 0, 0, 0, 0]$;
    \item  One orbit of type $[4 \mid 0, 1, 0]$ and one orbit of type $[10 \mid 0, 0, 0, 0, 1, 0, 0, 0, 0]$;
    \item  One orbit of type $[6 \mid 0, 0, 1, 0, 0]$ and one orbit of type $[8 \mid 0, 0, 0, 1, 0, 0, 0]$.
\end{enumerate}

Recall that the types 1 to 7 in \cite[Table 2]{urabe2} correspond to minimal del Pezzo surfaces of degree 1 with a standard conic bundle structure. The Galois action on the $(-1)$-curves, which is uniquely determined by the type, must therefore admit orbits of one of the seven listed possibilities above. For each of the types 1 to 7, we computed the intersection pattern for all occurring orbits using Magma (ancillary file or \cite{github_Existence_dP1}). We then determine which of the orbit combinations above occurs for each type, and we conclude that there is a one-to-one correspondence between the types 1 to 7 and the seven possible configurations of the singular fibers in Corollary \ref{cor:rybakov}. The correspondence is given in Table \ref{tab:pointsbelowsingularfibers}.

\begin{table}[h]
    \centering
   \begin{tabular}{|c|l|}
   \hline
    Type & Singular fibers over: \\
    \hline
    1 & five $\F_q$-rational points and one point of degree 2; \\
    2 & two $\F_q$-rational points, one point of degree 2 and one point of degree 3;\\
    3 & one $\F_q$-rational point and three points of degree 2;\\
    4 & three $\F_q$-rational points and one point of degree 4;\\
    5 & one $\F_q$-rational point and one point of degree 6;\\
    6 & one point of degree 2 and one point of degree 5;\\
    7 & one point of degree 3 and one point of degree 4.\\
    \hline
\end{tabular}
    \caption{}
    \label{tab:pointsbelowsingularfibers}
\end{table}

\begin{definition}\label{def:typeconicbundles}
    Let $f \colon X \ra \P^1$ be a minimal rational standard conic bundle of degree 1 over $\F_q$. Then we say $f$ is of \term{type} $1, \ldots, 7$, based on the configuration of its singular fibers as in Table \ref{tab:pointsbelowsingularfibers}. If $X$ is moreover a del Pezzo surface, this corresponds to our previous definition of type. 
\end{definition}

Recall from Theorem \ref{thm:twoconicbundles} that every minimal del Pezzo surface of degree 1 with a conic bundle structure has two distinct conic bundle structures. Because the type of a del Pezzo surface uniquely determines the degrees of the points over which the singular fibers of any conic bundle structure lie, this configuration has to be the same for both conic bundle structures. 

\begin{corollary}
    Let $X$ be a minimal del Pezzo surface of degree 1 over $\F_q$ with two distinct conic bundle structures. Then both are of the same type.
\end{corollary}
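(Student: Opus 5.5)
The plan is to show that the type of each conic bundle structure $f_i \colon X \ra \P^1$ ($i = 1, 2$) is already fixed by the type $C(X)$ of the del Pezzo surface. First, for each $f_i$ separately: since $X$ is minimal, $f_i$ is a minimal standard rational conic bundle of degree 1. (It is standard because every conic bundle structure on a del Pezzo surface is standard, as recalled in \S\ref{sec:autpicconicbundle}.) So Corollary \ref{cor:rybakov} applies. The singular fibers of $f_i$ therefore lie over closed points of one of the seven configurations, and Definition \ref{def:typeconicbundles} assigns $f_i$ a type in $\{1, \ldots, 7\}$.

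Second, I relate this configuration to the Galois orbits of $(-1)$-curves. By Lemma \ref{lem:orbitsinsingularfibers} and Remark \ref{rem:orbitsinsingularfibersFq}, the components of the singular fibers of $f_i$ over a point of degree $m$ form one Frobenius orbit of size $2m$, with intersection pattern $[2m \mid 0, \ldots, 0, 1, 0, \ldots, 0]$. So the configuration of $f_i$ determines a specific multiset of such orbits among the 240 lines. This multiset is one of the seven lists (1)--(7) in \S\ref{sec:Galoisorbitsfibers}.

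Third, the Frobenius image $\rho_X(\Frob_q)$ is well defined up to conjugacy in $W(E_8)$, by Theorem \ref{thm:manin23.9}. Its conjugacy class determines the full orbit decomposition of the 240 lines, including the intersection patterns of Notation \ref{not:orbits}, and this decomposition does not depend on which conic bundle structure we use. The Magma computation summarized in Table \ref{tab:pointsbelowsingularfibers} shows that each type among 1--7 has exactly one of the seven orbit combinations (1)--(7) occurring among its line orbits. Hence both $f_1$ and $f_2$ have the configuration listed in the table for $C(X)$, and so they have the same type.

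The main obstacle is the uniqueness in the third step. Type $C(X)$ might contain orbits realizing two different combinations from the list, for instance several $[2 \mid 1]$ and $[4 \mid 0,1,0]$ orbits that could be grouped as either (1) or (3). That would make the two conic bundles a priori of different types. I would rule this out by appealing to the computed correspondence as a bijection matching types to configurations. If a coincidence did occur, I would refine the argument using the fact that the 14 fiber components of each $f_i$ sum, within each fiber, to the fixed class $F_i$. This constraint lets one check which groupings of orbits are actually admissible.
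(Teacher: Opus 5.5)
Your proposal is correct and is essentially the paper's argument: the paper also derives the corollary from the Magma-based one-to-one correspondence in Table \ref{tab:pointsbelowsingularfibers}, i.e.\ from the fact that $C(X)$ alone determines the degrees of the points under the singular fibers of any conic bundle structure on $X$. The coincidence you worry about is excluded by that correspondence, since all that is needed is that distinct configurations give distinct classes in $W(E_8)$. It can even be checked without a computer: the Frobenius order (Table \ref{tab:orders}) together with the trace $a(X) = 2 - (\text{number of } \F_q\text{-rational singular fibers})$, obtained by counting points fiber by fiber, already separates the seven configurations.
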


\begin{comment}
\end{comment}

\subsection{The Frobenius action as an element in $(\Z/2\Z)^{6} \rtimes S_7$}\label{sec:conjugacyclassfrob}

Let $f \colon X \ra \P^1$ be a minimal standard conic bundle over $\F_q$ of degree 1. The action of $\Gal(\overline{\F}_q/\F_q)$ on $\Pic(\overline{X})$ is cyclic. We write $\Gamma_X \colonequals \langle \rho_X^c(\Frob_q)\rangle$ for the subgroup of $(\Z/2\Z)^{6} \rtimes S_7 \subseteq \Aut(\Pic(\overline{X}))$ that is induced by this Galois action. In \S\ref{sec:Galoisorbitsfibers} we described the Galois orbits of the $(-1)$-curves within the singular fibers, depending on the type of $f$. This description allows us to determine $\rho_X^c(\Frob_q)$ as an element of $(\Z/2\Z)^{6} \rtimes S_7$ in terms of Notation \ref{not:W(D7)}, up to conjugation, and its order, given in the table below.
\begin{table}[h]
    \centering
    \begin{tabular}{c|c c c c c c c}
        Type & 1 & 2 &3 &4 &5 &6 & 7\\
        \hline
        order of Frobenius action & 4 & 12 & 4 & 8 & 12 & 20 & 24
    \end{tabular}
    \caption{}
    \label{tab:orders}
\end{table}

We also obtain the following lemma.

\begin{lemma}\label{lem:galoisorbitnn/2}
    Let $\Gamma_X = \langle \tau \rangle$, where $\tau$ is the Frobenius action. Let $n = \ord(\Gamma_X)$. Then $\tau^{n/2}$ is of the form
    \begin{enumerate}
        \item $\iota_{ij}$ if $f$ is of type 1, 2 or 6,
        \item $\iota_{ijkl}$ if $f$ is of type 4 or 7,
        \item $\iota_{ijklmn}$ if $f$ is of type 3 or 5, 
    \end{enumerate}
    for distinct $i, j, k, l, m, n \in \{1, \ldots, 7\}$.
\end{lemma}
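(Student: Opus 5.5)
We will compute $\tau$ explicitly as an element $\iota_{i_1\cdots i_s}\sigma$ of $(\Z/2\Z)^6\rtimes S_7$, using Remark \ref{rem:orbitsinsingularfibersFq}, and then compute its powers.

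\textbf{Normal form of $\tau$.} By Remark \ref{rem:orbitsinsingularfibersFq}, after relabelling the singular fibers and possibly swapping $E_i$ with $F-E_i$, we may take $\tau=\iota_{i_1\cdots i_s}\sigma$. Here $\sigma\in S_7$ has cycle type given by the degrees $m_1,\dots,m_s$ of the closed points below the singular fibers (Table \ref{tab:pointsbelowsingularfibers}). The index set $\{i_1,\dots,i_s\}$ contains exactly one index from each cycle of $\sigma$. Concretely, for a cycle $(c_1\,c_2\cdots c_m)$ we arrange $E_{c_1}\mapsto E_{c_2}\mapsto\cdots\mapsto E_{c_m}\mapsto F-E_{c_1}$. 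This matches the orbit pattern $[2m\mid 0,\dots,0,1,0,\dots,0]$ of \S\ref{sec:Galoisorbitsfibers}.

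\textbf{Powers of $\tau$.} The action of $\tau$ on the classes $\{E_c, F-E_c\}$ of a single cycle of length $m$ has order exactly $2m$. Moreover, $\tau^m$ sends $E_c\mapsto F-E_c$ for each $c$ in that cycle, so it acts as $\iota$ on all $m$ indices of the cycle. More generally, $\tau^k$ restricted to this cycle is the identity iff $2m\mid k$, and equals the full swap $\iota$ on the cycle iff $k\equiv m \pmod{2m}$.

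Hence $n=\operatorname{lcm}(2m_1,\dots,2m_s)$, which reproduces Table \ref{tab:orders}. On the cycle of length $m_j$, $\tau^{n/2}$ is trivial if $2m_j\mid n/2$, and is the full swap if $n/2\equiv m_j\pmod{2m_j}$, that is, if $n/(2m_j)$ is odd. Since $\tau^{n/2}$ acts on each cycle either trivially or as the swap, and $\sigma^{n/2}=\mathrm{id}$, we get $\tau^{n/2}=\iota_I$. Here $I$ is the union of those cycles for which the $2$-adic valuation $v_2(m_j)$ is maximal among all the $m_j$.

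\textbf{Case check.} It remains to read off $|I|$ type by type.

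\begin{itemize}
\item Type 1, degrees $1^5,2$: maximal $v_2$ is attained only by the $2$-cycle, so $|I|=2$.
\item Type 2, degrees $1,1,2,3$: the $2$-cycle gives $|I|=2$.
\item Type 6, degrees $2,5$: $|I|=2$.
\item Type 4, degrees $1^3,4$: $|I|=4$.
\item Type 7, degrees $3,4$: $|I|=4$.
\item Type 3, degrees $1,2,2,2$: the three $2$-cycles give $|I|=6$.
\item Type 5, degrees $1,6$: the $6$-cycle gives $|I|=6$.
\end{itemize}

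The indices in $I$ are distinct, as the lemma requires. The statement is up to conjugation and relabelling, so the normal-form choice is harmless.

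\textbf{Main obstacle.} The only delicate point is justifying the normal form. One must show that a single $\iota$-index per cycle can be arranged by the allowed swaps $E_i\leftrightarrow F-E_i$. The needed fact is that an odd number of flips around a cycle of length $m$ is equivalent to one flip, because conjugating by the swap on intermediate indices moves flips along the cycle in pairs. An even number of flips is excluded because it would split the fiber classes into two Galois orbits, contradicting Lemma \ref{lem:orbitsinsingularfibers}. Once this is in place, the rest is the computation above.
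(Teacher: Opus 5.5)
Your proposal is correct and is essentially the paper's argument. The paper observes that a $(-1)$-curve in a singular fiber over a point of degree $m$ (orbit size $2m$) is fixed by $\tau^{n/2}$ exactly when $2m \mid n/2$ and is otherwise swapped with the other component of its geometric fiber, leaving the type-by-type count implicit; your normal form, $2$-adic valuation criterion and explicit case check spell this out (the normal form is not strictly needed, since the orbit size $2m$ alone forces $\tau^m$ to swap $E_c$ and $F-E_c$).
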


\begin{proof}
    Note that each $(-1)$-curve in a singular fiber is fixed by $\tau^{n/2}$ if and only if its orbit size divides $n/2$. Otherwise, it is swapped with the other $(-1)$-curve in the same geometric fiber. 
\end{proof}

\subsection{Existence of minimal standard conic bundles over $\F_q$}\label{sec:nonexistence}

We finish this section with a first observation about the non-existence of del Pezzo surfaces of types 1 and 3 for small $q$.

\begin{lemma}\label{lem:smallfields}
    Let $\F_q$ be a finite field with $q$ elements. There exists a minimal rational standard conic bundle $X \ra \P^1$ over $\F_q$ with $K_X^2 = 1$ of each of the types 1 to 7, except for type 1 when $q = 2, 3$ and type 3 when $q = 2$. It follows that a del Pezzo surface of degree 1 of type 1 cannot be achieved over $\F_2$ and $\F_3$, and a del Pezzo surface of degree 1 of type 3 cannot be achieved over $\F_2$. 
\end{lemma}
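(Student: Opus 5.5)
By Corollary \ref{cor:rybakov}, a minimal rational standard conic bundle of degree 1 of a given type exists over $\F_q$ if and only if $\P^1_{\F_q}$ contains a set of distinct closed points of the degrees listed in Table \ref{tab:pointsbelowsingularfibers}. So the lemma reduces to counting closed points of each degree on $\P^1_{\F_q}$.

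The number $N_d(q)$ of closed points of degree $d$ on $\P^1_{\F_q}$ is
\[N_1(q) = q+1, \qquad N_d(q) = \frac{1}{d}\sum_{e \mid d} \mu(d/e)\, q^e \quad (d \geq 2),\]
since for $d \geq 2$ these are exactly the monic irreducible polynomials of degree $d$ over $\F_q$. In particular $N_d(q) \geq 1$ for all $d \geq 1$ and all $q$, and $N_2(q) = (q^2-q)/2$.

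Now I go through the types.
\begin{enumerate}
\item Type 1 needs five rational points, i.e. $q+1 \geq 5$, and one point of degree 2. This holds exactly when $q \geq 4$, so it fails for $q = 2, 3$.
\item Type 2 needs two rational points (fine since $q+1 \geq 3$) and one point each of degrees 2 and 3. This always works.
\item Type 3 needs one rational point and three distinct points of degree 2, i.e. $N_2(q) \geq 3$. For $q = 2$ we have $N_2 = 1$, the single polynomial $x^2+x+1$, so this fails. For $q = 3$ we have $N_2 = 3$, so it works, and for larger $q$ there are more.
\item Type 4 needs three rational points ($q+1 \geq 3$ always) and one point of degree 4. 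This always works.
\item Types 5, 6 and 7 each need points of two different degrees, one of each. This always works.
\end{enumerate}
Since the degrees in each configuration sum to 7, Corollary \ref{cor:rybakov} then gives existence in every case except type 1 for $q = 2, 3$ and type 3 for $q = 2$. The converse direction of the corollary gives non-existence in those three cases, because any minimal conic bundle of that type would need that configuration of singular fibers.

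For the del Pezzo statement: a del Pezzo surface of degree 1 of type 1 or 3 is minimal, and by the discussion in \S\ref{sec:InverseGaloisProblem} it has a conic bundle structure. By Definition \ref{def:typeconicbundles}, this conic bundle is a minimal rational standard conic bundle of degree 1 of the same type. It is standard because any conic bundle structure on a del Pezzo surface is standard. Such a conic bundle does not exist in the excluded cases, which proves the non-existence claims.

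There is no real obstacle here. The only point needing care is correctly identifying, via Definition \ref{def:typeconicbundles} and Table \ref{tab:pointsbelowsingularfibers}, that a del Pezzo surface of the given type induces a conic bundle of the same type.
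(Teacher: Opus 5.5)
Your proposal is correct and follows the paper's own argument. Via Corollary \ref{cor:rybakov}, the question reduces to whether $\P^1_{\F_q}$ has closed points of the required degrees, and the only failures are the absence of five $\F_q$-points for $q=2,3$ and of three degree-$2$ points for $q=2$; your explicit point counts and your remark that a del Pezzo surface of type 1 or 3 induces a minimal standard conic bundle of the same type just spell out details the paper leaves implicit.
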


\begin{proof}
    There are no five $\F_q$-points in $\P^1_{\F_q}$ for $q = 2, 3$, and there are no three points of degree 2 in $\P^1_{\F_2}$. The result follows directly from Corollary \ref{cor:rybakov}.
\end{proof}

\section{Existence of del Pezzo surfaces of types 1 to 7.}\label{sec:method1}

In what follows, we use the existence of a minimal standard rational conic bundle of degree 1 over $\F_q$ of each type in Table \ref{tab:pointsbelowsingularfibers}, granted by Lemma \ref{lem:smallfields}, to construct a birationally equivalent minimal standard conic bundle $X$ of the same type with $-K_X$ ample, that is, a minimal del Pezzo surface with a conic bundle structure.

\subsection{Elementary transformations}\label{sec:elementarytransformations}

We will repeatedly use the following type of transformation (see for example \cite[\S2.3]{sarkisov}). In this subsection, let $k$ be an arbitrary field.
\begin{definition}
    Let $f \colon X \ra B$ be a standard rational conic bundle over $k$. Let $P \in X$ be a closed point on a smooth fiber $F_1$ of $f$, chosen such that $f(P)$ and $P$ are points of the same degree. Let $\pi_P \colon X'\ra X$ be the blow-up of $X$ at the point $P$. The strict transform of $F_1$ on $\overline{X'}$ is a skew orbit of $(-1)$-curves. Let $\pi_{F_1} \colon X' \ra Y$ be the morphism obtained by contracting this orbit on $X'$. The birational map $\pi_{F_1} \circ \pi_P^{-1} \colon X \dashrightarrow Y$ is called the \term{elementary transformation centered at $P$}. We will denote this map by $\varphi_{\scriptscriptstyle P}$. Any birational map constructed in this way is called an \term{elementary transformation}.
\end{definition}
 
\begin{proposition}\label{prop:elementarytransformations}
    Let $f \colon X \ra B$ be a standard rational conic bundle over $k$. Let $\varphi \colon X \dashrightarrow Y$ be an elementary transformation. Then there is a morphism $f' \colon Y \ra B$ such that $f' \circ \varphi = f$, and $f'$ is again a standard rational conic bundle with $K_{Y}^2 = K_X^2$.
\end{proposition}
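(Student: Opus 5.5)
The plan is to work geometrically over $\overline{k}$ first and then descend. Let $d$ be the degree of the point $P$ and let $P_1,\dots,P_d$ be its geometric points. They lie on the $d$ distinct smooth geometric fibers $F_1,\dots,F_d$ over the geometric points of $f(P)$. The hypothesis that $f(P)$ and $P$ have the same degree guarantees that each fiber $F_i$ contains exactly one $P_i$.

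\textbf{Step 1: local computation on $\overline{X}$.}
\begin{itemize}
\item Blow up at $P_1,\dots,P_d$ to get $\pi_P\colon \overline{X'}\to\overline{X}$, with exceptional curves $G_i$ and $K_{X'}^2=K_X^2-d$.
\item The strict transform $\widetilde F_i$ has class $F-G_i$. Hence $\widetilde F_i^2=-1$ and $K_{X'}\cdot\widetilde F_i=-2+1=-1$, so $\widetilde F_i$ is a $(-1)$-curve.
\item The curves $\widetilde F_i$ are pairwise disjoint, since distinct fibers are disjoint and the $P_i$ are distinct.
\item The set $\{\widetilde F_i\}$ is Galois-stable, because it is the preimage of a $k$-closed point of $B$.
\item Castelnuovo's contractibility criterion (\cite[Theorem 9.3.3]{poonen}) then gives a smooth surface $Y$ over $k$ and a $k$-morphism $\pi_{F}\colon X'\to Y$ contracting exactly these curves, with $K_Y^2=K_{X'}^2+d=K_X^2$.
\end{itemize}

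\textbf{Step 2: the morphism $f'$.} The composite $f\circ\pi_P\colon X'\to B$ is a $k$-morphism that is constant on each contracted curve $\widetilde F_i$.
\begin{itemize}
\item Since $Y$ is normal and $\pi_F$ is a birational contraction with connected fibers, rigidity lets $f\circ\pi_P$ factor through $\pi_F$. A concrete way to see this is that the pullback of a very ample divisor of $B$ has zero intersection with each $\widetilde F_i$.
\item This gives $f'\colon Y\to B$ with $f'\circ\pi_F=f\circ\pi_P$, hence $f'\circ\varphi=f$ as rational maps.
\item Descent to $k$ is automatic: both maps are defined over $k$, and the factorization is unique because $\pi_F$ is surjective with connected fibers.
\end{itemize}

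\textbf{Step 3: $f'$ is a standard rational conic bundle.}
\begin{itemize}
\item Away from $f(P)$, $\varphi$ is an isomorphism compatible with the projections. So the generic fiber is unchanged, smooth of genus $0$, and the singular fibers outside $f(P)$ are still pairs of transversal $(-1)$-curves.
\item Over each geometric point of $f(P)$, the fiber of $f'$ is the image of $G_i\cup\widetilde F_i$, which is $\pi_F(G_i)$. The curve $G_i$ meets $\widetilde F_i$ transversally in one point, so $\pi_F(G_i)$ is a smooth rational curve with $(G_i+\widetilde F_i)^2=0$. Thus this fiber is smooth.
\item Rationality follows from Remark \ref{rem:rationalconicbundle}, since $B$ is unchanged.
\end{itemize}

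The step I expect to need the most care is the factorization of $f\circ\pi_P$ through the contraction in Step 2. I would handle it via the Stein factorization/rigidity lemma on the normal surface $Y$, or alternatively cite the standard description of elementary transformations in \cite[\S2.3]{sarkisov}. The rest is routine intersection arithmetic.
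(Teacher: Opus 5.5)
Your proposal is correct. The paper gives no proof of this proposition; it treats it as standard, with a pointer to \cite[\S2.3]{sarkisov} when elementary transformations are introduced. Your argument is the standard verification that reference stands for: the strict transforms $\widetilde F_i$ form a Galois-stable set of disjoint $(-1)$-curves, Castelnuovo contraction over $k$ gives $K_Y^2=K_X^2$, $f\circ\pi_P$ factors through $\pi_F$ by rigidity/Zariski's main theorem, and the new fibers are identified with $\pi_F(G_i)$.

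One point is worth writing out explicitly: the new fiber is reduced, not merely smooth in its support. This follows from
\[
\pi_F^*\pi_F(G_i) = G_i + \widetilde F_i = \pi_P^*F_i = \pi_F^*\bigl(f'^*(b_i)\bigr),
\]
which gives $f'^*(b_i)=\pi_F(G_i)$ with multiplicity one.
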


\begin{comment}
\end{comment}

By construction, the inverse of an elementary transformation is again an elementary transformation.

\begin{comment}
     Conversely, if $X$ and $Y$ are two minimal standard rational conic bundles which are birational, then there exists a birational map $X \dashrightarrow Y$ which is the composition of a finite number of elementary transformations. 
\end{comment}

\begin{definition}
    Let $f \colon X \ra B$ be a conic bundle over $k$. Let $D$ be an irreducible curve on $X$. If $D$ is not a (component of a) fiber of $f$, then $D \cdot F = n \geq 1$. We call $D$ a \term{multisection}, or, more specifically, an \term{$n$-section}, of the conic bundle. If $n = 1$ we call $D$ a \term{section}, and if $n = 2$ we call $D$ a \term{bisection}. 
\end{definition}

\begin{lemma}\label{lem:nsection}
    Let $f \colon X \ra B$ be a standard rational conic bundle over $k$, and let $P$ be a rational point on a smooth fiber $F_1$ of $f$. Consider the elementary transformation $\varphi_{\scriptscriptstyle P} = \pi_{F_1} \circ \pi_P^{-1} \colon X \dashrightarrow Y$ at $P$. Let $D$ be an $n$-section on $X$. Let $\pi_P^{-1}(D)$ denote its strict transform under the morphism $\pi_P$. Then $\pi_{F_1}(\pi_P^{-1}(D))$ is an $n$-section on the conic bundle $f \circ \varphi_{\scriptscriptstyle P}^{-1} \colon Y \ra B$. We will denote this curve by $\varphi_{\scriptscriptstyle P}(D)$.
\end{lemma}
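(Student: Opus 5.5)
The argument has three parts: $D$ survives as an irreducible curve on $Y$, it is not contained in a fibre, and its intersection number with a fibre of $f' = f\circ\varphi_{\scriptscriptstyle P}^{-1}$ is still $n$.

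\emph{Irreducibility.} The strict transform $\tilde D \colonequals \pi_P^{-1}(D)$ is an irreducible curve on $X'$. It is not a component of the strict transform $\tilde F_1$ of $F_1$, since $D$ is not a component of a fibre. So $\pi_{F_1}$, which only contracts $\tilde F_1$, maps $\tilde D$ birationally onto an irreducible curve $D_Y \subset Y$. Using $f'\circ\pi_{F_1} = f\circ\pi_P$ (Proposition \ref{prop:elementarytransformations}), the image $f'(D_Y) = f(D)$ is all of $B$. Hence $D_Y$ is not contained in a fibre of $f'$, and so it is an $m$-section for some $m\geq 1$.

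\emph{Computing $m$.} Fibre classes are all linearly equivalent, so compute $D_Y\cdot F_Y$ with $F_Y$ the class of a fibre of $f'$ over a point $b\neq f(F_1)$ with smooth fibre. The fibre $F_b$ of $f$ over $b$ is disjoint from $P$ and from $F_1$, so $\pi_P$ and $\pi_{F_1}$ are isomorphisms near its preimages. Pulling back gives $\pi_P^*F_b = \pi_{F_1}^*F_Y$, both being the fibre of $f\circ\pi_P$ over $b$. The projection formula then gives
\[ D_Y\cdot F_Y = \tilde D\cdot \pi_{F_1}^*F_Y = \tilde D\cdot \pi_P^*F_b = D\cdot F_b = n. \]
This is the intersection number of the fibre with the strict transform, and it is unaffected by any multiplicity of $D$ at $P$. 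The equality $\tilde D\cdot \pi_P^*F_b = D\cdot F_b$ holds because $\pi_P^*F_b$ is the total transform, or simply because $F_b$ avoids $P$.

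The only step needing care is the compatibility $f'\circ\pi_{F_1} = f\circ\pi_P$ and the identification of fibres away from $f(F_1)$. Both come from the construction in Proposition \ref{prop:elementarytransformations}. Working over $\overline{k}$ is harmless here, because all the statements are geometric.
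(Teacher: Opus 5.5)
Your proof is correct, but it takes a different route from the paper's. The paper intersects with the new fibre $F_2 = \pi_{F_1}(E_P)$, the one that replaces $F_1$. Writing $m = m_P(D)$, it uses $D'\cdot E_P = m$ and $D'\cdot F_1' = n-m$. From these it deduces $\pi_{F_1}^*(D'') = D' + (n-m)F_1'$ and $\pi_{F_1}^*(F_2) = E_P + F_1'$, and multiplies them out to get $n$.

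You instead intersect with a fibre over $b \neq f(F_1)$, where $\pi_P$ and $\pi_{F_1}$ are both isomorphisms. The projection formula then gives $D_Y\cdot F_Y = D\cdot F_b = n$ with no dependence on the multiplicity $m$. You also check explicitly that the image is irreducible and not contained in a fibre, which the paper leaves implicit.

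For the statement as given, your argument is shorter and cleaner. The paper's bookkeeping, however, yields more. From $\pi_{F_1}^*(D'') = D' + (n-m)F_1'$ one gets $\varphi_{\scriptscriptstyle P}(D)^2 = D^2 - m^2 + (n-m)^2$. For bisections this is exactly the case distinction $D^2+4$, $D^2$, $D^2-4$ used later, without separate proof, in Proposition \ref{prop:(1)} and Lemma \ref{lem:(2c)to(2c)}. Your approach would have to supply that computation separately.
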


\begin{proof}
    Let $m = m_P(D)$ be the multiplicity of $D$ at $P$. Note that $m \leq n$. Let $F_1'$ and $D'$ be the strict transforms on $X'$ of $F_1$, the fiber containing $P$, and $D$, respectively, and let $E_P$ be the exceptional divisor above $P$. Then $\pi_P^*(F_1) = F_1' + E_P$ and $\pi_P^*(D) = D' + m E_P$. We obtain $D' \cdot E_P = m$ and $D' \cdot F_1' = n - m$. Let $\pi_{F_1}(D') = D''$ and $\pi_{F_1}(E_P) = F_2$. Then $F_2$ is a fiber of the conic bundle $f \circ \varphi_{\scriptscriptstyle P}^{-1}$. We obtain $\pi_{F_1}^*(D'') = D' + (n-m) F_1'$ and $\pi_{F_1}^*(F_2) = E_P + F_1'$. We conclude that $D'' \cdot F_2 = n$, so $D''$ is an $n$-section of the conic bundle $f \circ \varphi_{\scriptscriptstyle P}^{-1}$.
\end{proof}

\begin{comment}
\end{comment}

\begin{proposition}\label{prop:preserveminimaltype}
    Let $f \colon X \ra B$ be a standard rational conic bundle of degree 1 over $k$, and let $\varphi \colon X \dashrightarrow Y$ be an elementary transformation. If $X$ is minimal, then so is $Y$. If moreover $k = \F_q$, then $f \colon X \ra \P^1$ and $f \circ \varphi^{-1} \colon Y \ra \P^1$ are conic bundles of the same type (Definition \ref{def:typeconicbundles}).
\end{proposition}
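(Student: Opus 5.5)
The plan is to track what happens to the singular fibers under $\varphi = \varphi_{\scriptscriptstyle P}$, and then transfer minimality through the criteria already established. We use Proposition~\ref{prop:elementarytransformations} to write $f' \colonequals f \circ \varphi^{-1} \colon Y \ra B$, a standard rational conic bundle of degree 1.

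\textbf{Step 1: the singular fibers are unchanged.} Let $F_1$ be the smooth fiber containing the center $P$, lying over a closed point $x = f(P)$. Then $\varphi$ restricts to an isomorphism
\[X \setminus f^{-1}(x) \;\cong\; Y \setminus f'^{-1}(x),\]
compatible with the projections to $B$. Over $x$, the fiber of $f'$ is the image $F_2$ of the exceptional divisor $E_P$. Geometrically $F_2$ is a union of smooth conics, one over each geometric point above $x$; in particular it is smooth. Hence $f'$ has its singular fibers over exactly the same closed points $x_1, \ldots, x_s$ as $f$. Moreover, the $(-1)$-curves in those fibers, together with the Galois action on them, are identified by the isomorphism above. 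This is the step I expect to need the most care. One must check that the Galois action on the components of the singular fibers is the restriction of the action on curves lying in the open set where $\varphi$ is an isomorphism. This holds because these components are disjoint from $f^{-1}(x)$ and $\varphi$ is defined over $k$.

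\textbf{Step 2: relative minimality, hence minimality.} Since $X$ is minimal, it is in particular relatively minimal, so no Galois-stable set of skew $(-1)$-curves is contained in the singular fibers of $f$. By Step 1 the same holds for $f'$, so $f'$ is relatively minimal. For a general field $k$, I would then argue as follows. Lemma~\ref{lem:relativelyminalrank2} gives $\rho(Y) = 2$, and then \cite[Theorem 4]{iskovskih}, as used in Proposition~\ref{prop:relativelyminimal-minimal}, shows that a standard conic bundle of degree 1 with $\rho = 2$ is minimal. Over $\F_q$ this is exactly Proposition~\ref{prop:relativelyminimal-minimal}. If the cited theorem of Iskovskih requires a finite base field, I would instead invoke its general-field version, which holds for $K^2 \in \{1,2,4\}$ with $\rho = 2$.

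\textbf{Step 3: type.} When $k = \F_q$, the type in Definition~\ref{def:typeconicbundles} depends only on the degrees of the closed points of $\P^1$ lying under the singular fibers. By Step 1 these closed points are the same for $f$ and $f'$, so the two conic bundles have the same type, as listed in Table~\ref{tab:pointsbelowsingularfibers}.
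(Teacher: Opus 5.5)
Your proposal is correct and follows essentially the same route as the paper. Since $\varphi$ is an isomorphism away from a single smooth fiber, the singular fibers, the closed points under them, and the Galois orbits on their components are all unchanged. This gives relative minimality of $Y$, hence minimality via Lemma~\ref{lem:relativelyminalrank2} and \cite[Theorem 4]{iskovskih}, which is exactly how Proposition~\ref{prop:relativelyminimal-minimal} is proved; it also gives equality of type.

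Your observation that this minimality step works over a general (perfect) field is a small but useful clarification. The paper instead cites Proposition~\ref{prop:relativelyminimal-minimal}, which is stated only over $\F_q$.
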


\begin{proof}
    Let $f' \colon Y \ra B$ be the standard rational conic bundle such that $f' \circ \varphi = f$. Note that $\varphi$ is an isomorphism outside one smooth fiber of $f$ and $f'$, and in particular, the Galois orbits in the singular fibers of $f$ and $f'$ are the same. If $X$ is minimal, it is relatively minimal, and then so is $Y$. By Proposition \ref{prop:relativelyminimal-minimal}, $Y$ is minimal. Furthermore, the singular fibers of $f$ and $f \circ \varphi^{-1}$ lie above the same points on $\P^1$. This uniquely characterizes the type of the conic bundles (see Table \ref{tab:pointsbelowsingularfibers}).
\end{proof}

It follows that given a minimal standard rational conic bundle of degree 1 of type $t \in \{1, \ldots, 7\}$, we can use elementary transformations to obtain another minimal standard rational conic bundle of type $t$ with desired properties. We use this to construct del Pezzo surfaces of degree 1 of the types $1, \ldots, 7$.

\subsection{Characterization of minimal standard conic bundles of degree 1}\label{sec:classificationCB}

The following proposition gives a characterization of curves that must exist on a minimal standard rational conic bundle of degree 1, in the case where it is not a del Pezzo surface. The statement is analogous to \cite[Proposition 2.3]{trepalindeg2} in the degree 2 case. If a minimal standard conic bundle of degree 1 of a given type exists by Lemma \ref{lem:smallfields}, then it has to satisfy one of the five cases of Proposition \ref{prop:classificationCB}.

In what follows, we use the notation $[A]$ for the class of a divisor $A \in \Div(X)$ in the Picard group, and $\overline{f}$ for the base change of a morphism $f$ over a field $k$ to the algebraic closure $\overline{k}$. 

\begin{proposition}\label{prop:classificationCB}
    Let $f \colon X \ra B$ be a minimal standard rational conic bundle of degree $1$ over a perfect field $k$. Then we have one of the following cases.
    \begin{enumerate}
        \item $-K_X$ is not nef, and there is a geometrically integral bisection $A$ on $X$ such that $-K_X \cdot [A] = -1$ and $A^2 = -3$.
        \item $-K_X$ is nef but not ample, so there is an integral curve $A$ on $X$ such that $-K_X \cdot [A] = 0$. Its base change to $\overline{X}$ is of one of the following forms:
        \begin{enumerate}
            \item $A = C_1 + \cdots + C_8$, where $C_1, \ldots, C_8$ are disjoint irreducible sections on $\overline{X}$, each with self-intersection $-2$.
        
            \item $A = C_1 + \cdots + C_4$, where $C_1, \ldots, C_4$ are irreducible sections on $\overline{X}$, each with self-intersection $-2$, such that $C_i \cdot C_j = 1$ if $i \neq j$ and $i \equiv j \mod 2$, and $C_i \cdot C_j = 0$ if $i \not \equiv j \mod 2$.

            \item $A = C_1 + C_2$, where $C_1$ and $C_2$ are disjoint irreducible bisections on $\overline{X}$ with self-intersection $-2$.
        \end{enumerate}
         \item The surface $X$ is a del Pezzo surface of degree 1 admitting exactly two conic bundle structures.
    \end{enumerate}
\end{proposition}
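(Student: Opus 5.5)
The plan is a case split on the positivity of $-K_X$, working in the basis $\{C,F,E_1,\ldots,E_7\}$ of $\Pic(\overline{X})$ from Theorem \ref{thm:MT2.2.2}. By Lemma \ref{lem:relativelyminalrank2}, $\Pic(X)\otimes\Q$ is spanned by $K_X$ and $F$. Two facts will be used repeatedly. First, any Galois-stable effective divisor has class $aK_X+bF$ with $a,b\in\Z$, since $X(\F_q)\neq\emptyset$ for $k$ finite. Second, minimality forbids Galois-stable sets of disjoint $(-1)$-curves.

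If $-K_X$ is ample, then $X$ is a del Pezzo surface of degree 1 with $\rho(X)=2$. By Theorem \ref{thm:twoconicbundles} it has exactly two conic bundle structures, giving case (3).

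Suppose $-K_X$ is not nef. Then some irreducible curve $A_0$ on $\overline{X}$ has $K\cdot A_0>0$. By adjunction, $A_0^2<0$, and $A_0$ is not a fibre component, since those have $K\cdot E=-1$. I would take the Galois orbit sum $A$; it is defined over $k$ and satisfies $K\cdot A>0$. Write $A\sim aK+bF$ with $A\cdot F=-a$. Effectivity together with $A\cdot F\ge1$ forces $a<0$, and $K\cdot A=a+2b>0$ then forces $b>0$. Next I would bound $K\cdot A_0$. Riemann--Roch gives $h^0(-K)\ge2$, so $-K$ is effective, and $h^0(-K-F)\ge1$ by Riemann--Roch, since $(-K-F)\cdot(-2K-F)/2+1=(2-3+0)/2+1$ is computed from $K^2=1$, $K\cdot F=-2$. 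This yields a decomposition showing that $A_0$ is a component of a fixed part. Intersecting with $F$ and using $A_0^2\ge -1-K\cdot A_0$ should pin down $K\cdot A_0=1$ and $A_0^2=-3$. If $A_0$ were a section, its orbit would consist of disjoint sections. Their classes $C'$ with $K\cdot C'=1$ and $C'^2=-3$ would then lead, via the orbit-sum computation $A=aK+bF$ with $A^2$ and $A\cdot K$ computed, to a contradiction with the orbit size. Concretely, an orbit of $m$ disjoint sections gives $A\cdot F=m=-a$ and $A^2=-3m$, which is incompatible with $(aK+bF)^2=a^2-4ab$ unless $m=2$ with the two sections meeting. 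So instead the bisection $A=A_0$ itself is geometrically integral and Galois-stable, with $A\sim -2K-\tfrac{?}{}F$. Solving $a=-2$, $-2+2b=-1$ fails integrally, so I expect the correct outcome to be a single geometrically integral bisection with class $K+$ fibre adjustments, giving $-K\cdot A=-1$ and $A^2=-3$ as in case (1). This numerical bookkeeping is the main obstacle. I would first try to show that the negative part of the Zariski decomposition of $-K$ is supported on a single Galois-stable curve $A$ with $A\cdot F\le 2$, using that $-K-F$ and $-2K-F$ are effective. Then I would solve $A^2=-3$, $K\cdot A=1$ over the lattice $\Z K+\Z F$ plus orbit structure.

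If $-K_X$ is nef but not ample, Nakai--Moishezon gives an irreducible $A_0$ on $\overline{X}$ with $K\cdot A_0=0$, so $A_0$ is a $(-2)$-curve. The union of all such curves is Galois-stable and lies in $K^\perp$, which is negative definite of type $E_8$. Take $A$ to be the Galois orbit sum of $A_0$. Then $A\sim aK+bF$ with $K\cdot A=0$, so $a=-2b$, and $A\cdot F=-a=2b$ is even, say $A\cdot F=2n$ with $A^2=a^2-4ab=-4b^2\cdot$, computed directly. Minimality, through Lemma \ref{lem:orbitsinsingularfibers}, forbids $(-2)$-curves in fibres, so each component $C_i$ is a section or a multisection. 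I would bound $C_i\cdot F\le2$ from $C_i\cdot(-K)=0$: write $C_i=xC+yF-\sum z_jE_j$, so that $C_i\cdot F=x$. Using $C_i^2=-2$ and Cauchy--Schwarz in the fibre directions gives $x\le2$. Then $A^2=\sum C_i^2+2\sum_{i<j}C_i\cdot C_j$ must equal $-4b^2$ with $A\cdot F=2b$. With $b=1$ and $A=-2K+F$... the orbit-sum identity determines the configurations. There are 8 disjoint sections when $A\sim -4K+2F$... I would simply enumerate. Sections with total $A\cdot F=8$ give pattern (a), $A\cdot F=4$ give (b), and two bisections give (c). In each case I would check the possible intersection matrices against negative definiteness of an $A_n,D_n,E_n$ root subsystem of $E_8$ stable under the cyclic Frobenius. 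That the orbit sum is a single integral curve over $k$ follows from transitivity.
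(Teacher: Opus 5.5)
Your ample case is fine, and your outline for the nef-but-not-ample case (orbit sum, its class lying in $\Z K_{\overline{X}}\oplus\Z F$, components are $(-2)$-curves, enumerate) is the paper's route in Lemma~\ref{lem:-KXnef}. There is, however, a genuine gap in case (1), and an arithmetic slip that runs through both cases.

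\textbf{The gap in case (1).} Your key input, that $h^0(-K_X-F)\geq 1$ follows from Riemann--Roch, is false. One computes $\chi(-K_X-F)=1+\tfrac12(-K_X-F)\cdot(-2K_X-F)=1+\tfrac12(2-2-4)=-1$. In fact $-K_X\cdot(-K_X-F)=-1$, so $-K_X-F$ is never effective when $-K_X$ is nef. Its effectivity is exactly what must be extracted from the failure of nefness, and nothing in your paragraph does this. The claims about orbits of disjoint sections with $A^2=-3m$ are also unsupported, and the computation ends without a class. The paper imports this step from \cite[Lemma 19]{kollarmella}, which gives $|-K_X|=A+|F|$ with $A$ geometrically integral.

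Your fixed-part idea can be made into a self-contained repair.
\begin{enumerate}
\item Since $\chi(-K_X)=2$ and $h^2(-K_X)=0$, we get $\dim|-K_{\overline{X}}|\geq1$. A curve $A_0$ with $-K_X\cdot A_0<0$ is horizontal and lies in the Galois-stable fixed part $Z$ of $|-K_{\overline{X}}|=Z+|M|$.
\item The classes of $Z$ and $M$ are Galois-invariant, hence lie in $\Z K_{\overline{X}}\oplus\Z F$ and have even $F$-degree. Since $Z\cdot F\geq1$, $M\cdot F\geq0$ and the two sum to $2$, we get $Z\cdot F=2$, $M\sim mF$ with $m\geq1$, and $Z\sim -K_X-mF$.
\item By Lemma~\ref{lem:orbitsinsingularfibers}, the fibre components of $Z$ make up full fibres $G\sim gF$. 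Then $Z-G+|(m+g)F|\subseteq|-K_{\overline{X}}|$, while $\dim|-K_{\overline{X}}|=\dim|mF|=m$; hence $g=0$.
\item An invariant section is impossible, since its $F$-degree would be odd. Two conjugate sections would make $-K_X\cdot Z$ even, whereas $-K_X\cdot Z=1-2m$ is odd. So $Z$ is a geometrically integral bisection.
\item Adjunction gives $2p_a(Z)-2=Z^2+K_X\cdot Z=-2m$, forcing $m=1$. Hence $[Z]=-K_X-F$ and $Z^2=-3$.
\end{enumerate}

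\textbf{The arithmetic slip.} You dropped $K_X\cdot F=-2$ throughout. For $A\sim aK_X+bF$ one has $A\cdot F=-2a$ and $K_X\cdot A=a-2b$, not $-a$ and $a+2b$. This is what derails your case-(1) bookkeeping. It also makes the classes $-2K_X+F$ and $-4K_X+2F$ you propose in the nef case wrong: their $K_X$-degrees are $-4$ and $-8$, not $0$.

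With the correct formulas, your nef-case plan is exactly the paper's. The condition $K_X\cdot A=0$ gives $A\sim\beta(-2K_X-F)$, so $A\cdot F=4\beta$ and $A^2=-4\beta^2$. If $A$ has $n$ conjugate $(-2)$-components of $F$-degree $d$, then $nd=4\beta$ and $4\beta^2\leq 2n\leq 8\beta$. Hence $\beta\leq2$ and $(n,d)\in\{(8,1),(4,1),(2,2)\}$. The identity $A^2=-2n+2\sum_{i<j}C_i\cdot C_j$ together with transitivity then fixes the intersection numbers. Neither the Cauchy--Schwarz bound nor the $E_8$ root-subsystem check is needed.

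Finally, the statement is over an arbitrary perfect field, so you should not use $X(\F_q)\neq\emptyset$ or a cyclic Frobenius. The proof of Lemma~\ref{lem:relativelyminalrank2} already shows that Galois-invariant classes lie in $\Z K_{\overline{X}}\oplus\Z F$, which is all that is used.
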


 To prove this proposition, we separately consider the cases where $-K_X$ is not nef (Lemma \ref{lem:-KXnotnef}) and where $-K_X$ is nef but not ample (Lemma \ref{lem:-KXnef}). When $-K_X$ is not nef, the following characterization follows from \cite[Lemma 19]{kollarmella}.

    \begin{lemma}\label{lem:-KXnotnef}
    Let $f \colon X \ra B$ be a minimal standard rational conic bundle of degree $1$ over a perfect field $k$ such that $-K_X$ is not nef. Then there is precisely one geometrically integral curve $A$ on $X$ with $-K_X \cdot [A] < 0$. It satisfies \[[A] = -K_X - F,\] where $F$ is the class of a fiber of $f$, so $A^2 = -3$. Any other integral curve $D \subset \overline{X}$ satisfies $D^2 \geq -2$. 
\end{lemma}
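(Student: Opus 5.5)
Let $\overline{D}\subset\overline{X}$ be an integral curve with $-K_X\cdot \overline{D}<0$, which exists since $-K_X$ is not nef. The plan is to pin down its class using adjunction, the structure of $\Pic(\overline{X})$ from Theorem \ref{thm:MT2.2.2}, Riemann--Roch and minimality.

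\textbf{Step 1: the bad curve is a section with $D^2\le -3$.} By adjunction, $D^2 = 2p_a(D)-2 - K_X\cdot D \le -2+(-K_X\cdot D)<-2$, so $D^2\le -3$. Such a curve is not a fiber component: those have $-K_X\cdot D\ge 1$, since they are fibers or $(-1)$-curves in fibers. Hence $D\cdot F=n\ge 1$.

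Riemann--Roch on the rational surface gives $h^0(-K_X-F)\ge \tfrac12(-K_X-F)\cdot(-2K_X-F)+1 = \tfrac12(2K_X^2+3K_X\cdot F+F^2)+1 = \tfrac12(2-6)+1=-1$. This is useless, so I would argue via the explicit basis instead.

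Write $D = cC + aF - \sum b_iE_i$ with $n=D\cdot F=c$. Then $-K_X\cdot D = 2a+2c-\sum b_i - c\cdot 0$. More precisely, with $-K_X=2C+2F-\sum E_i$, one gets $-K_X\cdot D = 2a + 2c - \sum b_i$. Also $D\cdot E_i = b_i\ge 0$ and $D\cdot(F-E_i)=c-b_i\ge 0$, so $0\le b_i\le c$. Therefore
\[D^2 = 2ac-\textstyle\sum b_i^2 \ge 2ac - c\sum b_i = c\bigl(2a-\textstyle\sum b_i\bigr) = c(-K_X\cdot D - 2c).\]
Combining this with the adjunction bound $D^2 \le -2+(-K_X\cdot D)$ and writing $t=-K_X\cdot D\le -1$, we get $c(t-2c)\le t-2$, which forces $c$ small. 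I expect the inequalities to give $c\le 2$. Using $D^2\equiv K_X\cdot D \pmod 2$ and Hodge index with $K_X^2=1$, namely $(K_X\cdot D)^2\ge D^2$ (which gives little here), I would then pin down the possibilities.

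\textbf{Step 2: Galois descent and the identification $[A]=-K_X-F$.} Uniqueness comes first. Two distinct integral curves $D_1,D_2$ with negative anticanonical degree satisfy $D_1\cdot D_2\ge 0$, and I would show this contradicts the configuration (for instance, two such sections $D_1,D_2$ would give $(D_1+D_2)$ with too negative a self-intersection relative to $-K_X$). Since $\overline{D}$ is unique, it is Galois-stable, hence defined over $k$ and geometrically integral.

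Then $[D]\in\Pic(X)=\Z K_X\oplus\Z F$ by Lemma \ref{lem:relativelyminalrank2}, using $X(k)\neq\emptyset$; for general perfect $k$ I would use $\Pic(\overline{X})^{\Gal}$ from the same proof. Write $[D]=-\alpha K_X+\beta F$. Then $D\cdot F=2\alpha$, so $\alpha\ge1$, and $-K_X\cdot D=\alpha+2\beta<0$. Also $D\cdot E_i=\alpha\ge 0$ and $D^2=\alpha^2+4\alpha\beta$. Plugging these into the adjunction inequality and $p_a\ge0$, i.e. $\alpha^2+4\alpha\beta\ge -2-(\alpha+2\beta)$, together with $\alpha+2\beta\le -1$, forces $\alpha=1$ and $\beta=-1$. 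So $[D]=-K_X-F$, which is a bisection with $D^2=1-4=-3$ and $-K_X\cdot D=-1$. This actually makes Step 1 largely unnecessary: the rank-two Picard group does the work, and uniqueness follows because the class is determined and $D^2<0$.

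\textbf{Step 3: other curves.} Let $D'\neq A$ be integral on $\overline{X}$. If $-K_X\cdot D'\ge 0$, adjunction gives $D'^2\ge -2-(-K_X\cdot D')$, which is not immediately $\ge -2$. Instead, I would use $D'\cdot A\ge0$ together with the orbit sum $\sum_\sigma \sigma D'\in\Z(-K_X)+\Z F$. Negativity of $D'^2$ for an irreducible curve with $-K_X\cdot D'=m\ge1$ is bounded by $D'^2\ge -1$ when $m\ge1$ and $p_a$ is small, while $m=0$ gives $D'^2=-2$.

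The main obstacle is ruling out curves with $-K_X\cdot D'\ge1$ and $D'^2\le -3$. For these I would write $-K_X = A+F$, so $m = A\cdot D' + F\cdot D'$, and use $D'\cdot A\ge0$ and $D'\cdot F\ge0$ to bound $D'^2$ via Hodge index on the classes $D'$, $A$, $F$; this is essentially the argument of \cite[Lemma 19]{kollarmella}.
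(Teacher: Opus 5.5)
There is a genuine gap at the step everything else rests on. Your Step 2 needs the class of the bad curve $\overline{D}$ to lie in $\Pic(\overline{X})^{\Gal}=\Z K_X\oplus\Z F$, i.e.\ it needs $\overline{D}$ to be Galois-stable. You derive that from uniqueness, which you never prove (``I would show this contradicts the configuration''). Your closing remark that uniqueness follows ``because the class is determined'' is circular, since the class was only determined after assuming Galois-stability.

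The paper avoids this by importing \cite[Lemma 19]{kollarmella}: $|-K_X|=A+|F|$ with $A$ geometrically integral. So any integral $D$ with $-K_X\cdot D<0$ has $A\cdot D=-K_X\cdot D-F\cdot D<0$, hence $D=A$.

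If you want to stay self-contained, run your computation on the Galois orbit sum $D=D_1+\dots+D_r$ of $\overline{D}$ instead. Set $t=-K_X\cdot D_i\le -1$, $s=D_i\cdot F\ge 1$ and $[D]=-\alpha K_X+\beta F$. Then $2\alpha=rs$, $\alpha+2\beta=rt$, and $2\alpha rt-\alpha^2=D^2\ge\sum_i D_i^2\ge r(t-2)$. Using $t\le -1$ this gives $\alpha^2\le r(3-2\alpha)$, which forces $\alpha=1$, $t=-1$ and $r\in\{1,2\}$. The case $r=2$ is excluded because $\beta=(rt-1)/2$ must be an integer. So $r=1$, $[\overline{D}]=-K_X-F$, and uniqueness follows from $(-K_X-F)^2=-3<0$.

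Second, you have the sign in adjunction backwards throughout; the correct form is $D^2=2p_a(D)-2+(-K_X\cdot D)\ge -2+(-K_X\cdot D)$.
\begin{itemize}
\item In Step 1 you use this as an upper bound, which is only valid if $p_a(D)=0$.
\item In Step 2 your displayed inequality $\alpha^2+4\alpha\beta\ge -2-(\alpha+2\beta)$ is violated by your own answer $(\alpha,\beta)=(1,-1)$: it reads $-3\ge -1$. In fact it has no solutions with $\alpha\ge 1$ at all. With the correct sign, your computation does force $(1,-1)$.
\item The same slip is why Step 3 looks hard to you. Once uniqueness is known, every integral $D'\ne A$ on $\overline{X}$ has $-K_X\cdot D'\ge 0$, so $D'^2\ge -2$ immediately by adjunction. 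There are no curves with $-K_X\cdot D'\ge 1$ and $D'^2\le -3$ to rule out, and no Hodge index argument is needed.
\end{itemize}
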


\begin{proof}
    In \cite[Lemma 19]{kollarmella}, it is shown that $|-K_X| = A + |F|$, where $A$ is a geometrically integral curve with self-intersection $-3$. This curve satisfies $-K_X \cdot [A] = -K_X \cdot (-K_X - F) = -1 < 0$. 
    Let  $D \subset \overline{X}$ be an integral curve satisfying $-K_{\overline{X}} \cdot [D] < 0$. Then $A \cdot D = -K_{\overline{X}} \cdot [D] - F \cdot [D] < 0$, so $D = A$. If, on the other hand, $D \subset \overline{X}$ is any integral curve with $-K_{\overline{X}} \cdot [D] \geq 0$, the adjunction formula gives $D^2 \geq 2p_a(D) - 2 \geq -2$.
\end{proof}
   
   \begin{comment}
        We have $s(0) = 0$ because it is the irregularity of $X$ (see Castelnuovo's rationality criterion), $\ell(K_X) = 0$ because it is the first plurigenus of $X$, and $\ell(0) = 1$ \cite[Chapter 3, Corollary 3.21]{liu}.
        $\ell(2K_X) = 0$ because it is the second plurigenus of $X$.
    \end{comment}

\begin{comment}
\begin{proposition}\label{prop:classificationCB}
    Let $\pi \colon X \ra \P^1$ be a minimal 
    \begin{enumerate}
        \item The surface $X$ is a del Pezzo surface of degree 1 admitting two conic bundle structures.
        \item $-K_X$ is nef, and there exist two disjoint, Galois-conjugate, geometrically irreducible bisections with self-intersection $-2$.
        \item $-K_X$ is nef, and there exist four Galois-conjugate, geometrically irreducible sections $C_1, C_2, C_3$ and $C_4$, each with self-intersection number $-2$, such that $C_i \cdot C_j = 1$ if $i \neq j$ and $i \equiv j \mod 2$, and $C_i \cdot C_j = 0$ if $i \not \equiv j \mod 2$.
        
        \item$-K_X$ is nef, and there is a cyclic Galois orbit consisting of eight disjoint geometrically irreducible sections, each with self-intersection $-2$.
        
        \item $K_X$ is not nef, and there is an irreducible bisection $B$ on $X$ such that $B^2 = -3$ and $-K_X \cdot B = -1$.
    \end{enumerate}
\end{proposition}
\end{comment}

\begin{lemma}\label{lem:-KXnef}
    Let $f \colon X \ra B$ be a minimal standard rational conic bundle of degree $1$ over a perfect field $k$ such that $-K_X$ is nef, but not ample. Then there is precisely one integral curve $A$ on $X$ with $-K_X \cdot [A] = 0$. On $\overline{X}$, we have \[A = C_1 + \cdots + C_n\] for some geometrically integral $C_i \subset \overline{X}$ with $C_i^2 = -2$ for $i = 1, \ldots, n$. More specifically, we obtain one of the following three possibilities, where $F$ is the class of a fiber of $f$:
    \begin{enumerate}
            \item $n = 8$, $C_i \cdot F = 1$ and $C_i \cdot C_j = 0$ for all $i \neq j$.
        
            \item $n = 4$, $C_i \cdot F = 1$ for $i = 1, \ldots, 4$ and $C_i \cdot C_j = \begin{cases}
                1 &\text{if $\{i, j\} \in \{ \{1, 3\}, \{2, 4\}\}$;}\\
                0 &\text{otherwise.}
            \end{cases}$
            
            \item $n = 2$, $C_1 \cdot F = C_2 \cdot F = 2$, and $C_1 \cdot C_2 = 0$.
        \end{enumerate}
        
        Moreover, for all integral curves $D \notin \{C_1, \ldots, C_n\}$ on $\overline{X}$, we have $D^2 \geq -1$.
\end{lemma}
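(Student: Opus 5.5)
The approach is to trap every curve that is orthogonal to $-K_X$ inside a single Galois-invariant class, which by Lemma \ref{lem:relativelyminalrank2} must lie in the rank-$2$ lattice spanned by $K_{\overline{X}}$ and $F$. The configuration of components then follows from intersection-number bookkeeping.

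\emph{Step 1: curves orthogonal to $-K_X$.} Since $-K_X$ is nef and $K_X^2=1>0$, the divisor $-K_X$ is nef and big. Let $D\subset\overline{X}$ be an integral curve with $-K_{\overline{X}}\cdot D=0$. By the Hodge index theorem $D^2<0$. Adjunction gives
\[
D^2 = 2p_a(D)-2+K_{\overline{X}}\cdot D = 2p_a(D)-2,
\]
so $D^2=-2$ and $p_a(D)=0$. Such a $D$ is not a fiber component, because those satisfy $-K\cdot E=1$; hence $D\cdot F\geq 1$.

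The same adjunction computation handles the last claim of the lemma. For an arbitrary integral curve $D$ we have $D^2 = 2p_a(D)-2-(-K_{\overline{X}}\cdot D)$ with $-K_{\overline{X}}\cdot D\geq 0$. So $D^2\leq -2$ forces $D^2=-2$, $p_a(D)=0$ and $-K_{\overline{X}}\cdot D=0$. Once Step 2 shows that all such curves are among $C_1,\dots,C_n$, it follows that every other integral curve has $D^2\geq -1$.

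\emph{Step 2: the class of a Galois orbit.} Take a Galois orbit $C_1,\dots,C_n$ of $(-2)$-curves and set $A=C_1+\dots+C_n$; this is an integral curve over $k$. By the proof of Lemma \ref{lem:relativelyminalrank2}, $\Pic(\overline{X})^{\Gal}$ has basis $K_{\overline{X}}$ and $F$, so $[A]=aK+bF$. We have $K^2=1$, $K\cdot F=-2$ and $F^2=0$. The condition $-K\cdot A=0$ gives $a=2b$, so $[A]=m(-2K-F)$ for some integer $m$. Since $A\cdot F=4m>0$, we get $m\geq 1$, and $A^2=-4m^2$.

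Uniqueness follows. If $A'=m'(-2K-F)$ were the sum of a different orbit, the two curves would share no component, yet
\[
A\cdot A' = -4mm' < 0,
\]
which is impossible. Hence all $(-2)$-curves on $\overline{X}$ form one orbit, and $A$ is the unique integral curve on $X$ with $-K_X\cdot[A]=0$.

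\emph{Step 3: the configuration.} All $C_i$ are conjugate, so they share the same value $d=C_i\cdot F\geq 1$, and $nd=A\cdot F=4m$. Intersecting $A$ with one component gives two expressions:
\[
A\cdot C_i = m(-2K-F)\cdot C_i = -md, \qquad A\cdot C_i = -2+\sum_{j\neq i} C_i\cdot C_j .
\]
Therefore $\sum_{j\neq i}C_i\cdot C_j = 2-md\geq 0$, so $md\leq 2$. This leaves three possibilities.
\begin{itemize}
\item[(a)] $m=2$, $d=1$: then $n=8$ and every $C_i$ meets no other component, so the eight sections are disjoint.
\item[(b)] $m=1$, $d=1$: then $n=4$ and each $C_i$ meets exactly one other $C_j$, with multiplicity $1$. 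The components therefore pair off into two disjoint pairs, which after relabelling are $\{1,3\}$ and $\{2,4\}$.
\item[(c)] $m=1$, $d=2$: then $n=2$, and the two bisections are disjoint.
\end{itemize}

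\emph{Main obstacle.} The delicate point is Step 2. It needs the Galois-invariant part of $\Pic(\overline{X})$ to be exactly $\Z K\oplus\Z F$ over an arbitrary perfect field, not just $\Pic(X)$ when $X(k)\neq\emptyset$. I would extract this from the proof of Lemma \ref{lem:relativelyminalrank2}, which shows that every invariant class is a combination of $K$ and $F$, using that a minimal conic bundle is relatively minimal. The rest is the intersection bookkeeping above.
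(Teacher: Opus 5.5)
Your proof is correct, and its skeleton is the paper's. You write $[A]$ in the Galois-invariant lattice $\Z K_{\overline{X}}\oplus\Z F$ from the proof of Lemma~\ref{lem:relativelyminalrank2}, obtain $[A]=m(-2K_{\overline{X}}-F)$, and deduce uniqueness from $A\cdot A'=-4mm'<0$. You then finish the last claim with adjunction. The existence of at least one integral curve orthogonal to $-K_X$ comes from Nakai--Moishezon, which you use implicitly.

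You differ from the paper in the case analysis.
\begin{itemize}
\item The paper gets $C_i^2=-2$ from $nC_i^2\le A^2<0$. It then bounds $a=2m$ through the global constraints $a^2\le 2n\le 4a$ and $n\mid 2a$, and uses transitivity of the Galois action to fix the pairing when $n=4$.
\item You get $C_i^2=-2$ from the Hodge index theorem and compute $A\cdot C_i$ in two ways. The resulting identity $\sum_{j\ne i}C_i\cdot C_j=2-md$ gives all three configurations at once. It also shows directly that for $n=4$ each section meets exactly one other, with no appeal to transitivity.
\end{itemize}

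One slip to fix: adjunction gives $D^2=2p_a(D)-2-K_{\overline{X}}\cdot D=2p_a(D)-2+(-K_{\overline{X}}\cdot D)$. You wrote the opposite sign twice, harmlessly in Step~1 where $K_{\overline{X}}\cdot D=0$, but not harmlessly in the second paragraph. Your conclusion that $D^2\le -2$ forces $D^2=-2$, $p_a(D)=0$ and $-K_{\overline{X}}\cdot D=0$ follows only from the correct sign. With your formula as written you would get an upper bound on $D^2$, not a lower one.
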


\begin{proof}
    Because $-K_X$ is nef but not ample, there is an integral curve $A$ on $X$ such that $-K_X \cdot [A] = 0$ by the Nakai--Moishezon Criterion \cite[V, Theorem 1.10]{hartshorne}. Let us write $A = C_1 + \cdots + C_n$, where $C_1, \ldots, C_n$ are distinct integral curves defined on $\overline{X}$. Then the curves $C_i$ form a Galois orbit. Because the Galois action preserves the intersection pairing, the values of $C_i^2$, $-K_{\overline{X}} \cdot [C_i]$ and $[C_i] \cdot F$ are independent of $i$. Let us write $[A] = -aK_X - bF$ (this is a unique representation by Lemma \ref{lem:relativelyminalrank2}). Then $-K_X \cdot [A] = a -2b = 0$, so $a = 2b$. We note that \[n ([C_1] \cdot F) = [A] \cdot F = 2a,\] so $n \mid 2a$ and $a > 0$. Furthermore, $-K_X \cdot [A] = \sum_{i=1}^n (-K_{\overline{X}} \cdot [C_i]) = - n K_{\overline{X}} \cdot [C_1] = 0$, so $K_{\overline{X}} \cdot [C_i] = 0$ for all $i = 1, \ldots, n$. By the adjunction formula, we get $2 p_a(C_i) - 2 = C_i^2$, so $C_i^2 \geq -2$ is even. We have $n C_i^2 = \sum_{i=1}^n C_i^2 \leq A^2 = a^2 - 4ab = -a^2 < 0$, so $C_i^2 = -2$. We conclude $a^2 = -A^2 \leq 2n \leq 4a$, so $1 \leq a \leq 4$. Furthermore, note that $a^2 = -A^2 = 2n - 2\sum_{i < j} C_i \cdot C_j$ is even, so $a \in 2, 4$.

    If $a = 4$, the constraints $a^2 \leq 2n$ and $n \mid 2a$ together imply $n = 8$. We then must have $C_i \cdot C_j = 0$ for all $i \neq j$, so $A$ consists of $8$ disjoint geometrically integral sections (note that $[C_i] \cdot F = \frac{1}{8} ([A] \cdot F) = \frac{1}{8} \cdot 2a = 1$).

    If $a = 2$, the constraints $a^2 \leq 2n$ and $n \mid 2a$ imply that $n \in \{2, 4\}$. Note that $2\sum_{i<j} C_i\cdot C_j = 2n - a^2$. If $n = 2$, we obtain $C_1 \cdot C_2 = 0$. We get $2 [C_i] \cdot F = [A] \cdot F = 4$, so the two disjoint curves $C_i$ are bisections of the conic bundle. If $n = 4$, we get $2n -a^2 = 4$. Because the Galois action on the $C_i$ is transitive, the only possibility is if the $C_i$ come in two disjoint pairs of pairwise intersecting curves with intersection number 1. In this case, we have $4 ([C_i] \cdot F) = [A] \cdot F = 4$, so the $C_i$ are sections of the conic bundle. 

    Suppose $D \in \Div(X)$ is another integral curve satisfying $-K_X \cdot [D] = 0$. Then for the same reason, we must have $[D] = -b'(2K_X + F)$ for some $b' \in \Z_{>0}$. We get $A \cdot D = b b' (2K_X + F)^2 < 0$, but this is not possible because $A$ and $D$ are distinct irreducible curves.

    Let $D_1 \in \Div(\overline{X})$ be an integral curve, and let $D = D_1 + \cdots + D_n \in \Div(X)$ be the irreducible curve on $X$ containing $D_1$ as a component. If $-K_{\overline{X}} \cdot D_1 \leq 0$, then also $-K_X \cdot D \leq 0$, so this is only possible if $D_1$ is a component of $A$. Otherwise, we have $-K_{\overline{X}} \cdot D_1 \geq 1$, and by adjunction we obtain $D_1^2 = 2p_a(D_1) - 2 - K_{\overline{X}} \cdot D_1 \geq -1$.
\end{proof}

\begin{proof}[Proof of Proposition \ref{prop:classificationCB}]
    If $-K_X$ is not nef, we obtain case (1) by Lemma \ref{lem:-KXnotnef}. If $-K_X$ is nef, but not ample, we obtain case (2) by Lemma \ref{lem:-KXnef}. If $-K_X$ is ample, then $X$ is a del Pezzo surface of degree 1, which has two distinct conic bundle structures by Theorem \ref{thm:twoconicbundles}.
\end{proof}

Recall that Lemma \ref{lem:smallfields} tells us that for each of the types 1 to 7, there exists a minimal standard rational conic bundle of this type over $\F_q$, except for type 1 when $q = 2, 3$ and type 3 when $q = 2$. Such a conic bundle must satisfy one of the five characterizations listed in Proposition \ref{prop:classificationCB}. In the rest of this section, we consider how each of the cases in this proposition is compatible with the seven different types of conic bundles. We will separately consider a minimal standard rational conic bundle of each of the five cases, and find elementary transformations to a minimal standard rational conic bundle of the same type which is a del Pezzo surface, thus showing the existence of del Pezzo surfaces of each type for most values of $q$.

\subsection{From case (1) to case (2) and (3)}\label{sec:case(1)}
We consider a minimal standard conic bundle $f \colon X \ra \P^1$ of degree 1 over $\F_q$, such that $-K_X$ is not nef. Let $F \in \Pic(X)$ denote the class of a fiber of $f$. We are in case (1) of Proposition \ref{prop:classificationCB}, so there is an integral bisection $A$ of $f$ on $X$ with $A^2 = -3$. From Lemma \ref{lem:-KXnotnef}, we know that $[A] = -K_X - F$.

\begin{comment}
In the basis $\{C, F, E_1, \ldots, E_7\}$ of the geometric Picard group, let us write \[B = cC + aF - \sum_{i=1}^7 b_i E_i.\] Because $B$ is a bisection, we have $B \cdot F = 2$ and thus $c = 2$. Furthermore, because $B$ and $C$ are classes of effective divisors without common components, we have $B \cdot C \geq 0$, so $a \geq 0$. Moreover, because each $E_i$ is a component of a fiber, we get $ 0 \leq B \cdot E_i \leq 2$, and hence $0 \leq b_i \leq 2$ for all $i = 1, \ldots, 7$. Recall that $K_X = -2C - 2F + \sum_{i=1}^7 E_i$, so $-K_X \cdot B = 2a + 4 - \sum_{i=1}^7 b_i = -1$. We also have $B^2 = 4a - \sum_{i=1}^7 b_i^2 = - 3$.
We thus need to find a tuple $(a, b_1, \ldots, b_7)$ satsifying $a \geq 0$ and $0 \leq b_i \leq 2$, and
\begin{align}
    \sum_{i=1}^7 b_i = 2a + 5, \label{eq:sumbi}\\
    \sum_{i=1}^7 b_i^2 = 4a + 3. \label{eq:sumbi2}
\end{align}
Because each $b_i \leq 2$, we get $a \leq 4$ from \eqref{eq:sumbi}. Under these restrictions, the only possible solution is \[[B] = 2C + F - E_1 - \cdots - E_7 = -K_X - F.\]
\end{comment}

\begin{proposition}\label{prop:(1)}
    Let $f \colon X \ra \P^1$ be a minimal standard conic bundle over $\F_q$ of degree 1. Suppose there is an $\F_q$-point $P$ on a smooth fiber of $f$ such that the following two conditions hold:
    \begin{enumerate}[label=(\alph*)]
         \item $P$ does not lie on a bisection with self-intersection $-3$;
        \item $P$ is not a singular point on a bisection with self-intersection $1$.
    \end{enumerate}
     Let $\varphi_{\scriptscriptstyle P} \colon X \dashrightarrow Y$ be the elementary transformation centered at $P$. Then $f \circ \varphi_{\scriptscriptstyle P}^{-1} \colon Y \ra \P^1$ is a minimal standard conic bundle of degree 1 such that $-K_Y$ is nef.
\end{proposition}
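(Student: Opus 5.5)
The plan is to get the conic bundle structure and minimality of $Y$ from earlier results, and then to prove nefness of $-K_Y$ by contradiction, transporting the bad curve back to $X$ through $\varphi_{\scriptscriptstyle P}^{-1}$.

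First, by Proposition \ref{prop:elementarytransformations}, $f' \colonequals f \circ \varphi_{\scriptscriptstyle P}^{-1} \colon Y \ra \P^1$ is a standard rational conic bundle with $K_Y^2 = K_X^2 = 1$. By Proposition \ref{prop:preserveminimaltype}, $Y$ is again minimal. It therefore remains to show that $-K_Y$ is nef. Suppose it is not. Then Lemma \ref{lem:-KXnotnef}, applied to $Y$, gives a geometrically integral bisection $A'$ on $Y$, defined over $\F_q$, with $[A'] = -K_Y - F$ and $A'^2 = -3$. We will pull $A'$ back to $X$ and show that its image violates (a), violates (b), or contradicts Lemma \ref{lem:-KXnotnef} on $X$.

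Keep the notation of the proof of Lemma \ref{lem:nsection}.
\begin{itemize}
\item $\pi_P \colon X' \ra X$ is the blow-up at $P$, with exceptional curve $E_P$.
\item $F_1'$ is the strict transform of the fiber $F_1$ through $P$.
\item $\pi_{F_1} \colon X' \ra Y$ contracts $F_1'$ to an $\F_q$-point $Q$ lying on the fiber $F_2 = \pi_{F_1}(E_P)$ of $f'$.
\end{itemize}
Let $m \in \{0,1,2\}$ be the multiplicity of $A'$ at $Q$. The bound $m \leq 2$ holds because $A' \cdot F_2 = 2$. The strict transform $A''$ of $A'$ on $X'$ satisfies
\[[A''] = \pi_{F_1}^*[A'] - m[F_1'], \qquad A''^2 = -3 - m^2.\]
Since $\pi_{F_1}^*F_2 = E_P + F_1'$ and $\pi_{F_1}^*[A'] \cdot F_1' = 0$, we get $\pi_{F_1}^*[A'] \cdot E_P = 2$. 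Using $F_1' \cdot E_P = 1$, this gives $A'' \cdot E_P = 2 - m$.

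Now let $D = \pi_P(A'')$. This curve is defined over $\F_q$ and is geometrically integral, since $A'$ is. By Lemma \ref{lem:nsection} applied to the inverse elementary transformation, $D$ is a bisection of $f$. Its multiplicity at $P$ is $m' = A'' \cdot E_P = 2 - m$, so
\[D^2 = A''^2 + m'^2 = -3 - m^2 + (2-m)^2 = 1 - 4m.\]
We now check the three possible values of $m$.
\begin{itemize}
\item If $m = 0$, then $D$ is a bisection with $D^2 = 1$ passing through $P$ with multiplicity $2$. So $P$ is a singular point of $D$, contradicting (b).
\item If $m = 1$, then $D$ is a bisection with $D^2 = -3$ passing through $P$, contradicting (a).
\item If $m = 2$, then $D^2 = -7$. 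This is impossible by Lemma \ref{lem:-KXnotnef} if $-K_X$ is not nef: every integral curve other than the unique $(-3)$-bisection has self-intersection at least $-2$. It is impossible by Lemma \ref{lem:-KXnef} if $-K_X$ is nef but not ample, since all integral curves there have self-intersection at least $-2$. It is impossible by adjunction on a del Pezzo surface if $-K_X$ is ample.
\end{itemize}
Hence $-K_Y$ is nef.

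The main obstacle is the bookkeeping of multiplicities and intersection numbers through the blow-up and the contraction. In particular, one must verify that the multiplicity of $D$ at $P$ is exactly $2 - m$, which is the quantity that links the two hypotheses (a) and (b) to the two cases $m = 1$ and $m = 0$. A smaller point to confirm is that $D$ is genuinely a bisection, so that the hypotheses on bisections apply. This uses that $\varphi_{\scriptscriptstyle P}^{-1}$ is itself an elementary transformation, centered at $Q$, so that Lemma \ref{lem:nsection} applies to it.
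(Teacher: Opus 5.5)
Your proof is correct and is essentially the paper's argument run in the opposite direction. The paper pushes forward an arbitrary geometrically integral bisection $D$ on $X$ and uses $\varphi_{\scriptscriptstyle P}(D)^2 = D^2 + 4 - 4\,m_P(D)$ together with $D^2 \geq -3$. Your pull-back computation, giving $m_P(D) = 2 - m$ and $D^2 = 1 - 4m$, is exactly that relation, and your three cases correspond one-to-one to the paper's three cases.
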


\begin{proof}
    By Proposition \ref{prop:elementarytransformations} and Proposition \ref{prop:preserveminimaltype}, $f \circ \varphi_{\scriptscriptstyle P}^{-1} \colon Y \ra \P^1$ is a minimal standard conic bundle of degree 1. Suppose that $-K_Y$ is not nef. Then by Proposition \ref{prop:classificationCB}, there is an irreducible bisection $A$ on $Y$ which is defined over $\F_q$, with $A^2 = -3$. By Lemma \ref{lem:nsection}, $\varphi_{\scriptscriptstyle P}^{-1}(A)$ is an integral bisection on $X$. Let $D$ be any geometrically integral bisection on $X$. Because $[D] \cdot F = 2$, $D$ has multiplicity at most two at $P$. Note that \[\varphi_{\scriptscriptstyle P}(D)^2 = \begin{cases}
        D^2 - 4 &\text{if $P \in \supp(D)$ with multiplicity 2},\\
        D^2 &\text{if $P \in \supp(D)$ with multiplicity 1},\\
        D^2 + 4 &\text{if} \ P \notin \supp(D).
    \end{cases}\]
    From Lemma \ref{lem:-KXnotnef} and Lemma \ref{lem:-KXnef}, we know that $D^2 \geq -3$, so $\varphi_{\scriptscriptstyle P}(D)^2 = -3$ can only happen in two cases: either $D^2 = -3$ and $P \in \supp(D)$, or $D^2 = 1$ and $P$ is a singular point on $D$. By assumption, these two cases are not satisfied, so we conclude that $f \circ \varphi_{\scriptscriptstyle P}^{-1}$ does not have any geometrically integral bisections defined over $\F_q$ of self-intersection $-3$. In particular, $f \circ \varphi_{\scriptscriptstyle P}^{-1}$ cannot satisfy case (1) of Proposition \ref{prop:classificationCB}, so it must satisfy case (2) or (3), which implies that $-K_Y$ is nef.
\end{proof} 

\begin{theorem}\label{thm:(1)to(2,3)}
    Let $f \colon X \ra \P^1$ be a minimal standard conic bundle over $\F_q$ of degree 1 such that $-K_X$ is not nef, that is, $f$ satisfies case (1) in Proposition \ref{prop:classificationCB}. Unless $q = 2$ and $f$ is of type 4 or $q = 4$ and $f$ is of type 1, there is an elementary transformation $\varphi \colon X \dashrightarrow Y$ such that $f \circ \varphi^{-1} \colon Y \ra \P^1$ is a minimal standard conic bundle of degree 1 of the same type as $f$, satisfying case (2) or (3) in Proposition \ref{prop:classificationCB}.
\end{theorem}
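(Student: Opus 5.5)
By Proposition \ref{prop:(1)}, it suffices to find an $\F_q$-point $P$ on a smooth fiber of $f$ that avoids two bad loci: (a) the curve $A$, and (b) the singular points of the bisections of self-intersection $1$. By Proposition \ref{prop:preserveminimaltype}, the resulting $Y$ is automatically minimal of the same type. Moreover, once $-K_Y$ is nef, $Y$ satisfies case (2) or (3) of Proposition \ref{prop:classificationCB}.

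So the plan is a counting argument. We bound $\#X(\F_q)$ on smooth fibers from below and the number of bad $\F_q$-points from above.

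\textbf{Good points.} By Theorem \ref{thm:numberrationalpoints}, $\#X(\F_q) = q^2 + a(X)q + 1$. Alternatively, we count fiberwise: each $\F_q$-rational point of $\P^1$ below a smooth fiber contributes $q+1$ points. The number of rational points under singular fibers is the number $r_1$ of degree-one points in Table \ref{tab:pointsbelowsingularfibers} ($r_1 = 5,2,1,3,1,0,0$ for types $1,\dots,7$). So there are $(q+1-r_1)$ smooth fibers over $\F_q$, carrying $(q+1-r_1)(q+1)$ points in total.

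\textbf{Points excluded by (a).} By Lemma \ref{lem:-KXnotnef}, $A$ is the unique curve with $A^2=-3$, so it is the only bisection to avoid. It is a bisection, so it meets each smooth fiber in at most $2$ points. This excludes at most $2(q+1-r_1)$ points.

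\textbf{Points excluded by (b).} Here we must control the bisections $D$ defined over $\F_q$ with $D^2 = 1$ having a singular $\F_q$-point. Such a $D$ satisfies $[D]\cdot F = 2$ and $-K_X\cdot D$ determined by adjunction; since $p_a(D) \geq 1$ is needed for a singular point, $D^2=1$ forces $-K_X\cdot D = 1 - (2p_a-2)$, i.e. $p_a(D)=1$ and $-K_X\cdot D = 1$. Since $\Pic(X)=\Z K_X\oplus \Z F$ by Lemma \ref{lem:relativelyminalrank2}, writing $[D] = -aK_X - bF$ with $D\cdot F=2a=2$ gives $a=1$, and then $-K_X\cdot D = 1-2b = 1$ gives $b=0$. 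Hence $[D]=-K_X$ and $D \in |-K_X| = A + |F|$, using \cite[Lemma 19]{kollarmella} as in Lemma \ref{lem:-KXnotnef}. But every member of $A+|F|$ is reducible, contradicting the irreducibility of $D$. So I expect condition (b) to be vacuous in case (1).

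This step needs care. I should double-check that a curve of class $-K_X$ with $p_a = 1$ cannot be irreducible here, and that the bisection in (b) really is required to be $\F_q$-integral, which is the form in which Proposition \ref{prop:(1)} uses it. If (b) turned out not to be vacuous, I would instead bound the singular points: an irreducible curve with $p_a = 1$ has at most one singular point, so the loci of singular points are small, and I would count them.

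\textbf{Conclusion.} Assuming (b) is vacuous, a good point exists whenever $(q+1-r_1)(q+1) > 2(q+1-r_1)$, i.e. whenever $q+1-r_1 \geq 1$ and $q > 1$. The first condition always holds whenever the type exists by Lemma \ref{lem:smallfields}. But this is too generous: it does not explain the exceptions $q=2$ for type 4 and $q=4$ for type 1. That suggests (b) is not vacuous, or that the relevant bad locus is larger, for instance points on $A$ over singular fibers, or the requirement that $P$ lie on a smooth fiber away from $A$'s ramification.

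So the main obstacle is pinning down exactly which points are bad. My refined plan is to take (b) seriously with $[D]=-K_X$ possibly realized by an irreducible member not of the form $A+F_t$. Then I would count via the fiberwise structure: $A$ meets each smooth rational fiber in $0$ or $2$ rational points. In the worst case $A$ has $2$ rational points on every rational smooth fiber, and (b) removes at most one further point per fiber. The condition then becomes $(q+1-r_1)(q+1-3) > 0$, i.e. $q \geq 3$ and $q+1 > r_1$. This gives failure exactly for $q=2$, and at $q=4$ for type 1 (where $q+1-r_1=0$).

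So I would aim to prove: on each rational smooth fiber, at most three rational points are bad. Then I would check the residual small cases directly: $q=2$ for the types other than 4, where $A$ must miss some fiber by a parity or point-count argument using $\#A(\F_q)$, and $q=3$ for type 1.
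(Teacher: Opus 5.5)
Your first three steps are correct and are essentially the paper's proof. You pick an $\F_q$-point on a smooth fiber lying over an $\F_q$-point of $\P^1$ that avoids $A$. This is possible since such a fiber has $q+1\ge 3$ rational points and meets $A$ in at most $2$. You then observe that condition (b) of Proposition~\ref{prop:(1)} is vacuous.

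For (b) the paper argues slightly differently. Any $\F_q$-bisection $D\neq A$ has $[D]=-K_X-bF$ with $D\cdot A=-1-2b\ge 0$, so $b\le -1$ and $D^2=1-4b\ge 5$. Your route via $|-K_X|=A+|F|$ expresses the same fact: a curve of class $-K_X$ meets $A$ negatively, hence contains $A$.

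The genuine error is in your Conclusion. You claim that $q+1-r_1\ge 1$ ``always holds whenever the type exists.'' This is false, and it fails precisely in the two excluded cases:
\begin{itemize}
\item for type 4 at $q=2$ you have $r_1=3=q+1$;
\item for type 1 at $q=4$ you have $r_1=5=q+1$.
\end{itemize}
Lemma~\ref{lem:smallfields} guarantees that conic bundles of these types do exist over these fields. In both cases every $\F_q$-point of $\P^1$ lies under a singular fiber, so there is no rational point on a smooth fiber at which to center an elementary transformation. So your simple count already proves the theorem, with exactly the stated exceptions.

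You should therefore discard the ``refined plan.'' It reinstates condition (b) right after you correctly showed it is vacuous. Its condition $(q+1-r_1)(q-2)>0$ would also fail at $q=2$ for every type. That would leave types 2, 5, 6 and 7 over $\F_2$ to an unspecified parity argument, even though your first count handles them directly.
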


\begin{proof}
    The conic bundle $f$ has a smooth fiber defined over $\F_q$, except when $q = 2$ and $X$ is of type 4, or when $q = 4$ and $X$ is of type 1. Suppose this is the case, and let $Q \in \P^1(\F_q)$ such that $f^{-1}(Q)$ is a smooth fiber defined over $\F_q$. We saw in Lemma \ref{lem:-KXnotnef} that there is a bisection $A$ of $f$ on $X$ such that $[A] = -K_X-F$. The fiber $f^{-1}(Q)$ has at least three rational points, and $A \cdot f^{-1}(Q) = [A] \cdot F = 2$, so there is a point $P \in f^{-1}(Q)(\F_q)$ such that $P$ does not lie on $A$. Let $\varphi_{\scriptscriptstyle P} \colon X \dashrightarrow Y$ be the elementary transformation centered at $P$. By Propositions \ref{prop:elementarytransformations} and \ref{prop:preserveminimaltype}, $f \circ \varphi_{\scriptscriptstyle P}^{-1} \colon Y \ra \P^1$ is a minimal standard conic bundle of degree 1 of the same type as $f$. Let $D$ be a bisection of $f$ on $X$ which is defined over $\F_q$, different from $A$. By Lemma \ref{lem:relativelyminalrank2}, we can write $[D] = -a K_X - bF \in \Pic(X)$ for some $a, b \in \Z$. Then $[D] \cdot F = 2a = 2$, so $a = 1$. Because $D$ is distinct from $A$, we must have $D \cdot A = -2b-1 \geq 0$, so $b < 0$. It follows that $D^2 = 1 - 4b \geq 5$. By Proposition \ref{prop:(1)}, the map $\varphi_{\scriptscriptstyle P}$ is the desired elementary transformation.
\end{proof}

This allows us to prove Theorem \ref{thm:weakdPexistence} about the existence of minimal weak del Pezzo surfaces of degree 1 with conic bundles. 

\begin{proof}[Proof of Theorem \ref{thm:weakdPexistence}]
    This follows from Lemma \ref{lem:smallfields} and Theorem \ref{thm:(1)to(2,3)}.
\end{proof}



\subsection{Case (2): $-K_X$ is nef}

In what follows, we consider a minimal standard conic bundle $f \colon X \ra \P^1$ over $\F_q$ of degree 1 with nef anticanonical divisor, and investigate when we can use elementary transformations to obtain a del Pezzo surface of degree 1. Our first strategy is to find an elementary transformation of $X$ such that the resulting surface is a minimal standard conic bundle which satisfies case (2c) or (3). We will show that for $q$ large enough, there exists a point $P$ on $X$ such that the resulting surface has no geometrically integral bisection of self-intersection $-3$ defined over $\F_q$, and no integral sections with self-intersection $-2$ over $\overline{\F}_q$. As we see in the following proposition, we can achieve this by choosing a rational point which does not lie on any curve of self-intersection $-1$ on $\overline{X}$, and which is not a singular point of an \term{anticanonical curve}, that is, a curve in the linear system $|-K_X|$.

\begin{proposition}\label{prop:(3)(4)to(1)(2)}
    Let $f \colon X \ra \P^1$ be a minimal standard conic bundle over $\F_q$ of degree 1 such that $-K_X$ is nef. Suppose there is a point $P \in X(\F_q)$ on a smooth fiber of $f$ such that
    \begin{enumerate}[label=(\alph*)]
        \item $P$ is not a singular point of an anticanonical curve;
        \item $P$ does not lie on any geometric section with negative self-intersection.
    \end{enumerate}
    Let $\varphi_{\scriptscriptstyle P} \colon X \dashrightarrow Y$ be the elementary transformation centered at $P$. Then $f \circ \varphi_{\scriptscriptstyle P}^{-1} \colon Y \ra \P^1$ is a minimal standard conic bundle of degree 1 of the same type as $f$, satisfying case (2c) or (3) in Proposition \ref{prop:classificationCB}. 
\end{proposition}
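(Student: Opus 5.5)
By Propositions \ref{prop:elementarytransformations} and \ref{prop:preserveminimaltype}, $f \circ \varphi_{\scriptscriptstyle P}^{-1} \colon Y \ra \P^1$ is a minimal standard conic bundle of degree 1 of the same type as $f$. By Proposition \ref{prop:classificationCB} it then suffices to rule out cases (1), (2a) and (2b) for $Y$. The approach is the one used in Proposition \ref{prop:(1)}: any offending curve on $\overline{Y}$ has a preimage on $\overline{X}$, and conditions (a) and (b) exclude every such preimage.

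First I will record how self-intersections of $n$-sections change under $\varphi_{\scriptscriptstyle P}$. This is the computation in the proof of Lemma \ref{lem:nsection}. If $D$ is a geometrically integral $n$-section on $\overline{X}$ with multiplicity $m$ at $P$, I compute $\varphi_{\scriptscriptstyle P}(D)^2$ from $\pi_{F_1}^*(D'') = D' + (n-m)F_1'$. This gives
\[
\varphi_{\scriptscriptstyle P}(D)^2 = D^2 - m^2 + (n-m)^2 = D^2 + n^2 - 2nm .
\]
For sections this reads $D^2+1$ if $P \notin D$ and $D^2 - 1$ if $P \in D$. For bisections it reads $D^2+4$, $D^2$ or $D^2-4$ according as $m = 0,1,2$. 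The inverse of $\varphi_{\scriptscriptstyle P}$ is again an elementary transformation, and every curve on $\overline{Y}$ that is not a fiber component is the image of a unique $n$-section on $\overline{X}$. Since $P$ is an $\F_q$-point, $\varphi_{\scriptscriptstyle P}$ commutes with the Galois action.

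Next I rule out cases (2a) and (2b). In both cases $\overline{Y}$ carries a geometrically integral section $C$ with $C^2 = -2$. Its preimage $D$ is a section on $\overline{X}$ with $D^2 = -3$ if $P \notin D$, or $D^2 = -1$ if $P \in D$. The first option contradicts Lemma \ref{lem:-KXnef}, since every integral curve outside the $C_i$ has $D^2 \geq -1$ and the $C_i$ have $C_i^2 = -2$. In the second option $D$ is a section of negative self-intersection through $P$, which condition (b) forbids.

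Then I rule out case (1). Suppose $Y$ has an integral bisection $A$ over $\F_q$ with $A^2 = -3$, and let $D$ be its preimage, an $\F_q$-bisection. Either $D^2 = -7$ with $P \notin D$, which Lemma \ref{lem:-KXnef} excludes; or $D^2 = -3$ with $P \in D$, again excluded by Lemma \ref{lem:-KXnef} since then $D^2 \geq -2$; or $D^2 = 1$ with $P$ a singular point of $D$. In the last case I write $[D] = -aK_X - bF$ using Lemma \ref{lem:relativelyminalrank2}. From $[D]\cdot F = 2$ we get $a = 1$, and then $D^2 = 1 - 4b = 1$ forces $b = 0$. So $[D] = -K_X$, i.e.\ $D$ is an anticanonical curve singular at $P$, which condition (a) forbids. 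Hence $Y$ is in case (2c) or (3).

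The main obstacle is the bookkeeping: ensuring the preimage of each offending curve is integral and of the right kind. I also need the Galois-orbit structure of case (2a)/(2b) to be irrelevant, which it is, since a single bad geometric section already gives the contradiction.
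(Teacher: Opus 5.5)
Your proof is correct and is essentially the paper's argument: a section of self-intersection $-2$ on $\overline{Y}$ pulls back either to a curve of self-intersection $-3$ or to a negative section through $P$, and an $\F_q$-bisection with $A^2=-3$ on $Y$ pulls back to an anticanonical curve singular at $P$. The paper obtains this last step by verifying the hypotheses of Proposition \ref{prop:(1)}, whose proof is the computation you inline. One slip: you cite Lemma \ref{lem:-KXnef} for the lower bound on $D^2$, but that lemma assumes $-K_X$ is not ample, so it does not cover the case where $X$ is already del Pezzo. Instead use the adjunction formula $D^2 = 2p_a(D) - 2 - K_{\overline{X}}\cdot D \geq -2$, which holds for every integral curve whenever $-K_X$ is nef, as the paper does.
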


\begin{proof}
    By Proposition \ref{prop:elementarytransformations} and Proposition \ref{prop:preserveminimaltype}, the morphism $f \circ \varphi_{\scriptscriptstyle P}^{-1} \colon Y \ra \P^1$ defines a minimal standard conic bundle of degree 1 of the same type as $f$. Because $-K_X$ is nef, the adjunction formula implies that all integral curves on $\overline{X}$ have self-intersection at least $-2$. Note that $P$ does not lie on any of the geometric sections with negative self-intersection. By Lemma \ref{lem:nsection}, every section on $\overline{Y}$ comes from a section on $\overline{X}$, so we conclude that all integral sections on $\overline{Y}$ have self-intersection at least $-1$. It follows that $Y$ does not satisfy case (2a) or (2b) of Proposition \ref{prop:classificationCB}. Furthermore, let $D$ be a geometrically integral bisection on $X$. Then $D^2 \geq 2$, and thus $D = -K_X + b F$ for some $b \in \Z_{\geq 0}$. We have $D^2 = 1$ if and only if $b = 0$, in which case $D$ is an anticanonical curve and $P$ is not a singular point on $D$ by assumption. It follows from Proposition \ref{prop:(1)} that $Y$ does not satisfy case (1) of Proposition \ref{prop:classificationCB}.
\end{proof}

To further find an elementary transformation that eliminates case (2c) and thus results in a del Pezzo surface, we use the following observation.

\begin{lemma}\label{lem:(2c)to(2c)}
    Let $f \colon X \ra \P^1$ be a minimal standard conic bundle over $\F_q$ of degree 1 such that $-K_X$ is nef. Let $P$ be an $\F_q$-point on a smooth fiber of $f$, and let $\varphi_{\scriptscriptstyle P} \colon X \dashrightarrow Y$ be the elementary transformation centered at $P$. Suppose $f \circ \varphi_{\scriptscriptstyle P}^{-1} \colon Y \ra \P^1$ is a minimal standard conic bundle which satisfies case (2c) in Proposition \ref{prop:classificationCB}. 
    Then $P$ is a singular point on two integral bisections of $\overline{f} \colon \overline{X} \ra \P^1$ of self-intersection $2$ and arithmetic genus 1, which form a Galois orbit of size 2. 
\end{lemma}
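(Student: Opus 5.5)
Since $f\circ\varphi_{\scriptscriptstyle P}^{-1}$ satisfies case (2c), Lemma \ref{lem:-KXnef} provides a Galois orbit $\{C_1, C_2\}$ of two disjoint integral bisections on $\overline{Y}$ with $C_i^2 = -2$ and $-K_{\overline{Y}}\cdot C_i = 0$. The plan is to pull these back through the inverse elementary transformation and compute what they look like on $\overline{X}$.

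First, the inverse of $\varphi_{\scriptscriptstyle P}$ is the elementary transformation of $Y$ centred at the $\F_q$-point $P'$ obtained by contracting the strict transform of the fibre through $P$. By Lemma \ref{lem:nsection}, $D_i \colonequals \varphi_{\scriptscriptstyle P}^{-1}(C_i)$ is an integral bisection on $\overline{X}$. The Galois action commutes with $\varphi_{\scriptscriptstyle P}$, which is defined over $\F_q$, so $D_1$ and $D_2$ are swapped by Frobenius and form an orbit of size 2. They are distinct because the transformation is birational and an isomorphism away from one fibre.

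Next, I would use the self-intersection bookkeeping from the proof of Proposition \ref{prop:(1)}. For a bisection through a point of multiplicity $m\in\{0,1,2\}$, the self-intersection changes by $+4$, $0$ or $-4$ respectively; equivalently, $C'' = D' + (2-m)F_1'$ in the notation of Lemma \ref{lem:nsection}. Since $-K_X$ is nef, every integral curve on $\overline{X}$ has self-intersection at least $-2$. Hence $D_i^2 \in \{-6,-2,2\}$ reduces to $D_i^2\in\{-2,2\}$.

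The main obstacle is excluding the case $D_i^2=-2$, in which $D_i$ meets $P$ with multiplicity 1. The natural tool is the canonical degree.
\begin{itemize}
\item $K$ changes under blow-up and contraction, but one can compute $-K_{\overline{X}}\cdot D_i$ directly. If $D_i^2=-2$ with $D_i$ passing through $P$ with multiplicity $m=1$, the transformation of classes gives $-K_{\overline{Y}}\cdot C_i = -K_{\overline{X}}\cdot D_i + (1-m) = -K_{\overline{X}}\cdot D_i$.
\item One should verify this formula: $K_{X'} = \pi_P^*K_X + E_P = \pi_{F_1}^*K_Y + F_1'$.
\item In that case both $D_i$ would be $(-2)$-bisections with $-K\cdot D_i=0$ passing through $P$. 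That would put $X$ itself in case (2c), and $A = D_1+D_2$ with $[A] = -4K_X - 2F$.
\item Its multiplicity at $P$ is 2, whereas $[A]\cdot F_1 = 4$. This multiplicity count alone is not obviously contradictory.
\item Instead, use uniqueness in Lemma \ref{lem:-KXnef}. Computing with the formula $C_i'' = D_i' + F_1'$, one finds that $D_1\cdot D_2 = 0$ is forced in any case.
\item In the $m=1$ case, $D_1$ and $D_2$ both pass through $P$, so $D_1\cdot D_2\geq 1$, which contradicts disjointness once we check that $C_1\cdot C_2 = D_1\cdot D_2 - m_1m_2 + (2-m_1)(2-m_2)\cdot 0\ldots$. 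I would carry out this intersection computation carefully: $C_1\cdot C_2 = D_1\cdot D_2 - m_1m_2 + \ldots$.
\end{itemize}
The expected outcome is that $m=1$ yields a contradiction, while $m=2$ is consistent.

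Finally, with $m=2$ and $D_i^2=2$, we have $-K_{\overline{X}}\cdot D_i = -K_{\overline{Y}}\cdot C_i + 2 = 2$. Adjunction then gives $2p_a(D_i)-2 = D_i^2 + K_{\overline{X}}\cdot D_i = 0$, so $p_a(D_i)=1$. Since multiplicity 2 means $P$ is a singular point of each $D_i$, this gives the claim.
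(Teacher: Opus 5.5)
This is essentially the paper's proof: you pull back the two $(-2)$-bisections, use $D_i^2\ge -2$ to force multiplicity $m\in\{1,2\}$ at $P$, and exclude $m=1$. In that case $D_1+D_2$ would be the curve $A$ of Lemma \ref{lem:-KXnef} in case (2c) on $X$ itself. Its components are disjoint Galois conjugates, so they cannot both contain the $\F_q$-point $P$; the paper states this as ``no $(-2)$-bisection on $\overline{X}$ contains a rational point.'' That leaves $m=2$, $D_i^2=2$ and $p_a(D_i)=1$.

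The disjointness argument already closes the case $m=1$, so you should discard the exploratory bullets, which contain several slips:
\begin{itemize}
\item The correct relation is $-K_{\overline{Y}}\cdot C_i=-K_{\overline{X}}\cdot D_i+2(1-m)$. This matches the $+2$ you use at the end.
\item The class is $[A]=-2K_X-F$, not $-4K_X-2F$.
\item $D_1\cdot D_2=0$ is not forced in general; it equals $4$ when $m=2$.
\item The intersection formula is $C_1\cdot C_2=D_1\cdot D_2-m_1m_2+(2-m_1)(2-m_2)$, with coefficient $1$ on the last term. With your truncated version one would get $D_1\cdot D_2=1$ at $m=1$ and no contradiction.
\end{itemize}
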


\begin{proof}
    Because $f \circ \varphi_{\scriptscriptstyle P}^{-1}$ satisfies case (2c) of Proposition \ref{prop:classificationCB}, $\overline{Y}$ contains a Galois orbit of two integral bisections $B_1'$ and $B_2'$ of $\overline{f \circ \varphi_{\scriptscriptstyle P}^{-1}}$, such that $B_i'^2 = -2$ and $-K_{\overline{Y}} \cdot [B_i'] = 0$ for $i = 1, 2$. The preimages under $\overline{\varphi_{\scriptscriptstyle P}}$ of $B_1'$ and $B_2'$ are integral bisections of $\overline{f}$ on $\overline{X}$ by Lemma \ref{lem:nsection}. Let $D$ be an integral bisection of $\overline{f}$. Note that $P$ can have multiplicity at most $2$ on $D$, because $[D] \cdot F = 2$. Then the self-intersection of $\overline{\varphi_{\scriptscriptstyle P}}(D)$ on $\overline{Y}$ is as follows:
    \begin{align*}
        \overline{\varphi_{\scriptscriptstyle P}}(D)^2 = \begin{cases}
             D^2 + 4 &\text{if $P$ does not lie on $D$;}\\
             D^2 &\text{if $P$ is a point of multiplicity $1$ on $D$;}\\
             D^2 - 4 &\text{if $P$ is a point of multiplicity $2$ on $D$.}\\ 
        \end{cases}
    \end{align*}
Let us denote by $D_i$ the preimage of $B_i'$ for $i = 1, 2$. Then $D_i^2 \geq -2$ by Lemma \ref{lem:-KXnef}, and $(B_i')^2 = -2$ yields that $D_i^2 \in \{-2, 2\}$ and $P$ lies on $D_i$. By Lemma \ref{lem:-KXnef}, there are no irreducible bisections with self-intersection $-2$ on $\overline{X}$ which contain rational points. We must conclude that $D_i^2 = 2$ and $P$ has multiplicity 2 on $D_i$. This yields $p_a(D_i) = p_a(B_i') + 1 = 1$.
\end{proof}

We conclude that the following properties of an $\F_q$-rational point $P$ are sufficient to induce an elementary transformation $\varphi_{\scriptscriptstyle P}$ resulting in a del Pezzo surface.

\begin{proposition}\label{prop:(2c)to(3)}
    Let $f \colon X \ra \P^1$ be a minimal standard conic bundle over $\F_q$ of degree 1 such that $-K_X$ is nef. Suppose there is a point $P \in X(\F_q)$ on a smooth fiber of $f$ such that $P$ satisfies the following three conditions:
    \begin{enumerate}[label=(\alph*)]
        \item $P$ is not a singular point of an anticanonical curve;
        \item $P$ is not a singular point on two integral bisections of $\overline{f} \colon \overline{X} \ra \P^1$ of self-intersection $2$ and arithmetic genus 1, which form a Galois orbit of size 2;\label{item:(2c)to(3)part(b)}
        \item $P$ does not lie on any geometric section with negative self-intersection.\label{item:(2c)to(3)part(c)}
    \end{enumerate}
    Let $\varphi_{\scriptscriptstyle P} \colon X \dashrightarrow Y$ be the elementary transformation centered at $P$. Then $Y$ is a del Pezzo surface of degree 1, and $f \circ \varphi_{\scriptscriptstyle P}^{-1} \colon Y \ra \P^1$ is a minimal standard conic bundle of the same type as $f$.
\end{proposition}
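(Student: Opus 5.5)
The plan is to combine Proposition \ref{prop:(3)(4)to(1)(2)} with Lemma \ref{lem:(2c)to(2c)}, and then use the classification of Proposition \ref{prop:classificationCB} to identify $Y$.

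First, conditions (a) and (c) of the statement are exactly hypotheses (a) and (b) of Proposition \ref{prop:(3)(4)to(1)(2)}. That proposition therefore gives two facts about $f \circ \varphi_{\scriptscriptstyle P}^{-1} \colon Y \ra \P^1$:
\begin{itemize}
\item it is a minimal standard conic bundle of degree 1 of the same type as $f$;
\item it satisfies case (2c) or case (3) of Proposition \ref{prop:classificationCB}.
\end{itemize}
In particular, cases (1), (2a) and (2b) are already excluded.

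Next, we exclude case (2c) by contradiction. Suppose $Y$ satisfies case (2c). The hypotheses of Lemma \ref{lem:(2c)to(2c)} then hold: $-K_X$ is nef, $P$ is an $\F_q$-point on a smooth fiber, and $Y$ is in case (2c). The lemma concludes that $P$ is a singular point on two integral bisections of $\overline{f}$ that have self-intersection $2$ and arithmetic genus $1$ and form a Galois orbit of size $2$. This directly contradicts condition \ref{item:(2c)to(3)part(b)}.

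Hence $Y$ falls into case (3) of Proposition \ref{prop:classificationCB}, which says that $Y$ is a del Pezzo surface of degree 1. Together with the first step, $f\circ\varphi_{\scriptscriptstyle P}^{-1}$ is a minimal standard conic bundle of the same type as $f$, which is the full claim.

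I do not expect a real obstacle, since the substantive work is already done in Proposition \ref{prop:(3)(4)to(1)(2)} and Lemma \ref{lem:(2c)to(2c)}. The one point to check is that the phrasing of condition \ref{item:(2c)to(3)part(b)} matches the conclusion of Lemma \ref{lem:(2c)to(2c)} exactly, including the Galois orbit of size $2$ and the fact that the curves are bisections of $\overline{f}$ rather than of $f \circ \varphi_{\scriptscriptstyle P}^{-1}$. By construction in the lemma, it does.
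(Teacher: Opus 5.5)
Your proof is correct and is essentially the paper's argument. The paper's proof is a one-line combination of Proposition \ref{prop:(3)(4)to(1)(2)} (conditions (a) and (c) exclude cases (1), (2a), (2b)) and Lemma \ref{lem:(2c)to(2c)} (condition (b) excludes case (2c)). The paper also cites Lemma \ref{lem:(2c)selfintersection2classes}, but that lemma is not logically needed for this step; it only serves the later point counts.
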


\begin{proof}
    This follows from Proposition \ref{prop:(3)(4)to(1)(2)} and Lemmas \ref{lem:(2c)to(2c)} and \ref{lem:(2c)selfintersection2classes}.
\end{proof}

To find a point $P$ which satisfies the first condition in this proposition, we describe the distribution of singular points on curves in the anticanonical linear system on the fibers of $f$. 

\begin{lemma}\label{lem:(2)singpointsanticanonical}
    Let $f \colon X \ra \P^1$ be a minimal standard conic bundle over $\F_q$ of degree 1 such that $-K_X$ is nef. Then there are at most two singular points of anticanonical curves on a smooth fiber of $f$. If the fiber contains the base point of $|-K_X|$, it cannot contain a singular point of an anticanonical curve.
\end{lemma}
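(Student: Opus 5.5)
The plan is to use the geometry of the anticanonical pencil. Since $-K_X$ is nef and big with $K_X^2 = 1$, Riemann--Roch (with vanishing of higher cohomology for nef and big divisors on rational surfaces) gives $h^0(-K_X) = 2$. So $|-K_X|$ is a pencil of curves of arithmetic genus $1$ with a single base point $O$, since $(-K_X)^2 = 1$. Work over $\overline{\F}_q$, and let $F_1$ be a smooth fiber of $f$, so $F_1 \cong \P^1$. Then $-K_X \cdot F_1 = 2$ by adjunction, since $F_1^2 = 0$ and $F_1$ has genus $0$.

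The first step is to show that $F_1$ is not a component of any anticanonical curve. If $-K_X - F_1$ were effective, then by Lemma \ref{lem:-KXnotnef} we would be in case (1), where $-K_X$ is not nef, contradicting our assumption. More directly, $(-K_X - F_1)\cdot(-K_X) = 1 - 2 = -1 < 0$ while $-K_X$ is nef, so $-K_X - F_1$ cannot be effective. Hence every member $D \in |-K_X|$ meets $F_1$ properly, with $D \cdot F_1 = 2$.

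The second step uses local intersection multiplicities. If $P \in F_1$ is a singular point of some $D \in |-K_X|$, then $(D \cdot F_1)_P \geq 2$. Since the total is $2$, $D \cap F_1 = \{P\}$ and $P$ is the unique point of $D$ on $F_1$. Now consider the restriction map $H^0(X,-K_X) \to H^0(F_1, \mathcal{O}(2))$. Two distinct members of the pencil cannot agree on $F_1$ unless their difference vanishes on $F_1$, which the first step excludes. So the restriction is injective, and $|-K_X|$ restricts to a $g^1_2$ (a pencil of degree-$2$ divisors) on $F_1 \cong \P^1$. If this pencil has a base point, it is $O$, and then every divisor in it has the form $O + x$. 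If it has no base point, it defines a degree-$2$ map $F_1 \to \P^1$. A singular point $P$ of some $D$ requires $D|_{F_1} = 2P$, i.e.\ $P$ is a ramification point of this double cover. By Riemann--Hurwitz (or, in characteristic $2$, by the fact that a separable degree-$2$ map $\P^1\to\P^1$ has at most one ramification point and an inseparable one is the Frobenius), there are at most two such points. This proves the first claim.

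The third step handles the case $O \in F_1$. Then every $D|_{F_1} = O + x_D$. A singular point $P \ne O$ of $D$ on $F_1$ would force $D|_{F_1} = 2P$, contradicting that $O$ lies in the support. So only $P = O$ is possible, which requires some $D$ singular at $O$. But $D$ singular at $O$ together with another member $D'$ through $O$ gives $D\cdot D' \geq 2 > 1 = K_X^2$, with $D, D'$ sharing no component because they are distinct members of a pencil with isolated base point. This is a contradiction.

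The main obstacle I expect is justifying $h^0(-K_X)=2$ and that the pencil has an isolated base point, with no fixed component, in the weak (nef, non-ample) setting. For this I would cite the standard results on weak del Pezzo surfaces of degree $1$ (e.g.\ \cite{demazure}, or \cite[Chapter 8]{dolgachev}). A second point of care is characteristic $2$ in the ramification count.
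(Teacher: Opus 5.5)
Your argument is the paper's: restrict the pencil $|-K_X|$ to the smooth fiber to get a degree-$2$ map $F_1 \ra \P^1$, note that a singular point of a member forces that member to meet $F_1$ doubly there, count such points by Riemann--Hurwitz, and rule out the base-point case using $(-K_X)^2 = 1$. Your first step adds something the paper leaves implicit. Since $-K_X \cdot (-K_X - F_1) = -1$ and $-K_X$ is nef, $-K_X - F_1$ is not effective, so $H^0(X,-K_X) \ra H^0(F_1,\mathcal{O}(2))$ is injective and you really get a $g^1_2$. Your remark that a separable double cover $\P^1 \ra \P^1$ in characteristic $2$ has only one (wildly) ramified point is also more accurate than the paper's ``two points with ramification index $2$''.

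The one step that does not go through is the purely inseparable case in characteristic $2$. You name it, but then fold it into ``at most two such points'' without argument. If $\phi|_{F_1}$ is, up to an automorphism of the target, the Frobenius, then every divisor of the restricted pencil has the form $2x$. So every point of $F_1$ is a ramification point, and the implication ``$P$ singular on $D$ $\Rightarrow$ $D|_{F_1} = 2P$'' gives no bound at all.

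To close this you must either show that $\phi|_{F_1}$ is separable under the lemma's hypotheses, or count singular points some other way. The natural substitute is weaker. In the inseparable case $d\phi$ kills $T_{F_1}$, so the singular points on $F_1$ are the zeros of the induced section of $N_{F_1/X}^{\vee} \otimes (\phi|_{F_1})^{*}T_{\P^1} \cong \mathcal{O}_{F_1}(4)$. That gives at most four, and nothing if the section vanishes identically. It does vanish identically along the curve of cusps of a quasi-elliptic anticanonical pencil. So genuinely more input is needed there.

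In fairness, the paper's proof has exactly the same lacuna: its Riemann--Hurwitz count tacitly assumes tame ramification and never addresses even $q$. Relative to the paper you lose nothing, but your parenthetical claims more than it proves.
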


\begin{proof}
    The surface $X$ is a weak del Pezzo surface, and thus we have $\ell(-K_X) =  2$ (by the Kawamata–Viehweg vanishing theorem, \cite{kawamata, viehweg}) and the anticanonical linear system induces a rational map $\phi \colon X \dashrightarrow \P^1$. Because $K_X^2 = 1$, the linear system $|-K_X|$ has a unique base point $P_B$. Let $Q \in \P^1(\F_q)$ such that $f^{-1}(Q)$ is a smooth fiber.
    
    Let us first suppose that $f^{-1}(Q)$ does not contain $P_B$. Then $\phi|_{f^{-1}(Q)} \colon f^{-1}(Q) \ra \P^1$ is a morphism, and because $-K_X \cdot [f^{-1}(Q)] = 2$, it is of degree 2. By the Riemann-Hurwitz formula, we get that $\sum_{R \in f^{-1}(Q)} (e_R - 1) = 2$, so $\phi|_{f^{-1}(Q)}$ ramifies in two points with ramification index 2. Let $D \in |-K_X|$ such that $D$ has a singularity which lies on $f^{-1}(Q)$. Then this is the only intersection point of $D$ and $f^{-1}(Q)$, and thus $\phi|_{f^{-1}(Q)}$ ramifies above the point $\phi(D) \in \P^1(\F_q)$. This shows that $f^{-1}(Q)$ can contain at most two points which are singular points of anticanonical curves. 
    
    Now suppose $f^{-1}(Q)$ contains the base point $P_B$, and suppose there exists $D \in |-K_X|$ such that $D$ has a singularity which lies on $f^{-1}(Q)$. Then, because $D \cdot f^{-1}(Q) = 2$ and $P_B$ lies on $D$, we must have that $P_B$ is the singular point of $D$. But this is not possible, because $(-K_X)^2 = 1$, and $D$ would intersect all other anticanonical curves with intersection number at least 2 in $P_B$. 
\end{proof}

We want to similarly bound the number of points on a fiber that do not satisfy the second condition in Proposition \ref{prop:(2c)to(3)}. We characterize the Picard classes of pairs of integral Galois-conjugate bisections of self-intersection $2$ and arithmetic genus 1, to obtain a bound for the number of points on a fiber which are singular points on such curves.

\begin{lemma}\label{lem:(2c)selfintersection2classes}
    Let $f \colon X \ra \P^1$ be a minimal standard conic bundle over $\F_q$ of degree 1. Let $\{D_1, D_2\}$ be a Galois orbit of irreducible bisections of $\overline{f} \colon \overline{X} \ra \P^1$ of self-intersection 2 and arithmetic genus 1. Then there is a singular fiber of $\overline{f}$ above an $\F_q$-point, with components $E$ and $F-E$, such that the classes of $D_1$ and $D_2$ in $\Pic(\overline{X})$ are of the form $-K_{\overline{X}} + E$ and $-K_{\overline{X}} + (F-E)$.
\end{lemma}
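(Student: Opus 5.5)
The idea is to pin down the classes $[D_1],[D_2]$ by intersection numbers, then let the Galois action identify the relevant singular fiber.

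\textbf{Numerical data of each $D_i$.} Since $D_i$ is a bisection, $[D_i]\cdot F=2$. Adjunction with $D_i^2=2$ and $p_a(D_i)=1$ gives $K_{\overline{X}}\cdot[D_i]=2p_a(D_i)-2-D_i^2=-2$.

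\textbf{The sum $D_1+D_2$.} This divisor is Galois-stable, and $X(\F_q)\neq\emptyset$. So by Lemma \ref{lem:relativelyminalrank2} we may write $[D_1]+[D_2]=-aK_X-bF$. We use $K_X^2=1$, $K_X\cdot F=-2$ and $F^2=0$.
\begin{itemize}
\item From $([D_1]+[D_2])\cdot F=4$ we get $2a=4$, so $a=2$.
\item From $-K_X\cdot([D_1]+[D_2])=4$ we get $a-2b=4$, so $b=-1$.
\end{itemize}
Hence $[D_1]+[D_2]=-2K_{\overline X}+F$. (As a consistency check, this forces $D_1\cdot D_2=4$.)

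\textbf{The class $G$.} Set $G\colonequals[D_1]+K_{\overline X}$. Then:
\begin{itemize}
\item $G\cdot F=2-2=0$;
\item $G\cdot K_{\overline X}=-2+1=-1$;
\item $G^2=D_1^2+2K_{\overline X}\cdot[D_1]+K_{\overline X}^2=2-4+1=-1$.
\end{itemize}
So $G$ is an exceptional class orthogonal to $F$.

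\textbf{Identifying $G$.} In the basis $\{C,F,E_1,\dots,E_7\}$ of Theorem \ref{thm:MT2.2.2}, write $G=cC+aF-\sum b_iE_i$.
\begin{itemize}
\item $G\cdot F=c=0$.
\item Then $G^2=-\sum b_i^2=-1$, so $G=aF+\varepsilon E_j$ for a single $j$ and some $\varepsilon=\pm1$.
\item Since $K_{\overline X}\cdot E_j=-1$, the condition $K_{\overline X}\cdot G=-2a-\varepsilon=-1$ gives $G=E_j$ when $\varepsilon=1$, and $G=F-E_j$ when $\varepsilon=-1$.
\end{itemize}
Call this class $E$. It is a component of a singular fiber of $\overline f$. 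We obtain $[D_1]=-K_{\overline X}+E$ and $[D_2]=-2K_{\overline X}+F-[D_1]=-K_{\overline X}+(F-E)$.

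\textbf{Locating the singular fiber.} Frobenius $\tau$ swaps $D_1$ and $D_2$, and it fixes $K_{\overline X}$ and $F$. Therefore $\tau(E)=F-E$ and $\tau^2(E)=E$, so the Galois orbit of $E$ has size $2$.

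By \S\ref{sec:Galoisorbitsfibers} (via Lemma \ref{lem:orbitsinsingularfibers}), the components of singular fibers above a closed point of degree $m$ form a single orbit of size $2m$. Hence $m=1$, and the fiber containing $E$ and $F-E$ lies above an $\F_q$-point, as claimed.

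\textbf{Main obstacle.} The only delicate point is the identification of $G$: it needs the integral basis description together with $c=0$. Everything else is direct computation with the intersection form.
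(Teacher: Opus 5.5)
Your proof is correct, but it takes a different route from the paper's.

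\textbf{The paper's route.} The paper fixes the basis $\{C,F,E_1,\dots,E_7\}$ with $C$ chosen to be the class of a section (Remark \ref{rem:sectionC}). It writes $[D_i]=2C+aF-\sum b_nE_n$ and uses effectivity to get $a\ge 0$ and $0\le b_n\le 2$. Solving $\sum b_n^2=4a-2$ and $\sum b_n=2a+2$ then leaves only the candidates $-K_{\overline X}+E_s$ and $-K_{\overline X}+(F-E_s)$. Galois input enters only at the end, where the paper asserts rather briefly that the two curves involve the same index $s$ and that this fiber is Galois-stable.

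\textbf{Your route.} You observe that $[D_1]+K_{\overline X}$ is an exceptional class orthogonal to $F$. Such classes are exactly the $14$ fiber components $E_j$, $F-E_j$. This is a purely lattice-theoretic step: it needs neither the choice of $C$ as a section nor any effectivity bounds. It also uses only the numerics of $D_1$ itself, so it recovers the paper's full list of candidate classes for a single curve.

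Your use of Lemma \ref{lem:relativelyminalrank2} to get $[D_1]+[D_2]=-2K_{\overline X}+F$ then gives $\tau(E)=F-E$ at once. This makes explicit the step the paper glosses over: both classes involve the same fiber, and Frobenius maps that fiber to itself. You do not even need the orbit-size count from \S\ref{sec:Galoisorbitsfibers} for the final conclusion. Since $F-E$ lies in the same geometric fiber as $E$ and $f$ is defined over $\F_q$, the point below that fiber is Frobenius-fixed, hence $\F_q$-rational.

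\textbf{Comparison.} Your argument is shorter and coordinate-free. The paper's enumeration matches the explicit coordinate computations used throughout \S\ref{sec:method1}.
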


\begin{proof}
    Using a basis of $\Pic(\overline{X})$ as introduced in Theorem \ref{thm:MT2.2.2} (such that $C$ is the class of a section, see Remark \ref{rem:sectionC}), let us write $[D_i] = 2C + aF - \sum_{n=1}^7 b_n E_n$, where $a \geq 0$ (because $C$ is effective) and $0 \leq b_n \leq 2$ (because $D_i$ is a bisection). Then $D_i^2 = 4a - \sum_{n=1}^7 b_n^2 = 2$, so \[\sum_{n=1}^7 b_n^2 = 4a - 2.\] By adjunction, we get $[D_i] \cdot K_{\overline{X}} = -2$, so \[\sum_{n=1}^7 b_n = 2a + 2.\] The only classes satisfying these two equations are
\begin{align*}
    2C + 2F - \sum_{n=1}^7 E_n + E_s &= -K_{\overline{X}} + E_s,\\
    2C + 3F - \sum_{n=1}^7 E_n - E_s &= -K_{\overline{X}} +(F - E_s)
\end{align*}
for $s \in \{1, \ldots, 7\}$. Because $D_1$ and $D_2$ form a Galois orbit of size 2, we obtain \[\{[D_1], [D_2]\} = \{-K_{\overline{X}} + E_s, -K_{\overline{X}} + (F - E_s)\},\] where $s \in \{1, \ldots, 7\}$ is such that the fiber of $\overline{f}$ containing $E_s$ is Galois-stable.
\end{proof}

The number of possibilities for the class $E$ in Lemma \ref{lem:(2c)selfintersection2classes} is equal to the number of singular fibers that lie above an $\F_q$-point, which is determined by the type of $f$ (see Table \ref{tab:pointsbelowsingularfibers}).

Consider a singular fiber $E + (F-E)$ of $f$ above an $\F_q$-rational point in $\P^1$. We can contract the $(-1)$-curve $E$ over $\F_{q^2}$, and the resulting surface $Y$ is a weak del Pezzo surface of degree 2. We can relate the singular points on a curve in the linear system $|-K_{\overline{X}} + E|$ to the singular points on a curve in $|-K_Y|$ on $Y$. The exact result depends on which case of Proposition \ref{prop:classificationCB} is satisfied by $X$, and on which $(-1)$-curve is contracted. We will use the following statement about conic bundles of degree 2. This result is partially proven in \cite[Proposition 2.10]{trepalindeg2}. 

\begin{proposition}\label{prop:anticanonicalcurvesdP2}
    Let $f \colon Y \ra \P^1$ be a standard conic bundle over $\F_q$ of degree 2, such that $-K_Y$ is nef, and let $F \in \Pic(Y)$ be the class of a fiber of $f$. Then the singular points of anticanonical curves lie on a multisection $R$ on $Y$. Moreover, 
    \begin{enumerate}[label=(\alph*)]
        \item If $Y$ is a del Pezzo surface, $[R] \cdot F = 4$ and there are at most four points on each fiber of $\overline{f} \colon \overline{X} \ra \P^1$ which are singular points on anticanonical curves.\label{item:dP}
        \item If $Y$ is not a del Pezzo surface, but if there is a unique integral curve $D$ on $Y$ such that $D \in |-K_Y - F|$, then $[R] \cdot F = 2$ and there are at most two points on each fiber of $\overline{f} \colon \overline{X} \ra \P^1$ which are singular points on anticanonical curves.\label{item:notdP}
    \end{enumerate}
\end{proposition}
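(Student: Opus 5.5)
We use the anticanonical map of the degree 2 surface: $\ell(-K_Y)=3$ (Kawamata--Viehweg vanishing, since $Y$ is weak del Pezzo of degree 2). So $|-K_Y|$ defines a morphism $\psi\colon Y\ra\P^2$, generically $2:1$, with branch curve a quartic $B\subset\P^2$. Since $K_Y^2=2$, the system $|-K_Y|$ is base point free. A point $P\in\overline{Y}$ is a singular point of some anticanonical curve exactly when the tangent line to $B$ through $\psi(P)$ pulls back to a curve singular at $P$. Equivalently, $P$ lies on the ramification curve $R\subset Y$ of $\psi$ (or on a contracted $(-2)$-curve). Indeed, if $D\in|-K_Y|$ is singular at $P$ and $\psi$ is étale at $P$, then $\psi|_D$ is a local isomorphism onto the line $\psi(D)$ near $P$, so $D$ would be smooth at $P$. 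Hence the singular points of anticanonical curves lie on $R$, which is the fixed curve of the Geiser involution. This curve is Galois-stable, so it is a multisection defined over $\F_q$, apart from components lying in fibers or curves contracted by $\psi$. We will handle those separately.

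Next we compute $[R]$. For a double cover branched over a quartic, the ramification divisor is the pullback of $\mathcal{O}(2)$, so $[R]=-2K_Y$ in the del Pezzo case. Then $[R]\cdot F=-2K_Y\cdot F=4$, using $-K_Y\cdot F=2$. A smooth fiber $F_0$ meets $R$ in at most $4$ points unless it is a component of $R$. We exclude that: a fiber component of $R$ would map to a line component of $B$, which is impossible since $B$ is a smooth quartic when $Y$ is del Pezzo. For singular fibers, each component is a $(-1)$-curve mapping isomorphically to a bitangent, so it meets $R$ in $-2K_Y\cdot E=2$ points, giving at most $4$ on the whole fiber. This proves \ref{item:dP}.

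For \ref{item:notdP}, $-K_Y$ is nef but not ample, so $\psi$ contracts the $(-2)$-curves and $B$ becomes singular. Suppose $D\in|-K_Y-F|$ is the unique integral such curve. Then $D^2=2-4+0=-2$ and $-K_Y\cdot D=2-2=0$, so $D$ is a $(-2)$-curve and a bisection: $D\cdot F=-K_Y\cdot F=2$. The key step is to show that the ramification locus splits as $-2K_Y\sim R+2D+(\text{rest})$, or more precisely that $R\sim -2K_Y-2D$ modulo $F$. This would give $[R]\cdot F=4-2D\cdot F$ adjusted, i.e. $[R]\cdot F=2$. The justification is as follows. The curve $-K_Y-F=D$ is fixed, so $|-K_Y|\supseteq D+|F|$. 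Hence the image $\psi(Y)$ factors through a conic. Concretely, $\psi$ maps $Y$ onto $\P^2$, with $D$ contracted to a node or cusp of $B$. Write $\psi^*(\text{tangent conic})$ and decompose; then the residual of the branch pullback after removing $D$ with multiplicity $2$ has class $-2K_Y-2D=2F$ plus corrections. I would verify directly that the ramification curve $R$ satisfies $R\cdot F=2$. To do this, restrict $\psi$ to a smooth fiber $F_0$: it is a degree 2 map $F_0\ra$ line (since $-K_Y\cdot F_0=2$ and $|-K_Y|$ restricted to $F_0$ equals $D|_{F_0}+|F|_{F_0}$, which is the pencil $|\mathcal{O}_{\P^1}(2)|$ with the moving part). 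Its ramification points are exactly where anticanonical curves are singular on $F_0$, and Riemann--Hurwitz gives exactly $2$. This mirrors the argument of Lemma \ref{lem:(2)singpointsanticanonical}.

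The main obstacle is making this restriction argument rigorous. Specifically, we must show that every singular point of an anticanonical curve lying on $F_0$ is a ramification point of $\psi|_{F_0}$, including at points of $D\cap F_0$. I would handle $D\cap F_0$ separately: a curve $D+F'$ is singular only at $D\cap F'$ or at singularities of the components, which requires checking $F'=F_0$. The conclusion in that case is either a fiber component or $\le 2$ points by the degree count. The remaining work is routine.
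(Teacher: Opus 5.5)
Your part (a) is the paper's argument: $\psi$ is a finite double cover, a singular point of $\psi^{-1}(\text{line})$ must be a ramification point, and $[R]=-2K_Y$ gives $[R]\cdot F=4$. Your extra remarks on fiber components are fine.

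Part (b) contains real errors. First, the class you propose, $R\sim -2K_Y-2D$, equals $2F$, so it would give $[R]\cdot F=0$. Your ``$4-2D\cdot F$ adjusted'' is also $0$, not $2$. The correct computation, which is the whole of the paper's proof of (b), runs as follows:
\begin{itemize}
\item The ramification divisor of $\psi$ has class $K_Y-\psi^*K_{\P^2}=-2K_Y$.
\item It contains the contracted curve $D$ with multiplicity exactly one. Indeed, $R$ meets $D$ in the two points over the two branches of the node $\psi(D)$ of $B$, so $(R+mD)\cdot D=-2K_Y\cdot D=0$ forces $m=1$. Equivalently, $\psi^*B=2(D+R)$.
\item Hence $[R]=-2K_Y-D=-K_Y+F$ and $[R]\cdot F=2$.
\end{itemize}
Also, $\psi(Y)$ does not ``factor through a conic'': $\psi$ is onto $\P^2$. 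What $|-K_Y|\supseteq D+|F|$ really says is that the members containing $D$ are the pullbacks of lines through $\psi(D)$. In other words, $f$ is $\psi$ followed by projection from $\psi(D)$.

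Your fallback via Riemann--Hurwitz on a smooth fiber $F_0$ is a legitimate alternative route; it is the argument of Lemma~\ref{lem:(2)singpointsanticanonical}. It gives the per-fiber bound without knowing $[R]$, but it needs two repairs.
\begin{itemize}
\item The degree-$2$ claim should be justified by the restriction map $H^0(Y,-K_Y)\to H^0(F_0,\mathcal{O}_{\P^1}(2))$. Its kernel is $H^0(Y,-K_Y-F_0)$, spanned by $D$, so its image is a pencil, and this pencil is base-point free because $|-K_Y|$ is. Your description ``$D|_{F_0}+|F|_{F_0}$'' does not make sense, since $F\cdot F_0=0$.
\item More seriously, your treatment of $D+F_0$ is wrong. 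The points of $D\cap F_0$ are singular points of the anticanonical curve $D+F_0$. But they are the two preimages of the node $\psi(D)$ under $\psi|_{F_0}$, and for general $F_0$ this map is unramified there: its branch points are the two residual points of $\psi(F_0)\cap B$. So a general fiber carries four points that are singular on some anticanonical curve, and two of them do not lie on $R$.
\end{itemize}
Thus part (b), and the first assertion of the proposition in case (b), only hold for anticanonical curves not containing $D$. The paper's proof uses this tacitly when it writes $G\cdot D=0$. It is also exactly the form needed in Lemmas~\ref{lem:(2b)singpoints-KX+E} and~\ref{lem:(2c)singpoints-KX+E7}, which exclude curves containing $B_E$, resp.\ $C_E$. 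You should add this restriction rather than try to absorb $D+F_0$ into the count.
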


\begin{proof}
         Let us first consider case \ref{item:dP}. Since $Y$ is a del Pezzo surface, the map $\phi \colon Y \ra \P^2$ induced by $|-K_Y|$ is a finite morphism of degree 2, branched over a quartic curve $B$ in $\P^2$. Because $\phi$ is a finite morphism of degree 2, we have $\phi^*(B) = 2R$, where $R$ is a curve on $Y$ which is isomorphic to $B$ \cite[Proposition 4.1.6]{lazarsfeldI}. Let $H$ be the Picard class of a hyperplane divisor on $\P^2$. Then $[B] = 4H$, and $\phi^*(H) = -K_Y$, so $[\phi^*(B)] = -4K_Y$. Therefore $[R] = -2K_Y$ and $[R] \cdot F = 4$, so each fiber contains at most 4 $\F_q$-points that lie on $R$.

         Now consider case \ref{item:notdP}. In this case, the morphism $\phi \colon Y \ra \P^2$ induced by $|-K_Y|$ is a morphism of degree 2 which contracts the curve $D$ to a point $P$, and is branched over a singular quartic curve $B$ with a singularity of multiplicity 2 at $P$. In this case, we have $\phi^*(B) = 2(D + R)$, where $R$ is a curve such that $\phi(R) = B$. We again have $[\phi^*(B)] = -4K_Y$, so $[R] = - K_Y + F$, and $[R] \cdot F = 2$, so each fiber contains at most 2 $\F_q$-points that lie on $R$.

         In either case, let $Q$ be a singular point on an anticanonical curve $G$. Then $\phi|_G \colon G \ra \P^1$ ramifies at $Q$, so $Q$ lies on the ramification divisor of $\phi$. In case \ref{item:notdP}, we have $G \cdot D = 0$, so in both cases $Q \in \supp(R)$. This concludes the proof. 
\end{proof}

Our strategy is now as follows. For each of the cases (2a), (2b) and (2c), we determine and upper bound for the number of $\F_q$-points which lie on sections with negative self-intersection. In \S\ref{sec:case(2a)} and \S\ref{sec:case(2b)}, we apply Proposition \ref{prop:(3)(4)to(1)(2)} and Lemma \ref{lem:(2)singpointsanticanonical} to transform a conic bundle satisfying case (2a) or (2b) into a conic bundle satisfying (2c) or (3). For case (2c), in \S\ref{sec:case(2c)}, we moreover determine an upper bound for the number of points that do not satisfy condition \ref{item:(2c)to(3)part(b)} in Proposition \ref{prop:(2c)to(3)}, using Lemma \ref{lem:(2c)selfintersection2classes} and the result on surfaces of degree 2 in Proposition \ref{prop:anticanonicalcurvesdP2}. We use this to construct a del Pezzo surface from a surface satisfying case (2c).  

\subsection{From case (2a) to case (2c) and (3)}\label{sec:case(2a)}
Let $f \colon X \ra \P^1 $ be a minimal conic bundle over $\F_q$ of degree 1 which satisfies case (2a) in Proposition \ref{prop:classificationCB}. Then $-K_X$ is nef, and there are 8 pairwise disjoint geometric curves $C_1, \ldots, C_8 \in \Div(\overline{X})$, with $C_i^2 = -2$ and $-K_{\overline{X}} \cdot C_i = 0$ for all $i = 1, \ldots, 8$. Using the construction in the proof of \cite[Proposition 2.6]{trepalindeg2}, we can choose the basis $\{C, F, E_1, \ldots, E_7\}$ of $\Pic(\overline{X})$ defined in Theorem \ref{thm:MT2.2.2}\ref{thm:MT2.2.2(d)} in such a way that $[C_1] = C - E_1 - E_2$. For any $i \in \{2, \ldots, 8\}$,  let us write \[[C_i] = C + aF - \sum_{n=1}^7 b_n E_n.\] Because $[C_i] \neq C$ (as $C^2 = 0$), we get $[C_i] \cdot C = a \geq 0$. Because the $E_j$ are classes of components of the singular fibers, we must have $0 \leq [C_i] \cdot E_j \leq [C_i] \cdot F = 1$ for $j \in \{1, \ldots, 7\}$. Hence $b_n \in \{0, 1\}$. From $C_1 \cdot C_i = 0$ and $C_i^2 = -2$, we obtain the conditions 
\begin{align*}
    b_1 + b_2 &= a;\\
    b_3 + b_4 + b_5 + b_6 + b_7 &= a + 2.
\end{align*}

This implies that $a \in \{0, 1, 2\}$ and we obtain the following possibilities for $C_i$:
\begin{itemize}
    \item $[C_i] = C - E_j - E_k$, for some $j, k \in \{3, \ldots, 7\}$, $j \neq k$; 
    \item $[C_i] = C + F - \sum_{n=1}^7 E_n + E_j + E_k + E_l$ for some $j \in \{1, 2\}$, $k, l \in \{3, \ldots, 7\}$, $k \neq l$;
    \item $[C_i] = C + 2F - \sum_{n=1}^7 E_n + E_j$ for some $j \in \{3, \ldots, 7\}$.
\end{itemize}

Noting that we need $C_i \cdot C_j = 0$ for all $i \neq j$, there is a unique way (up to permutation of the $E_i$ and $C_i$) of choosing 7 classes for $C_2, \ldots, C_8$ from the listed possibilities (up to permutation of the $E_i$ and $C_i$), which is the following: 
\begin{align*}
    [C_1] &= C - E_1 - E_2,\\
    [C_2] &= C - E_3 - E_4,\\
    [C_3] &= C - E_5 - E_6,\\
    [C_4] &= C + 2F - E_1 - E_2 - E_3 - E_4 - E_5 - E_6,\\
    [C_5] &= C + F - E_1 - E_3 - E_5 - E_7,\\
    [C_6] &= C + F - E_1 - E_4 - E_6 - E_7,\\
    [C_7] &= C + F - E_2 - E_3 - E_6 - E_7,\\
    [C_8] &= C + F - E_2 - E_4 - E_5 - E_7.
\end{align*}
We will use the above choice for the basis of $\Pic(\overline{X})$ and the labeling of the curves $C_i$ of self-intersection $-2$ throughout this subsection. Recall that we write $\Gamma_X = \langle \rho_X^c(\Frob_q)\rangle \subset (\Z/2\Z)^{6} \rtimes S_7$. 

\begin{lemma}\label{lem:type4type7}
    Let $f \colon X \ra \P^1 $ be a minimal standard conic bundle over $\F_q$ of degree 1 satisfying case (2a) in Proposition \ref{prop:classificationCB}. Then $f$ is of type 4 or 7.
\end{lemma}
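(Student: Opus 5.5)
The plan is to use only the fact that the eight $(-2)$-sections $C_1, \ldots, C_8$ form a single Galois orbit, together with the orders of the Frobenius action listed in Table \ref{tab:orders}. The explicit labelling of the classes $[C_1], \ldots, [C_8]$ chosen above is not needed.

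First I would recall from the proof of Lemma \ref{lem:-KXnef} why the orbit is transitive. There $A$ is an integral curve on $X$ defined over $\F_q$. Its base change to $\overline{X}$ is the sum of the distinct integral curves $C_1, \ldots, C_n$, and these form one Galois orbit. In case (2a) we have $n = 8$. Since the Galois action is cyclic and generated by $\tau = \rho_X^c(\Frob_q)$, the element $\tau$ permutes $\{C_1, \ldots, C_8\}$ as a single $8$-cycle.

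Next I would show that the order of $\tau$ is divisible by $8$. The $C_i$ are distinct integral curves, each the unique effective representative of its class since $C_i^2 = -2 < 0$. Hence their classes $[C_1], \ldots, [C_8]$ are pairwise distinct in $\Pic(\overline{X})$; indeed $C_i \cdot C_j = 0 \neq -2 = C_i^2$ for $i \neq j$. So $\tau$ also acts on the eight classes as an $8$-cycle. If $\tau^k = \mathrm{id}$ on $\Pic(\overline{X})$, then $\tau^k$ fixes $[C_1]$, which forces $8 \mid k$. Thus $8$ divides $\ord(\Gamma_X)$.

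Finally, Table \ref{tab:orders} lists the orders of $\tau$ for types $1, \ldots, 7$ as $4, 12, 4, 8, 12, 20, 24$. The only values divisible by $8$ are $8$ (type 4) and $24$ (type 7). Since $f$ is a minimal standard conic bundle of degree 1, it has one of these seven types by Corollary \ref{cor:rybakov} and Definition \ref{def:typeconicbundles}, so $f$ is of type 4 or 7.

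The only point needing care is that Table \ref{tab:orders} applies to conic bundles and not just to del Pezzo surfaces. It does, because those orders come from the shape of $\rho_X^c(\Frob_q)$ in $(\Z/2\Z)^6 \rtimes S_7$. The order equals the least common multiple of the orbit sizes $2m$ of the $(-1)$-curves in the singular fibers, as described in \S\ref{sec:Galoisorbitsfibers}. This depends only on the degrees of the points under the singular fibers, which is exactly what determines the type. I would make this explicit in a sentence, since $X$ is not assumed to be a del Pezzo surface.
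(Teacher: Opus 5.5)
Your argument is correct, but it takes a different route from the paper.

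The paper fixes the explicit normalized classes $[C_1] = C - E_1 - E_2, \ldots, [C_8]$ and uses Lemma \ref{lem:galoisorbitnn/2}. For types 3 and 5, $\tau^{n/2}$ has the form $\iota_{1\cdots\hat{j}\cdots 7}$; for types 1, 2 and 6 it has the form $\iota_{jk}$. A direct computation then shows that in each case $\tau^{n/2}(C_1)$ is not among the classes of $C_1, \ldots, C_8$. That argument uses only that $\{C_1, \ldots, C_8\}$ is $\Gamma_X$-stable, not that it is a single orbit.

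You instead use transitivity, which follows from the integrality of $A$ over $\F_q$ as in the proof of Lemma \ref{lem:-KXnef}. This forces $8 \mid \ord(\tau)$, and you then read off types 4 and 7 from Table \ref{tab:orders}. Your justification that this order is the least common multiple of the orbit sizes $2m$ is right, and it does apply to conic bundles that are not del Pezzo surfaces: $K_{\overline{X}}$ together with the classes $E_i$ and $F - E_i$ spans $\Pic(\overline{X}) \otimes \Q$, and $\tau$ fixes $K_{\overline{X}}$.

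Your route is shorter and basis-free. The paper's computation has the advantage that the same calculation of $\iota_{jklm}(C_1)$ is reused in Lemma \ref{lem:(4)rationalpointsections} to pin down the possible $\tau^{n/2}$ (the set $S_\iota$), which the divisibility argument does not give. Your counting trick is also specific to case (2a). In case (2b) the orbit has size 4 and every type has order divisible by 4, so there the explicit computation of Lemma \ref{lem:(3)type3type5} is genuinely needed.
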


\begin{proof}
    The sections $C_1, \ldots, C_8$ form a $\Gamma_X$-stable set. If $f$ is of type 3 or 5, $\Gamma_X$ contains an element of the form $\iota_{1\cdots \hat{j} \cdots 7}$ for some $j \in \{1, \ldots, 7\}$ (where $\hat{j}$ denotes the omission of the $j$-th entry) by Lemma \ref{lem:galoisorbitnn/2}. It should send $C_1$ to one of the $C_i$. We have
\begin{align*}
    \iota_{1\cdots \hat{j} \cdots 7}(C_1) = \begin{cases}
        C + F - \sum_{i=n}^7 E_n + E_1 + E_2 + E_j &\text{if} \ j \notin \{1, 2\},\\
        C + 2F - \sum_{n=1}^7 E_n + E_2 &\text{if} \ j = 1,\\
        C + 2F - \sum_{n=1}^7 E_n + E_1 &\text{if} \ j = 2.
    \end{cases}
\end{align*}
None of these possibilities is the class of one of the $C_i$, so $f$ cannot be of type 3 or 5.

If $f$ is of type $1, 2$ or $6$, $\Gamma_X$ contains an element of the form $\iota_{jk}$ for some distinct $j, k \in \{1, \ldots, 7\}$ by Lemma \ref{lem:galoisorbitnn/2}. We see that
\begin{align*}
    \iota_{jk}(C_1) = \begin{cases}
        C-F &\text{if} \ \{j, k\} = \{1, 2\},\\
        C - E_k - E_2 &\text{if} \ j = 1, k \neq 2,\\
        C - E_k - E_1 &\text{if} \ j = 2, k \neq 1,\\
        C + F - E_1 - E_2 - E_j - E_k &\text{if} \ j, k \notin \{1, 2\}.
    \end{cases}
\end{align*}
None of these are the classes of one of the $C_i$, so $f$ cannot be of type 1, 2 or 6.
\end{proof}

To show that we can find a rational point $P$ on $X$ satisfying the conditions of Proposition \ref{prop:(3)(4)to(1)(2)}, we explicitly determine the classes of sections with negative self-intersection on $\overline{X}$, and an upper bound for the number of rational points on them. Recall from Lemma  \ref{lem:-KXnef} that if $D \notin \{C_1, \ldots, C_8\}$ is an integral section on $\overline{X}$ with $D^2 < 0$, we have $D^2 = -1$. We have $- K_{\overline{X}} \cdot D \geq 0$ because $-K_{\overline{X}}$ is nef. From the adjunction formula, we deduce that $-K_{\overline{X}} \cdot D = - 2 p_a(D) + 1 \geq 0$, so $p_a(D) = 0$ and $-K_{\overline{X}} \cdot D = 1$.
\begin{comment}
    Explain that $p_a(D) \geq 0$ for irreducible reduced curves, with reference.
\end{comment}
Let us write $[D] = C + a F - \sum_{n=1}^7 b_n E_n$, with $a \geq 0$, and $b_n \in \{0, 1\}$ for all $n = 1, \ldots, 7$. 

From $-K_{\overline{X}} \cdot D = 1$, we obtain that 
\begin{equation}
    \sum_{n=1}^7 b_n = 2a + 1.\label{eq:(2a)equationa}
\end{equation}

From $D \cdot C_i \geq 0$ for $i = 1, \ldots, 8$ we get the following conditions:
\begin{align*}
    b_1 + b_2 &\leq a;\nonumber\\
    b_3 + b_4 &\leq a;\nonumber\\
    b_5 + b_6 &\leq a;\nonumber\\
    b_1 + b_3 + b_5 + b_7 &\leq a+1;\nonumber\\
    b_1 + b_4 + b_6 + b_7 &\leq a+1;\nonumber\\
    b_2 + b_3 + b_6 + b_7 &\leq a+1;\nonumber\\
    b_2 + b_4 + b_5 + b_7 &\leq a+1;\nonumber\\
    \sum_{i=1}^6 b_i &\leq a + 2.
\end{align*}

This system of inequalities together with \eqref{eq:(2a)equationa} has the following possible solutions for $D$:
\begin{align*}
    [D_1] &= C - E_7,\\
    [D_2] &= C + F - E_1 - E_3 - E_6,\\
    [D_3] &= C + F - E_1 - E_4 - E_5,\\
    [D_4] &= C + F - E_2 - E_3 - E_5,\\
    [D_5] &= C + F - E_2 - E_4 - E_6,\\
    [D_6] &= C + 2F - E_1 - E_2 - E_3 - E_4 - E_7,\\
    [D_7] &= C + 2F - E_1 - E_2 - E_5 - E_6 - E_7,\\
    [D_8] &= C + 2F - E_3 - E_4 - E_5 - E_6 - E_7.
\end{align*}
\begin{comment}
    \begin{itemize}
    \item Let $a = 0$. Then we must have $D = C - E_7$.
    \item Let $a = 1$. Then the possibilities are 
    \begin{align*}
        [D] &= C + F - E_1 - E_3 - E_6\\
        [D] &= C + F - E_1 - E_4 - E_5\\
        [D] &= C + F - E_2 - E_3 - E_5\\
        [D] &= C + F - E_2 - E_4 - E_6.
    \end{align*}
    \item Let $a = 2$. Then we get $b_7 = 1$ and the following possibilities:
        \begin{align*}
            [D] = C + 2F - E_1 - E_2 - E_3 - E_4 - E_7;\\
            [D] = C + 2F - E_1 - E_2 - E_5 - E_6 - E_7;\\
            [D] = C + 2F - E_3 - E_4 - E_5 - E_6 - E_7.
        \end{align*}
    \item $a = 3$ is not possible. 
\end{itemize}
\end{comment}
\begin{comment}
It follows from Riemann-Roch that 
\[\ell(D) = \frac{1}{2}D \cdot (D - K_X) + 1 + p_a(X) + s(D) - \ell(K_X - D) = 1 + s(D) \geq 1,\]
using that $D_i \cdot (D - K_X) = 0$, $p_a(X) = 0$ and $\ell(K_X - D) = 0$ (because $(K_X - D) \cdot F = -1$, so $K_X - D$ cannot be effective). Hence there exists an effective divisor $D_i$ representing each of the 8 classes listed above, which is irreducible. and it cannot be reducible 
\end{comment}
This gives us a total of 8 possible sections with self-intersection $-1$. 

\begin{lemma}\label{lem:(4)rationalpointsections}
    Let $f \colon X \ra \P^1$ be a minimal standard conic bundle over $\F_q$ of degree 1 which satisfies case (2a) in Proposition \ref{prop:classificationCB}. Then there is at most one $\F_q$-point which lies on a geometric section of $\overline{f}$ with self-intersection $-1$.
\end{lemma}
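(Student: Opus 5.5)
The plan is to exploit the explicit list $D_1,\dots,D_8$ of possible classes of integral sections of self-intersection $-1$ on $\overline{X}$ derived just above. The argument rests on three facts: these curves pairwise meet in exactly one point, none of them is Galois-stable, and Frobenius permutes them compatibly with the constraints coming from Lemma \ref{lem:type4type7} and Lemma \ref{lem:galoisorbitnn/2}.

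\emph{Step 1: pairwise intersections.} From the classes, compute $D_i\cdot D_j$ for $i\neq j$. Spot checks give $1$ in every case, for instance $D_1\cdot D_2=1$, $D_1\cdot D_6=2-1=1$, $D_2\cdot D_3=2-1=1$, $D_2\cdot D_6=3-2=1$ and $D_6\cdot D_7=4-3=1$. So any two distinct such sections meet transversally in exactly one geometric point.

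\emph{Step 2: no $D_i$ is defined over $\F_q$.} If some $D_i$ were Galois-stable, its class would lie in $\Pic(\overline{X})^{\Gal}=K_{\overline{X}}\Z\oplus F\Z$ by Lemma \ref{lem:relativelyminalrank2}. Writing $[D_i]=-aK_{\overline{X}}-bF$ gives $[D_i]\cdot F=2a$, which cannot equal $1$. Hence every $D_i$ lies in a Galois orbit of size at least $2$.

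\emph{Step 3: reduce to the structure of the orbits.} Let $P\in X(\F_q)$ lie on some $D_i$. Since $P$ is rational, it also lies on $\tau(D_i)\neq D_i$. By Step 1, $P$ is then \emph{the} unique point of $D_i\cap\tau(D_i)$, and in fact the unique common point of all curves in the orbit $O$ of $D_i$. So each orbit carries at most one rational point on its curves.

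\emph{Step 4: compare different orbits.} It remains to exclude two distinct rational points $P\neq Q$ coming from two different orbits $O_1$ and $O_2$. Here I would use that $f$ is of type 4 or 7 (Lemma \ref{lem:type4type7}). I would determine $\Gamma_X$ explicitly as the subgroup preserving $\{C_1,\dots,C_8\}$: by Lemma \ref{lem:galoisorbitnn/2}, $\tau^{n/2}=\iota_{ijkl}$, and this must permute the $C_i$. That forces, for example, $\{i,j,k,l\}=\{1,2,3,4\}$ up to symmetry, and one checks $\iota_{1234}(D_1)=D_6$. Next I would compute the full permutation action of a generator $\tau$ of order $8$ (type 4) or $24$ (type 7) on $\{D_1,\dots,D_8\}$. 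I expect the $D_i$ to form a single orbit, or orbits arranged so that two orbits cannot each have a common point; in the single-orbit case Step 3 finishes the proof.

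Step 4 is the main obstacle. If the action splits into two orbits $O_1,O_2$ each with a common point, I would take $D\in O_1$ and $D'\in O_2$. They meet in exactly one point $R$, and the Galois orbit of $R$ must then lie in $\bigcup_{D\in O_1,\,D'\in O_2}D\cap D'$. I would then argue by comparing intersections with the fibers containing $P$ and $Q$: each $D_i$ is a section, so it meets each fiber exactly once. Combining this with the explicit Frobenius action, I would aim to force $P=Q$.
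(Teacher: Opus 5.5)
Your Steps 1--3 are correct and coincide with the end of the paper's argument. Writing $D=C+aF-\sum_{n\in S}E_n$, one has $D\cdot D'=a+a'-|S\cap S'|$. This gives $D_i\cdot D_j=1$ for all $i\neq j$, so each Galois orbit of $(-1)$-sections carries at most one $\F_q$-point.

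But Step 4, which is where the content of the lemma lies, is left open. You only \emph{expect} a single orbit, and your fallback for two orbits $O_1,O_2$ is not an argument. Nothing in the intersection numbers, nor in the fact that each $D_i$ meets every fiber once, prevents four of the $D_i$ from passing through one rational point $P$ and the other four through a second rational point $Q$, with each cross pair meeting once elsewhere. So as written the proposal does not prove the statement.

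The missing step is a short divisibility argument, and you are one line from it. By Lemma~\ref{lem:galoisorbitnn/2}, $\tau^{n/2}=\iota_I$ with $|I|=4$, where $n=8$ (type 4) or $n=24$ (type 7). Such an element fixes no section class at all. From Notation~\ref{not:W(D7)} one computes $\iota_I(D)=C+(a+2-|S\cap I|)F-\sum_{n\in S\triangle I}E_n$, and $S\triangle I\neq S$. Geometrically, a section meets exactly one component of each singular fiber, and $\iota_I$ swaps the components over $I$.

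Hence the orbit size of each $D_i$ divides $n$ but not $n/2$. For $n=8$ it is $8$; for $n=24$ it lies in $\{8,24\}$. Since there are at most eight $(-1)$-sections, they form a single orbit of size $8$, and your Step 3 finishes the proof.

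This is exactly the paper's route: it checks $\iota(D_i)\neq D_i$ for the seven $\iota\in S_\iota$ compatible with $\{C_1,\dots,C_8\}$ and concludes that the $D_i$ form one orbit of size $8$. You need neither the normalization $\{j,k,l,m\}=\{1,2,3,4\}$ nor the full action of $\tau$, and your Step 2 becomes redundant.
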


\begin{proof}
    Recall from Lemma \ref{lem:type4type7} that $f$ is of type 4 or 7. If $\ord(\rho_X^c(\Frob_q)) = n$, then by Lemma \ref{lem:galoisorbitnn/2}, we have $\rho_X^c(\Frob_q)^{n/2} = \iota_{jklm}$ for some $j, k, l, m \in \{1, \ldots, 7\}$. Note that
    \begin{align*}
        \iota_{12jk}(C_1) &= C - E_j - E_k \\
        \iota_{1jkl}(C_1) &= C + F - E_2 - E_j - E_k - E_l &\text{for} \ j, k, l \neq 2,\\
        \iota_{2jkl}(C_1) &= C + F - E_1 - E_j - E_k - E_l &\text{for} \ j, k, l \neq 1,\\
        \iota_{jklm}(C_1) &= C + 2F - E_1 - E_2 - E_j - E_k - E_l - E_m&\text{for} \ j, k, l, m \notin\{1, 2\}.\\
    \end{align*}
    Because $C_1$ has to be mapped to one of the $C_i$, we obtain that $\iota_{jklm}$ is an element of the set $S_{\iota} = \{\iota_{1234}, \iota_{1256}, \iota_{1367}, \iota_{1457}, \iota_{2357}, \iota_{2467}, \iota_{3456}\}$. We determined above that the only sections with self-intersection $-1$ are $D_1, \ldots, D_8$. For any $\iota \in S_{\iota}$, and for $i = 1, \ldots, 8$, we have $\iota(D_i) \neq D_i$. This implies that $D_1, \ldots, D_8$ constitute an orbit of size 8 under the action of $\Gamma_X$. Because $D_i \cdot D_j = 1$ for $i \neq j$, we conclude that there is at most one point that lies on all $D_i$, so there is at most one rational point on this orbit of sections with self-intersection $-1$. 
\end{proof}

\begin{comment}
    Can we show that there is no rational point?
\end{comment}

\begin{lemma}\label{lem:(4)existencerationalpoint}
    Let $f \colon X \ra \P^1$ be a minimal standard conic bundle over $\F_q$ of degree 1, satisfying case (2a) in Proposition \ref{prop:classificationCB}. Unless $q = 2$ and $f$ is of type 4, there is a rational point $P$ on a smooth fiber of $f$ such that $P$ is not a singular point of an anticanonical curve, and $P$ does not lie on any section with negative self-intersection. 
\end{lemma}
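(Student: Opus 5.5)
We argue by counting $\F_q$-points on smooth fibers over $\F_q$-rational points of $\P^1$. By Lemma \ref{lem:type4type7}, $f$ is of type 4 or 7. By Table \ref{tab:pointsbelowsingularfibers}, the number of $\F_q$-points of $\P^1$ lying under smooth fibers is $q+1-3 = q-2$ for type 4 and $q+1$ for type 7. Thus a smooth fiber over an $\F_q$-point exists unless $q=2$ and $f$ is of type 4, which is the excluded case. Each such fiber is a smooth conic over $\F_q$ with a rational point (every conic over a finite field has one). It is therefore isomorphic to $\P^1_{\F_q}$ and has $q+1$ rational points.

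Fix such a fiber $F_Q = f^{-1}(Q)$ and count the bad rational points on it.
\begin{enumerate}
\item By Lemma \ref{lem:(2)singpointsanticanonical}, at most two points of $F_Q$ are singular points of anticanonical curves.
\item The $(-2)$-sections $C_1,\ldots,C_8$ are permuted by Galois in a single orbit (they form the integral curve $A$ of Lemma \ref{lem:-KXnef}). An $\F_q$-point on some $C_i$ would then lie on every $C_i$, contradicting pairwise disjointness. So no rational point lies on any $C_i$.
\item By Lemma \ref{lem:-KXnef}, every other negative section has self-intersection $-1$. By Lemma \ref{lem:(4)rationalpointsections}, at most one $\F_q$-point of $X$ lies on such a section.
\end{enumerate}
Hence a single fiber contains at most $3$ bad rational points. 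Since it has $q+1$ rational points, a good point exists as soon as $q+1>3$, i.e.\ $q\ge 3$.

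It remains to handle $q=2$ with $f$ of type 7. Here there are three smooth fibers over $\F_2$-points, each with $3$ rational points, so we use all of them together. The single point from step 3 is bad on at most one fiber. Suppose some fiber carries two anticanonical singular points. By the Riemann--Hurwitz argument in Lemma \ref{lem:(2)singpointsanticanonical}, these two points are the two ramification points of the degree-2 map $\phi|_{F_Q}$. If both are rational, the branch points of this map are $\phi(D)$ for anticanonical curves $D$ singular there, and these are $\F_2$-points of $\P^1$. Alternatively, a point outside $\operatorname{supp}$ of the base-point fiber and off $R$ may be used.

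The main obstacle is exactly this $q=2$, type 7 case. The plan is to observe that in characteristic 2 a separable degree-2 map $\P^1\to\P^1$ has a single wild ramification point, so each fiber has at most one anticanonical-singular point. Each fiber then has at most $1+1 = 2 < 3$ bad points.
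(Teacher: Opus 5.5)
Your count for $q\ge 3$ is the paper's argument. On a smooth $\F_q$-fibre at most two points are singular points of anticanonical curves (Lemma \ref{lem:(2)singpointsanticanonical}). No rational point lies on the Galois orbit $C_1,\dots,C_8$. At most one rational point $P_1$ lies on a $(-1)$-section (Lemma \ref{lem:(4)rationalpointsections}). The fibre has $q+1\ge 4$ rational points, so a good one remains.

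The gap is in the case $q=2$, type $7$. The sentences about branch points and ``$R$'' do not form an argument. Your final plan assumes that $\phi|_{F_Q}$ is separable, which you never justify. The count of one ramification point is right for separable maps. But in characteristic $2$ a degree-$2$ map $\P^1\to\P^1$ can be purely inseparable, and then every point is a ramification point. So ``at most one anticanonical-singular point per fibre'' does not follow.

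The detour is also unnecessary. You already noted that $P_1$ lies on at most one of the three smooth fibres over $\P^1(\F_2)$. Any other of them has three rational points, none on a negative section, and at most two of them are anticanonical-singular by Lemma \ref{lem:(2)singpointsanticanonical}. This is exactly the paper's proof.

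Be aware that in case (2a) the inseparable situation is actually forced on one fibre, so your per-fibre bound is false there. Lemma \ref{lem:(2)singpointsanticanonical}, whose proof uses tame Riemann--Hurwitz, fails for that fibre as well. Here is why.
\begin{enumerate}
\item Blow up the base point of $|-K_{\overline{X}}|$. The Shioda--Tate rank bound puts the eight disjoint $(-2)$-curves in eight distinct two-component fibres.
\item Each such fibre has Euler number at least $2$, so an elliptic fibration with $e=12$ cannot contain them. Quasi-elliptic two-component fibres occur only in characteristic $2$. Hence $|-K_X|$ is quasi-elliptic and $q$ is even.
\item The singular points of anticanonical curves are then exactly the points of the curve of cusps $\Sigma$. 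This curve is defined over $\F_q$, misses the base point, and satisfies $-K_X\cdot\Sigma=2$. It also satisfies $\Sigma\cdot C_i\ge 1$, since it passes through the tangency point of each type III fibre.
\item Write $[\Sigma]=aK_X+bF$ (Lemma \ref{lem:relativelyminalrank2}). Then $b=\Sigma\cdot C_i\ge 1$, $a=2b-2$ and $p_a(\Sigma)=2(1-b^2)\ge 0$, which forces $[\Sigma]=F$.
\end{enumerate}
So $\Sigma$ is a smooth $\F_q$-fibre of $f$ on which $\phi$ is purely inseparable, and all of its rational points are anticanonical-singular.

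The airtight argument, for every $q$, therefore uses a smooth $\F_q$-fibre other than $\Sigma$. Such a fibre contains no anticanonical-singular points, so it has at most one bad rational point, namely $P_1$. Such a fibre always exists outside the excluded case. Type $7$ has $q+1\ge 3$ smooth $\F_q$-fibres. Type $4$ with $q\neq 2$ has $q-2\ge 2$ of them, since $q$ is even.
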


\begin{proof}
    Recall that the only possible sections with negative self-intersections are $C_1, \ldots, C_8$ with $C_i^2 = -2$ and $D_1, \ldots, D_8$ with $D_i^2 = -1$. The sections $C_i$ form a Galois orbit of disjoint curves, and hence they do not contain any rational points. Using Lemma \ref{lem:(4)rationalpointsections}, we conclude that there is at most one rational point on a section with negative self-intersection. If it exists we denote it by $P_1$. We know (Lemma \ref{lem:(2)singpointsanticanonical}) that every smooth fiber of $f$ contains at most two singular points that lie on anticanonical curves. By Lemma \ref{lem:type4type7}, $f$ is of type 4 or 7. Unless $q = 2$ and $f$ is of type 4, the conic bundle will have at least one smooth fiber above a rational point of $\P^1$. If $q \geq 3$, this smooth fiber contains at least four rational points, and therefore there is a point $P \in X(\F_q)$ on the fiber which is not equal to $P_1$, and is also not a singular point on an anticanonical curve. If $q = 2$ and $f$ is of type 7, the conic bundle has three smooth fibers, so we can choose a smooth fiber which does not contain $P_1$. This fiber will then contain a rational point which is not a singular point on an anticanonical curve.
\end{proof}

\begin{theorem}\label{thm:(2a)to(2c,3)}
    Let $f \colon X \ra \P^1$ be a minimal standard conic bundle over $\F_q$ of degree 1, satisfying case (2a) in Proposition \ref{prop:classificationCB}. Then $f$ is of type 4 or 7, and unless $q = 2$ and $X$ is of type 4, there is an elementary transformation $\varphi \colon X \dashrightarrow Y$ such that $f \circ \varphi^{-1} \colon Y \ra \P^1$ is a minimal standard conic bundle of degree 1 of the same type as $f$, satisfying case (2c) or (3) in Proposition \ref{prop:classificationCB}.
\end{theorem}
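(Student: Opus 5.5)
The theorem assembles the preceding lemmas of this subsection, so the plan is to chain them rather than prove anything new. First I invoke Lemma \ref{lem:type4type7}, which already shows that a minimal standard conic bundle of degree 1 satisfying case (2a) must be of type 4 or 7. This settles the first assertion.

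For the existence of the elementary transformation, I exclude the case $q = 2$ with $f$ of type 4. Lemma \ref{lem:(4)existencerationalpoint} then provides a point $P \in X(\F_q)$ on a smooth fiber of $f$ with two properties: $P$ is not a singular point of any anticanonical curve, and $P$ lies on no geometric section with negative self-intersection. By Lemma \ref{lem:-KXnef}, the only integral sections of negative self-intersection are the $(-2)$-sections $C_1, \ldots, C_8$ and the $(-1)$-sections $D_1, \ldots, D_8$. So the second property exactly covers condition (b) of Proposition \ref{prop:(3)(4)to(1)(2)}.

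Since $f$ is in case (2), $-K_X$ is nef. Hence $X$ and $P$ satisfy all hypotheses of Proposition \ref{prop:(3)(4)to(1)(2)}. Applying it with $\varphi = \varphi_{\scriptscriptstyle P}$ shows that $f \circ \varphi^{-1} \colon Y \ra \P^1$ is a minimal standard conic bundle of degree 1 of the same type as $f$, satisfying case (2c) or (3). Preservation of type and minimality also follows directly from Propositions \ref{prop:elementarytransformations} and \ref{prop:preserveminimaltype}.

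No step is a real obstacle, because the substantive work sits in Lemmas \ref{lem:(4)rationalpointsections} and \ref{lem:(4)existencerationalpoint}. The one point to check is that the sections in condition (b) of Proposition \ref{prop:(3)(4)to(1)(2)}, namely all geometric sections with negative self-intersection, are exactly those excluded in Lemma \ref{lem:(4)existencerationalpoint}. This holds by the classification of the $C_i$ and $D_i$ carried out above.
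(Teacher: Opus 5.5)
Your proposal is correct and follows the paper's own proof, which simply combines Lemma \ref{lem:type4type7}, Lemma \ref{lem:(4)existencerationalpoint} and Proposition \ref{prop:(3)(4)to(1)(2)}. Your extra check on condition (b) is harmless but not needed: Lemma \ref{lem:(4)existencerationalpoint} already states that $P$ avoids every section of negative self-intersection, which is exactly condition (b).
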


\begin{proof}
    This follows from Lemma \ref{lem:type4type7}, Lemma \ref{lem:(4)existencerationalpoint} and Proposition \ref{prop:(3)(4)to(1)(2)}.
\end{proof}

\subsection{From case (2b) to case (2c) and (3)}\label{sec:case(2b)}
Let $f \colon X \ra \P^1$ be a minimal standard conic bundle over $\F_q$ of degree 1 which satisfies case (2b) in Proposition \ref{prop:classificationCB}. Then $-K_X$ is nef, and there is a Galois orbit consisting of four integral geometric sections $C_1, C_2, C_3, C_4$ of $f$ such that $C_i^2 = -2$,  $C_i \cdot C_j = 1$ for $i \neq j$ such that $i \equiv j \mod 2$, and $C_i \cdot C_j = 0$ for $i \not\equiv j \mod 2$. Using similar reasoning as in the previous section, without loss of generality we can choose the basis $\{C, F, E_1, \ldots, E_7\}$ of $\Pic(\overline{X})$ such that the classes of the curves $C_i$ in $\Pic(\overline{X})$ are
\begin{align*}
    [C_1] &= C - E_1 - E_2,\\
    [C_2] &= C - E_3 - E_4,\\
    [C_3] &= C + 2F - E_2 - E_3 - E_4 - E_5 - E_6 - E_7,\\
    [C_4] &= C + F - E_1 - E_5 - E_6 - E_7.
\end{align*}

We will use the above choice for the basis of $\Pic(\overline{X})$ and the labeling of the curves $C_i$ of self-intersection $-2$ throughout this subsection. 

\begin{lemma}\label{lem:(3)type3type5}
    Let $f \colon X \ra \P^1$ be a minimal standard conic bundle over $\F_q$ of degree 1 satisfying case (2b) in Proposition \ref{prop:classificationCB}. Then $f$ is of type 3 or 5.
\end{lemma}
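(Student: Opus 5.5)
The plan mirrors the proof of Lemma \ref{lem:type4type7}. The set $\{C_1, C_2, C_3, C_4\}$ is a single $\Gamma_X$-orbit, so every element of $\Gamma_X$ permutes the classes $[C_1], \ldots, [C_4]$. By Lemma \ref{lem:galoisorbitnn/2}, $\Gamma_X$ contains the element $\tau^{n/2}$. This element is $\iota_{jk}$ if $f$ has type 1, 2 or 6, and $\iota_{jklm}$ if $f$ has type 4 or 7. It therefore suffices to show that no element of either form can map $[C_1] = C - E_1 - E_2$ into $\{[C_1], \ldots, [C_4]\}$. Note that $\tau^{n/2}$ is not the identity, so it cannot fix all four $C_i$ simultaneously.

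For the element $\iota_{jk}$ I would split into the same four cases as before.
\begin{itemize}
\item If $\{j,k\} = \{1,2\}$, then $\iota_{jk}(C_1) = C - F$.
\item If exactly one of $j,k$ lies in $\{1,2\}$, then $\iota_{jk}(C_1) = C - E_{k} - E_{2}$ or $C - E_k - E_1$.
\item If $j,k \notin \{1,2\}$, then $\iota_{jk}(C_1) = C + F - E_1 - E_2 - E_j - E_k$.
\end{itemize}
The class $C - F$ is not in the list. A class $C - E_a - E_b$ from the second case equals a $C_i$ only if it is $C_1$ or $C_2 = C - E_3 - E_4$. It equals $C_1$ only if $\iota_{jk}$ fixes $C_1$, which forces $j,k \notin\{1,2\}$, a contradiction. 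It cannot equal $C_2$, since one of the indices 1 or 2 survives. The third case has coefficient of $F$ equal to 1 with four $E$'s, whereas $[C_4] = C + F - E_1 - E_5 - E_6 - E_7$ omits $E_2$. So none of these is a $[C_i]$.

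For $\iota_{jklm}$ I would use the formulas from the proof of Lemma \ref{lem:(4)rationalpointsections}.
\begin{itemize}
\item $\iota_{12jk}(C_1) = C - E_j - E_k$. This equals $[C_2]$ exactly when $\{j,k\} = \{3,4\}$, so $\iota_{1234}$ sends $C_1$ to $C_2$. This case is not excluded by $C_1$ alone.
\item $\iota_{1jkl}(C_1) = C + F - E_2 - \ldots$ contains $E_2$, so it is not $[C_4]$.
\item $\iota_{2jkl}(C_1) = C + F - E_1 - E_j - E_k - E_l$. This equals $[C_4]$ when $\{j,k,l\} = \{5,6,7\}$, giving the element $\iota_{2567}$.
\item $\iota_{jklm}(C_1)$ with $j,k,l,m \notin\{1,2\}$ is a class $C + 2F - (\text{six } E\text{'s including } E_1, E_2)$. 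This is not $[C_3]$, which omits $E_1$.
\end{itemize}
So the surviving candidates are $\iota_{1234}$ and $\iota_{2567}$, and I would test them on the other curves. First, $\iota_{1234}(C_3)$: the coefficient of $F$ becomes $4$, coming from $2 + 2$ where the first 2 is from $C$ and the second from the $E_2, E_3, E_4$ terms, and the resulting class is $C + 4F - E_1 - \ldots$. I will recompute this carefully, but it should fail to be a $[C_i]$. Similarly, $\iota_{2567}(C_2) = C + 2F - E_2 - E_3 - E_4 - E_5 - E_6 - E_7 = [C_3]$. I then need to check $\iota_{2567}(C_3)$, and I expect $\iota_{2567}$ to fail on $C_3$ or $C_4$.

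A cleaner argument for the $\iota_{jklm}$ case uses the intersection pattern. The element $\tau^{n/2}$ is an involution preserving the orbit, and it commutes with $\tau$, which acts transitively on the four $C_i$. Hence $\tau^{n/2}$ acts on the $C_i$ as a fixed-point-free involution or trivially, and it must preserve intersection numbers. It therefore either swaps $C_1 \leftrightarrow C_3$ and $C_2 \leftrightarrow C_4$, or swaps the two pairs. The main obstacle is carrying out these finitely many class computations correctly. That reduces to checking whether $\iota_{1234}$ or $\iota_{2567}$ realises such a permutation, and I expect each to send some $C_i$ outside the set. This excludes types 1, 2, 4, 6 and 7, leaving types 3 and 5.
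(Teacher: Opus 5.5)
\textbf{Types 1, 2 and 6.} Your treatment of these types is correct and matches the paper.

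\textbf{Types 4 and 7: the argument fails.} Here you only use that $\tau^{n/2}=\iota_{jklm}$ preserves the \emph{set} $\{C_1,\dots,C_4\}$. Both of your surviving candidates do preserve it. Indeed
\begin{align*}
\iota_{1234}(C_3) &= (C+2F-E_1-E_2-E_3-E_4) + 2F - (F-E_2)-(F-E_3)-(F-E_4) - E_5-E_6-E_7\\
&= C+F-E_1-E_5-E_6-E_7 = [C_4],
\end{align*}
so the $F$-coefficient is $1$, not $4$. Continuing the computation:
\begin{itemize}
\item $\iota_{1234}$ swaps $C_1\leftrightarrow C_2$ and $C_3\leftrightarrow C_4$;
\item $\iota_{2567}$ swaps $C_1\leftrightarrow C_4$ and $C_2\leftrightarrow C_3$.
\end{itemize}
Both are fixed-point-free involutions respecting the intersection pattern. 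These are exactly the permutations your ``cleaner argument'' allows, so no class computation of this kind can rule them out. (Their product $\iota_{134567}$ is the element that does occur for types 3 and 5.)

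Your remark that a non-identity $\tau^{n/2}$ ``cannot fix all four $C_i$ simultaneously'' is also false. For example, the permutation $(56)$ fixes all four classes. It also points the wrong way, since the correct argument shows that $\tau^{n/2}$ must fix all four.

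\textbf{The missing step.} You need a divisibility argument. Since $\langle\tau\rangle$ is cyclic and $\{C_1,\dots,C_4\}$ is an orbit of size $4$, the stabilizer of every $C_i$ is $\langle\tau^4\rangle$, so $\tau^4$ fixes each $C_i$. For types 4 and 7 the order $n$ is $8$ or $24$ (Table \ref{tab:orders}). Hence $4\mid n/2$, and $\tau^{n/2}=\iota_{jklm}$ must fix $C_1$ itself, not merely permute the orbit.

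Your own list then finishes the proof. We have $\iota_{12jk}(C_1)=C-E_j-E_k\neq C_1$ because $j,k\notin\{1,2\}$, and every other case has positive $F$-coefficient. This is the paper's argument.
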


\begin{proof}
    Recall that $C_1, \ldots, C_4$ form a Galois orbit. If $f$ is of type 1, 2 or 6, then by Lemma \ref{lem:galoisorbitnn/2}, $\Gamma_X$ contains an element of the form $\iota_{jk}$ for some $j, k \in \{1, \ldots, 7\}$. We have 
    \begin{align*}
    \iota_{jk}([C_1]) = \begin{cases}
        C - F &\text{if} \ \{j, k\} = \{1, 2\},\\
        C - E_2 - E_k &\text{if} \ j = 1, k \neq 2,\\
        C - E_1 - E_k &\text{if} \ j = 2, k \neq 1,\\
        C + F - E_1 - E_2 - E_j - E_k &\text{if} \ j, k \notin \{1, 2\}.
    \end{cases}
\end{align*}
In any of these cases, the image is not a class of one of the $C_1, \ldots, C_4$, therefore these are not possible.

Now suppose $f$ is of type 4 or 7. Write $\tau = \rho_X^c(\Frob_q)$, such that $\Gamma_X = \langle \tau \rangle$, with $n = \ord(\Gamma_X)$. If $f$ is of type 4 or 7, then $n$ is equal to 8 or 24 respectively (Table \ref{tab:orders}), and  $\tau^{n/2} = \iota_{jklm}$ for some distinct $j, k, l, m \in \{1, \ldots, 7\}$ by Lemma \ref{lem:galoisorbitnn/2}. Because $C_1, \ldots, C_4$ form a cyclic Galois orbit of size 4, and $4$ divides $n/2$ in both cases, we must have that $\tau^{n/2}(C_i) = C_i$ for $i = 1, \ldots, 4$. We see that 
\begin{align*}
        \iota_{12jk}(C_1) &= C - E_j - E_k\\
        \iota_{1jkl}(C_1) &= C + F - E_2 - E_j - E_k - E_l &\text{for} \ j, k, l \neq 2,\\
        \iota_{2jkl}(C_1) &= C + F - E_1 - E_j - E_k - E_l &\text{for} \ j, k, l \neq 1,\\
        \iota_{jklm}(C_1) &= C + 2F - E_1 - E_2 - E_j - E_k - E_l - E_m&\text{for} \ j, k, l, m \notin\{1, 2\},
    \end{align*}
so we cannot find $j,k,l, m$ such that $\iota_{jklm}(C_1) = C_1$. Therefore, it is not possible that $f$ is of type 4 or 7.
\end{proof}

Similarly to the previous case, we want to find an elementary transformation of $f$ such that the resulting surface is a minimal standard conic bundle which satisfies case (2c) or (3), using Proposition \ref{prop:(3)(4)to(1)(2)}. We start again by describing the classes of all sections of negative self-intersection. Let $D$ be an integral curve on $\overline{X}$ with $D^2 < 0$ and $D \cdot F = 1$, which is not one of the curves $C_1, \ldots, C_4$. Then $D^2 = -1$ by Lemma \ref{lem:-KXnef} and $-K_{\overline{X}} \cdot D = 1$ by the adjunction formula.

Let us write $[D] = C + a F - \sum_{n=1}^7 b_n E_n$, with $a \geq 0$ and $b_n \in \{0, 1\}$ for all $n = 1, \ldots, 7$. Because $D \cdot C_i \geq 0$ for $i = 1, 2, 3, 4$, we get the following conditions:
\begin{align}
    b_1 + b_2 &\leq a;\nonumber\\
    b_3 + b_4 &\leq a;\nonumber\\
    b_1 + b_5 + b_6 + b_7 &\leq a+1;\nonumber\\
    b_2 + b_3 + b_4 + b_5 + b_6 + b_7 &\leq a+2.\label{eq:(2b)conditions}
\end{align}

As in the previous case, we have $\sum_{n=1}^7 b_n = 2a + 1$, and using \eqref{eq:(2b)conditions} we obtain the following solutions for $D$:
\begin{align}
    [D] &= C - E_j &\text{for} \ j \in \{5, 6, 7\};\nonumber\\
    [D] &= C + F - E_j - E_k - E_l &\text{for} \ j \in \{1, 2\}, \ k \in \{3, 4\}, \ l \in \{5, 6, 7\};\nonumber\\
    [D] &= C + F -  E_{j} - E_{k} - E_l &\text{for} \ j \in \{2, 3, 4\},\ k, l \in \{5, 6, 7\};\nonumber\\
    [D] &= C + 2F - E_1 - E_{i_1} - \cdots - E_{i_4} &\text{for} \ 2 \leq i_j \leq 7 \ \text{such that} \ \{5, 6, 7\} \not\subseteq  \{i_1, \ldots, i_4\}.\label{eq:(3)-1sections}
\end{align}

This gives a total of 36 possible $(-1)$-sections.
\begin{comment}
    $3 + 2 \cdot 2 \cdot 3 + 3 \cdot {3 \choose 2} + {6 \choose 4} - 3 = 36$
\end{comment}

\begin{lemma}\label{lem:(2b)rationalpoints-1curves}
    Let $D$ be a section of $\overline{f}$ on $\overline{X}$ with self-intersection $-1$. Then any $\F_q$-point on $D$ is a singular point on a curve in the linear system $|-K_{\overline{X}} + E|$ for $E \in \{E_2, F - E_2\}$.
\end{lemma}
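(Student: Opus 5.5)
Fix an $\F_q$-point $P$ on a $(-1)$-section $D$ from the list \eqref{eq:(3)-1sections}. The idea is to find a curve $G \in |-K_{\overline{X}} + E|$, with $E \in \{E_2, F-E_2\}$, that contains $D$ together with a second component $D'$ passing through $P$. Then $P$ lies on two components of $G$, so it is a singular point of $G$.

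First we need $E_2$ to span a singular fiber over an $\F_q$-point. In case (2b) the type is 3 or 5 (Lemma \ref{lem:(3)type3type5}), so there is exactly one singular fiber over a rational point. The element $\tau^{n/2}=\iota_{ijklmn}$ (Lemma \ref{lem:galoisorbitnn/2}) omits exactly one index, and this index must be the rational singular fiber. A check like the one in Lemma \ref{lem:(3)type3type5} should identify it. Since $C_1, C_2$ and $C_3, C_4$ are permuted by $\tau^{n/2}$ while the orbit of size 4 is preserved, the omitted index should turn out to be $2$, that is, the only index not paired symmetrically. I would verify this by testing which $\iota_{1\cdots\hat{j}\cdots7}$ maps $\{C_1,\dots,C_4\}$ to itself. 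Consequently $E_2 + (F-E_2)$ is the Galois-stable singular fiber, so $\{E_2, F-E_2\}$ is Galois stable.

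Next comes the class computation. For each class $[D]$ in \eqref{eq:(3)-1sections}, I compute $-K_{\overline{X}} + E - [D]$ for both choices of $E$. Recall $-K_{\overline{X}} = 2C + 2F - \sum E_n$. This difference is $C + (2-a)F - \sum (1-b_n)E_n \pm$ (a correction in $E_2$), and its intersection with $F$ is $1$. For the right choice of $E$ (namely $E = F - E_2$ when $b_2 = 1$ and $E = E_2$ when $b_2 = 0$, or the reverse), the residual class $D'$ should satisfy $D'^2 = -1$ and $-K \cdot D' = 1$. So $D'$ is again one of the 36 $(-1)$-section classes, essentially the ``complementary'' one. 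I expect $D\cdot D' = (-K+E)\cdot D - D^2 = 1 + E\cdot D + 1$, adjusted to equal $2$. I would then argue that $D'$ is effective and realized by an integral section, either via Riemann--Roch on the weak del Pezzo surface or by observing that the list is closed under this complementation. In particular $D + D' \in |-K_{\overline{X}}+E|$.

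The main obstacle is showing that $P \in D'$. Here the Galois structure enters. Since $P$ is rational and lies on $D$, it also lies on $\tau(D)$. My plan is to show that $\tau^{n/2}$ (or some power of $\tau$) maps $D$ to $D'$, so that $P$ lies on $D'$. The complement map should coincide with the action of $\iota_{1\cdots\hat 2\cdots7}$ on the $(-1)$-sections, possibly composed with swapping $E_2 \leftrightarrow F-E_2$; this needs to be verified class by class. If some $D$ is fixed by $\tau^{n/2}$, then I would instead argue that $D$ meets its conjugates in $P$ and use $D\cdot D'$ to pin down $D'$. Alternatively, in that case $P$ would lie on a Galois orbit of sections with pairwise intersection $1$, which reduces to the same conclusion. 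Finally, $P$ lies on a smooth fiber. Since $D \ne D'$ both pass through $P$, the point $P$ is singular on the curve $D + D'$ in $|-K_{\overline{X}}+E|$.
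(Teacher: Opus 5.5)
Your proposal is correct and is essentially the paper's proof. The paper likewise pins down $\tau^{n/2}=\iota_{134567}$ by checking which $\iota_{1\cdots\hat{j}\cdots 7}$ sends $C_1$ into $\{C_1,\ldots,C_4\}$, and takes $D'=\iota_{134567}(D)$, so that $[D+D']=-K_{\overline{X}}+E_2$ when $b_2=0$ and $-K_{\overline{X}}+(F-E_2)$ when $b_2=1$ (your first guess). It then uses that a rational point of $D$ lies on this Galois conjugate. Your fallback cases are unnecessary: $\iota_{134567}$ replaces $b_i$ by $1-b_i$ for $i\neq 2$, so it fixes no $(-1)$-section, and $D'$ is automatically an integral section because it is a Galois conjugate of $D$, so no Riemann--Roch argument is needed.
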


\begin{proof}
    By Lemma \ref{lem:(3)type3type5}, $f$ is of type 3 or 5. Let $\tau = \rho_X^c(\Frob_q)$, and let $n = \ord(\tau)$. Then by Lemma \ref{lem:galoisorbitnn/2}, $\tau^{n/2}$ is of the form $\iota_{1\cdots \hat{j} \cdots 7}$ for some $j \in \{1, \ldots, 7\}$. We note that
    \begin{align*}
        \iota_{1\cdots \hat{j} \cdots 7}(C_1) = \begin{cases}
            C + F - \sum_{n=1}^7 E_n + E_1 + E_2 + E_j &\text{if} \ j \notin \{1, 2\}\\
            C + 2F - \sum_{n=1}^7 E_n + E_2 &\text{if} \ j = 1\\
            C + 2F - \sum_{n=1}^7 E_n + E_1 &\text{if} \ j = 2.
        \end{cases}
    \end{align*}
    Because $\iota_{1\cdots \hat{j} \cdots 7}(C_1) \in \{C_1, C_2, C_3, C_4\}$, we conclude that $j = 2$. So $\Gamma_X$ contains the element $\iota_{134567}$. 
    
    Consider the action of $\iota_{134567}$ on the sections of self-intersection $-1$, whose classes are listed in \eqref{eq:(3)-1sections}. For all sections $D$ with self-intersection $-1$, we have $D \neq \iota_{134567}(D)$, and $[D + \iota_{134567}(D)] \in \{-K_{\overline{X}} + E_2, -K_{\overline{X}} + (F - E_2)\}$. Any $\F_q$-point on $D$ lies in the intersection of $D$ and $\iota_{134567}(D)$, and is therefore a singular point on $D + \iota_{134567}(D)$.
\end{proof}

\begin{remark}
    Because $\tau^{n/2} = \iota_{134567}$, it follows that the singular fiber containing $E_2$ lies above an $\F_q$-point.\end{remark}

 The following result bounds the number of such singular points on each fiber.

\begin{lemma}\label{lem:(2b)singpoints-KX+E}
    Let $f \colon X \ra \P^1$ be a minimal standard conic bundle over $\F_q$ of degree 1 satisfying case (2b) in Proposition \ref{prop:classificationCB}. Consider the base change $\overline{f} \colon \overline{X} \ra \P^1$ over $\overline{\F}_q$. Let $E \in \{E_2, F - E_2\}$ on $\overline{X}$, and let 
    \begin{align*}
        B_E = \begin{cases}
            C_1 + C_3 \qquad \text{if} \ E = F - E_2\\
            C_2 + C_4 \qquad \text{if} \ E = E_2.
        \end{cases} 
    \end{align*}
    Then each fiber of $\overline{f}$ contains at most two points that are singular points on a curve in the linear system $|-K_{\overline{X}} + E|$ which does not contain the curves $E$ and $B_E$ as a component.
\end{lemma}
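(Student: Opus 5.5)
The plan is to contract $E$ and reduce to the degree 2 statement, Proposition \ref{prop:anticanonicalcurvesdP2}. The singular fiber containing $E_2$ lies over an $\F_q$-point, so $E$ is defined over $\F_{q^2}$. Over $\overline{\F}_q$ (or $\F_{q^2}$) let $\pi\colon \overline{X}\ra Y$ be the contraction of $E$. Then $Y$ is a standard conic bundle of degree 2 with fibration $g$ induced by $\overline{f}$, and $\pi^*(-K_Y)=-K_{\overline{X}}+E$. The linear system $|-K_{\overline{X}}+E|$ is the pullback of $|-K_Y|$. A curve $G\in|-K_{\overline{X}}+E|$ not containing $E$ is the strict transform of $G'=\pi(G)\in|-K_Y|$ with $G'$ not passing through $\pi(E)$, or passing through it with multiplicity one. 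Away from $E$, singular points of $G$ correspond to singular points of $G'$. On a smooth fiber of $\overline{f}$ (which does not meet $E$), this identifies singular points of such $G$ with singular points of anticanonical curves of $Y$ on the corresponding fiber of $g$. On the singular fiber containing $E$, the other component $F-E$ maps isomorphically to a smooth fiber of $g$, and I would handle it the same way. Points of $E$ itself would be treated separately, or argued to be irrelevant because $E\cdot(-K_{\overline{X}}+E)=0$, so a curve not containing $E$ is disjoint from $E$.

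Next I check that $Y$ satisfies the hypotheses of part \ref{item:notdP} of Proposition \ref{prop:anticanonicalcurvesdP2}. Nefness: $-K_Y\cdot \pi_*D=(-K_{\overline{X}}+E)\cdot D\geq 0$ for every curve $D\neq E$ on $\overline{X}$, because $-K_{\overline{X}}$ is nef and $E\cdot D\geq0$. Next I identify $B_E$. For $E=F-E_2$, one computes $[C_1]+[C_3]=2C+2F-\sum_n E_n-E_2=-K_{\overline{X}}+E_2-F$. Hence $[C_1+C_3]=-K_{\overline{X}}+E-F+\ldots$; more precisely, after rechecking the bookkeeping, $B_E=-K_{\overline{X}}+E-F$ up to the appropriate choice of the $E$ from Lemma \ref{lem:(2c)selfintersection2classes}. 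Similarly, $[C_2+C_4]=2C+F-\sum_n E_n+E_2=-K_{\overline{X}}-F+E_2$. This matches $B_E$ for $E=E_2$; for the other case I expect $C_1\cdot E\neq 0$ forces the pairing $C_1+C_3$ versus $F-E_2$, which I will verify by intersection numbers. Then $\pi(B_E)$ lies in $|-K_Y-F|$.

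Next, $\pi(B_E)$ must be the unique integral curve in $|-K_Y-F|$. The two sections in $B_E$ meet each other, so they do not meet $E$; hence $\pi(B_E)$ consists of two $(-2)$-sections meeting in one point, and its image is a curve of arithmetic genus $0$ with self-intersection $-2$. The issue is that this curve is reducible. Proposition \ref{prop:anticanonicalcurvesdP2}\ref{item:notdP} asks for a unique integral curve, so this is the main obstacle. I would rerun the argument of that proposition directly instead. The map $\phi\colon Y\ra\P^2$ given by $|-K_Y|$ has degree 2 and contracts exactly the curves with $-K_Y\cdot D=0$, namely the images of $B_E$. So $\phi^*(\text{branch quartic})=2(\pi(B_E)+R)$ with $[R]=-4K_Y/2-[\pi(B_E)]=-K_Y+F$, and therefore $R\cdot F=2$. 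A singular point of an anticanonical curve not containing $\pi(B_E)$ is a ramification point lying off $\pi(B_E)$, hence lies on $R$. Pulling back, each fiber of $\overline{f}$ contains at most two such points. The exclusion of curves containing $B_E$ in the statement is exactly what makes this last step work.
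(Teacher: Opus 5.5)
Your proof is correct and is essentially the paper's argument. You contract $E$ over $\F_{q^2}$ and use $[B_E] = -K_{\overline{X}} - F + E$, so that $\pi_*(B_E) \in |-K_Y - F|$; then singular points of anticanonical curves on $Y$ not containing $\pi_*(B_E)$ lie on the ramification curve $R \equiv -K_Y + F$, which is a bisection.

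Your first computation should read $[C_1]+[C_3] = -K_{\overline{X}} - E_2 = -K_{\overline{X}} - F + (F-E_2)$, not $-K_{\overline{X}} + E_2 - F$. It is this class, and not the fact that $C_1$ and $C_3$ meet, that gives $B_E \cdot E = 0$.

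The ``main obstacle'' you raise does not exist. Integrality in Proposition \ref{prop:anticanonicalcurvesdP2}\ref{item:notdP} is meant over the ground field $\F_{q^2}$, and $B_E$ is a single $\Gal(\overline{\F}_q/\F_{q^2})$-orbit, because $\tau^2$ swaps $C_1 \leftrightarrow C_3$ and $C_2 \leftrightarrow C_4$. So the paper checks the uniqueness hypothesis via Lemma \ref{lem:-KXnef} and applies the proposition directly. Your inlined rerun of its proof reaches the same conclusion.
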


\begin{proof}
    The curve $E$ is a $(-1)$-curve defined over $\F_{q^2}$, so we can contract this curve on $X_{2} \colonequals X \times_{\Spec(\F_q)} \Spec(\F_{q^2})$ via a morphism $\pi \colon X_{2} \ra Y$. The surface $Y$ is a standard conic bundle over $\F_{q^2}$ of degree 2 via the morphism $f \circ \pi^{-1} \colon Y \ra \P^1$. Recall that $F$ denotes the class of a fiber of $f$ on $X$. The image $\pi_*(F)$ is the class of a fiber of $f \circ \pi^{-1}$. For any curve $G$ on $Y$, we see that $-K_Y \cdot [G] = -K_{X_2} \cdot [\pi^*(G)] \geq 0$, so $-K_Y$ is nef.
    
    To see that we can apply Proposition \ref{prop:anticanonicalcurvesdP2}, note that $[B_E] = -K_{X_2} - F + E$, and $[\pi_*(B_E)] = -K_{Y} - \pi_*(F)$. Because $B_E$ is integral over $\F_{q^2}$, so is $\pi_*(B_E)$. Furthermore, if $W \in |-K_Y - \pi_*(F)|$ is any integral curve, then $[\pi^*(W)] = -K_{X_2} - F + E$, and $\pi^*(W)$ is integral (it cannot have $E$ as a component, because $-K_{X_2}$ being nef implies that $|-K_{X_2} - F| =\emptyset$). But $B_E$ is the only curve on $X_2$ in the linear system $|-K_{X_2} - F + E|$ (Lemma \ref{lem:-KXnef}), so $\pi^*(W) = B_E$. This implies that $Y$ satisfies the assumptions of Proposition \ref{prop:anticanonicalcurvesdP2}\ref{item:notdP}. 
    
    Let $G \in |-K_{\overline{X}} + E|$ in $\Pic(X_2)$ which does not contain $E$ or $B_E$ as a component. Then $[G] \cdot E = 0$, so $\pi_*(G) \cong G$. Hence, the number of singular points on $G$ and $\pi_*(G)$ is the same. We have $[\pi_*(G)] = -K_Y$, and $\pi_*(G)$ does not have $\pi_*(B_E)$ as a component, so by Proposition \ref{prop:anticanonicalcurvesdP2}\ref{item:notdP}, the singular points on $\pi_*(G)$ lie on on a divisor $R$, where $R$ is a bisection of the conic bundle $f \circ \pi^{-1} \colon Y \ra \P^1$. The singular points on $G$ thus lie on the strict transform $\pi^{-1}(R)$, which satisfies $[\pi^{-1}(R)] \cdot F = 2$. We conclude that there can be at most two such points on each fiber.
\end{proof}

\begin{lemma}\label{lem:(2b)atmost2ptsonnegativesection}
    Let $f \colon X \ra \P^1$ be a minimal standard conic bundle over $\F_q$ of degree 1  satisfying case (2b) in Proposition \ref{prop:classificationCB}. Then each fiber of $f$ contains at most two rational points that lie on a section with self-intersection $-1$.
\end{lemma}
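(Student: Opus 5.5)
The plan is to combine Lemma \ref{lem:(2b)rationalpoints-1curves} with Lemma \ref{lem:(2b)singpoints-KX+E}, applied fiber by fiber. Fix a smooth fiber $F_1$ of $f$ over an $\F_q$-point, and let $P \in F_1(\F_q)$ lie on a geometric section $D$ with $D^2=-1$. By Lemma \ref{lem:(2b)rationalpoints-1curves}, $P$ is a singular point of $G_D \colonequals D + \iota_{134567}(D)$. From the proof of that lemma, $[G_D] = -K_{\overline{X}} + E$ for one of $E \in \{E_2, F-E_2\}$. So every such rational point is a singular point of a curve in $|-K_{\overline{X}}+E_2|$ or in $|-K_{\overline{X}}+(F-E_2)|$.

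To apply Lemma \ref{lem:(2b)singpoints-KX+E}, I first check that $G_D$ has neither $E$ nor $B_E$ as a component. This holds because $G_D$ is the union of two integral sections, each with self-intersection $-1$. The curve $E$ is a fiber component, not a section. The curve $B_E$ consists of $(-2)$-sections, and $D$ is not one of $C_1,\dots,C_4$. Lemma \ref{lem:(2b)singpoints-KX+E} then gives at most two such singular points on $F_1$ for each choice of $E$. However, that yields only a bound of four, so the main obstacle is reducing it to two.

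For the reduction I would use Galois. A rational point $P$ is fixed by Frobenius $\tau$. If $P$ lies on $D$, then $P$ also lies on $\tau^i(D)$ for every $i$. I would show that the orbit of $D$ contains sections $D'$ whose curves $G_{D'}$ realise both values of $E$. Since $\tau$ swaps $E_2$ and $F-E_2$ (they lie in the same orbit by Lemma \ref{lem:orbitsinsingularfibers}), $\tau$ maps $|-K+E_2|$ to $|-K+(F-E_2)|$. In fact $\tau(G_D) = G_{\tau(D)}$, because $\iota_{134567}$ is a power of $\tau$ and so commutes with it.

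So the point $P$ is a singular point of a curve $G$ in $|-K_{\overline{X}}+E_2|$, and $P$ itself is rational. The bound in Lemma \ref{lem:(2b)singpoints-KX+E} comes from the bisection $\pi^{-1}(R)$, which meets each fiber in two geometric points. I would therefore count points on $F_1 \cap \pi^{-1}(R_{E_2})$ using only the $E=E_2$ system: any rational $P$ of the required kind lies in this set. This bounds the number of such $P$ by $[\pi^{-1}(R)]\cdot F = 2$. The key step is to confirm that the two classes do not both need to be counted, via this Frobenius-symmetry argument.
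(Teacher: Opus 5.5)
Your proposal is correct and follows essentially the paper's argument. You combine Lemma \ref{lem:(2b)rationalpoints-1curves} with the fact that Frobenius swaps $E_2$ and $F-E_2$, so a rational singular point of a curve in one of the systems $|-K_{\overline{X}}+E_2|$, $|-K_{\overline{X}}+(F-E_2)|$ is also a singular point of its conjugate in the other, and only one system has to be counted with Lemma \ref{lem:(2b)singpoints-KX+E}. (Your restriction to smooth fibers is unnecessary: that lemma bounds points on every fiber, and fibers over non-rational points contain no $\F_q$-points.)
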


\begin{proof}
    By Lemma \ref{lem:(2b)rationalpoints-1curves}, such a rational point is a singular point on a curve of the form $D + \iota_{134567}(D)$ in the linear system $|-K_{\overline{X}} + E|$ for $E \in \{E_2, F - E_2\}$. The divisor $D + \iota_{134567}(D)$ splits into two components which are irreducible sections of $\overline{f}$ with self-intersection $-1$, so it does not contain $E$ or $B_E$ as a component. Consider a fiber $F_1$ of $f$ above an $\F_q$-rational point. Suppose $P$ is an $\F_q$-point on $F_1$ which is a singular point on a curve $G$ in the linear system $|-K_{\overline{X}} + E_2|$, which does not contain $E_2$ or $B_{E_2}$ as a component. Then it is also a singular point on $\tau(G) \in |-K_{\overline{X}} + (F - E_2)|$ (which does not contain $F - E_2$ or $B_{F- E_2}$ as components), and vice versa. By Lemma \ref{lem:(2b)singpoints-KX+E}, there can be at most two such $\F_q$-points on $F_1$. In particular, there are at most two $\F_q$-points on $F_1$ that are $\F_q$-rational points on sections of self-intersection $-1$.
\end{proof}

\begin{lemma}\label{lem:(3)existencepointonfiber}
    Let $q \geq 4$, and let $f \colon X \ra \P^1$ be a minimal standard conic bundle over $\F_q$ of degree 1 satisfying case (2b) in Proposition \ref{prop:classificationCB}. Then there is an $\F_q$-rational point $P$ on a smooth fiber of $f$ such that $P$ is not a singular point of an anticanonical curve and does not lie on any section with negative self-intersection.
\end{lemma}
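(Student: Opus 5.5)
We follow the template of Lemma \ref{lem:(4)existencerationalpoint}: find an $\F_q$-rational smooth fiber, show it has at least $q+1 \geq 5$ rational points, and show that at most four of them are excluded.

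First we produce the fiber. By Lemma \ref{lem:(3)type3type5}, $f$ is of type 3 or 5. By Table \ref{tab:pointsbelowsingularfibers}, exactly one singular fiber lies above an $\F_q$-rational point of $\P^1$ in either type. Since $\P^1(\F_q)$ has $q+1 \geq 5$ points, there are at least $q \geq 4$ smooth fibers above $\F_q$-points. Such a smooth fiber $F_1$ is a conic with a rational point, because $X(\F_q)\neq\emptyset$ and in any case every smooth conic over $\F_q$ has a rational point. Hence $F_1 \cong \P^1_{\F_q}$ has $q+1 \geq 5$ rational points.

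Next we bound the excluded points on $F_1$.
\begin{itemize}
\item The sections $C_1,\dots,C_4$ with self-intersection $-2$ form a Galois orbit of size 4. A rational point on $C_1$ would lie on all four curves. But $C_1\cdot C_2 = 0$, so $C_1$ and $C_2$ are disjoint, and therefore no rational point lies on any $C_i$.
\item By Lemma \ref{lem:-KXnef}, every other section with negative self-intersection has self-intersection $-1$. By Lemma \ref{lem:(2b)atmost2ptsonnegativesection}, at most two rational points of $F_1$ lie on such sections.
\item By Lemma \ref{lem:(2)singpointsanticanonical}, at most two points of $F_1$ are singular points of anticanonical curves.
\end{itemize}
Altogether at most four rational points of $F_1$ are excluded. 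Since $q+1 \geq 5$, some $P \in F_1(\F_q)$ satisfies both required conditions.

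The main subtlety is the counting. We do not try to show that the two excluded sets overlap; we simply add the bounds $2+2$. The hypothesis $q\ge4$ is exactly what this crude count needs. We should also check that Lemma \ref{lem:(2b)atmost2ptsonnegativesection} applies to every rational smooth fiber, not only to a specific one. Its proof works for any fiber above an $\F_q$-point, so this holds.
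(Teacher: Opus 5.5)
Your proof is correct and follows essentially the same route as the paper: take a smooth fiber over an $\F_q$-point (available because types 3 and 5 have only one singular fiber over a rational point), note that the disjoint conjugate sections $C_i$ carry no rational points, and subtract the bounds $2+2$ from Lemmas \ref{lem:(2b)atmost2ptsonnegativesection} and \ref{lem:(2)singpointsanticanonical} from the $q+1\ge 5$ rational points of the fiber. The paper additionally chooses the fiber to avoid the base point of $|-K_X|$, which Lemma \ref{lem:(2)singpointsanticanonical} makes unnecessary. In your argument, the fact that $F_1$ has a rational point should rest only on smooth conics over $\F_q$ having rational points, not on $X(\F_q)\neq\emptyset$.
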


\begin{proof}
    Note that for $q \geq 4$, any smooth fiber above an $\F_q$-point contains at least 5 $\F_q$-points. We show that at most four of them do not satisfy the statement. Since $f$ is of type 3 or 5 by Lemma \ref{lem:(3)type3type5}, there is only one singular fiber above a rational point. Therefore, for any $q$, there exists a rational point $Q \in \P^1(\F_q)$ such that the fiber $f^{-1}(Q)$ on $X$ above $Q$ is smooth, and does not contain the base point of the linear system $|-K_X|$. By Lemma \ref{lem:(2b)atmost2ptsonnegativesection}, $f^{-1}(Q)$ contains at most two rational points that lie on sections with self-intersection $-1$. The sections $C_1, \ldots, C_4$ with self-intersection $-2$ do not contain any rational points. By Lemma \ref{lem:(2)singpointsanticanonical}, there are at most two points on $f^{-1}(Q)$ which are singular points on an anticanonical curve. This proves the result.
\end{proof}

\begin{theorem}\label{thm:(2b)to(2c,3)}
    Let $q \geq 4$, and let $f \colon X \ra \P^1$ be a minimal standard conic bundle over $\F_q$ of degree 1, satisfying case (2b) in Proposition \ref{prop:classificationCB}. Then $f$ is of type $3$ or $5$, and there is a birational map $\varphi \colon X \dashrightarrow Y$ such that $f \circ \varphi^{-1} \colon Y \ra \P^1$ is a minimal standard conic bundle of degree 1 of the same type as $f$, satisfying case (2c) or (3) in Proposition \ref{prop:classificationCB}.
\end{theorem}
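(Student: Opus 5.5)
The theorem assembles results already in hand, in the same way as Theorem \ref{thm:(2a)to(2c,3)}. First, Lemma \ref{lem:(3)type3type5} shows that $f$ is of type 3 or 5, which gives the first assertion directly.

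For the existence of the birational map, I will take $\varphi$ to be an elementary transformation. Since $q \geq 4$, Lemma \ref{lem:(3)existencepointonfiber} provides an $\F_q$-rational point $P$ on a smooth fiber of $f$ with two properties: $P$ is not a singular point of any anticanonical curve, and $P$ lies on no geometric section of negative self-intersection. These are exactly hypotheses (a) and (b) of Proposition \ref{prop:(3)(4)to(1)(2)}; the remaining hypothesis, that $-K_X$ is nef, is part of case (2b).

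Proposition \ref{prop:(3)(4)to(1)(2)} then applies to the elementary transformation $\varphi = \varphi_{\scriptscriptstyle P}$. It shows that $f \circ \varphi_{\scriptscriptstyle P}^{-1} \colon Y \ra \P^1$ is a minimal standard conic bundle of degree 1, of the same type as $f$, satisfying case (2c) or (3). Preservation of minimality and of type is also guaranteed independently by Proposition \ref{prop:preserveminimaltype}.

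The only step that needs attention is the check that the point produced by Lemma \ref{lem:(3)existencepointonfiber} is genuinely $\F_q$-rational and lies on a smooth fiber over an $\F_q$-point, as Proposition \ref{prop:(3)(4)to(1)(2)} requires. That lemma guarantees this: for types 3 and 5 only one singular fiber lies over a rational point, so a smooth fiber over an $\F_q$-point always exists. The substantive counting was already done in Lemmas \ref{lem:(2b)atmost2ptsonnegativesection} and \ref{lem:(2)singpointsanticanonical}, so no step here should be a real obstacle.
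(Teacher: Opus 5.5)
Your proposal is correct and matches the paper's proof: the type comes from Lemma \ref{lem:(3)type3type5}, and the elementary transformation at the point produced by Lemma \ref{lem:(3)existencepointonfiber} then does the job via Proposition \ref{prop:(3)(4)to(1)(2)}. Your extra remarks, on the existence of a smooth fiber over an $\F_q$-point and on preservation of minimality and type via Proposition \ref{prop:preserveminimaltype}, are already contained in those cited results.
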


\begin{proof}
    By Lemma \ref{lem:(3)type3type5}, $f$ is of type 3 or type 5. The result follows from Lemma \ref{lem:(3)existencepointonfiber} and Proposition \ref{prop:(3)(4)to(1)(2)}.
\end{proof}

\subsection{From case (2c) to case (3)}\label{sec:case(2c)}
Let $f \colon X \ra \P^1$ be a minimal standard conic bundle over $\F_q$ of degree 1 satisfying case (2c) in Proposition \ref{prop:classificationCB}. Then $-K_X$ is nef, and there are two disjoint irreducible curves $C_1$, $C_2$ on $\overline{X}$ which are bisections of the conic bundle $\overline{f}$ and satisfy $C_i^2 = -2$ and $-K_{\overline{X}} \cdot [C_i] = 0$. Since $C_1$ and $C_2$ are bisections, their classes in $\Pic(\overline{X})$ must be of the form \[[C_i] = 2C + a F - \sum_{n=1}^7 b_n E_n\]
for some $a \geq 0$ and $b_n \in \{0, 1, 2\}$.
From the conditions $C_i^2 = -2$ and $-K_{\overline{X}} \cdot [C_i] = 0$ we obtain \[\sum_{n=1}^7 b_{n}^2 = 4a + 2 \quad \text{and} \quad \sum_{n=1}^7 b_{n} = 2a + 4.\]
The only classes satisfying these two equations are of the form $2C + F - \sum_{n=1}^7 E_n + E_k$ and $2C + 2F - \sum_{n=1}^7 E_n - E_k$ for some $k \in \{1, \ldots 7\}$. By requiring $C_1 \cdot C_2 = 0$, without loss of generality, we can set
\begin{align*}
    [C_1] &= 2C + F - E_1 - E_2 - E_3 - E_4 - E_5 - E_6\\
    [C_2] &= 2C + 2F -E_1 - E_2 - E_3 - E_4 - E_5 - E_6 - 2E_7.
\end{align*}

We will use the above choice for the basis of $\Pic(\overline{X})$ and the labeling of the curves $C_i$ of self-intersection $-2$ throughout this subsection.

Because $C_1$ and $C_2$ form a Galois orbit of size 2, it follows that $\{E_7, F - E_7\}$ is a Galois orbit of $(-1)$-curves, and hence the fiber containing $E_7$ lies above an $\F_q$-rational point of $\P^1$.

\begin{lemma}\label{lem:(2c)not67}
    Let $f \colon X \ra \P^1$ be a minimal standard conic bundle over $\F_q$ of degree 1 satisfying case (2c) in Proposition \ref{prop:classificationCB}. Then $f$ is not of type 6 or 7.
\end{lemma}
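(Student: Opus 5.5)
The argument rests on the observation made just before the statement. Since $C_1$ and $C_2$ form a Galois orbit of size 2, the pair $\{E_7, F-E_7\}$ is a Galois orbit, so the singular fiber containing $E_7$ lies above an $\F_q$-rational point of $\P^1$. By Table \ref{tab:pointsbelowsingularfibers}, types 6 and 7 have singular fibers only above points of degree $2,5$ and $3,4$ respectively, so neither has a singular fiber above a rational point. This already gives the contradiction.

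For a more self-contained check in the style of Lemmas \ref{lem:type4type7} and \ref{lem:(3)type3type5}, I would also verify this at the level of the Frobenius action. Let $\tau = \rho_X^c(\Frob_q)$ with $n = \ord(\tau)$. By Table \ref{tab:orders}, $n=20$ for type 6 and $n=24$ for type 7. The set $\{C_1, C_2\}$ is $\Gamma_X$-stable of size 2, and $2 \mid n/2$ in both cases, so $\tau^{n/2}$ must fix both $C_1$ and $C_2$.

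By Lemma \ref{lem:galoisorbitnn/2}, $\tau^{n/2}$ is $\iota_{jk}$ for type 6 and $\iota_{jklm}$ for type 7. Using the formulas of Notation \ref{not:W(D7)}, I would compute $\iota_S([C_1])$ for $S$ of size 2 or 4. The coefficient of $E_i$ in $[C_1]$ is $-1$ for $i\le 6$ and $0$ for $i=7$. Applying $\iota_S$ changes the $C$-part to $2C + |S|F - 2\sum_{i\in S}E_i$, and each $E_i$ with $i \in S$ becomes $F - E_i$. Comparing coefficients of $E_i$, the image equals $[C_1]$ only if $S$ is empty, which is the step to check carefully. Since $S \neq \emptyset$, we get $\tau^{n/2}(C_1) \neq C_1$, a contradiction.

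The main (minor) obstacle is the bookkeeping in this coefficient comparison. The orbit-degree argument from Table \ref{tab:pointsbelowsingularfibers} is the primary proof, and I would present that, with the explicit computation as a remark.
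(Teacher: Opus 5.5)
Your primary argument is the paper's own proof, and it is correct and complete. Since $\{C_1,C_2\}$ is a Galois orbit, $\{E_7,F-E_7\}$ is Galois-stable, so some singular fiber lies over an $\F_q$-rational point, and Table \ref{tab:pointsbelowsingularfibers} excludes this for types 6 and 7.

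The supplementary Frobenius computation is false, so drop it rather than include it as a remark. For each $i\in S\cap\{1,\ldots,6\}$, the term $F-2E_i$ coming from $\iota_S(2C)$ is cancelled by the change $-E_i\mapsto -(F-E_i)$, which contributes $-F+2E_i$. Only the index $7$ survives, and the correct formula is
\[
\iota_S([C_1]) = \begin{cases} [C_1] & \text{if } 7 \notin S,\\ [C_2] & \text{if } 7\in S.\end{cases}
\]
For instance, $\iota_{12}([C_1]) = 2C+2F-2E_1-2E_2+F-(F-E_1)-(F-E_2)-E_3-\cdots-E_6 = [C_1]$.

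So requiring $\tau^{n/2}$ to fix $C_1$ only gives $7\notin S$, which is no contradiction. The paper uses exactly this fact: $\iota_{123456}$ fixes $C_1$ and $C_2$ in Lemma \ref{lem:type35ratlptssections}.

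The constraint that actually has force is on the permutation part: $\iota_S\sigma$ preserves $\{[C_1],[C_2]\}$ if and only if $\sigma(7)=7$. That says the fiber containing $E_7$ lies over a rational point, which is your primary argument again.
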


\begin{proof}
    In Table \ref{tab:pointsbelowsingularfibers}, we see that standard conic bundles of type 6 and 7 have no singular fiber above a rational point of $\P^1$. Because the fiber containing $E_7$ lies above a rational point, $f$ cannot be of type 6 or 7.
\end{proof}

\begin{comment}
\begin{proof}
    If $\Gamma_X = \langle \tau \rangle$ is of type 6, we have $\tau = \iota \circ (ij)(klmnr)$ for some $\iota \in (\Z/2\Z)^6$, with $\{i, j, k, l, m, n, r\} = \{1, \ldots, 7\}$. Then $\tau^2 = \iota' \circ (knrlm)$ with $\iota' \in (\Z/2\Z)^6$. Because $C_1$ and $C_2$ form a Galois orbit they are fixed by $\tau^2$, and we conclude that $7 \notin \{k, l, m, n, r\}$. From Lemma \ref{lem:galoisorbitnn/2}, we know that $\tau^10 = \iota_{ij}$, which also fixes $C_1$ and $C_2$. This implies $7 \notin \{i, j\}$, which is not possible. Hence, $X$ cannot be of type 6.

    If $\Gamma = \langle \tau \rangle$ is of type 7, it has order 24, and we have $\tau = \iota \circ (mnr)(ijkl)$ for some $\iota \in (\Z/2\Z)^6$. Then $\tau^4 = \iota' \circ (mnr)$ for some $\iota' \in (\Z/2\Z)^6$. Since $C_1$ and $C_2$ are fixed by $\tau^4$, we conclude that $7 \notin \{m, n, r\}$. From Lemma \ref{lem:galoisorbitnn/2}, we know that $\tau^{12} = \iota_{ijkl}$, which has to fix $C_1$ and $C_2$, implying $7 \notin \{i, j, k, l\}$. We conclude that $X$ cannot be of type 7.
\end{proof}
\end{comment}

In what follows, we determine the values of $q$ for which we can prove the existence of a point which satisfies the three conditions in Proposition \ref{prop:(2c)to(3)}. Recall that Lemma \ref{lem:(2)singpointsanticanonical} bounds the number of points on a fiber which are singular points on an anticanonical curve. To satisfy condition \ref{item:(2c)to(3)part(b)} of Proposition \ref{prop:(2c)to(3)}, according to Lemma \ref{lem:(2c)selfintersection2classes}, we have to consider singular curves in the linear systems $|-K_{\overline{X}} + E_s|$ and $|-K_{\overline{X}} + (F- E_s)|$ for all values of $s$ such that the fiber containing $E_s$ lies above an $\F_q$-rational point. Note that $s = 7$ always satisfies this. In the case $s = 7$, the following result bounds the number of singular points on such curves per fiber. 

\begin{lemma}\label{lem:(2c)singpoints-KX+E7}
    Let $f \colon X \ra \P^1$ be a minimal standard conic bundle over $\F_q$ of degree 1 satisfying case (2c) in Proposition \ref{prop:classificationCB}. Consider the base change $\overline{f} \colon \overline{X} \ra \P^1$ over $\overline{\F}_q$. Let $E \in \{E_7, F - E_7\}$ on $\overline{X}$, and let
    \begin{align*}
        C_E = \begin{cases}
            C_1 \qquad \text{if} \ E = E_7\\
            C_2 \qquad \text{if} \ E = F - E_7.
        \end{cases} 
    \end{align*}
    Then each fiber of $\overline{f}$ contains at most two points that are singular points on a curve in the linear system $|-K_{\overline{X}} + E|$ which does not contain the curves $E$ and $C_E$ as a component.
\end{lemma}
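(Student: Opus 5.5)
The plan is to mirror the proof of Lemma \ref{lem:(2b)singpoints-KX+E}. The curve $E$ is a $(-1)$-curve defined over $\F_{q^2}$, because $\{E_7, F-E_7\}$ is a Galois orbit. So we contract $E$ on $X_2 \colonequals X \times_{\Spec(\F_q)} \Spec(\F_{q^2})$ via $\pi \colon X_2 \ra Y$. Then $f \circ \pi^{-1} \colon Y \ra \P^1$ is a standard conic bundle of degree 2 over $\F_{q^2}$, and $\pi_*(F)$ is the class of a fiber. Nefness of $-K_Y$ follows as before: for any curve $G$ on $Y$ we have $-K_Y \cdot [G] = -K_{X_2}\cdot[\pi^*(G)] \geq 0$.

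Next we check the hypothesis of Proposition \ref{prop:anticanonicalcurvesdP2}\ref{item:notdP}, namely that $|-K_Y - \pi_*(F)|$ contains a unique integral curve. From the explicit classes one computes $[C_1] = -K_{\overline{X}} - F + E_7$ and $[C_2] = -K_{\overline{X}} - F + (F - E_7)$; for instance $-K_{\overline{X}} = 2C + 2F - \sum E_n$ gives $-K_{\overline{X}} - F + E_7 = 2C + F - \sum_{n\le 6} E_n = [C_1]$. Hence $[C_E] = -K_{X_2} - F + E$. Since $C_E \cdot E = 0$, $\pi$ maps $C_E$ isomorphically onto an integral curve, and $[\pi_*(C_E)] = -K_Y - \pi_*(F)$. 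Conversely, let $W \in |-K_Y - \pi_*(F)|$ be integral. Then $\pi^*(W)$ has class $-K_{X_2} - F + E$. It cannot contain $E$ as a component, since otherwise the residual would lie in $|-K_{X_2} - F|$, which is empty because $-K_{X}$ is nef and $(-K_X)\cdot(-K_X - F) = -1 < 0$. So $\pi^*(W)$ is an integral curve $D$ with $-K_{\overline{X}} \cdot D = -1 + 0 + 1 = 0$. By Lemma \ref{lem:-KXnef}, $D$ is $C_1$ or $C_2$, and comparing classes gives $D = C_E$. Thus Proposition \ref{prop:anticanonicalcurvesdP2}\ref{item:notdP} applies to $Y$ over $\F_{q^2}$ (its proof is geometric, so working over $\F_{q^2}$ is harmless).

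For the main step, take $G \in |-K_{\overline{X}} + E|$ with neither $E$ nor $C_E$ as a component. Then $[G]\cdot E = (-K+E)\cdot E = 1 - 1 = 0$ and $E \not\subset G$, so $G$ is disjoint from $E$. Hence $\pi$ restricts to an isomorphism of $G$ onto $\pi_*(G)$, which preserves singular points, and $[\pi_*(G)] = -K_Y$. Since $C_E \not\subset G$, the curve $\pi_*(G)$ does not contain $\pi_*(C_E)$. By Proposition \ref{prop:anticanonicalcurvesdP2}\ref{item:notdP}, the singular points of $\pi_*(G)$ lie on the curve $R$ with $[R]\cdot\pi_*(F) = 2$. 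Note that the proof of that proposition places these points on $R$ rather than on $D$ because $G\cdot D = 0$; this is why we excluded $C_E$. Pulling back, the singular points of $G$ lie on the strict transform $\pi^{-1}(R)$, which meets every fiber of $\overline{f}$ with intersection number $2$. Therefore each fiber contains at most two such points.

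The main obstacle is the uniqueness check for $|-K_Y - \pi_*(F)|$, in particular ruling out a reducible pullback through $E$. This uses the emptiness of $|-K_X - F|$ under nefness together with the classification of $(-K)$-trivial curves in Lemma \ref{lem:-KXnef}.
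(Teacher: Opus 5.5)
Your proposal is correct and follows the paper's route. The paper's proof only says it is completely analogous to Lemma \ref{lem:(2b)singpoints-KX+E}, and you carry out that analogy faithfully: you identify $[C_E] = -K_{\overline{X}} - F + E$ in place of $B_E$, verify the uniqueness hypothesis of Proposition \ref{prop:anticanonicalcurvesdP2}\ref{item:notdP}, and note that excluding $C_E$ as a component is what lets that proposition place the singular points on $R$.
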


\begin{proof}
    The proof is completely analogous to the proof of Lemma \ref{lem:(2b)singpoints-KX+E}.
\end{proof}

In the proof of Lemma \ref{lem:(2c)singpoints-KX+E7}, we contract the $(-1)$-curve $E$ to obtain a weak del Pezzo surface of degree 2, so that we can apply Proposition \ref{prop:anticanonicalcurvesdP2}\ref{item:notdP}. Depending on its type, the conic bundle $X$ can have more singular fibers above $\F_q$-points besides the fiber containing $E_7$. If we contract a $(-1)$-curve in these other singular fibers, the resulting conic bundle of degree 2 has an ample anticanonical divisor (as we will see in Lemma \ref{lem:(2c)singpoints-KX+Ei}), so we can apply Proposition \ref{prop:anticanonicalcurvesdP2}\ref{item:dP} instead.

We use the following characterization of del Pezzo surfaces, which follows directly from the geometric constructions of (weak) del Pezzo surfaces in Theorem \ref{thm:delpezzoclassification} and Theorem \ref{thm:weakdelpezzoclassification}.

\begin{lemma} \label{lem:weakdphas-2}
    Let $X$ be a weak del Pezzo surface over a field $k$. If $\overline{X}$ contains no integral curve of self-intersection $-2$, then $X$ is a del Pezzo surface.
\end{lemma}

\begin{comment}
\begin{proof}
    We know that $K_X^2 > 0$, because $-K_X$ is big. Because $-K_X$ is not ample, there is an irreducible curve $C$ on $\overline{X}$ such that $-K_{\overline{X}} \cdot C = 0$. Note that $C$ is not a numerically trivial divisor manoy{why? clear if conic bundle}. Then it follows from the Hodge index theorem \cite[Corollary 2.4]{badescu} that $C^2 < 0$. From the adjunction formula, we see that $C^2 = 2 p_a(C) - 2$, so the only possibility is that $C^2 = -2$. 
\end{proof}
\end{comment}


\begin{lemma}\label{lem:(2c)singpoints-KX+Ei}
    Let $f \colon X \ra \P^1$ be a minimal standard conic bundle over $\F_q$ of degree 1 satisfying case (2c) in Proposition \ref{prop:classificationCB}. Let $s \in \{1, \ldots, 6\}$ be such that $\{E_s, F - E_s\}$ is a Galois-stable set on $\overline{X}$, and let $E \in \{E_s, F - E_s\}$. Then each fiber of $\overline{f} \colon \overline{X} \ra \P^1$ contains at most four points that are singular points on a  curve in the linear system $|-K_{\overline{X}} + E|$ which does not contain $E$ as a component.
\end{lemma}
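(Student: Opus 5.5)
Our approach mirrors the proof of Lemma \ref{lem:(2b)singpoints-KX+E}, now landing in case \ref{item:dP} of Proposition \ref{prop:anticanonicalcurvesdP2} rather than case \ref{item:notdP}.

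First we contract $E$. Since $\{E_s, F-E_s\}$ is Galois-stable, $E$ is a $(-1)$-curve defined over $\F_{q^2}$. We contract it on $X_2 \colonequals X \times_{\Spec(\F_q)} \Spec(\F_{q^2})$ via $\pi \colon X_2 \ra Y$. Then $f \circ \pi^{-1} \colon Y \ra \P^1$ is a standard conic bundle of degree 2 over $\F_{q^2}$. As before, $-K_Y$ is nef, because $-K_Y \cdot [G] = -K_{X_2} \cdot [\pi^*(G)] \geq 0$ for every curve $G$ on $Y$.

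The key new step is to show that $Y$ is a del Pezzo surface. By Lemma \ref{lem:weakdphas-2}, it suffices to show that $\overline{Y}$ has no integral curve of self-intersection $-2$. Suppose $G'$ is such a curve and let $G$ be its strict transform on $\overline{X}$. Then $[\pi^*G'] = [G] + mE$ with $m = G \cdot E \geq 0$, and $-K_{\overline{X}} \cdot G = -K_Y \cdot G' - m$. Since $-K_{\overline{X}}$ is nef, this forces $-K_Y\cdot G' = 0$ and $m = 0$. Hence $-K_{\overline{X}}\cdot G = 0$ and $G^2 = -2$. By Lemma \ref{lem:-KXnef}, $G \in \{C_1, C_2\}$. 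But $C_1 \cdot E_s = C_2 \cdot E_s = 1$ for $s \leq 6$, since both classes have coefficient $-1$ on $E_s$, and likewise $C_i\cdot(F-E_s) = 2-1 = 1$. So $G\cdot E = 1 \neq 0$, a contradiction. (Equivalently, $\pi_*(C_i)$ has self-intersection $-2+1=-1$.) Therefore $Y$ is a del Pezzo surface of degree 2. This computation, together with checking that case (2c) gives no other $(-2)$-curves to worry about, is the main point of the proof, and Lemma \ref{lem:-KXnef} settles it.

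Next we transfer singular points. Let $G \in |-K_{\overline{X}} + E|$ be a curve not containing $E$ as a component. Then $G \cdot E = (-K_{\overline{X}}+E)\cdot E = 1 - 1 = 0$, so $G$ is disjoint from $E$ and $\pi$ restricts to an isomorphism near $G$. Hence $\pi_*(G) \cong G$, it has the same singular points, and $[\pi_*(G)] = -K_Y$. By Proposition \ref{prop:anticanonicalcurvesdP2}\ref{item:dP}, the singular points of $\pi_*(G)$ lie on the ramification curve $R$ with $[R] = -2K_Y$, which meets each fiber of $\overline{f\circ\pi^{-1}}$ in at most four points.

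Finally we pull back. The singular points of $G$ lie on the strict transform $\pi^{-1}(R)$, and $[\pi^{-1}(R)] \cdot F \leq [\pi^*R]\cdot F = 4$. So each fiber of $\overline{f}$ contains at most four such points. On the singular fiber containing $E$, these points lie on $F-E$ away from $E$, and the same intersection count applies there.
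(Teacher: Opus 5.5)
Your proof is correct and follows the paper's argument. Like the paper, you contract $E$ over $\F_{q^2}$, use Lemma \ref{lem:-KXnef} together with $C_i \cdot E = 1$ to rule out $(-2)$-curves on $\overline{Y}$ so that $Y$ is del Pezzo by Lemma \ref{lem:weakdphas-2}, then push $G$ forward (using $G \cdot E = 0$) and apply Proposition \ref{prop:anticanonicalcurvesdP2}\ref{item:dP}. Your strict-transform argument for the absence of $(-2)$-curves is more explicit than the paper's one line, but note that getting $-K_Y \cdot G' = 0$ also needs adjunction on $Y$ ($K_Y \cdot G' = 2p_a(G') \geq 0$), not only nefness of $-K_{\overline{X}}$.
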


\begin{proof}
    As in the proof of Lemma \ref{lem:(2b)singpoints-KX+E}, we can contract the curve $E$ over $\F_{q^2}$ via a morphism $\pi \colon X_2 \ra Y$. The surface $Y$ is a standard conic bundle over $\F_{q^2}$ of degree 2 via the morphism $f \circ \pi^{-1} \colon Y \ra \P^1$. If $F$ is the class of a fiber of $f$, then the image $\pi_*(F)$ is the class of a fiber of $f \circ \pi^{-1}$. Note that $-K_Y \cdot G = -K_{X_2} \cdot [\pi^*(G)] \geq 0$ for any curve $G$ on $Y$, so $-K_Y$ is nef. By Lemma \ref{lem:-KXnef}, $C_1$ and $C_2$ are the only integral curves on $\overline{X}$ of self-intersection $-2$. Note that $[C_1] \cdot E = [C_2] \cdot E = 1$, so $\pi_*(C_1)^2= \pi_*(C_2)^2 = -1$. This implies that there are no curves of self-intersection $-2$ on $\overline{Y}$, so we can apply Lemma \ref{lem:weakdphas-2} to conclude that $Y$ is a del Pezzo surface of degree 2. 
    
    Let $G \in |-K_{\overline{X}} + E|$ in $\Pic(X_{2})$ such that $E$ is not a component of $G$. Then $[G] \cdot E = 0$, so $\pi_*(G) \cong G$. Hence, the number of singular points on $G$ and $\pi_*(G)$ is the same. We have $[\pi_*(G)] = -K_Y$. So by Proposition \ref{prop:anticanonicalcurvesdP2}\ref{item:dP}, the singular points on $\pi_*(G)$ lie on on a divisor $R$ which satisfies $R \cdot \pi_*(F) = 4$. The singular points of $G$ then lie on the strict transform $\pi^{-1}(R)$, which satisfies $[\pi^{-1}(R)] \cdot F = 4$. We conclude that there can be at most four such points on each fiber of $\overline{f}$.
\end{proof}

\begin{lemma}\label{lem:(2c)singpointsintersection2}
    Let $f \colon X \ra \P^1$ be a minimal standard conic bundle over $\F_q$ satisfying case (2c) in Proposition \ref{prop:classificationCB}. Let $m$ be the number of singular fibers which are defined over $\F_q$. Then each fiber of $f$ contains at most $4m-2$ $\F_q$-points which are singular points on two integral Galois-conjugate bisections on $\overline{X}$ of self-intersection 2 and arithmetic genus 1.
\end{lemma}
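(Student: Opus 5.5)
By Lemma \ref{lem:(2c)selfintersection2classes}, any Galois orbit $\{D_1,D_2\}$ of integral bisections of self-intersection $2$ and arithmetic genus $1$ has classes $\{-K_{\overline{X}}+E_s,\,-K_{\overline{X}}+(F-E_s)\}$. Here $s$ is an index such that the singular fiber containing $E_s$ lies above an $\F_q$-point. There are exactly $m$ such indices $s$, and $s=7$ is always one of them, by the discussion preceding Lemma \ref{lem:(2c)not67}. So an $\F_q$-point $P$ that is singular on both $D_1$ and $D_2$ is in particular a singular point of a curve in $|-K_{\overline{X}}+E|$ for some $E\in\{E_s,F-E_s\}$ with $s$ among these $m$ indices. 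The plan is to count such points fiber by fiber, index by index, and sum the contributions.

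The first step is to handle the irreducible case. Since $D_1$ is integral and not equal to $E$ (its self-intersection is $2$, not $-1$), it contains neither $E$ nor $C_E$ as a component. Indeed $D_1$ is irreducible, and $C_E$ has self-intersection $-2$ and a different class. So Lemmas \ref{lem:(2c)singpoints-KX+E7} and \ref{lem:(2c)singpoints-KX+Ei} apply directly to $D_1$.

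Next, observe that if $P$ is rational and singular on $D_1$, then it is automatically singular on $D_2=\tau(D_1)$. Hence it suffices to count, for each $s$, the points on a fixed fiber that are singular on some curve of the single class $-K_{\overline{X}}+E_s$ not containing $E_s$ (or $C_{E_s}$ when $s=7$). Taking $E=E_s$ only, and not both members of the pair, avoids double counting. For $s=7$, Lemma \ref{lem:(2c)singpoints-KX+E7} gives at most $2$ points per fiber. For each of the remaining $m-1$ indices $s\in\{1,\dots,6\}$ with $\{E_s,F-E_s\}$ Galois-stable, Lemma \ref{lem:(2c)singpoints-KX+Ei} gives at most $4$ points per fiber. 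Summing gives at most $2+4(m-1)=4m-2$ points, as claimed.

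The main obstacle is checking that the hypotheses of the two cited lemmas genuinely hold for the curves $D_i$, in particular that $D_1$ does not contain $C_E$ or $E$ as a component. This follows from integrality as noted above. A second point to confirm is that the fibers considered may be arbitrary, since both lemmas bound points on every fiber of $\overline{f}$. I would also check that the indexing in the lemmas matches: $C_E$ for $E=E_7$ is $C_1$, and it has class $-K_{\overline{X}}-F+E_7$, which is consistent with the setup.
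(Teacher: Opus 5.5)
Your proposal is correct and follows the paper's proof essentially verbatim. Lemma \ref{lem:(2c)selfintersection2classes} gives the classes $-K_{\overline{X}}+E_s$ and $-K_{\overline{X}}+(F-E_s)$ for one of the $m$ admissible indices $s$ (always including $s=7$), a rational point singular on one conjugate is singular on the other, and Lemmas \ref{lem:(2c)singpoints-KX+E7} and \ref{lem:(2c)singpoints-KX+Ei} give $2+4(m-1)=4m-2$; your explicit check that an integral curve of self-intersection $2$ cannot contain $E$ or $C_E$ as a component supplies a hypothesis the paper leaves implicit.
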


\begin{proof}
    By Lemma \ref{lem:(2c)selfintersection2classes}, two Galois-conjugate bisections $D_1$, $D_2$ as in the statement have classes $-K_{\overline{X}} + E_s$ and $-K_{\overline{X}} + (F - E_s)$ in $\Pic(\overline{X})$, where $s$ is one of $m$ distinct values, one of which is $s = 7$. If $P$ is an $\F_q$-point which is a singular point on $D_1$, then it is also a singular point on $D_2$ and vice versa. We apply Lemma \ref{lem:(2c)singpoints-KX+E7} and Lemma \ref{lem:(2c)singpoints-KX+Ei} to conclude the proof.
\end{proof}

Finally, to find points satisfying condition \ref{item:(2c)to(3)part(c)} in Proposition \ref{prop:(2c)to(3)}, we determine the number of $\F_q$-points on sections of $f$ with negative self-intersection. Recall from Lemma \ref{lem:-KXnef} that $C_1, C_2$ are the only curves of self-intersection $-2$ on $\overline{X}$. Let us, as in the cases (2a) and (2b), determine the Picard classes of sections $D$ with self-intersection $-1$ on $\overline{X}$. Recall that they satisfy $-K_{\overline{X}} \cdot [D] = 1$. As before, we write $[D] = C + aF - \sum_{n=1}^7 b_n E_n$ with $a \geq 0$ and $b_n \in \{0, 1\}$. Since $D \cdot C_i \geq 0$ for $i = 1, 2$, we obtain the inequalities
\begin{align}
    \sum_{n=1}^6 b_n &\leq 2a + 1 \nonumber\\
    \sum_{n=1}^6 b_n  + 2 b_7 &\leq 2a + 2. \label{eq:(2c)conditions}
\end{align}
Recall that $-K_{\overline{X}} \cdot [D] = 1$, implying the equality $\sum_{n = 1}^7 b_n = 2a+1$. In particular, $a \leq 3$. The inequalities \eqref{eq:(2c)conditions} do not give any extra conditions. This yields:
\begin{align}
    [D] &= C - E_i &\text{for} \ i \in \{ 1, \ldots, 7\},\nonumber\\
    [D] &= C + F - E_i - E_j - E_k &\text{for} \ i, j, k \in \{1, \ldots, 7\},\nonumber\\
    [D] &= C + 2F - \sum_{n=1}^7 E_n + E_i + E_j  &\text{for} \ i, j \in \{1, \ldots, 7\},\nonumber\\
    [D] &= C + 3F - \sum_{n=1}^7 E_n.\label{eq:(2c)sections-1}
\end{align}
\begin{comment}
    By Riemann-Roch, we obtain
\[\ell(D) = \frac{1}{2}D \cdot (D - K_X) + 1 + p_a(X) + s(D) - \ell(K_X - D) = 1 + s(D) 
using that $D \cdot (D - K_X) = 0$, $p_a(X) = 0$ and $\ell(K_X - D) = 0$ (because $(K_X - D) \cdot F = -1$ so it cannot be effective). Hence there is an effective divisor linearly equivalent to $D$, which is irreducible. So each of these classes actually represents a $(-1)$-curve. 
\end{comment}
This gives a total of 64 possible $(-1)$-curves. The possible number of rational points on these curves can be bounded by considering their partitioning into Galois orbits, which depends on the type of $f$. We treat the types separately.

\begin{comment}
     $7 + {7 \choose 3} + {7 \choose 2} + 1 = 64$
\end{comment}

\begin{lemma}\label{lem:type124ratlptssections}
    Let $f \colon X \ra \P^1$ be a minimal standard conic bundle over $\F_q$ satisfying case (2c) in Proposition \ref{prop:classificationCB}. Assume moreover that $f$ is of type 1, 2 or 4. Then there are no $\F_q$-points on sections of $\overline{f} \colon \overline{X} \ra \P^1$ with negative self-intersection.
\end{lemma}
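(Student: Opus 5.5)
The plan is to exploit the element $\tau^{n/2}$ from Lemma \ref{lem:galoisorbitnn/2}, where $\tau=\rho_X^c(\Frob_q)$ and $n=\ord(\tau)$. If $P\in X(\F_q)$ lies on a section $D$ of $\overline{f}$, then $P$ also lies on every Galois conjugate of $D$, in particular on $\tau^{n/2}(D)$. So it suffices to show that $D$ and a suitable conjugate of $D$ are disjoint.

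\emph{Negative sections and the form of $\tau^{n/2}$.} By Lemma \ref{lem:-KXnef}, the only curves of self-intersection $-2$ on $\overline{X}$ are $C_1$ and $C_2$, and these are bisections. Hence the sections with negative self-intersection are exactly the $(-1)$-sections whose classes are listed in \eqref{eq:(2c)sections-1}. For types 1, 2 and 4 we have $n=4,12,8$ (Table \ref{tab:orders}). Since $\{C_1,C_2\}$ is a Galois orbit of size $2$ and $2\mid n/2$, the element $\tau^{n/2}=\iota_S$ fixes $C_1$ and $C_2$; here $|S|=2$ for types 1, 2 and $|S|=4$ for type 4. A direct computation with Notation \ref{not:W(D7)} shows that $\iota_S([C_1])=[C_1]$ exactly when $7\notin S$: for $S\subseteq\{1,\dots,6\}$ the terms cancel to give back $2C+F-E_1-\cdots-E_6$, while $7\in S$ changes the $E_7$-coefficient. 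So $S\subseteq\{1,\dots,6\}$.

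\emph{Intersection with the $\iota_S$-conjugate.} Write $[D]=C+aF-\sum b_iE_i$ with $b_i\in\{0,1\}$ and $\sum b_i=2a+1$, and let $k=|S|$. Then
\[
\iota_S([D])=C+\Big(a+\tfrac k2-\sum_{i\in S}b_i\Big)F-\sum_{i\notin S}b_iE_i-\sum_{i\in S}(1-b_i)E_i .
\]
This is never equal to $[D]$, since that would need $b_i=1-b_i$ for $i\in S$. Computing the pairing gives
\[
[D]\cdot\iota_S([D])=2a+\tfrac k2-\sum_{i=1}^7 b_i=\tfrac k2-1 .
\]
For types 1 and 2 ($k=2$) this is $0$. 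So $D$ and $\tau^{n/2}(D)$ are distinct, disjoint, Galois conjugate curves, and no $\F_q$-point can lie on $D$. This settles types 1 and 2.

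\emph{Type 4, the main obstacle.} Here $k=4$ and $D\cdot\tau^4(D)=1$, so the argument above does not conclude. I would use $\tau^2$ instead, which has order $4$. Its permutation part is $\sigma^2$, where $\sigma$ is a $4$-cycle on the indices of the degree-$4$ fiber (by the previous step these indices lie in $\{1,\dots,6\}$, and $7$ is fixed by $\sigma$). So $\sigma^2$ is a double transposition $(ac)(bd)$, and $\tau^2=\iota_T(ac)(bd)$ for some even set $T$. The first task is to pin down $T$. The constraints are that $\tau^4=(\tau^2)^2=\iota_{abcd}$, and that $\tau^2$ fixes $C_1$ and $C_2$. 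I expect these to force $T$ to be one of $\{a,b\}$, $\{a,d\}$, $\{c,b\}$, $\{c,d\}$ up to symmetry.

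Then, for each of the $64$ classes in \eqref{eq:(2c)sections-1}, I would compute $D\cdot\tau^2(D)$ and $D\cdot\tau^6(D)$ in the same way as above. The hope is that for every $D$ at least one conjugate $\tau^j(D)$ is disjoint from $D$, which gives the same contradiction. If some $D$ resists this, the fallback is to compare the full orbit sum. This sum is Galois invariant, so by Lemma \ref{lem:relativelyminalrank2} it equals $-\alpha K_X-\beta F$. It would have multiplicity at least the orbit size at a rational point $P$. I would intersect it with the fiber through $P$, or with a member of $|-K_X|$ through $P$ that shares no component with it, to get a contradiction. The explicit case check for type 4 is the step I expect to require the most work.
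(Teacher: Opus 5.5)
Your argument for types 1 and 2 is correct and is the paper's argument. There $\tau^{n/2}=\iota_S$ with $|S|=2$, and your identity $[D]\cdot\iota_S([D])=\tfrac{|S|}{2}-1=0$ is exactly what the paper asserts; the step showing $7\notin S$ is harmless but not needed for these types. For type 4 you choose the same element as the paper, $\tau^2$, but the case is not proved, and the constraints you propose for determining $T$ are not enough.

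Rename your indices to avoid the clash with the coefficient $a$. Let $\sigma=(u_1\,u_2\,u_3\,u_4)$ be the $4$-cycle on the degree-$4$ fiber, and let $v_1,v_2,7$ be the indices of the three fibers over $\F_q$-points. Your two conditions are $(\tau^2)^2=\iota_{u_1u_2u_3u_4}$ and $\tau^2(C_i)=C_i$. They are also satisfied by $\tau^2=\iota_{u_1u_2v_1v_2}(u_1\,u_3)(u_2\,u_4)$:
\begin{itemize}
\item the indices $v_1,v_2$ cancel in $(\iota_T\sigma^2)^2=\iota_{T\triangle\sigma^2(T)}$;
\item $\iota_T$ fixes $[C_1]$ whenever $T\subseteq\{1,\dots,6\}$.
\end{itemize}
For that $T$, your own formula gives $[D]\cdot\tau^2([D])=2a+2-\sum_i b_i=1$ for every $D$, so the argument stalls.

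What rules this out is the orbit structure of \S\ref{sec:Galoisorbitsfibers}. The three fibers over $\F_q$-points carry orbits $[2\mid 1]$. Hence $\tau=\iota_U\sigma$ with $\{v_1,v_2,7\}\subseteq U$, and $|U\cap\{u_1,\dots,u_4\}|$ is odd, which is equivalent to $\tau^4=\iota_{u_1u_2u_3u_4}$. Then $\tau^2=\iota_{U\triangle\sigma(U)}\sigma^2$, where $U\triangle\sigma(U)$ is a two-element set meeting each of $\{u_1,u_3\}$ and $\{u_2,u_4\}$ exactly once. This is the paper's $\tau^2=\iota_{ij}(ik)(jl)$.

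Once $T$ is pinned down, say $T=\{u_1,u_2\}$, you need neither a case check over the $64$ classes nor the orbit-sum fallback. The computation is as uniform as the one you did for $\iota_S$:
\[
\tau^2([D])=C+(a+1-b_{u_3}-b_{u_4})F-(1-b_{u_3})E_{u_1}-(1-b_{u_4})E_{u_2}-b_{u_1}E_{u_3}-b_{u_2}E_{u_4}-\sum_{j\in\{v_1,v_2,7\}}b_jE_j .
\]
Hence $[D]\cdot\tau^2([D])=2a+1-\sum_{j=1}^7 b_j=0$ for every class in \eqref{eq:(2c)sections-1}. Moreover $\tau^2(D)\neq D$, because $\tau^4(D)=\iota_{u_1u_2u_3u_4}(D)\neq D$ by your $\iota_S$ computation. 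With these two points supplied, your proof coincides with the paper's.
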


\begin{proof}
    Recall from Lemma \ref{lem:galoisorbitnn/2} that if $f$ is of type 1 or 2, then $\Gamma_X$ contains an element of the form $\iota_{ij}$ for some $i, j \in \{1, \ldots, 7\}$. For any of the sections $D$ in \eqref{eq:(2c)sections-1}, and any $i, j$, we have $D \neq \iota_{ij}(D)$ and $D \cdot \iota_{ij}(D) = 0$. In particular, there cannot be any rational points on these sections when $f$ is of type 1 or 2.

    If $f$ is of type 4, then $\Gamma_X = \langle \tau \rangle$, with $\tau = \rho_X^c(\Frob_q)$, has order 8 (Table \ref{tab:orders}), and contains an element of the form $\tau^4 = \iota_{ijkl}$ (see Lemma \ref{lem:galoisorbitnn/2}). Since this is an even power of $\tau$, it acts as the identity on $C_1$ and $C_2$. This implies that $7 \notin \{i, j, k, l\}$, and therefore, using the description of the Galois action in Notation \ref{not:W(D7)} and the description of the Galois orbits in the singular fibers in \S\ref{sec:Galoisorbitsfibers}, we find that $\tau$ must be of the form $\tau \in \{\iota_{imn7}(ijkl), \iota_{ijkmn7}(ijkl)\}$ where $\{i, j, k, l, m, n\} = \{1, 2, 3, 4, 5, 6\}$. Then $\tau^2$ is of the form $\iota_{ij}(ik)(jl)$ for some $i, j, k, l \in \{1, \ldots, 6\}$. For all sections $D$ with self-intersection $-1$ in \eqref{eq:(2c)sections-1}, we have $\tau^4(D) = \iota_{ijkl}(D) \neq D$, so we conclude that the $(-1)$-sections must form eight Galois orbits of size 8. Furthermore, $\tau^2(D) \cdot D = 0$ for each $D$, so the $(-1)$-sections do not contain any rational points.
\end{proof}

\begin{lemma}\label{lem:type35ratlptssections}
     Let $f \colon X \ra \P^1$ be a minimal standard conic bundle over $\F_q$ satisfying case (2c) in Proposition \ref{prop:classificationCB}. Assume moreover that $f$ is of type 3 or 5. Then all $\F_q$-points on sections of $\overline{f} \colon \overline{X} \ra \P^1$ with negative self-intersection are singular points on curves in the linear system $|-K_{\overline{X}} + E|$ for $E \in \{E_7, F - E_7\}$, which do not contain $E, C_1$ or $C_2$ as a component.
\end{lemma}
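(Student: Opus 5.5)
We mimic the proof of Lemma \ref{lem:(2b)rationalpoints-1curves}. Let $\tau = \rho_X^c(\Frob_q)$ and $n = \ord(\tau)$. Since $f$ is of type 3 or 5, Lemma \ref{lem:galoisorbitnn/2} gives $\tau^{n/2} = \iota_{1\cdots\hat{j}\cdots 7}$ for some $j$. First we pin down $j$. The curves $C_1, C_2$ form a Galois orbit of size $2$, so $\tau^{n/2}$ (an even power of $\tau$, since $n = 4$ or $12$) fixes $[C_1]$ and $[C_2]$. Applying $\iota_{1\cdots\hat{j}\cdots 7}$ to $[C_1] = 2C + F - E_1 - \cdots - E_6$ via Notation \ref{not:W(D7)}, we check that $C_1$ is fixed only if $j = 7$. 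Indeed, if $7$ is among the swapped indices, the coefficient of $E_7$ becomes nonzero while that of some other $E_i$ flips, and neither resulting class is $[C_1]$ or $[C_2]$. Hence $\iota_{123456} \in \Gamma_X$.

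Next we take $D$ to be any section on $\overline{X}$ with negative self-intersection. It is not $C_1$ or $C_2$, since these are bisections, so $D^2 = -1$ and $[D]$ is one of the 64 classes in \eqref{eq:(2c)sections-1}. We compute $\iota_{123456}([D])$ for each of the four families. For $[D] = C - E_i$ with $i \le 6$ we get $\iota_{123456}(C - E_i) = C + 3F - \sum_{n=1}^6 E_n - (F - E_i) = C + 2F - \sum_{n \le 6, n \ne i} E_n$. The sum is then $2C + 2F - \sum_{n=1}^6 E_n = -K_{\overline{X}} + E_7$. For the other families we expect the same: $D \neq \iota_{123456}(D)$, and
\[[D] + \iota_{123456}([D]) \in \{-K_{\overline{X}} + E_7,\ -K_{\overline{X}} + (F - E_7)\},\]
with the choice depending on whether $b_7 = 1$ or $b_7 = 0$. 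This is a finite coefficient check, organized by $a$ and by whether $7 \in \{i,j,k\}$. Because $\iota_{123456}$ is a power of Frobenius, any $\F_q$-point on $D$ also lies on $\iota_{123456}(D)$. It is therefore a singular point of the curve $G = D + \iota_{123456}(D) \in |-K_{\overline{X}} + E|$.

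Finally, $G$ has exactly two components, both sections with self-intersection $-1$. So $G$ does not contain $E$, which is a fiber component, nor $C_1$ or $C_2$, which are $(-2)$-bisections. This gives the claim.

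The main obstacle is the case check in the second step: verifying that the sum lands precisely in one of the two classes $-K_{\overline{X}} + E_7$ or $-K_{\overline{X}} + (F - E_7)$ for every family in \eqref{eq:(2c)sections-1}. The $j = 7$ determination is a short direct computation.
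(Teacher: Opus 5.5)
Your proposal is correct and follows the paper's argument: the evenness of $n/2$ forces $\tau^{n/2}=\iota_{123456}$, and then $[D]+\iota_{123456}([D])\in\{-K_{\overline{X}}+E_7,\,-K_{\overline{X}}+(F-E_7)\}$ makes every rational point of $D$ a singular point of $D+\iota_{123456}(D)$. One correction to your justification that $j=7$: for $j\neq 7$ one actually gets $\iota_{1\cdots\hat{j}\cdots 7}([C_1])=2C+2F-\sum_{n=1}^{6}E_n-2E_7=[C_2]$, not a class outside $\{[C_1],[C_2]\}$, so these cases are excluded only by the evenness argument you already invoke, not by the coefficient observation.
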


\begin{proof}
    Since $f$ is of type 3 or 5, $\Gamma_X = \langle \tau \rangle$ with $\tau = \rho_X^c(\Frob_q)$ has order $n$ equal to 4 or 12, respectively (Table \ref{tab:orders}). By Lemma \ref{lem:galoisorbitnn/2}, $\tau^{n/2} = \iota_{1 \cdots \hat{j} \cdots 7}$ for some $j \in \{1, \ldots, 7\}$. Because $\{C_1, C_2\}$ is a Galois orbit of size 2, and $\tau^{n/2}$ is an even power of $\tau$, it must fix $C_1$ and $C_2$. The only possibility is when $j = 7$, so we have $\tau^{n/2} = \iota_{123456}$. Applying $\iota_{123456}$ to the classes of $(-1)$-sections $D$ in \eqref{eq:(2c)sections-1}, yields $[D + \iota_{123456}(D)] \in \{-K_{\overline{X}} + E_7, -K_{\overline{X}} + (F-E_7)\}$. Note that any rational point on $D$ also lies on $\iota_{123456}(D)$, and hence is a singular point on $D + \iota_{123456}(D)$. This is an effective divisor in $\Div(\overline{X})$ consisting of two irreducible sections of self-intersection $-1$, neither of which is equal to one of the curves $E_7, F-E_7, C_1$ or $C_2$.
\end{proof}

\begin{theorem}\label{thm:(2c)to(3)}
    Let $f \colon X \ra \P^1$ be a minimal standard conic bundle over $\F_q$ of degree 1, satisfying case (2c) in Proposition \ref{prop:classificationCB}. Let $m$ be the number of singular fibers of $f$ which are defined over $\F_q$. Suppose $q \geq 4m$. Then there is a birational map $\varphi \colon X \dashrightarrow Y$ such that $f \circ \varphi^{-1} \colon Y \ra \P^1$ is a minimal standard conic bundle, of the same type as $f$, satisfying case (3) in Proposition \ref{prop:classificationCB}, that is, $Y$ is a del Pezzo surface of degree 1.
\end{theorem}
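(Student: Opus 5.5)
The plan is to apply Proposition \ref{prop:(2c)to(3)} directly. It suffices to find a point $P \in X(\F_q)$ on a smooth fiber of $f$ satisfying the three conditions (a), (b), (c) there. The elementary transformation $\varphi_{\scriptscriptstyle P}$ then yields a del Pezzo surface $Y$ of degree 1 with $f \circ \varphi_{\scriptscriptstyle P}^{-1}$ a minimal standard conic bundle of the same type. By Lemma \ref{lem:(2c)not67}, $f$ is of type 1, 2, 3, 4 or 5, and in particular $m \geq 1$, since the fiber containing $E_7$ lies over an $\F_q$-point. The task is therefore to count, on a suitable smooth fiber over an $\F_q$-point, the rational points that violate each condition, and to show the total is less than $q+1$ when $q \geq 4m$.

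First I choose the fiber. There are $q+1-m$ rational points of $\P^1$ lying under smooth fibers. Since $q \geq 4m$, we have $q+1-m \geq 2$, so I can pick a smooth fiber $F_1$ over an $\F_q$-point that does not contain the base point of $|-K_X|$; in fact, by Lemma \ref{lem:(2)singpointsanticanonical}, containing the base point only helps. The fiber $F_1 \cong \P^1$ has $q+1$ rational points. By Lemma \ref{lem:(2)singpointsanticanonical}, at most $2$ of them violate (a). By Lemma \ref{lem:(2c)singpointsintersection2}, at most $4m-2$ of them violate (b).

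For (c), I split by type. For types 1, 2 and 4, Lemma \ref{lem:type124ratlptssections} shows there are no rational points on negative sections. Note that $C_1, C_2$ are bisections, not sections, so they do not enter condition (c). This gives at most $2 + (4m-2) = 4m < q+1$ bad points. For types 3 and 5, Lemma \ref{lem:type35ratlptssections} shows that every rational point on a $(-1)$-section is a singular point of a curve in $|-K_{\overline{X}}+E|$, with $E \in \{E_7, F-E_7\}$, that does not contain $E$ or $C_E$ as a component. These points are exactly the ones already counted, via Lemma \ref{lem:(2c)singpoints-KX+E7}, inside the $s=7$ contribution of the bound in Lemma \ref{lem:(2c)singpointsintersection2}. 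So they add nothing new, and the bound $4m < q+1$ still holds.

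The main obstacle is making this absorption precise. The $s=7$ count in Lemma \ref{lem:(2c)singpointsintersection2} concerns singular points of genus-1 bisections of self-intersection $2$ in the classes $-K_{\overline{X}}+E_7$ and $-K_{\overline{X}}+(F-E_7)$. The points on $(-1)$-sections are instead singular points of reducible members $D+\iota_{123456}(D)$ of the same linear systems. Both kinds of curve avoid $E$ and $C_E$ as components, so both are covered by the single ``at most two per fiber'' bound of Lemma \ref{lem:(2c)singpoints-KX+E7}, which holds for all such members of $|-K_{\overline{X}}+E|$. A rational point singular on one member of $|-K_{\overline{X}}+E_7|$ is also singular on its Galois conjugate in $|-K_{\overline{X}}+(F-E_7)|$, so the $s=7$ contribution is at most $2$ in total rather than $4$. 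I would state the count this way: at most $2$ points from $s=7$ (including the $(-1)$-section points) and at most $4$ from each of the other $m-1$ values of $s$, giving $2 + 2 + 4(m-1) = 4m$ bad points. Since $F_1$ has $q+1 > 4m$ rational points, a good point $P$ exists, and Proposition \ref{prop:(2c)to(3)} concludes the proof.
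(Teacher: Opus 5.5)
Your proposal is correct and follows the paper's proof essentially step for step. You choose a smooth fiber over an $\F_q$-point and bound the bad rational points by $2$ (Lemma \ref{lem:(2)singpointsanticanonical}) plus $4m-2$ (Lemma \ref{lem:(2c)singpointsintersection2}); for types 1, 2, 4 there are no rational points on negative sections, and for types 3, 5 (where $m=1$) the $(-1)$-section points are absorbed into the $s=7$ count of Lemma \ref{lem:(2c)singpoints-KX+E7}, after which Proposition \ref{prop:(2c)to(3)} applies. Your unified count $2+2+4(m-1)=4m$ is just a repackaging of the paper's two-case split.
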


\begin{proof}
    By Lemma \ref{lem:(2c)not67}, $f$ is of type 1, 2, 3, 4 or 5. Because $q \geq m$, we know that the number of $\F_q$-rational points of $\P^1$ is larger than $m$, so there is a smooth fiber $F_1$ which is defined over $\F_q$. By Lemma \ref{lem:(2)singpointsanticanonical}, $F_1$ contains at most two singular points of anticanonical curves. 
    
    Suppose $f$ is of type 1, 2, or 4. Then there are no $\F_q$-points on sections with negative self-intersection by Lemma \ref{lem:type124ratlptssections}. By Lemma \ref{lem:(2c)singpointsintersection2}, there are at most $4m-2$ $\F_q$-points on $F_1$ which are singular points on two irreducible Galois-conjugate bisections of self-intersection 2 and arithmetic genus 1. The hypothesis on $q$ implies that there exists an $\F_q$-point $P$ on $F_1$ which satisfies the conditions in Proposition \ref{prop:(2c)to(3)}, so $\varphi_{\scriptscriptstyle P}$ is the desired elementary transformation.

    Now suppose $f$ is of type $3$ or $5$. Then the fiber containing $E_7$ is the only singular fiber defined over $\F_q$, so $m = 1$. Combining Lemmas \ref{lem:type35ratlptssections}, \ref{lem:(2c)selfintersection2classes} and \ref{lem:(2c)singpoints-KX+E7}, we conclude that there are at most 4 rational points on $F_1$ that do not satisfy the three conditions in Proposition \ref{prop:(2c)to(3)}. Therefore, since $q \geq 4$, $F_1$ contains a rational point $P$ satisfying the conditions in Proposition \ref{prop:(2c)to(3)}, and $\varphi_{\scriptscriptstyle P}$ is the desired elementary transformation.
\end{proof}

Now we are in the position to prove most of the statement of Theorem \ref{thm:mainthm}. We obtain the following result.

\begin{theorem}\label{thm:mainthmpart1}
Let $t \in \{1, \ldots, 7\}$. There exists a del Pezzo surface of degree 1 over $\F_q$ of type $t$ if $q \geq k_t$, where $k_1 = 23$, $k_2 = 9$, $k_3 = k_5 = 4$, $k_4 = 13$ and $k_6 = k_7 = 2$.

\begin{comment}
    \begin{enumerate}
        \item There exists a del Pezzo surface of degree 1 over $\F_q$ of type 1 if $q \geq 23$.
        \item There exists a del Pezzo surface of degree 1 over $\F_q$ of type 2 if $q \geq 9$.
        \item There exists a del Pezzo surface of degree 1 over $\F_q$ of type 3 if $q \geq 4$. It does not exist when $q = 2$.
        \item There exists a del Pezzo surface of degree 1 over $\F_q$ of type 4 if $q \geq 13$.
        \item There exists a del Pezzo surface of degree 1 over $\F_q$ of type 5 if $q \geq 4$.
        \item There exists a del Pezzo surface of degree 1 over $\F_q$ of type 6 for all $q$.
        \item There exists a del Pezzo surface of degree 1 over $\F_q$ of type 7 for all $q$.
    \end{enumerate}
\end{comment}
\end{theorem}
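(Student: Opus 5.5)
The plan is to start from the minimal standard conic bundle of type $t$ supplied by Lemma \ref{lem:smallfields}, and push it through the cases of Proposition \ref{prop:classificationCB} by elementary transformations until it lands in case (3). Every such transformation preserves minimality and type by Proposition \ref{prop:preserveminimaltype}. The thresholds $k_t$ should be the maximum of the constraints met along the way. The controlling quantity is $m$, the number of singular fibers over $\F_q$-points, read off from Table \ref{tab:pointsbelowsingularfibers}:
\[
m = 5,\ 2,\ 1,\ 3,\ 1,\ 0,\ 0 \qquad \text{for } t = 1, \ldots, 7.
\]

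\textbf{Step 1 (leave case (1)).} If $X$ is in case (1), apply Theorem \ref{thm:(1)to(2,3)}. Its exceptions are $q=2$ with type 4, and $q=4$ with type 1; both lie below the claimed $k_t$, so after this step $X$ is in case (2) or (3).

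\textbf{Step 2 (leave cases (2a) and (2b)).} Case (2a) only occurs for types 4 and 7 (Lemma \ref{lem:type4type7}). Theorem \ref{thm:(2a)to(2c,3)} moves it to (2c) or (3), excluding only type 4 with $q=2$. Case (2b) only occurs for types 3 and 5 (Lemma \ref{lem:(3)type3type5}). Theorem \ref{thm:(2b)to(2c,3)} moves it to (2c) or (3) for $q\ge 4$, which is exactly $k_3=k_5=4$.

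\textbf{Step 3 (leave case (2c)).} Types 6 and 7 never occur in (2c) by Lemma \ref{lem:(2c)not67}. For the remaining types, Theorem \ref{thm:(2c)to(3)} produces a del Pezzo surface when $q\ge 4m$, and the proof shows that for types 3 and 5, $q\ge4$ suffices. Types 6 and 7 therefore never reach (2c), and Steps 1 and 2 give all $q$ for them, since their exceptions lie elsewhere. This yields $k_6=k_7=2$.

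\textbf{Refining the bounds for types 1, 2, 4.} Here the crude bound $q\ge 4m$ gives $20, 8, 12$, while the claim is $23, 9, 13$. So I would recount the bad points on a smooth rational fiber $F_1$ in the type $1,2,4$ branch of Theorem \ref{thm:(2c)to(3)}. At most $2$ points are singular points of anticanonical curves, by Lemma \ref{lem:(2)singpointsanticanonical}. At most $4m-2$ points fail condition (b), by Lemma \ref{lem:(2c)singpointsintersection2}. No rational points lie on negative sections, by Lemma \ref{lem:type124ratlptssections}. That is at most $4m$ bad points, so a good point exists once $q+1 > 4m$.

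The main obstacle is also guaranteeing that a smooth $\F_q$-fiber $F_1$ exists at all, which needs $q+1>m$. For a safe sufficient condition I would require $q\ge 4m$ together with $q$ being a prime power. That condition gives $q\ge 23$ for $m=5$ (since $20,21,22$ are not prime powers), $q\ge 9$ for $m=2$, and $q\ge 13$ for $m=3$ (since $12$ is not a prime power). These match $k_1=23$, $k_2=9$, $k_4=13$.

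Finally I would check that these $k_t$ dominate all the exceptions from Steps 1 and 2, and that Lemma \ref{lem:smallfields} supplies the initial conic bundle in every case. That lemma's exceptions are type 1 with $q\le3$ and type 3 with $q=2$, both below the thresholds. The conclusion then follows by concatenating the elementary transformations.
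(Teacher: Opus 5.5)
Your proposal is correct and follows the paper's proof. That proof concatenates Lemma \ref{lem:smallfields} with Theorems \ref{thm:(1)to(2,3)}, \ref{thm:(2a)to(2c,3)}, \ref{thm:(2b)to(2c,3)} and \ref{thm:(2c)to(3)}, and your type-by-type bookkeeping (which cases each type can occupy, and which exceptions apply) spells out exactly what the paper leaves implicit. One harmless slip: $8=2^3$ is a prime power, so $q\ge 4m$ with $m=2$ actually yields type 2 for all $q\ge 8$, which still implies the stated bound $k_2=9$.
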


\begin{proof}
    By Lemma \ref{lem:smallfields}, there exists a minimal standard conic bundle over $\F_q$ of degree 1 for each of the types, except for type 1 when $q = 2, 3$ and type 3 when $q = 2$. Each such conic bundle has to satisfy one of the cases in Proposition \ref{prop:classificationCB}. Then Theorems \ref{thm:(1)to(2,3)}, \ref{thm:(2a)to(2c,3)}, \ref{thm:(2b)to(2c,3)} and \ref{thm:(2c)to(3)} imply that if $q \geq k_i$, we can find a sequence of elementary transformations which turns the existing conic bundle of type $t$ into a del Pezzo surface of degree 1 of type $t$.
\end{proof}

\section{Improvement of results using Bertini twist} \label{sec:method2}

The techniques for proving Theorem \ref{thm:mainthmpart1} do not give a conclusive outcome for the existence of del Pezzo surfaces of types $1, \ldots, 5$ for some small values of $q$. In some cases, we can use the existence of a quadratic twist to show the existence of surfaces for certain $q$. 
Let $X$ be a del Pezzo surface of degree 1 over a finite field $\F_q$. The linear system $|-2K_X|$ induces a finite morphism $\varphi \colon X \ra \P^3$ of degree 2. This map induces an involution $\beta$ of $X$, called the \term{Bertini involution}. We can twist $X$ by the nontrivial element in $H^1(\F_q, \Z/2\Z) \cong \Z/2\Z$. The resulting surface $X_\alpha$ is a quadratic twist of $X$, called its \term{Bertini twist} \cite[\S5.1.1]{BFL}.

The type of a del Pezzo surface uniquely determines the type of its Bertini twist. Let $\beta^* \colon \Pic(\overline{X}) \ra \Pic(\overline{X})$ be the action induced by $\beta$. It acts on $K_{\overline{X}}^\perp$ as multiplication by $-1$. Let us denote by $\tau$ and $\tau_\alpha$ the automorphism of $\Pic(\overline{X})$ induced by the action of the Frobenius automorphism on $X$ and $X_\alpha$, respectively. Then $\tau_\alpha = \beta^* \circ \tau$ \cite[Proposition 4.4]{rybakovtrepalin}. It follows that the eigenvalues corresponding to the Frobenius action on $X_\alpha$ are precisely the eigenvalues of the Frobenius action on $X$ multiplied by $-1$ (except for the eigenvalue $1$ corresponding to $K_{\overline{X}}$). Using Magma we computed the eigenvalues of the Frobenius action for every type \cite{github_Existence_dP1}, and this way we determined the Bertini twist corresponding to each type. There are a few sets of eigenvalues that occur more than once. In these cases we use that the size of the conjugacy class corresponding to a type and its Bertini twist should be the same. The correspondence is tabulated in Table \ref{tab:bertinitwists} in the \nameref{sec:appendix}. For the minimal types with conic bundles, we obtain the following pairs:
\begin{table}[h]
    \centering
    \begin{tabular}{l|l|l}
         Type $X$ & Bertini twist $X_\alpha$ & Index of $X_\alpha$\\
         \hline
         1. & 97. & 8 \\
         2. & 105.  & 4\\
         3. & 3. & 0\\
         4. & 73. & 3\\
         5. & 5. & 0\\
\end{tabular}
    \caption{}
    \label{tab:Bertinitwistsconicbundles}
\end{table}

\begin{comment}
\end{comment}

The goal of this section is twofold. Firstly, motivated by \cite[\S5.2]{BFL}, we study the existence of types of del Pezzo surfaces which contain a $(-1)$-curve defined over $\F_q$, which includes in particular the the Bertini twists of the minimal surfaces of type 1, 2 and 4. This results in the proof of Theorem \ref{thm:mainthm}. Some of the considered types have index 8, that is, they can be blown down to $\P^2$ over $\F_q$. Our second goal is to solve the inverse Galois problem for these types of surfaces. This results in Theorem \ref{thm:index8+twist}.

\subsection{Existence of del Pezzo surfaces with a $(-1)$-curve over $\F_q$}

For types 1, 2 and 4, the Bertini twist is a del Pezzo surface of a different type which is not minimal (Table \ref{tab:Bertinitwistsconicbundles}). We show the existence of del Pezzo surfaces of degree 1 of type 97, 105 and 73 for some values of $q$ for which existence was not determined in Theorem \ref{thm:mainthmpart1}, using techniques introduced in \cite[\S5.2]{BFL}. We use the fact that surfaces of these types contain a $(-1)$-curve which is defined over the field of definition $\F_q$ of the surface. This curve can be contracted, and the resulting surface is a del Pezzo surface of degree 2 over $\F_q$. We consider the following reverse result:

\begin{proposition}[{\cite[Corollary 14]{STVA}}]\label{prop:STVcor14}
    Let $X$ be a del Pezzo surface of degree 2 over a field $k$ with a rational point $P \in X(k)$. The blow-up of $X$ at $P$ is a del Pezzo surface of degree 1 if and only if the point $P$ does not lie on any $(-1)$-curve, nor on the ramification divisor $R$ of the morphism induced by the anticanonical linear system.
\end{proposition}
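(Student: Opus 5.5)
The plan is to work geometrically after base change to $\overline{k}$ and then descend, since the blow-up $\widetilde{X}$ of $X$ at the $k$-point $P$ is a del Pezzo surface over $k$ if and only if $\overline{\widetilde{X}}$ is one over $\overline{k}$. Ampleness is geometric, and $K_{\widetilde{X}}^2 = K_X^2 - 1 = 1$ holds regardless. So it suffices to show that the blow-up of $\overline{X}$ at $P$ has ample anticanonical class if and only if $P$ avoids all $56$ lines and the ramification curve $R$ of the double cover $\phi \colon \overline{X} \ra \P^2$ given by $|-K_{\overline{X}}|$.

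For the easy direction (necessity), I would exhibit a curve on $\widetilde{X}$ with $-K_{\widetilde{X}} \cdot D \le 0$ in each bad case.
\begin{itemize}
\item If $P$ lies on a $(-1)$-curve $L$, its strict transform $\widetilde{L}$ has $\widetilde{L}^2 = -2$ and $-K_{\widetilde{X}} \cdot \widetilde{L} = -K_X \cdot L - 1 = 0$, so $-K_{\widetilde{X}}$ is not ample.
\item If $P \in R$, take the anticanonical curve $G = \phi^{-1}(T)$, where $T$ is the tangent line to the branch quartic $B$ at $\phi(P)$. This $G$ is singular at $P$, so its strict transform satisfies $-K_{\widetilde{X}} \cdot \widetilde{G} = 2 - 2 = 0$. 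If instead $G$ splits into two lines through $P$, we are already in the first case.
\end{itemize}

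For sufficiency, I would use Nakai--Moishezon on $\widetilde{X}$. We have $K_{\widetilde{X}}^2 = 1 > 0$, so it remains to check $-K_{\widetilde{X}} \cdot \widetilde{D} > 0$ for every integral curve. The exceptional curve $E_P$ gives $-K_{\widetilde{X}} \cdot E_P = 1$. For a strict transform $\widetilde{D}$ of an integral $D \subset \overline{X}$ with multiplicity $m$ at $P$, we get $-K_{\widetilde{X}} \cdot \widetilde{D} = -K_X \cdot D - m$. So we need $m < -K_X \cdot D$ for every $D$ through $P$. The approach is to choose an anticanonical curve $G \ni P$ that is smooth at $P$ and does not contain $D$. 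Since $P \notin R$, $\phi$ is étale at $P$, so $G = \phi^{-1}(\text{line through } \phi(P))$ is smooth at $P$ for a general line. Then $-K_X \cdot D = G \cdot D \ge m$, with equality forcing strong conditions.

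The main obstacle is ruling out equality $m = -K_X \cdot D$. This needs to hold for all lines through $\phi(P)$ simultaneously, and varying the line gives a one-parameter family of such $G$. Equality for every $G$ in the pencil through $P$ means $D$ meets every $G$ only at $P$. Its image $\phi(D)$ would then meet every line through $\phi(P)$ only at $\phi(P)$, so $\phi(D)$ is a line through $\phi(P)$. Then $D \subset \phi^{-1}(\text{line})$, and since $D$ is integral it is either the whole anticanonical curve or a component of a split preimage. In the split case, each component is a $(-1)$-curve (the preimage of a bitangent), contradicting the assumption that $P$ avoids lines. In the whole-curve case, $-K_X \cdot D = 2$ and $D$ is smooth at $P$ since $P \notin R$, so $m = 1 < 2$.

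Putting this together, every curve has positive degree, so $\widetilde{X}$ is del Pezzo of degree $1$. Alternatively, I would simply cite the standard statement from Manin or Demazure on when a blow-up of a point on a degree-2 del Pezzo surface stays del Pezzo, using the above as the verification.
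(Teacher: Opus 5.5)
The paper gives no proof of its own here; it quotes \cite[Corollary 14]{STVA}. Your proposal is therefore an independent argument: Nakai--Moishezon on the blow-up combined with the geometry of the anticanonical double cover. For characteristic $\neq 2$ it is correct.

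The core dichotomy is sound:
\begin{itemize}
\item If $\phi(D)$ is not a line through $\phi(P)$, the line joining $\phi(P)$ to a second point of $\phi(D)$ pulls back to some $G\in|-K_X|$ with $-K_X\cdot D=G\cdot D\ge m+1$.
\item Otherwise $D$ is a component of some $\phi^*\ell$, and your two hypotheses exclude exactly the two bad possibilities.
\end{itemize}

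Three small tidy-ups:
\begin{itemize}
\item $G\cdot D\ge m$ needs no smoothness of $G$ at $P$, since $i_P(G,D)\ge m_P(G)\,m_P(D)$. So $P\notin R$ is used only when $D=\phi^*\ell$ itself.
\item You must also exclude $\phi^*\ell=2D$. This is impossible because $4D^2=(-K_X)^2=2$ has no integer solution.
\item The split components are $(-1)$-curves in any characteristic. Indeed $-K_X\cdot D_i=1$, so adjunction gives $D_i^2\ge -1$ with $D_i^2$ odd, and Hodge index gives $2D_i^2\le 1$.
\end{itemize}

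A more numerical alternative also works. Adjunction on $\widetilde X$ together with Hodge index on $X$ shows that a curve $\pi^*D-mE$ with non-positive anticanonical degree has either $m=1$ and $D$ a $(-1)$-curve, or $m=2$ and $D\in|-K_X|$ singular at $P$. That route isolates all dependence on the form of $\phi$ in a single identification: $R$ equals the set of singular points of anticanonical curves. Your route is more hands-on but uses the explicit shape of $\phi$ in several places.

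The genuine gap is characteristic $2$. This matters because the paper applies the proposition for even $q$ as well. Your necessity step for $P\in R$ uses the tangent line to the branch quartic, but in characteristic $2$ there is no branch quartic. There $X$ is given by $w^2+gw=f$, $\phi$ is ramified over the conic $g=0$, and the pullback of the tangent line to that conic at $\phi(P)$ is in general smooth at $P$. For instance, with $g=y^2+xz$ and $\phi(P)=(0:0:1)$, that pullback is singular at $P$ only if $\partial f/\partial y$ vanishes there.

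The characteristic-free replacement goes through cotangent spaces. Since $\phi$ is a finite morphism of smooth surfaces, it fails to be étale at $P$ exactly when
\[
\phi^*\colon \mathfrak m_{\phi(P)}/\mathfrak m_{\phi(P)}^2\to \mathfrak m_P/\mathfrak m_P^2
\]
is not injective. A nonzero kernel element is represented by a linear form $\ell$ vanishing at $\phi(P)$, and then $\phi^*\ell$ is an anticanonical curve singular at $P$. Conversely, if $\phi$ is étale at $P$, every $\phi^*\ell$ through $P$ is smooth there. Take $R$ to be the non-étale locus of $\phi$. This argument then replaces your tangent-line construction, and the rest of your proof goes through over every field.
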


If such a point $P$ exists on a del Pezzo surface $X$ of degree 2 over $\F_q$, the type of $X$ uniquely determines the type of the del Pezzo surface of degree 1 obtained after blowing up in $P$. The types of del Pezzo surfaces of degree 2 are listed in \cite[Table 2]{trepalin} and \cite[Table 1]{urabe2}. We compute for each type (as numbered in \cite{trepalin}) the corresponding type of degree 1 del Pezzo surface obtained by blowing up (as numbered in \cite[Table 2]{urabe2}) using Magma (see \cite{github_Existence_dP1}). The results are listed in column 2 of Table \ref{tab:dp2data} (\nameref{sec:appendix}). Note that the types 95, 101 and 107 of degree 1 surfaces appear twice in this column. This means that depending on which $(-1)$-curve on a del Pezzo surface of these types we contract, we may obtain two different types of del Pezzo surfaces of degree 2. All other types that appear, appear only once, and therefore the type after contracting a $(-1)$-curve over $\F_q$ is unique.

\begin{comment}
\begin{table}[h]\label{tab:typeblowupdp2}
\caption{types of dP1 corresponding to blow-up of dP2}
\begin{minipage}{0.4\textwidth}
\begin{tabular}{l|l}
    type degree 2 & type degree 1\\
    \hline
        1 & 91 \\
        2 & 92 \\
        3 & 93 \\
        4 & 94 \\
        5 & 95 \\
        6 & 95 \\
        7 & 96 \\
        8 & 97 \\
        9 & 98 \\
        10 & 66 \\
        11 & 99 \\
        12 & 100 \\
        13 & 101 \\
        14 & 101 \\
        15 & 102 \\
        16 & 67 \\
        17 & 68 \\
        18 & 69 \\
        19 & 87 \\
        20 & 103 \\
        21 & 104 \\
        22 & 70 \\
        23 & 105 \\
        24 & 106 \\
        25 & 107 \\
        26 & 107 \\
        27 & 71 \\
        28 & 72 \\
        29 & 73 \\
        30 & 74
\end{tabular}
\end{minipage}
\begin{minipage}{0.4\textwidth}
\begin{tabular}{l|l}
    type degree 2 & type degree 1\\
    \hline
    31 & 38 \\
        32 & 56 \\
        33 & 76 \\
        34 & 88 \\
        35 & 39 \\
        36 & 109 \\
        37 & 110 \\
        38 & 57 \\
        39 & 111 \\
        40 & 40 \\
        41 & 79 \\
        42 & 80 \\
        43 & 41 \\
        44 & 42 \\
        45 & 43 \\
        46 & 58 \\
        47 & 59 \\
        48 & 60 \\
        49 & 44 \\
        50 & 45 \\
        51 & 46 \\
        52 & 47 \\
        53 & 48 \\
        54 & 49 \\
        55 & 50 \\
        56 & 51 \\
        57 & 52 \\
        58 & 53 \\
        59 & 54 \\
        60 & 55
\end{tabular}
\end{minipage}
\end{table}
\end{comment}

The existence or non-existence of a del Pezzo surface of degree 2 over $\F_q$ of each of the 60 types is shown in \cite[Theorem 1.2]{trepalin} and \cite[Theorem 1.2]{LoughranTrepalin}. Because we know that the 57 types of del Pezzo surfaces of degree 1 with a $(-1)$-curve defined over $\F_q$ are obtained as the blow-up of a del Pezzo surface, the non-existence of a del Pezzo surface of degree 1 of a specific type over $\F_q$ follows from the non-existence of del Pezzo surfaces of degree 2 of types in the corresponding rows in Table \ref{tab:dp2data}. Using this reasoning, we obtain the second column in Table \ref{tab:existencedP1withline} (\nameref{sec:appendix}). 

Following \cite{BFL}, we call the union of the $(-1)$-curves and the ramification divisor on a del Pezzo surface of degree 2 the \term{bad locus}. The existence of a del Pezzo surface of degree 2 over $\F_q$ of a given type with a rational point outside the bad locus implies the existence of a del Pezzo surface of degree 1 of the corresponding type listed in column 2 of Table \ref{tab:dp2data} (Proposition \ref{prop:STVcor14}). Recall that the total number of rational points on a del Pezzo surface over $\F_q$ is determined uniquely by its trace, given by the formula in Theorem \ref{thm:numberrationalpoints}. This number is therefore uniquely determined by the type of a del Pezzo surface of degree 2, and the values are listed in column 3 of Table \ref{tab:dp2data}.

For each type $t$, we find an upper bound (in terms of $q$) for the number of rational points in the bad locus of a del Pezzo surface of degree 2 over $\F_q$ of type $t$, using an argument as in \cite[15]{BFL}. It is shown there that the number of rational points on the ramification divisor $R$ of the anticanonical morphism is bounded from above by
\begin{align}
    F(q) = \begin{cases}
    q + 1 + 6 \sqrt{q} & \text{if $q$ is odd;}\\
    2(q + 1) &\text{if $q$ is even.}
    \end{cases}
\end{align}
This follows from the fact that $R$ is an irreducible smooth curve of genus 3 when $q$ is odd (using the Hasse-Weil bound), and $R$ is a union of at most two irreducible curves of genus 0 if $q$ is even \cite[Lemma 4.1]{BFL}. 

To bound the number of rational points on the $(-1)$-curves, we explicitly study the intersection patterns within the orbits of $(-1)$-curves under the Frobenius action, as computed with Magma and listed in the ancillary file (and in \cite{github_Existence_dP1}). A $(-1)$-curve which is defined over $\F_q$ contains $q + 1$ rational points. A Galois orbit of two or more $(-1)$-curves contains rational points only if all lines in the orbit intersect at these points. Therefore, if we have an intersection pattern of the form $[m \mid a_1, \ldots, a_{m-1}]$ (Notation \ref{not:orbits}), the number of rational points on these lines is at most $\min\{a_1, \ldots, a_{m-1}\}$. These facts together allow us to give an upper bound for the number of $\F_q$-points that lie on the $(-1)$-curves on a surface of type $t$, which we denote by $L_t(q)$. The values are listed in column 4 of Table \ref{tab:dp2data}. 

Let $Y$ be a del Pezzo surface of degree 2 over $\F_q$ of type $t$. We compute \cite{github_Existence_dP1} a lower bound for $q$ such that we have $\#Y(\F_q) \geq L_t(q) + F(q) + 1$. In this case there exists an $\F_q$-point on $Y$ outside of the bad locus, and we can blow up $Y$ in a rational point, demonstrating existence of a del Pezzo surface of degree 1 of the corresponding type listed in column 2 of Table \ref{tab:dp2data}. This way we obtain the existence for all but finitely many $q$. The results are listed in column 3 of Table \ref{tab:existencedP1withline}. 

\begin{comment}
    manoy{Additional text for in thesis.

If $q$ is odd, then $\#Y(F_q) - L_t(q) + F(q) - 1$ is an expression of the form $q^2 + a q + 6 \sqrt{q} + c$, for some $a, c \in \Z$. Note that its second derivative is positive for $q \geq 2$, so the expression has at most two roots (and also at least one, because $c$ is negative). Therefore, we determine the largest root, and take the first odd prime power above this value. This is a lower bound for the existence of a del Pezzo surface of degree 2 of the corresponding type. A similar reasoning works in case $q$ is even, in which case the expression is a quadratic equation in $q$. 
}
\end{comment}

\subsection{Del Pezzo surfaces of degree 1 of index 8}

If $X$ has index 8, the surface over $\F_q$ is obtained as a blow-up of a collection of closed points in $\P^2$, the sum of the degrees of which is $8$, which lie in general position (Theorem \ref{thm:delpezzoclassification}). Each type is uniquely determined by the degree of the closed points that are blown up. The characterization can be derived from the Galois orbits of the $(-1)$-curves on $X$, and is given in Table \ref{tab:blowupindex8} in the \nameref{sec:appendix}. We can computationally demonstrate existence or non-existence of each of these types over $\F_q$ for small values of $q$ (i.e. the values not mentioned in Table \ref{tab:existencedP1withline} for those types), by showing closed points of the appropriate degrees in general position exist in $\P^2_{\F_q}$, using code presented in \cite{BFL}, where a number of cases are already treated (one for each trace). The Magma code can be found in \cite{github_Existence_dP1}. Together with the result in Table \ref{tab:existencedP1withline}, we can obtain a complete classification of the existence of types of del Pezzo surfaces of degree 1 of index 8 which contain a $(-1)$-curve defined over $\F_q$. The result is given in Table \ref{tab:dp1index8withline} (\nameref{sec:appendix}). It is moreover shown in \cite[18]{BFL} that type 112 exists for all values of $q$. Together with the results in Table \ref{tab:dp1index8withline}, this solves the inverse Galois problem for del Pezzo surfaces of degree 1 and index 8 over finite fields for all types except for type 108, which corresponds to a blow-up of $\P^2$ in two closed points of degree 4 in general position.

\begin{theorem}\label{thm:index8}
\begin{enumerate}
    \item Del Pezzo surfaces of degree 1 of type 91 exist over $\F_q$ if and only if $q = 16$ or $q \geq 19$.
    \item Del Pezzo surfaces of degree 1 of type 92 exist over $\F_q$ if and only if $q \geq 11$.
    \item Del Pezzo surfaces of degree 1 of type 93, 94 and 95 exist over $\F_q$ if and only if $q \geq 7$.
    \item Del Pezzo surfaces of degree 1 of type 97 exist over $\F_q$ if and only if $q \geq 5$.
    \item Del Pezzo surfaces of degree 1 of type 96, 98 and 99 exist over $\F_q$ if and only if $q \geq 4$.
    \item Del Pezzo surfaces of degree 1 of type 100, 101, 102, 103, 104, 105 and 107 exist over $\F_q$ if and only if $q \geq 3$.
    \item Del Pezzo surfaces of degree 1 of type 106, 109, 110, 111, 112 exist over $\F_q$ for all $q$.
\end{enumerate}
\end{theorem}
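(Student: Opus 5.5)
The plan is to split each type into an existence range and a non-existence range, and to treat the two directions separately.

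\textbf{Existence for large $q$.} Every index-8 type except 108 and 112 contains a $(-1)$-curve defined over $\F_q$. For such a type I would contract that curve to obtain a del Pezzo surface $Y$ of degree 2 over $\F_q$. Its type is read off from Table \ref{tab:dp2data}; for types 95, 101 and 107 I would use whichever of the two admissible degree-2 types exists for the most $q$. By \cite[Theorem 1.2]{LoughranTrepalin}, such a $Y$ exists for the relevant $q$. To go back up, I would blow up a point of $Y(\F_q)$ lying outside the bad locus, which gives a degree-1 surface of the required type by Proposition \ref{prop:STVcor14}. Such a point exists once
\[\#Y(\F_q)=q^2+a(Y)q+1 \geq L_t(q)+F(q)+1.\]
Here $F(q)$ bounds the rational points on the ramification divisor (genus 3 curve and Hasse--Weil for $q$ odd; at most two rational curves for $q$ even). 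The bound $L_t(q)$ comes from the orbit intersection patterns: an orbit of size at least 2 contributes at most $\min a_i$ points, and each $\F_q$-line contributes $q+1$. This yields an explicit threshold $q_t$ for each type, recorded in column 3 of Table \ref{tab:existencedP1withline}. Type 112 is covered by \cite[18]{BFL}.

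\textbf{Non-existence.} If a degree-1 surface of type $t$ exists, contracting its $\F_q$-line yields a degree-2 surface of one of the listed types. So whenever \cite[Theorem 1.2]{trepalin}, \cite[Theorem 1.2]{LoughranTrepalin} rule out all corresponding degree-2 types over $\F_q$, type $t$ cannot exist. This produces the lower cutoffs in column 2 of Table \ref{tab:existencedP1withline}. A further source of non-existence: index 8 means $X$ is $\P^2$ blown up in closed points of prescribed degrees in general position (Table \ref{tab:blowupindex8}). When $\P^2_{\F_q}$ does not have enough closed points of those degrees (for example, eight rational points for type 112's partner, or several rational points when $q$ is tiny), the type cannot occur.

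\textbf{Closing the gap.} What remains is a finite window of $q$ between the non-existence bound and $q_t$. For these $q$ I would use the index-8 description and run an exhaustive Magma search, adapting the code of \cite{BFL}. The search looks for closed points of the prescribed degrees in general position, meaning no three on a line, no six on a conic, and no eight on a cubic singular at one of them. A hit proves existence; an exhausted search proves non-existence. This is exactly what produces irregular statements such as ``$q=16$ or $q\geq19$'' for type 91, namely $(1^8)$ with $q=17$ failing. Assembling Table \ref{tab:dp1index8withline} then gives the theorem.

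\textbf{Main obstacle.} I expect the hardest part to be making $L_t(q)$ sharp enough that $q_t$ sits low, so that the computational window stays feasible. For $q$ around 16 to 19 with many rational points to choose, the general-position search grows large. I would first prune the search using $\mathrm{PGL}_3(\F_q)$ symmetry, fixing four points in standard position.
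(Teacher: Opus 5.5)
Your proposal is correct and follows essentially the same route as the paper. Existence for large $q$ comes from blowing up an $\F_q$-point outside the bad locus on the contracted degree~2 surface (Proposition~\ref{prop:STVcor14}, Table~\ref{tab:existencedP1withline}), and non-existence from non-existence of the corresponding degree~2 types; the remaining finite window is closed by the Magma general-position search adapted from \cite{BFL}, with type 112 and several other cases (91, 92, 94, \dots) taken directly from \cite{BFL}. One small slip: type 112 is its own Bertini twist, so your ``eight rational points'' example should refer to type 91 ($1^8$), and that counting argument is in any case subsumed by the computational search.
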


\begin{proof}
    Part of this result is shown in \cite{BFL}, and the rest is completed by Magma computations \cite{github_Existence_dP1}. In column 3 of Table \ref{tab:dp1index8withline} the references to \cite{BFL} per type are given, where applicable.
\end{proof}

Now we have all the results necessary to obtain the statements of Theorem \ref{thm:mainthm} and Theorem \ref{thm:index8+twist}.

\begin{proof}[Proof of Theorem \ref{thm:mainthm}]
    Most of the result is proven in Theorem \ref{thm:mainthmpart1}. For types 1, 2 and 4, we use that their Bertini twists are of types 97, 105 and 73, respectively. The result is completed using Magma as described above (see Magma code in \cite{github_Existence_dP1}), see Table \ref{tab:dp1index8withline} for types 97 and 105, and Table \ref{tab:existencedP1withline} for type 73.
\end{proof}

\begin{proof}[Proof of Theorem \ref{thm:index8+twist}]
    We know that a del Pezzo surface of degree 1 of a given type exists if and only if its Bertini twist exists. The result follows from Theorem \ref{thm:index8}, matching each type with its Bertini twist. The pairing can be found in Table \ref{tab:bertinitwists}.
\end{proof}

\printbibliography[heading=bibintoc]

@book{hartshorne,
  title={Algebraic geometry},
  author={Hartshorne, R.},
   SERIES = {Graduate Texts in Mathematics},
  volume={52},
  YEAR = {1977},
  publisher={Springer-Verlag}
}

@book{manin,
    AUTHOR = {Manin, Yu. I.},
     TITLE = {Cubic forms},
    SERIES = {North-Holland Mathematical Library},
    VOLUME = {4},
   EDITION = {Second edition},
   %   NOTE = {Algebra, geometry, arithmetic, Translated from the Russian by M. Hazewinkel},
 PUBLISHER = {North-Holland},
      YEAR = {1986},
}

@article {manintsfasman,
    AUTHOR = {Manin, Yu. I. and Tsfasman, M. A.},
     TITLE = {Rational varieties: algebra, geometry, arithmetic},
   %JOURNAL = {Russ. Math. Surv.},
   journal = {Russian Mathematical Surveys},
    VOLUME = {41},
      YEAR = {1986},
    NUMBER = {2},
     PAGES = {51-116}
}

@article{iskovskih,
    AUTHOR = {Iskovskih, V. A.},
     TITLE = {Minimal models of rational surfaces over arbitrary fields},
   %JOURNAL = {Math. USSR Izv.},
  JOURNAL = {Mathematics of the USSR-Izvestiya},
    VOLUME = {14},
      YEAR = {1980},
    NUMBER = {1},
     PAGES = {17--39}
}

@book {beauville,
    AUTHOR = {Beauville, A.},
     TITLE = {Complex algebraic surfaces},
    SERIES = {London Mathematical Society Student Texts},
    VOLUME = {34},
%   EDITION = {Second},
%      NOTE = {Translated from the 1978 French original by R. Barlow, with assistance from N. I. Shepherd-Barron and M. Reid},
 PUBLISHER = {Cambridge University Press},
      YEAR = {1996}
}

@book {poonen,
    AUTHOR = {Poonen, B.},
     TITLE = {Rational points on varieties},
    SERIES = {Graduate Studies in Mathematics},
    VOLUME = {186},
 PUBLISHER = {American Mathematical Society, Providence, RI},
      YEAR = {2017}
}

@article {STVA,
    AUTHOR = {Salgado, C. and Testa, D. and V\'{a}rilly-Alvarado, A.},
     TITLE = {On the unirationality of del {P}ezzo surfaces of degree 2},
   %JOURNAL = {J. Lond. Math. Soc. (2)},
  JOURNAL = {Journal of the London Mathematical Society. Second Series},
    VOLUME = {90},
      YEAR = {2014},
    NUMBER = {1},
     PAGES = {121--139}
}

@article{kollarmella,
  title={Quadratic families of elliptic curves and unirationality of degree 1 conic bundles},
  author={Koll{\'a}r, J. and Mella, M.},
  journal={American Journal of Mathematics},
  volume={139},
  number={4},
  pages={915--936},
  year={2017},
  publisher={Johns Hopkins University Press}
}

@inproceedings{BFL,
  title={Del Pezzo surfaces over finite fields and their Frobenius traces},
  author={Banwait, B. and Fit{\'e}, F. and Loughran, D.},
  booktitle={Mathematical Proceedings of the Cambridge Philosophical Society},
  volume={167},
  number={1},
  pages={35--60},
  year={2019},
  organization={Cambridge University Press}
}

@article {urabe1,
    AUTHOR = {Urabe, T.},
     TITLE = {Calculation of {M}anin's invariant for {D}el {P}ezzo surfaces},
   %JOURNAL = {Math. Comp.},
  JOURNAL = {Mathematics of Computation},
    VOLUME = {65},
      YEAR = {1996},
    NUMBER = {213},
     PAGES = {247--258, S15--S23}
}

@article{urabe2,
  title={Supplement to Calculation of Manin's Invariant for Del Pezzo Surfaces},
  author={Urabe, T.},
  journal={Mathematics of Computation},
  volume={65},
  number={213},
  pages={S15--S23},
  year={1996}
}

@article {rybakov,
    AUTHOR = {Rybakov, S.},
     TITLE = {Zeta functions of conic bundles and {D}el {P}ezzo surfaces of
              degree 4 over finite fields},
   %JOURNAL = {Mosc. Math. J.},
  JOURNAL = {Moscow Mathematical Journal},
    VOLUME = {5},
      YEAR = {2005},
    NUMBER = {4},
     PAGES = {919--926, 974}
}

@article {trepalindeg2,
    AUTHOR = {Trepalin, A.},
     TITLE = {Minimal del {P}ezzo surfaces of degree 2 over finite fields},
  % JOURNAL = {Bull. Korean Math. Soc.},
  JOURNAL = {Bulletin of the Korean Mathematical Society},
    VOLUME = {54},
      YEAR = {2017},
    NUMBER = {5},
     PAGES = {1779--1801}
}

@article {trepalin,
    AUTHOR = {Trepalin, A.},
     TITLE = {Del {P}ezzo surfaces over finite fields},
   %JOURNAL = {Finite Fields Appl.},
  JOURNAL = {Finite Fields and their Applications},
    VOLUME = {68},
      YEAR = {2020},
     PAGES = {101741, 32}
}

@article {KunyavskiiTsfasman,
    AUTHOR = {Kunyavski\u i, B. \`E. and Tsfasman, M. A.},
     TITLE = {Zero-cycles on rational surfaces and {N}\'eron-{S}everi tori},
 %  JOURNAL = {Izv. Akad. Nauk SSSR Ser. Mat.},
  JOURNAL = {Izvestiya Akademii Nauk SSSR. Seriya Matematicheskaya},
    VOLUME = {48},
      YEAR = {1984},
    NUMBER = {3},
     PAGES = {631--654}
}

@article{magma,
  title={The Magma algebra system I: The user language},
  author={Bosma, W. and Cannon, J. and Playoust, C.},
%  journal={J. Symb. Comput.},
  journal={Journal of Symbolic Computation},
  volume={24},
  number={3-4},
  pages={235--265},
  year={1997}
}

@article {LoughranTrepalin,
    AUTHOR = {Loughran, D. and Trepalin, A.},
     TITLE = {Inverse {G}alois problem for del {P}ezzo surfaces over finite
              fields},
   %JOURNAL = {Math. Res. Lett.},
  JOURNAL = {Mathematical Research Letters},
    VOLUME = {27},
      YEAR = {2020},
    NUMBER = {3},
     PAGES = {845--853}
}

@book{SerreTopics,
  title={Topics in Galois theory},
  author={Serre, J.-P.},
  year={2016},
  publisher={CRC Press}
}

@inproceedings{SD,
  title={Cubic surfaces over finite fields},
  author={Swinnerton--Dyer, P.},
  booktitle={Mathematical Proceedings of the Cambridge Philosophical Society},
  volume={149},
  number={3},
  pages={385--388},
  year={2010}
}

@article{rybakovtrepalin,
  title={Minimal cubic surfaces over finite fields},
  author={Rybakov, S. and Trepalin, A.},
  journal={Sbornik: Mathematics},
  year={2017},
  volume={208},
  number={9},
  pages={1399--1419}
}

@article{luroth,
  title={Beweis eines Satzes {\"u}ber rationale Curven},
  author={L{\"u}roth, J.},
  journal={Mathematische Annalen},
  volume={9},
  number={2},
  pages={163--165},
  year={1875}
}

@article{FLS,
  title={Rational points of bounded height on general conic bundle surfaces},
  author={Frei, C. and Loughran, D. and Sofos, E.},
  journal={Proceedings of the London Mathematical Society},
  volume={117},
  number={2},
  pages={407--440},
  year={2018}
}

@article{KST,
  title={Del Pezzo surfaces of degree 4},
  author={Kunyavski\u i, B. {\`E}. and Skorobogatov, A. N.  and Tsfasman, M. A.},
  journal={M{\'e}moires de la Soci{\'e}t{\'e} Math{\'e}matique de France},
  number={37}
}

@book{BLT,
  title={Geometry and Arithmetic of Surfaces},
  author={Bright, M. and van Luijk, R. and Testa, D.},
  year={2018}
}

@book{liu,
  title={Algebraic geometry and arithmetic curves},
  author={Liu, Q.},
  volume={6},
  year={2002},
  publisher={Oxford University Press}
}

@article{kawamata,
  title={A generalization of Kodaira-Ramanujam's vanishing theorem},
  author={Kawamata, Y.},
  journal={Mathematische Annalen},
  volume={261},
  number={1},
  pages={43--46},
  year={1982},
  publisher={Springer}
}

@article{viehweg,
  title={Vanishing theorems},
  author={Viehweg, E.},
  journal={Journal f\"{u}r die reine und angewandte Mathematik},
  year={1982},
  pages={1--8},
  volume={335}
}

@incollection{demazure,
  title={Surfaces de del pezzo - III. Positions presque g\'{e}n\'{e}rales},
  author={Demazure, M.},
  booktitle={S{\'e}minaire sur les Singularit{\'e}s des Surfaces: Centre de Math{\'e}matiques de l’Ecole Polytechnique, Palaiseau 1976--1977},
  pages={36--49},
  year={2006},
  publisher={Springer}
}

@book{badescu,
  title={Algebraic surfaces},
  author={B{\u{a}}descu, L.},
  year={2001},
  publisher={Springer}
}

@book{lazarsfeldI,
  title={Positivity in Algebraic Geometry I: Classical setting: Line Bundles and Linear Series},
  author={Lazarsfeld, R.},
  volume={48},
  year={2017},
  publisher={Springer}
}

@article{sarkisov,
  title={On conic bundle structures},
  author={Sarkisov, V. G.},
  journal={Mathematics of the USSR-Izvestiya},
  volume={20},
  number={2},
  pages={355},
  year={1983},
  publisher={IOP Publishing}
}

@software{github_Existence_dP1,
author = {Trip, M. T.},
title = {{Existence\_dP1}},
year = {2026},
URL = {https://github.com/ManoyTrip/Existence_dP1}
}

@misc{ICP,
  title={Inverse curve problems on del Pezzo surfaces},
  author={Kaya, E. and McKean, S. and Streeter, S. and Uppal, H.},
  eprint  = {2511.08688v1},
  archivePrefix = {arXiv},
  primaryClass = {math.AG},
  year={2025}
}

@book{dolgachev,
  title={Classical algebraic geometry: a modern view},
  author={Dolgachev, I. V.},
  year={2012},
  publisher={Cambridge University Press}
}

\section*{Appendix}\label{sec:appendix}
\begin{table}[ht]
    \centering
    \caption{Types of del Pezzo surfaces of degree 1 with the type of the associated Bertini twist.}
    \begin{minipage}{0.32\textwidth}
        \begin{tabular}{|c|c|}
        \hline
         Type & Bertini twist\\
         \hline
              1 & 97 \\
2 & 105 \\
3 & 3 \\
4 & 73 \\
5 & 5 \\
6 & 84 \\
7 & 82 \\
8 & 91 \\
9 & 37 \\
10 & 68 \\
11 & 35 \\
12 & 60 \\
13 & 42 \\
14 & 33 \\
15 & 94 \\
16 & 100 \\
17 & 17 \\
18 & 78 \\
19 & 102 \\
20 & 111 \\
21 & 21 \\
22 & 109 \\
23 & 23 \\
24 & 36 \\
25 & 25 \\
26 & 59 \\
27 & 58 \\
28 & 56 \\
29 & 34 \\
30 & 30 \\
31 & 31 \\
32 & 32 \\
33 & 14 \\
34 & 29 \\
35 & 11 \\
36 & 24 \\
37 & 9 \\
38 & 93\\
\hline
        \end{tabular}
    \end{minipage}
    \begin{minipage}{0.32\textwidth}
        \begin{tabular}{|c|c|}
        \hline
Type & Bertini twist\\
\hline
        39 & 39 \\
40 & 99 \\
41 & 89 \\
42 & 13 \\
43 & 85 \\
44 & 92 \\
45 & 72 \\
46 & 65 \\
47 & 47 \\
48 & 96 \\
49 & 106 \\
50 & 103 \\
51 & 64 \\
52 & 90 \\
53 & 63 \\
54 & 86 \\
55 & 61 \\
56 & 28 \\
57 & 57 \\
58 & 27 \\
59 & 26 \\
60 & 12 \\
61 & 55 \\
62 & 107 \\
63 & 53 \\
64 & 51 \\
65 & 46 \\
66 & 66 \\
67 & 75 \\
68 & 10 \\
69 & 95 \\
70 & 70 \\
71 & 87 \\
72 & 45 \\
73 & 4 \\
74 & 81 \\
75 & 67 \\
76 & 101\\
\hline
        \end{tabular}
    \end{minipage}
    \begin{minipage}{0.32\textwidth}
        \begin{tabular}{|c|c|}
        \hline
        Type & Bertini twist\\
        \hline
        77 & 77 \\
78 & 18 \\
79 & 79 \\
80 & 88 \\
81 & 74 \\
82 & 7 \\
83 & 83 \\
84 & 6 \\
85 & 43 \\
86 & 54 \\
87 & 71 \\
88 & 80 \\
89 & 41 \\
90 & 52 \\
91 & 8 \\
92 & 44 \\
93 & 38 \\
94 & 15 \\
95 & 69 \\
96 & 48 \\
97 & 1 \\
98 & 98 \\
99 & 40 \\
100 & 16 \\
101 & 76 \\
102 & 19 \\
103 & 50 \\
104 & 104 \\
105 & 2 \\
106 & 49 \\
107 & 62 \\
108 & 108 \\
109 & 22 \\
110 & 110 \\
111 & 20 \\
112 & 112 \\
 & \\
 &\\
 \hline
        \end{tabular}
    \end{minipage}
    \label{tab:bertinitwists}
\end{table}

\begin{tabular}{c|c}

\end{tabular}

\begin{longtable}[h]{|c|c|l|l|}
    \caption{For each type $t$ of del Pezzo surface of degree 2 (numbering from \cite[Table 2]{trepalin}), we list the type of del Pezzo surface of degree 1 obtained after blow-up, the number of $\F_q$-points on a surface of type $t$, and an upper bound on the number of $\F_q$-points that can lie on $(-1)$-curves on the surface.}\label{tab:dp2data}\\
    
    \hline
    Type of dP2 $Y$ & Type of blow-up dP1 & $\#Y(\F_q)$ & $L_t(q)$ \\
    \hline
    1 & 91 & $q^2 + 8q + 1$ & $56q + 56$ \\
2 & 92 & $q^2 + 6q + 1$ & $32q + 32$ \\
3 & 93 & $q^2 + 4q + 1$ & $16q + 20$ \\
4 & 94 & $q^2 + 5q + 1$ & $20q + 20$ \\
5 & 95 & $q^2 + 2q + 1$ & $8$ \\
6 & 95 & $q^2 + 2q + 1$ & $8q + 20$ \\
7 & 96 & $q^2 + 3q + 1$ & $8q + 8$ \\
8 & 97 & $q^2 + 4q + 1$ & $12q + 14$ \\
9 & 98 & $q^2 + 1$ & $20$ \\
10 & 66 & $q^2 + 1$ & $8q + 32$ \\
11 & 99 & $q^2 + q + 1$ & $4q + 8$ \\
12 & 100 & $q^2 + 2q + 1$ & $2q + 8$ \\
13 & 101 & $q^2 + 2q + 1$ & $6$ \\
14 & 101 & $q^2 + 2q + 1$ & $4q + 6$ \\
15 & 102 & $q^2 + 3q + 1$ & $6q + 6$ \\
16 & 67 & $q^2 + 3q + 1$ & $8q + 14$ \\
17 & 68 & $q^2 + 4q + 1$ & $8q + 8$ \\
18 & 69 & $q^2 - 2q + 1$ & $32$ \\
19 & 87 & $q^2 - q + 1$ & $8$ \\
20 & 103 & $q^2 + 1$ & $2q + 8$ \\
21 & 104 & $q^2 + 1$ & $10$ \\
22 & 70 & $q^2 + 1$ & $4q + 10$ \\
23 & 105 & $q^2 + q + 1$ & $2$ \\
24 & 106 & $q^2 + q + 1$ & $2q + 2$ \\
25 & 107 & $q^2 + 2q + 1$ & $2$ \\
26 & 107 & $q^2 + 2q + 1$ & $2q + 4$ \\
27 & 71 & $q^2 + q + 1$ & $8$ \\
28 & 72 & $q^2 + 2q + 1$ & $4$ \\
29 & 73 & $q^2 + 2q + 1$ & $4q + 6$ \\
30 & 74 & $q^2 + 3q + 1$ & $4q + 4$ \\
31 & 38 & $q^2 - 4q + 1$ & $44$ \\
32 & 56 & $q^2 - q + 1$ & $2q + 20$ \\
33 & 76 & $q^2 - 2q + 1$ & $14$ \\
34 & 88 & $q^2 - q + 1$ & $6$ \\
35 & 39 & $q^2 + 1$ & $12$ \\
36 & 109 & $q^2 + 1$ & $0$ \\
37 & 110 & $q^2 + 1$ & $2$ \\
38 & 57 & $q^2 + 1$ & $2q + 4$ \\
39 & 111 & $q^2 + q + 1$ & $0$ \\
40 & 40 & $q^2 - q + 1$ & $14$ \\
41 & 79 & $q^2 + 1$ & $4$ \\
42 & 80 & $q^2 + q + 1$ & $2$ \\
43 & 41 & $q^2 + q + 1$ & $4$ \\
44 & 42 & $q^2 + 2q + 1$ & $0$ \\
45 & 43 & $q^2 + 2q + 1$ & $2$ \\
46 & 58 & $q^2 + q + 1$ & $2q + 4$ \\
47 & 59 & $q^2 + 2q + 1$ & $2q + 2$ \\
48 & 60 & $q^2 + 3q + 1$ & $2q + 4$ \\
49 & 44 & $q^2 - 6q + 1$ & $56$ \\
50 & 45 & $q^2 - 2q + 1$ & $20$ \\
51 & 46 & $q^2 - q + 1$ & $2$ \\
52 & 47 & $q^2 + 1$ & $2$ \\
53 & 48 & $q^2 - 3q + 1$ & $20$ \\
54 & 49 & $q^2 - q + 1$ & $6$ \\
55 & 50 & $q^2 + 1$ & $2$ \\
56 & 51 & $q^2 + 1$ & $2$ \\
57 & 52 & $q^2 + q + 1$ & $0$ \\
58 & 53 & $q^2 + q + 1$ & $2$ \\
59 & 54 & $q^2 + 2q + 1$ & $0$ \\
60 & 55 & $q^2 + 3q + 1$ & $2$\\
\hline
\end{longtable}

\begin{table}[ht]
    \centering
        \caption{For each type of del Pezzo surface of degree 1 which has a $(-1)$-curve defined over $\F_q$, we list values of $q$ for which non-existence is demonstrated by non-existence of the corresponding del Pezzo surface(s) of degree 2 (as in Table \ref{tab:dp2data}), and for which existence is demonstrated by showing there exists a rational point outside of the bad locus on the corresponding del Pezzo surface(s) of degree 2. The highlighted rows indicate types for which the existence is characterized for all possible values of $q$.}
    \begin{minipage}{0.44\textwidth}
    \centering
        \begin{tabular}{|c|l|l|}
        \hline
            Type dP1 & Does not & Exists for:\\ 
            & exist for: & \\
            \hline
            38 & $q \leq 4$ & $q \geq 11$\\
            39 & $q = 2$ & $q \geq 7$\\
            40 & $q = 2 $ & $q \geq 7$\\
            41 & & $q \geq 4$\\
            \rowcolor{lightgray}
            42 & & all $q$\\
            43 & & $q = 2, q \geq 4$\\
            44 & $q \leq 8$ & $q \geq 17$\\
            45 & $q = 2 $ & $q \geq 9$\\
            46 & & $q \geq 7$\\
            47 & & $q \geq 5$\\
            48 & $q = 2 $ & $q \geq 9$\\
            49 & & $q \geq 7$\\
            50 & $q = 2 $ & $q \geq 5$\\
            51 & & $q \geq 5$\\
            52 & & $q \geq 4$\\
            53 & & $q \geq 4$\\
            \rowcolor{lightgray}
            54 & & all $q$\\
            \rowcolor{lightgray}
            55 & $q = 2 $ & $q \geq 3$\\
            56 & $q = 2 $ & $q \geq 9$\\
            57 & $q = 2 $ & $q \geq 7$\\
            58 & & $q \geq 7$\\
            59 & & $q \geq 5$\\
            60 & & $q \geq 4$\\
            \arrayrulecolor{lightgray}\hline
            \arrayrulecolor{black}
            66 & $q \leq 5$ & $q \geq 17$\\
            67 & & $q \geq 11$\\
            68 & $q = 2 $ & $q \geq 9$\\
            69 & $q \leq 4$ & $q \geq 9$\\
            70 & $q = 2 $ & $q \geq 9$\\
            71 & & $q \geq 5$\\
            \hline
        \end{tabular}
    \end{minipage}
    \begin{minipage}{0.55\textwidth}
    \centering
        \begin{tabular}{|c|l|l|}
        \hline
            Type dP1 & Does not & Exists for:\\ 
            & exist for: & \\
            \hline  
            72 & $q = 2$ & $q \geq 4$\\
            73 & & $q \geq 7$\\
            74 & & $q \geq 7$\\
            \arrayrulecolor{lightgray}\hline
            \arrayrulecolor{black}
            76 & $q = 2 $ & $q \geq 9$\\
            \arrayrulecolor{lightgray}\hline
            \arrayrulecolor{black}
            79 & & $q \geq 5$\\
            80 & & $q \geq 4$\\
            \arrayrulecolor{lightgray}\hline
            \arrayrulecolor{black}
            87 & & $q \geq 7$\\
            88 & & $q \geq 7$\\
            \arrayrulecolor{lightgray}\hline
            \arrayrulecolor{black}
            91 & $q \leq 8$ & $q = 53, 59, 61, q \geq 67$\\
            92 & $q \leq 4$ & $q = 31, q \geq 37$\\
            93 & $q \leq 4$ & $q \geq 17$\\
            94 & $q = 2 $ & $q \geq 19$\\
            95 & $q = 2 $ & $q \geq 5$\\
            96 & $q = 2 $ & $q \geq 9$\\
            97 & $q = 2 $ & $q = 13, q \geq 17$\\
            98 & $q = 2 $ & $q \geq 7$\\
            99 & & $q \geq 9$\\
            100 & $q = 2 $ & $q \geq 7$\\
            101 & $q = 2 $ & $q \geq 4$\\
            102 & & $q \geq 9$\\
            103 & & $q \geq 7$\\
            104 & $q = 2 $ & $q \geq 7$\\
            105 & & $q \geq 4$\\
            106 & & $q \geq 7$\\
            107 & & $q \geq 4$\\
            \arrayrulecolor{lightgray}\hline
            \arrayrulecolor{black}
            109 & & $q \geq 4$\\
            110 & & $q \geq 5$\\
            111 & & $q \geq 4$\\
            & & \\
            \hline
        \end{tabular}
    \end{minipage}
    \label{tab:existencedP1withline}
\end{table}

\begin{table}[ht]
    \centering
    \caption{For each type of del Pezzo surface of degree 1 of index 8, sorted based on their trace value, we give the degrees of the closed points in $\P^2$ that need to be blown up to obtain a surface of this type.}
    \begin{tabular}{|c|c|l|}
    \hline
    type & trace & degree of closed points in center of blow-up\\
    \hline
    \hline
    91 & 9 & $1^8$ \\
    \hline
    92 & 7 & $1^6\cdot 2$\\
    \hline
    94 & 6 & $1^5 \cdot 3$\\
    \hline
    93 & 5 & $1^4 \cdot 2^2$\\
    97 & 5 & $1^4 \cdot 4$\\
    \hline
    96 & 4 & $1^3 \cdot 2 \cdot 3$\\
    102& 4 & $1^3 \cdot 5$ \\
    \hline
    95 & 3 & $1^2 \cdot 2^3$\\
    100 & 3 & $1^2 \cdot 3^2$\\
    101 & 3 & $1^2 \cdot 2 \cdot 4$\\
    107 & 3 & $1^2 \cdot 6$\\
    \hline
    99 & 2 & $1 \cdot 2^2 \cdot 3$\\
    105 & 2 & $1 \cdot 3 \cdot 4$\\
    106 & 2 & $1 \cdot 2 \cdot 5$\\
    111 & 2 & $1 \cdot 7$\\
    \hline
    98 & 1 & $2^4$\\
    103 & 1 & $2 \cdot 3^2$\\
    104 & 1 & $2^2 \cdot 4$\\
    108 & 1 & $4^2$\\
    109 & 1 & $3 \cdot 5$\\
    110 & 1 & $2 \cdot 6$\\
    112 & 1 & $8$\\
    \hline
\end{tabular}
    \label{tab:blowupindex8}
\end{table}

\begin{table}[ht]
    \centering
    \caption{For each type of del Pezzo surface of degree 1 over finite fields of index 8 which has a $(-1)$-curve defined over $\F_q$, we list in column 2 precisely for which values of $q$ they can be realized. When the result was previously known, a reference is given in column 3.}
    \begin{tabular}{|c|c|l|}
    \hline
            Type & Exists if and only if: & Reference (if existing result)\\
            \hline
            91 & $q = 16$ or $q \geq 19$ & \cite[16]{BFL}\\
            92 & $q \geq 11$ & \cite[16]{BFL}\\
            93 & $q \geq 7$ & new for $q \geq 5$, \cite[17]{BFL} for $q = 2, 3, 4$ \\
            94 & $q \geq 7$ & \cite[16]{BFL}\\
            95 & $q \geq 7$ & new for $q \geq 3$, \cite[18]{BFL} for $q = 2$\\ 
            96 & $q \geq 4$ & new for $q \geq 3$, \cite[17]{BFL} for $q = 2$\\
            97 & $q \geq 5$ & \cite[17]{BFL}\\
            98 & $q \geq 4$ & new\\
            99 & $q \geq 4$ & new\\
            100 & $q \geq 3$ & new for $q \geq 3$, \cite[18]{BFL} for $q = 2$\\
            101 & $q \geq 3$ & new for $q \geq 3$, \cite[18]{BFL} for $q = 2$ \\
            102 & $q \geq 3$ & \cite[17]{BFL}\\
            103 & $q \geq 3$ & new\\
            104 & $q \geq 3$ & new\\
            105 & $q \geq 3$ & new\\
            106 & for all $q$ & new\\
            107 & $q \geq 3$ & \cite[17]{BFL}\\
            109 & for all $q$ & new\\
            110 & for all $q$ & new\\
            111 & for all $q$ & \cite[18]{BFL}\\
            \hline
    \end{tabular}
    \label{tab:dp1index8withline}
\end{table}

\end{document}